\documentclass[11pt,letterpaper]{article}
\usepackage[margin=1in]{geometry}
\usepackage[T1]{fontenc}
\usepackage{lmodern,microtype}
\usepackage{amsmath,amssymb,amsthm,mathtools}
\usepackage{booktabs,array,graphicx}
\usepackage{float}
\usepackage{enumitem}
\usepackage{needspace}
\usepackage{aliascnt}
\usepackage[numbers,sort&compress]{natbib}
\usepackage[hidelinks]{hyperref}
\usepackage[nameinlink,capitalise,noabbrev]{cleveref}
\newcommand{\E}{\mathbb E}
\newcommand{\Prob}{\mathbb P}
\newcommand{\R}{\mathbb R}
\newcommand{\C}{\mathbb C}
\newcommand{\N}{\mathbb N}
\newcommand{\eps}{\varepsilon}

\newcommand{\one}{\mathbf 1}
\DeclareMathOperator{\tr}{tr}
\DeclareMathOperator{\rank}{rank}
\DeclareMathOperator{\diag}{diag}

\DeclareMathOperator{\Var}{Var}

\DeclareMathOperator{\supp}{supp}
\DeclareMathOperator{\id}{id}

\newcommand{\norm}[1]{\left\lVert#1\right\rVert}

\newcommand{\Lpnorm}[2]{\left\lVert#1\right\rVert_{L^{#2}}}
\newcommand{\fall}[2]{(#1)_{#2}}

\newtheorem{theorem}{Theorem}[section]
\newaliascnt{lemma}{theorem}
\newtheorem{lemma}[lemma]{Lemma}
\aliascntresetthe{lemma}
\newaliascnt{proposition}{theorem}
\newtheorem{proposition}[proposition]{Proposition}
\aliascntresetthe{proposition}
\newaliascnt{corollary}{theorem}
\newtheorem{corollary}[corollary]{Corollary}
\aliascntresetthe{corollary}
\theoremstyle{definition}
\newaliascnt{definition}{theorem}
\newtheorem{definition}[definition]{Definition}
\aliascntresetthe{definition}
\newaliascnt{example}{theorem}
\newtheorem{example}[example]{Example}
\aliascntresetthe{example}
\theoremstyle{remark}
\newaliascnt{remark}{theorem}

\aliascntresetthe{remark}
\setlist{itemsep=2pt,topsep=4pt}
\crefname{lemma}{Lemma}{Lemmas}
\Crefname{lemma}{Lemma}{Lemmas}
\crefname{proposition}{Proposition}{Propositions}
\Crefname{proposition}{Proposition}{Propositions}
\crefname{corollary}{Corollary}{Corollaries}
\Crefname{corollary}{Corollary}{Corollaries}
\crefname{definition}{Definition}{Definitions}
\Crefname{definition}{Definition}{Definitions}
\crefname{example}{Example}{Examples}
\Crefname{example}{Example}{Examples}
\crefname{remark}{Remark}{Remarks}
\Crefname{remark}{Remark}{Remarks}
\crefname{appendix}{Appendix}{Appendices}
\Crefname{appendix}{Appendix}{Appendices}

\hypersetup{
  pdftitle={Sharp Norms from Finite Structure: Graph Matrices and Structured Chaoses},
  pdfauthor={Huibo Xu, Shi Fu, Youming Qiao, Dacheng Tao}
}
\title{Sharp Norms from Finite Structure:\\Graph Matrices and Structured Chaoses}
\author{Huibo Xu$^{1}$ \quad Shi Fu$^{1}$ \quad
Youming Qiao$^{2}$ \quad Dacheng Tao$^{1}$\\[0.5em]
\small $^{1}$Nanyang Technological University, Singapore\\
\small $^{2}$University of Technology Sydney, Sydney, Australia}
\date{}
\begin{document}
\hypersetup{pageanchor=false}
\maketitle
\thispagestyle{empty}
\vspace{-2em}
\begin{abstract}
Graph matrices encode dependencies in random matrices built from shared
random variables. Their norm estimates are central to spectral algorithms,
sum-of-squares (SoS), and high-dimensional statistics: precision can
determine whether a signal dominates noise or a candidate moment matrix
remains positive semidefinite. We develop a structural norm theory that
connects a uniform sharp formula to extensions across models and to
algebraic feasibility and tensor-network spectra.
For every fixed simple graph shape in the dense Rademacher model,
including overlapping or empty matrix boundaries, we prove
\[
 \mathbb E\|M_\alpha\|
 =\Theta_\alpha\!\left(n^{(v+h-s)/2}(\log n)^{a_*/2}\right).
\]
Here $v$ counts vertices, $h$ counts isolated summation vertices, $s$ is
the minimum boundary-separator size, and $a_*$ maximizes, over minimum
separators, the number of components adjacent to the separator and meeting
neither surviving boundary. Thus, despite high-order dependencies, two
finite cut optimizations determine both growth exponents. The formula
resolves the sharp-norm question in this model, replacing the
polylogarithmic gap in separator estimates with an exact logarithmic
exponent and matching lower bounds. It reveals that local fluctuations
add at one cut and compete across cuts. An infinite family with identical
coarse graph parameters but different sharp norm scales exhibits the
structural information this rule detects.
The proof converts label losses in high-moment expansions into layerwise
separator excess, uniformly controlling every defect layer at growing
moment order. Conditional flattening and synchronized fluctuations give
the matching lower bound. Retaining the layers lets us track factor
incidence, scalar tails, and label dimensions separately, yielding sharp
estimates for specified independent-factor chaoses, local weights, and
unequal dimensions. Separate distributional comparisons cover bounded
asymmetric noise, Gaussian inputs, and fixed-degree Hermite inputs,
retaining growing-moment control.

Two applications use this precision together with additional structure.
For degree-four clique SoS in $G(n,1/2)$, critical log-free estimates and
an exact positive square give, with high probability, pseudoexpectations
satisfying all constraints simultaneously for $9\le k\le c\sqrt n$,
for an absolute $c>0$. This sharpens the Hopkins--Kothari--Potechin
quartic-correction construction to the square-root scale without an
asymptotic logarithmic loss. For fixed independent Gaussian tensor networks
with equal edge dimension, we obtain necessary and sufficient
deviation-convergence thresholds, sharp expected deviation scales, and
entropy estimates with bounded additive error. For connected, loopless
complex-Gaussian networks with nonempty input and output boundaries,
an additional flow-quotient comparison and auxiliary-dimension transfer,
combined with the tensor-GUE strong-convergence theorem of Chen,
Garza-Vargas, and van Handel, show that the appropriately rescaled largest
output-state eigenvalue converges in probability to the exact right edge
of the known limiting law, determining the constant correction to
min-entropy. The results connect finite structure to sharp growth scales,
and additional algebraic and spectral structure to full feasibility and
exact limiting constants.

\end{abstract}
\clearpage
\hypersetup{pageanchor=true}
\pagenumbering{arabic}
\section{Introduction}\label{sec:introduction}

Many proofs about computation on random inputs turn on a spectral
comparison: a random error must be smaller than a useful signal or a
positive matrix that supports a relaxation. The matrices in these proofs
often have polynomial entries built from the same random variables, so
their entries are strongly dependent. Graph matrices encode these
dependencies by finite graph templates. They are a central tool in
sum-of-squares (SoS) lower bounds for planted clique, the
Sherrington--Kirkpatrick model, tensor PCA, and the Wishart model of
sparse PCA \citep{BarakEtAl2016,GhoshEtAl2020,PotechinRajendran2023},
and more recently for non-Gaussian component analysis, with consequences
for robust statistics and mixture learning \citep{DiakonikolasEtAl2024}.
Their norm estimates help prove that candidate moment matrices are
positive semidefinite, and hence that a relaxation remains consistent
with an apparent solution.

Spectral algorithms for planted sparse vectors and random overcomplete
tensor decomposition also require bounds on dependent polynomial matrices
\citep{HopkinsSchrammShiSteurer2015}; graph-matrix techniques give
systematic proofs of key estimates in these analyses
\citep[Section~9]{AhnMedarametlaPotechin2016}. Here the norm controls the
random interference against which a signal must be detected. In both
settings, the precision of the estimate can determine the parameter range
in which the computational argument succeeds. This raises a basic
structural question: \emph{which features of a graph template determine
the exact growth of its matrix norm?} We determine that growth for every
fixed simple graph shape in the dense sign model, including the
logarithmic factors that distinguish a removable loss from a real
spectral fluctuation.

To see the object behind these applications, start with independent random
signs: for every unordered pair $\{i,j\}\subseteq[n]=\{1,\ldots,n\}$,
choose $\xi_{\{i,j\}}=+1$ or $-1$ with equal probability. Equivalently,
encode the edges and nonedges of $G(n,1/2)$ by positive and negative signs.
We call this the \emph{dense Rademacher model}. The word ``dense'' refers
to this ambient random input, not to the number of edges in a template.
The signs are independent across unordered pairs; using a pair in the
opposite order does not produce a new random variable.

A graph matrix uses a small graph as a recipe for multiplying and summing
these signs. For example, the path $a-x-b$ defines the matrix
\begin{equation}\label{eq:intro-path-recipe}
 P_{ij}=\sum_{k\notin\{i,j\}}
          \xi_{\{i,k\}}\xi_{\{k,j\}}\quad(i\ne j),
 \qquad P_{ii}=0.
\end{equation}
The endpoint labels $i,j$ specify the row and column; the internal label
$k$ is summed out. Each summand follows the two edges of the path.
Although the underlying signs are independent, different entries of $P$
reuse them and are dependent. More generally, a finite graph, called a
\emph{shape}, specifies which signs to multiply. Its designated row and
column vertices specify the matrix indices; the labels of all other
vertices are summed over, with distinct vertices assigned distinct labels.
The graph is a recipe for a dependent random matrix, not the large random
graph on $n$ vertices itself.
For polynomial matrix-valued functions with the appropriate relabeling
symmetry, grouping terms by shape gives a matrix analogue of a Fourier
expansion. One can then analyze the graph-indexed components and the
patterns produced by multiplying them
\citep{AhnMedarametlaPotechin2016,PotechinRajendran2023}.

Norm estimates turn this representation into a quantitative proof.
Consider, for illustration, a Hermitian block $B+E$, where $B$ is positive
definite and $E$ is a random error. The implication
\[
 \norm{B^{-1/2}EB^{-1/2}}<1
 \quad\Longrightarrow\quad B+E\succ0
\]
expresses the basic role of spectral control: the error must be small
relative to the available positive mass. Actual SoS arguments require
substantially more structure, including suitable factorizations,
preservation of positive terms, and treatment of kernels forced by the
constraints. Graph-matrix norm bounds control the fluctuations and
intersection errors that arise within these constructions. Their precision
can determine whether the resulting comparison closes at the desired
parameter scale \citep{PotechinRajendran2023,GhoshEtAl2020}.

Even a logarithmic loss can matter in such a comparison. In ellipsoid
fitting, refined spectral estimates for particular correlated matrices
enabled a construction to fit a constant multiple of $d^2$ independent
Gaussian points in $\mathbb R^d$, improving a guarantee with a
polylogarithmic loss \citep{HsiehEtAl2023}. Sharpening a matrix estimate
thus changed the scale of a geometric feasibility result.

But not every logarithm is a loss in the proof. A single-edge shape
already illustrates the distinction. If its two endpoints index the row
and column, it gives the symmetric sign matrix $\Xi$, with
$\Xi_{ij}=\xi_{\{i,j\}}$ for $i\ne j$ and $\Xi_{ii}=0$.
If instead one endpoint indexes \emph{both} row and column and the other
is summed out, the same edge gives a diagonal matrix $D$:
\begin{equation}\label{eq:intro-diagonal-recipe}
 D_{ii}=\sum_{j\ne i}\xi_{\{i,j\}},
 \qquad D_{ij}=0\quad(i\ne j).
\end{equation}
These two uses of the same one-edge graph have different norm scales:
\begin{equation}\label{eq:intro-log-contrast}
 \E\norm{\Xi}\asymp\sqrt n,
 \qquad
 \E\norm{D}
 =\E\max_i\left|\sum_{j\ne i}\xi_{\{i,j\}}\right|
 \asymp\sqrt{n\log n}.
\end{equation}
For the diagonal matrix, the operator norm selects the largest row sum:
an extreme local fluctuation produces the extra logarithm. Thus even the
number of vertices and edges does not determine the sharp scale. The
placement of the matrix indices matters. For a general shape, which
fluctuations can the norm select, and how much amplification do they cause?

A substantial theory already addresses these matrices. Separator-based
bounds identify their polynomial norm scale up to polylogarithmic factors
\citep{AhnMedarametlaPotechin2016}. More general matrix-chaos inequalities
express spectral bounds through coefficient flattenings, provide
mechanically computable parameters for combinatorial models, and remove
unnecessary dimensional factors in important examples
\citep{BandeiraEtAl2025}. Decoupling and linearization offer another
systematic approach through lower-degree polynomial matrices
\citep{TulsianiWu2025}. These developments make a finer structural question
natural: for each fixed graph shape, what is the exact logarithmic exponent,
and which feature of the shape forces it? This is the sharp classification
question recorded as Open Problem~31 \citep{NizicNikolac2025Problem31}.

We answer this question for every fixed finite simple graph shape in the
sign model just described, allowing overlapping or empty row and column
boundaries. A terminating finite combinatorial rule determines both
exponents in
\[
 \E\norm{M_\alpha}
 =\Theta_\alpha\!\left(n^{f(\alpha)}(\log n)^{g(\alpha)}\right).
\]
The answer is governed by a competition between cuts. The smallest
vertex cut between the two matrix boundaries determines the power of $n$,
with a correction for isolated summation vertices. Among those smallest
cuts, count the pieces that touch the cut but neither remaining boundary.
The largest such count determines the power of $\log n$. Thus the
classification identifies both the fluctuations the norm can select and
the ones it cannot amplify simultaneously. Matching lower bounds show
that the surviving logarithms are necessary. A finite graph calculation
therefore determines when a log-free estimate is possible and how much
amplification is unavoidable otherwise.

The proof develops a structural norm theory beyond graph edges. Its
layerwise counting theorem separates the effects of factor incidence,
scalar tails, and label-space dimensions. Together with matching
lower-bound arguments and distributional comparisons, it yields sharp
results for specified independent-factor, locally weighted, Gaussian,
and fixed-degree Hermite models. These extensions explain how the cut
rule changes when the random input or its geometry changes.

The applications carry this precision into two different conclusions.
For degree-four SoS, critical log-free estimates and an exact positive
square give full clique feasibility at $c\sqrt n$, removing the
logarithmic loss in the cited construction's scale. For Gaussian tensor
networks, uniform norm estimates give sharp deviation thresholds and
entropy bounds. A further comparison, through a flow quotient and an
auxiliary dimension, proves convergence of the largest eigenvalue to
the exact edge of the known limiting law. The paper thus connects
structural norm classification to algebraic feasibility and to spectral
endpoints; reaching an exact endpoint requires the additional mechanism
that preserves the leading constant.

\subsection{A finite rule for the sharp norm}

We now give the general definition behind \cref{eq:intro-path-recipe}.
\begin{definition}[Admissible shape and graph matrix]\label{def:graph-matrix}
Let $\alpha=(W,E,U,V)$ be a fixed finite simple graph with ordered row and
column boundaries $U,V\subseteq W$. Each boundary has distinct vertices,
but the two boundaries may overlap or be empty. These are precisely the
\emph{admissible shapes} in the sign theorem; no additional connectivity
condition is imposed. Write $v=|W|$, and use the independent uniform signs
defined above. Rows are indexed by injective labels of $U$, and columns
by injective labels of $V$. Thus $\mathbf i$ and $\mathbf j$ record the
labels assigned to the designated row and column vertices. The graph matrix is
\begin{equation}\label{eq:intro-graph-matrix}
 M_\alpha[\mathbf i,\mathbf j]
 =
 \sum_{\substack{\phi:W\hookrightarrow[n]\\
                 \phi|_U=\mathbf i,\ \phi|_V=\mathbf j}}
       \prod_{\{x,y\}\in E}\xi_{\{\phi(x),\phi(y)\}} .
\end{equation}
Here $\phi:W\hookrightarrow[n]$ assigns distinct labels to all vertices;
the boundary labels are fixed, and the others are summed over. We call this
the \emph{globally injective} model. There is no implicit normalization.
If the boundary labels cannot be extended to such an injection, the entry
is zero. In particular, assignments must agree on a common boundary vertex.
\end{definition}
The graph is fixed throughout; constants may depend on it, but not on $n$.
This convention is essential. A statement for every fixed graph is not a
uniform statement for graphs whose sizes grow with $n$.

The classification asks for graph-dependent exponents satisfying
\begin{equation}\label{eq:intro-question}
 \E\norm{M_\alpha}
 =\Theta_\alpha\!\left(n^{f(\alpha)}(\log n)^{g(\alpha)}\right).
\end{equation}
The missing logarithm can reflect a genuine extremal phenomenon rather
than slack in a concentration argument. The next statistic distinguishes
these possibilities.

\begin{definition}[Separators and active components]\label{def:separator}
A set
$S\subseteq W$ separates $U$ from $V$ if every path from $U$ to $V$
meets $S$; vertices of the boundaries themselves may be deleted.
In particular, every common boundary vertex belongs to every separator.
Let $s$ be the minimum size of such a separator. A component of
$\alpha-S$ is \emph{active} if it meets neither surviving boundary and is
adjacent to $S$. Let $a(S)$ be the number of active components, and set
\begin{equation}\label{eq:intro-invariant}
 s=\min_{S:U\mid V}|S|,
 \qquad
 a_*=\max_{\substack{S:U\mid V\\|S|=s}}a(S).
\end{equation}
\end{definition}
A component already disconnected from both boundaries before the cut is
not active. Such a component is a scalar random factor, with a different
role in moment bounds.

Let $h$ denote the number of isolated vertices outside $U\cup V$. Each
such vertex contributes a summation without a random edge. The
classification has the following form.
\begin{theorem}[Sharp graph-matrix norm]\label{thm:intro-graph}
For every fixed admissible simple shape $\alpha$, there are positive
constants $c_\alpha,C_\alpha,n_\alpha$ such that, for $n\ge n_\alpha$,
\begin{equation}\label{eq:intro-sharp}
 c_\alpha n^{(v+h-s)/2}(\log n)^{a_*/2}
 \ \le\ \E\norm{M_\alpha}\ \le\
 C_\alpha n^{(v+h-s)/2}(\log n)^{a_*/2}.
\end{equation}
\end{theorem}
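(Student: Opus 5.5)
The upper bound will come from the trace-moment method at growing moment order, the lower bound from conditioning on a minimum separator and exhibiting synchronized extreme fluctuations.

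\textbf{Upper bound.} Set $q\asymp\log n$ and bound
$\E\norm{M_\alpha}\le(\E\tr (M_\alpha M_\alpha^{\top})^{q})^{1/(2q)}$.
Expand the trace as a sum over closed walks of $2q$ copies of $\alpha$ glued alternately along $U$ and $V$. Nonzero expectation requires every sign $\xi_{\{i,j\}}$ to appear an even number of times. Following the separator method of Ahn--Medarametla--Potechin, I would not count total label losses globally but layer by layer. In each copy, the number of fresh labels is at most $v-s$ plus the excess over the minimum separator, and the parity constraint forces this excess to be charged to repeated labels.

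The key step is a layerwise separator-excess count. Label each copy by the separator $S$ through which information passes, and split its components into three classes: those attached to a boundary, active ones, and scalar ones. Components attached to a boundary propagate labels and contribute $n^{(v-s)/2}$ per layer. An active component, hanging off $S$ but not touching either boundary, behaves like the diagonal row-sum example: given the labels on $S$, it contributes a scalar sum $Z_S$ with subgaussian tails. Its $2q$-th moment is $(Cq)^{q}$ times the variance scale, so each active component costs a factor $\sqrt{q}\asymp\sqrt{\log n}$ per copy. Isolated summation vertices contribute deterministic factors $n$, which gives the $h$ in $n^{(v+h-s)/2}$.

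The main obstacle is uniformity. When the walk switches between different minimum separators, or uses a non-minimal cut at some layers, the $\sqrt q$ gains must not multiply across cuts. I would show that any layer using a cut of size $>s$ loses a factor $n^{-1/2}$, which absorbs all polylogarithmic gains. Among minimum separators, the ``competition'' then means the layer pays at most $\max_S a(S)=a_*$ logarithmic factors. The combinatorial bookkeeping must be uniform over all defect patterns with $q$ growing, with a count of at most $C^{q}q^{O(q)}$ shape configurations whose $q^{O(q)}$ part is exactly matched by the $a_*$ active tails. I expect this to be the hardest part: Menger-type arguments give the vertex-cut bound, but the exact exponent $a_*$ (rather than, say, the total number of components) requires arguing that components active for one minimum cut cannot simultaneously contribute independent tail factors at another.

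\textbf{Lower bound.} Fix a minimum separator $S$ attaining $a_*$. Condition on the signs inside the active components, and flatten $M_\alpha$ through $S$: it factors as $L\,D\,R$, with $L$ and $R$ the graph matrices from $U$ to $S$ and from $S$ to $V$, and $D$ diagonal in the $S$-labels with entry $\prod_{C\text{ active}}Z_C(\mathbf s)$. A synchronized-fluctuation argument then shows that some label $\mathbf s$ has every $Z_C(\mathbf s)$ of order $\sqrt{n^{|C|}\log n}$ simultaneously. The $Z_C$ are nearly independent across $\mathbf s$ via disjoint labels, and there are $n^{s}$ choices, so the extremes of a product of $a_*$ near-Gaussians reach $(\log n)^{a_*/2}$.

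Finally, test $M_\alpha$ against vectors concentrated on the row and column slices through this $\mathbf s$. Lower bounds on $\norm{L^{\top}e}$ and $\norm{Re}$ of order $n^{(\cdot)/2}$ come from a second-moment computation, with the isolated vertices again giving the factor $n^{h/2}$. Together these yield $c_\alpha n^{(v+h-s)/2}(\log n)^{a_*/2}$ with constant probability, hence in expectation.
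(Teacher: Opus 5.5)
\textbf{Verdict: the upper bound has a genuine gap at exactly the step you flag, and the full-matrix trace bound also fails for shapes with detached components; the lower bound follows the paper's mechanism but needs three repairs.} Your lower bound matches the paper in outline: a minimum separator maximizing $a(S)$, conditioning, and synchronized fluctuations at a single separator label.

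\textbf{The layerwise count is missing.} That count is the entire content of the upper bound, and the proposal does not supply it. Assigning a separator to each \emph{copy} of the shape and charging $\sqrt q$ per active component per copy describes only the leading configurations. The separator method you cite, applied copy by copy, gives the polynomial scale only up to polylogarithmic factors. A configuration count of the form $C^qq^{O(q)}$ cannot be absorbed unless the $O(q)$ is exactly $a_*q$ plus a term linear in the defect. Two things are needed:
\begin{enumerate}
\item a way to read separators off a \emph{single} trace state, so that the label loss equals the accumulated separator excess;
\item a count $C^{2p}p^{a_*p+K\delta}$ of states of defect $\delta$, with $K$ independent of $p$, so that $\sum_\delta(p^K/n)^\delta$ converges at $p\asymp\log n$.
\end{enumerate}
The paper obtains both as follows. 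It records the exact equality partition $\pi_x$ of the $2p$ replica labels at each role and attaches to it the interval $I(\pi)=[|\pi|-|\tau\vee\pi|,\,p-|\tau\vee\pi|]$. Intervals of adjacent roles intersect, via a perfect matching refining their even meet. Hence each integer \emph{layer} $k$, not each copy, yields a separator $S_k$, and the coarea identity $\delta=\sum_{k=1}^{p-1}(|S_k|-s)$ holds. A vertex-disjoint path backbone is then counted by positive calibration at auxiliary dimension $2p^2$. Off-path components are encoded by a seed matching plus local repairs. A boundary-free component has free seed choices only on the $e_C$ layers where it is active, which gives $\sum_Ce_C\le a_*(p-1)+r\widehat\delta$. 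That inequality is exactly the resolution of your ``competition across cuts'' concern; without an argument of this kind the upper bound is not proved.

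\textbf{The full-matrix trace bound is not sharp.} Applying $\E\norm{M_\alpha}\le(\E\tr(M_\alpha M_\alpha^{\top})^q)^{1/(2q)}$ at $q\asymp\log n$ to the whole matrix fails whenever there are detached components (your ``scalar ones''). Each contributes a factor $\sqrt q$ to the $2q$-th moment, so you would obtain $(\log n)^{(a_*+d_0)/2}$. Concretely, take an isolated common-boundary root together with a detached middle edge. This gives a diagonal matrix with $\E\norm{M_0}\asymp n$. Yet its expected traces coincide at every order with those of the rooted triangle, whose expected norm is $\asymp n\sqrt{\log n}$. So no analysis of those traces can yield the sharp bound. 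The paper handles this in three steps:
\begin{enumerate}
\item It passes to the typed model by random role-coloring, $M_G=v^v\E_cH_c$, plus convexity. There components split as Kronecker factors, and different shape edges no longer share unordered-pair sign coordinates.
\item It takes high trace moments only on the boundary core.
\item It restores detached factors in $L^1$ via $\E|Z_K|\le n^{k/2}$.
\end{enumerate}

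\textbf{Repairs needed in the lower bound.}
\begin{enumerate}
\item The factorization $M_\alpha=LDR$ is false in the globally injective model. You need a transfer such as the paper's Fourier extraction over fixed role classes, which isolates $|\operatorname{Aut}_{U,V}(G)|H_G$.
\item Your test vectors $Le_{\mathbf s}$ and $R^{\top}e_{\mathbf s}$ leave cross terms $\sum_{\mathbf t\ne\mathbf s}\langle Le_{\mathbf s},Le_{\mathbf t}\rangle D_{\mathbf t}\langle R^{\top}e_{\mathbf t},R^{\top}e_{\mathbf s}\rangle$ that you must show are of smaller order. The paper avoids them by applying the lower stacking inequality to the exact flattening $\bigoplus_{x_S}w(x_S)\one\one^{*}$, which uses inclusion-minimality of $S$.
\item Synchronization cannot rest on ``nearly independent'' values over $n^s$ tuples. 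It should use $\Theta(n)$ tuples distinct in every separator coordinate, after conditioning on \emph{all} internal component arrays, with a conditional moderate-deviation bound $cn^{-C\eps^2}$ valid on a good set of internal realizations.
\end{enumerate}
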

The formula turns a spectral extremum over a growing matrix into two
optimizations on a fixed graph. First minimize the number
of deleted vertices: with $v,h$ fixed, each unit of separator size changes
the polynomial exponent by one half. Only \emph{among those minimum cuts} do we maximize
the active-component count: each counted piece contributes a half-power
of $\log n$. A larger cut cannot trade a loss in the power of $n$ for a
fixed logarithmic gain. The number of edges and the degrees of the pieces
do not enter the exponents separately; they matter through these cut
statistics and through the shape-dependent constants. This is the
reason that shapes with the same polynomial scale can have different
logarithmic behavior. Unlike an unspecified polylogarithmic upper bound,
the formula decides which logarithms must remain and certifies their size
by a matching lower bound.

The rule in \cref{eq:intro-invariant} is finite. Enumerate vertex subsets,
test separation, retain those of minimum size, and count their active
components. In particular, the logarithmic exponent is not defined through
a limit of moment optimizations or through a list of small graphs. The
same rule applies to a fixed graph of any size.

Detached components explain why adjacency to the cut is part of the
definition. In the auxiliary \emph{typed model}, each vertex role has a
separate label set and each edge its own independent array. A detached
component there multiplies every entry by the same random scalar: its
size contributes a power of $n$, but the norm does not maximize it over
$n$ independent choices. Counting it as active would introduce a spurious
logarithm. \Cref{sec:models} compares this typed model with the original
globally injective matrix; it does not factor the latter entrywise.

The formula recovers the contrast in \cref{eq:intro-log-contrast}.
For the sign matrix $\Xi$, the single-edge shape has $v=2$, $h=0$,
$s=1$, and $a_*=0$. For the diagonal row-sum matrix $D$, the same edge
has $v=2$, $h=0$, and $s=1$, but deleting the common boundary vertex
exposes one active component, so $a_*=1$. Even the polynomial scale and
minimum cut agree; the active count detects the square-root logarithm.

The distinction persists when the boundary placement and the usual
coarse graph parameters are held fixed. Take the path $u-x-y-v$ with
boundaries $u,v$ and add two leaves. Attaching both leaves to $x$ gives
$a_*=2$; attaching one to $x$ and one to $y$ gives $a_*=1$. Both trees
have six vertices, five edges, and minimum separator size one, but
\begin{equation}\label{eq:intro-chain-separation}
 \E\norm{M_{\rm concentrated}}\asymp n^{5/2}\log n,
 \qquad
 \E\norm{M_{\rm distributed}}\asymp n^{5/2}\sqrt{\log n}.
\end{equation}
In their equal-size typed coefficient representations, even the two
scalar flattening summaries $\sigma,\nu$ of
\citet{BandeiraEtAl2025} agree. \Cref{sec:separating-family} defines
these summaries and proves an infinite family of such separations.
The maximum over minimum cuts detects information those scalars lose:
the location at which several fluctuations can reinforce one another.

The same family shows what changes in the estimate. On a longer path,
attach $q_j$ leaves at internal site $j$, and put $Q=\sum_jq_j$.
In the typed model, summing leaves gives independent diagonal gates
between sign matrices. Multiplying their separate norm bounds pays
$(\log n)^{Q/2}$. The sharp rule instead pays
$(\log n)^{\max_jq_j/2}$: charges add at one cut and compete across cuts.
The gain can therefore grow with the number of decorated sites in this
family: separate maximization adds charges that the matrix cannot realize
together. The elementary product bound is sharp when all leaves are at
one site. \Cref{app:separation-comparison} also calculates specified direct
literature bounds on the same family. These comparisons concern those
outputs and the two scalar summaries, rather than the full flattening
profile or every possible refinement of earlier methods.

There is also an obstruction to reconstructing the answer from ordinary
expected traces. An isolated diagonal root with a detached edge and a
rooted triangle have identical expected trace moments at every order,
yet their expected norms differ by $\sqrt{\log n}$. Their diagonal
marginals agree; their joint dependence does not. The exact finite-$n$
identities and the norm comparison appear in \cref{sec:trace-obstruction}.
Thus the input to our rule is the shape and its boundaries, not merely
its dimension, rank, or expected trace sequence.

\subsection{A structural norm theory beyond graph edges}

The next contribution is to separate the sources of spectral growth.
Factor scopes specify which labels a random variable can see; scalar
tails determine the size of fluctuations selected at a cut; unequal role
sizes change the cost of that cut. We prove extensions that track these
effects in explicit structural models. The common mechanism is the
separator sequence in the high-moment expansion: it retains enough
information to reweight the analysis when the input changes. Gaussian
and fixed-degree Hermite comparisons also retain control over a growing
moment window, which is needed to pass from moment estimates to the
operator norm.

First replace each edge occurrence by a nonempty factor scope
$e\subseteq W$ with its own independent random array. A matrix entry
multiplies the selected array coordinates and sums over middle-role
labels. The two-section graph joins roles that share a scope; its
separators therefore record the factor incidence. Assume every role has
a label set of size comparable to $n$. For each factor occurrence, let
its coordinate law be fixed,
symmetric, and of variance one, and suppose
\[
 \Lpnorm{\xi_e}q\asymp_e q^{\theta_e/2},
 \qquad q\ge2,\qquad 0\le\theta_e\le2.
\]
These are two-sided, all-order assumptions on a law that does not change
with $n$. Define
\begin{equation}\label{eq:intro-tail-charge}
 b_*=
 \max_{\substack{S:U\mid V\\|S|=s}}
 \left(a(S)+\sum_{e\subseteq S}\theta_e\right).
\end{equation}
The second term records factors supported wholly on the separator.
Their extremes can contribute to the same label selection as the active
components.

\begin{theorem}[Sharp norm for the specified factor model]
\label{thm:intro-factors}
After the explicit preprocessing of unused roles and detached scalar
components, a fixed independent-factor shape with the laws above has
\[
 \E\norm{M}
 \asymp
 n^{(v+h-s)/2}(\log n)^{b_*/2}.
\]
The constants may depend on the shape, the comparison constants for the
role sizes, and the fixed factor laws.
\end{theorem}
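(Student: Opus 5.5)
We prove the two bounds separately: the upper bound by a high-moment trace method, and the lower bound by an explicit construction at one optimal cut.

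\textbf{Upper bound.} We bound $\E\norm{M}\le(\E\tr (MM^\top)^{q})^{1/(2q)}$ with moment order $q\asymp\log n$. After preprocessing, detached scalar components factor out as $n$-independent scalar moments, and unused roles only multiply dimensions. The trace then expands as a sum over labelled walks of $2q$ copies of the shape, glued along boundaries. Each walk contributes
\[
 \prod_e\E\prod\xi_e,
\]
which vanishes unless every factor coordinate appears an even number of times. A coordinate appearing $2m$ times contributes at most $C^{2m}(2m)^{\theta_e m}$ by the assumed $L^q$ growth.

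The walks are organized layer by layer. In each consecutive pair of copies, the labels that are not new must lie on a vertex set separating $U$ from $V$ in the two-section graph, so its size is at least $s$. Counting fresh labels gives the factor $n^{(v+h-s)q}$, up to a separator-excess term. Every unit of excess costs a factor $n^{-1/2}$ per layer, which beats any $q^{O(1)}$ combinatorial gain when $q\asymp\log n$.

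Within minimum-separator layers, the remaining freedom has two sources. First, an active component can be traversed many times with labels reused: $\ell$ reuses give a moment factor that behaves like $q^{\ell}$ against a label loss $n^{-\ell}\cdot n$. The net effect, after optimizing, is $q^{1/2}$ per active component in the $2q$-th root. Second, a factor contained in $S$ is repeated on the frozen separator labels, contributing $q^{\theta_e/2}$ through its tail.

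The key step is to show these charges add only within one fixed separator. When consecutive layers use different minimum separators, the walk has to pay a transition that carries separator excess. We therefore bound the weight of each layer by $\max_{S}\bigl(a(S)+\sum_{e\subseteq S}\theta_e\bigr)=b_*$, in the spirit of the layerwise counting underlying \cref{thm:intro-graph}. Taking the $2q$-th root with $q=\log n$ then yields $n^{(v+h-s)/2}(\log n)^{b_*/2}$. I expect this step to be the main obstacle: controlling walks that mix separators while keeping the combinatorial count of walk shapes at $C^q$, uniformly for growing $q$. The plan is to encode each layer by its separator and its defect set, and to charge every switch between separators to excess.

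\textbf{Lower bound.} Fix a maximizing minimum separator $S$ and condition on all factors not contained in $S$ or in the active components. The matrix then flattens as $A\,\Delta\,B$. Here $A$ and $B$ are the pieces on the $U$-side and $V$-side, which by Menger's theorem contain $s$ disjoint paths and have singular values of typical size $n^{(\cdot)/2}$. $\Delta$ is diagonal over the separator labels, and its entries are products of independent sums over active components, times the on-separator factors.

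Among about $n^{s}$ separator labelings, some entry of $\Delta$ achieves, with constant probability, the synchronized extreme
\[
 \prod_{\text{active}}\sqrt{\log n}\cdot\prod_{e\subseteq S}(\log n)^{\theta_e/2},
\]
using the two-sided tail assumption and near-independence across disjoint label sets. Testing $M$ against vectors concentrated on the $A$- and $B$-fibers of that label gives the matching lower bound. The remaining error terms come from global injectivity and from lower-order shapes; these are handled by the model comparison of \cref{sec:models} together with the upper bound for shapes of strictly smaller scale.
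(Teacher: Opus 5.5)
Your outline has the right architecture: an order-$\log n$ trace bound in which each separator layer pays at most $b_*$, and a conditional lower bound at a minimum separator maximizing $a(S)+\Theta(S)$. However, the upper bound defers exactly the estimate that constitutes the theorem, and the mechanism you propose for it is wrong.

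\emph{Source of the active-component logarithm.} The $(\log n)^{1/2}$ per active component is not a trade of repeated-coordinate moments against label loss; a lost label costs $n^{-1}$, which no power of $q\asymp\log n$ repays. It is zero-defect pairing entropy. The component's seed matching is free inside the cells of the meet of its neighbours' partitions, which gives at most $(2p)^{e_C}$ choices, where $e_C$ is the number of layers at which the component is active.

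\emph{Switches between minimum separators.} These cannot be charged to excess, because they are free. In the decorated chain, the replica intervals may pass from $\{x_1\}$ to $\{x_2\}$ at any level with zero defect. No such charge is needed; each layer must simply pay its own charge.

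\emph{What the paper uses instead.} There are two ingredients missing from your sketch.
First, the intervals $[|\pi|-|\tau\vee\pi|,\,p-|\tau\vee\pi|]$ give separators $S_k$ at integer levels, with defect $\sum_k(|S_k|-s)$. Interval containment then shows that the moment cost $p^{\theta_e(p-|\lambda_e|)}$ of a factor with meet $\lambda_e=\bigwedge_{v\in e}\pi_v$ falls only on levels with $e\subseteq S_k$.
Second, there is a layerwise all-defect count $N(\mathbf S)\le C^{2p}p^{\sum_ka(S_k)+K'\sum_k(|S_k|-s)}$, proved by rerunning the backbone and seed encoding with the original slices fixed.

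The aggregate graph count $p^{a_*p+K\delta}$ is not enough. Combined with a separate maximum of $\Theta$, it gives $a_*+\max_S\Theta(S)$, which can exceed $b_*$ (for example, two leaves at $x_1$ and two unary Gaussian gates at $x_2$).

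\emph{Detached components.} These are not $n$-independent: each has $\E|Z_K|\asymp n^{k/2}$. They must be restored in $L^1$, not inside the order-$\log n$ trace, where each would add a spurious $(\log n)^{1/2}$.

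In the lower bound, the decisive step does not follow. Testing $A\Delta B$ against vectors on the fibers of one good label $x_S^*$ leaves the interference $\sum_{x_S\ne x_S^*}\langle u,Ae_{x_S}\rangle\,w(x_S)\,\langle e_{x_S}^*B,v\rangle$, a sum of about $n^s$ correlated terms that you never control.

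The paper reverses the conditioning. It freezes $w$ (all occurrences meeting active components or contained in $S$) and integrates the fresh side arrays through a general-law stacking inequality. Since $S$ is inclusion-minimal, every separator role is observed on both sides. The coefficient flattening is therefore exactly $\bigoplus_{x_S}w(x_S)\mathbf 1\mathbf 1^*$, of norm $\sqrt{M_LM_R}\max_{x_S}|w(x_S)|$, and no cross terms arise.

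The synchronized maximum also needs more than the two-sided tail hypothesis, which handles only factors inside $S$. For each active component you must:
(i) condition on all internal arrays;
(ii) keep a good set of realizations (conditional variance of order $n^k$, no dominant coefficient) that is valid for every separator tuple simultaneously;
(iii) prove the conditional moderate deviation $cn^{-C\eps^2}$ by exponential tilting, which uses $\theta_e\le2$.
Only then are $\Theta(n)$ tuples disjoint in every coordinate independent trials.

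Finally, there are no global-injectivity or lower-order-shape errors to absorb here: the factor model is a typed Cartesian sum.
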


The formula gives a precise interaction between structure and tails:
component fluctuations and heavy factors add at the same cut, and the
resulting charges compete across minimum cuts. Proving it requires a
refinement of the total count in \cref{eq:intro-count}. We retain the
separator sequence $(S_k)$, assign each layer its factor moments, and
only then sum over sequences. This reusable part of the argument
distinguishes minimum separators that tie in size but carry different
tail charges.

Symmetry can be dropped for bounded noise by a different argument.
For independent centered variance-one coordinates bounded by $K$,
\cref{thm:bounded-comparison} shows that replacing all $Q$ occurrence arrays by
signs changes every $L^p$ norm by at most $(2K)^Q$ in either direction.
This is standard symmetrization and contraction applied to the original
coefficient tensor, before simplifying its sign reference. It extends
the sharp sign scale to dense centered Bernoulli and bounded-erasure
factors, even with coordinate-dependent laws, but not to a shared
unordered-edge array without a separate comparison argument.

A separate result, \cref{prop:profile}, treats certain $n$-dependent laws
through a finite moment profile. The resulting moment-weighted cuts give
an upper bound, with an expectation consequence under the stated
cut-stability hypothesis. This is useful beyond a fixed distribution,
but is not a matching sparse-law theorem. Detached random factors retain
their actual moments in the trace, and an identity matrix factor retains
its dimension; the profile does not absorb either into an unspecified
constant.

Changing the dimensions affects the optimization itself, not just its
constants: deleting a role with more labels incurs a larger polynomial
cost, and a tie between cheapest cuts can determine the logarithm.
\Cref{thm:aspects} makes this precise for the typed independent-sign model
with original factor scopes of arity at least two, nonempty role sets of
size $m_x\asymp n^{\beta_x}$ for fixed $\beta_x\ge0$, and deterministic
real local amplitudes satisfying
\[
 c_e n^{\gamma_e}\le |a_{e,n}(x_e)|\le C_e n^{\gamma_e},
 \qquad 0<c_e\le C_e<\infty.
\]
Each amplitude sees only the coordinates of its own occurrence.
Write $\Gamma=\sum_e\gamma_e$. In the finite normal form of
\cref{sec:weights}, bounded roles are eliminated, boundary factors are
removed in the sign reference, and deterministic isolated roles and
detached random components are recorded separately. Let $B_+$ be the sum
of positive role exponents, $h_\beta$ the sum of exponents of deterministic
isolated roles, and $d_0$ the number of detached random components.
In the remaining core, put
\[
 \kappa=\min_{S:U\mid V}\sum_{x\in S}\beta_x,
 \qquad a_*=\max_{\sum_{x\in S}\beta_x=\kappa}a(S).
\]
The theorem gives the explicit sharp scale
\begin{equation}\label{eq:intro-weighted-scale}
 \E\norm H\asymp
 n^{\Gamma+(B_++h_\beta-\kappa)/2}(\log(2n))^{a_*/2}.
\end{equation}
Its logarithmic-window upper moments have the additional factor
$q^{d_0/2}$. The optimization retains every minimum-cost separator, including ties.
Bounded roles ($\beta_x=0$) are included through a finite reduction.
\Cref{sec:weights} explains both the reduction and a concrete transition
where a change of minimum face changes the logarithmic exponent.
For rational input exponents the statistics admit a terminating exact
evaluation procedure; arbitrary real exponents require a representation
permitting exact comparisons.

For Gaussian inputs, the next issue is the precision of the estimate
as the moment order grows. Fixed-order limits alone do not control the
operator norm; the extension retains the sharp scale throughout a
logarithmic moment window. Consider a fixed coefficient-one independent-factor
template with $Q$ nonempty scopes, comparable role sizes, and standard
real Gaussian coordinates; unary scopes are permitted. To express this
model through sign statistics, for each occurrence
adjoin a private middle role of size $n$ and replace the occurrence by a
sign factor on the enlarged scope. Let
$(\widehat f,\widehat g,\widehat d_0)$ be the sign norm and detached-component
statistics of this lifted template. \Cref{thm:gaussian-lift} gives
\begin{equation}\label{eq:intro-gaussian-window}
 \begin{aligned}
 \E\norm{H_G}&\asymp n^{\widehat f-Q/2}(\log(2n))^{\widehat g},\\
 \Lpnorm{\norm{H_G}}q&\asymp
 n^{\widehat f-Q/2}(\log(2n))^{\widehat g}q^{\widehat d_0/2},
 \qquad 2\le q\le C_0\log(2n).
 \end{aligned}
\end{equation}
The second statement is uniform throughout this window for each fixed
$C_0>0$ and all sufficiently large $n$, with constants allowed to depend
on $C_0$ and the fixed template. The same result holds for standard
circular complex coordinates. The private roles retain fluctuations
from unary factors that cannot be removed as sign isometries; detached
components supply the separate $q^{\widehat d_0/2}$ factor. This controls
the sharp scale, while exact edge constants require the additional
comparison described below.

Fixed-degree Hermite coordinates are covered by a separate all-moment
comparison. Replace occurrence $e$ by
$\operatorname{He}_{d_e}(g)/\sqrt{d_e!}$, using mutually independent
standard real Gaussian coordinates and fixed positive integers $d_e$.
Let $H_{\rm dec}$ replace that occurrence by $d_e$ independent Gaussian
occurrences on the same scope. With
$A=\prod_e d_e^{d_e/2}/\sqrt{d_e!}$, \cref{thm:hermite} states
\begin{equation}\label{eq:intro-hermite-comparison}
 A^{-1}\Lpnorm{\norm{H_{\rm dec}}}q
 \le\Lpnorm{\norm{H_{\rm He}}}q
 \le A\Lpnorm{\norm{H_{\rm dec}}}q,
 \qquad q\ge1.
\end{equation}
Consequently the norm and logarithmic-window moment exponents are those
of an explicitly expanded Gaussian template. The comparison is uniform
in dimension, not in the degrees. In particular, it accommodates fixed
degrees above two and does not require the Hermite coordinate itself to
have a symmetric law. This is a route beyond the separate tail theorem's
$\theta_e\le2$ assumption, not a relaxation of that assumption inside
its proof.

The same cut competition remains visible in concrete matrices.
The Gaussian gated chain in \cref{sec:extensions} recovers the
maximum-over-cuts rule of the sign leaf family through unary tail
charges. The gated Khatri--Rao product instead adds degrees at a shared
column role and has a short independent proof. These examples link the
distributional extension to the same structural phenomenon.

Two obstructions explain why the hypotheses carry mathematical content.
The coefficient example in \cref{sec:weights} has identical entry
magnitudes and random incidence but different operator scales: local
amplitude control is not a substitute for arbitrary coefficient geometry.
For a specified sparse iid family, \cref{prop:sparse-phase} records the
complete signed-Bernoulli phase diagram, including the
$(\log n/\log\log n)^{1/2}$ correction at density $1/n$ and the rare-occupancy
regime. This classical critical-density obstruction \citep{Seginer2000}
delineates the fixed-law domain. The positive statements above are
separate, proved extensions with their own hypotheses, not an assertion
that every combination of their assumptions defines one covered model.

\subsection{Application: degree-four SoS at the square-root scale}

The first application shows how removing a spectral logarithm changes
the scale of a computational lower bound. For the degree-four SoS
relaxation of clique in a dense random graph, we establish feasibility
through a constant multiple of $\sqrt n$. The candidate moment matrix
is a structured sum of graph-matrix blocks. Turning their estimates
into feasibility requires comparison with different positive scales
on different subspaces, together with all Boolean, nonedge,
normalization, and size constraints.

The quantitative conclusion of \cref{thm:sos} is simultaneous feasibility
through a constant multiple of the square-root scale: for some absolute
$c>0$, with probability at least $1-n^{-10}$ over $G\sim G(n,1/2)$,
\[
 \text{every real }9\le k\le c\sqrt n
 \quad\text{admits a feasible degree-four clique pseudoexpectation.}
\]
The construction uses the quartic correction of
\citet{HopkinsKothariPotechin2015}. Relative to the
$\widetilde\Omega(\sqrt n)$ scale in the cited degree-four works
\citep{HopkinsKothariPotechin2015,RaghavendraSchramm2015}, the issue is
the logarithmic loss, not a change of polynomial exponent. The statement
concerns this relaxation and includes its full constraints.

The proof separates the incidence space from its kernel and retains an
exact positive square before estimating errors. Only critical matching
and path configurations need log-free estimates; the others have enough
polynomial slack. A final algebraic extension enforces the full moment
constraints on one graph event. The critical estimates also have short
independent proofs, so this application illustrates the selective use of
sharp bounds rather than a logical need for every case of the classification.

\subsection{Application: tensor-network thresholds and exact spectral edges}

The second application asks how accurately a random tensor network
preserves lengths and how large an eigenvalue its output state can have.
We obtain necessary and sufficient deviation thresholds, entropy bounds
with bounded additive error, and, under the connected-network hypotheses
below, an exact limiting largest eigenvalue. Consider fixed Gaussian random tensor networks,
with independent standard circular complex coordinates at each tensor,
independent tensors, and a common edge dimension $N$. Every connected
component meets a terminal; the nontrivial deviation statement assumes
at least one tensor vertex. Internal edges, including parallel edges,
are distinct indices, and open edges end at distinct degree-one terminals.
Here normalization is especially consequential. If $H_N$ is the
contraction map, mean-Gram normalization chooses $M_N$ so that
$\E M_N^*M_N=I$. Sample-state normalization instead considers
$\rho_A=H_NH_N^*/\norm{H_N}_F^2$. These answer different questions: approximate
isometry concerns the former, while output-state entropies concern the
latter. Random tensor network methods and their spectral consequences
have been developed in several settings
\citep{HaydenEtAl2016,Hastings2016,ChengEtAl2024,
FitterLoulidiNechita2024}.

For equal edge dimension $N$, write $b$ for the number of input legs.
For a nonempty set $T$ of tensor vertices, let $c(T)$ be its internal
edge boundary and let $a(T),b(T)$ count its output and input legs. The
relevant signed cut gap is
\begin{equation}\label{eq:intro-rtn-gap}
 \Delta=\min_{\varnothing\ne T}\bigl(c(T)+a(T)-b(T)\bigr).
\end{equation}
Write $F_N=M_N^*M_N-I_{N^b}$ for the mean-Gram deviation. All Schatten
norms here are unnormalized. \Cref{thm:rtn} gives, for each fixed
$1\le t<\infty$ and $\Delta\ge0$,
\begin{equation}\label{eq:intro-rtn-scales}
 \E\|F_N\|_{S_t}\asymp_{\mathcal G,t}N^{b/t-\Delta/2},
 \qquad
 \E\|F_N\|_{\rm op}\asymp_{\mathcal G}N^{-\Delta/2}
 \quad(\Delta>0).
\end{equation}
For every sign of $\Delta$, the convergence criteria are both necessary
and sufficient:
\begin{equation}\label{eq:intro-rtn-criteria}
 \|F_N\|_{S_t}\xrightarrow{\Prob}0
 \ \Longleftrightarrow\ \Delta>2b/t,
 \qquad
 \|F_N\|_{\rm op}\xrightarrow{\Prob}0
 \ \Longleftrightarrow\ \Delta>0.
\end{equation}
For the sample-normalized state, the relevant statistic is instead the
ordinary minimum input-output edge cut $s_{\rm cut}$. It is distinct from
the signed gap $\Delta$. \Cref{thm:rtn-state} gives a constant
$C_{\mathcal G}$ such that, with probability
$1-O_{\mathcal G}(N^{-1})$,
\begin{equation}\label{eq:intro-rtn-entropy}
 \begin{aligned}
 N^{-s_{\rm cut}}&\le\|\rho_A\|_{\rm op}
                    \le C_{\mathcal G}N^{-s_{\rm cut}},\\
 s_{\rm cut}\log N-\log C_{\mathcal G}
 &\le H_\alpha(\rho_A)\le s_{\rm cut}\log N
                    \qquad(0\le\alpha\le\infty).
 \end{aligned}
\end{equation}
All entropy orders hold on the same event. Here $H_\alpha$ is the
R\'enyi entropy, with the von Neumann entropy at $\alpha=1$, log rank at
$\alpha=0$, and min-entropy at $\alpha=\infty$. The error is bounded
additively, not just negligible relative to $\log N$. The theorem thus
controls the largest eigenvalue and the entropy endpoints, rather than
only a fixed moment or a limiting empirical spectral distribution.
It uses concentration of the realized Frobenius normalization as well
as a cut-rank bound and the uniform Gaussian norm estimate.

The scale bound leaves a finer question: does the largest eigenvalue
actually approach the edge predicted by the limiting bulk spectrum?
Fixed normalized moments alone cannot exclude a bounded number of
outliers. We resolve this question for the connected, loopless,
independent complex-Gaussian network with $A,B\ne\varnothing$ and
common edge dimension $N$. Let $\mu_G$ be the compact
minimum-energy moment law of
\citet[Theorems~4 and~7]{FitterLoulidiNechita2024}, and let
$E_G=\sup\supp\mu_G$. Then \cref{thm:rtn-exact-edge} gives
\begin{equation}\label{eq:intro-rtn-exact}
 \begin{gathered}
 N^{s_{\rm cut}}\lambda_{\max}(\rho_A)
       \xrightarrow{\Prob}E_G,\\
 H_\infty(\rho_A)
       =s_{\rm cut}\log N-\log E_G+o_{\Prob}(1).
 \end{gathered}
\end{equation}
The new information is the exact right endpoint, not a new claim
to the already-known fixed-moment law. Even at the same minimum
cut, the constant can change: a single square Gaussian gate has
$s_{\rm cut}=1$ and $E_G=4$, whereas an $N^2$ by $N$ gate has
$s_{\rm cut}=1$ and $E_G=1$. The cut dimension alone cannot
distinguish their min-entropy corrections. The proof uses a flow-quotient
comparison with an independently constructed Gaussian circuit and the
tensor-GUE strong-convergence theorem of
\citet{ChenGarzaVargasVanHandel2026}; it is not a direct substitution into
the graph-matrix norm formula. The cut-gap, entropy, and exact-edge
statements retain their respective normalizations and hypotheses. In
particular, the narrower last result does not replace the common-event
entropy bound.

\subsection{Proof ideas: sharp scales, structured extensions, and exact edges}

Two obstacles govern the proofs. At the scale level, separately maximizing
local fluctuations can overcount the logarithm, while counting only
leading moment configurations misses the accumulated contribution of
defects. Separator layers resolve both issues and support the structured
extensions. At the exact-edge level, even a sharp estimate with an
unspecified constant loses the endpoint; a comparison at an auxiliary
dimension retains it. We describe these mechanisms below. The SoS
application combines the required scale estimates with the positive
square and subspace budget described above.

\paragraph{Counting every defect, not only the leading states.}
In $\E\tr((MM^*)^p)$, the same random input occurs in all $2p$ copies.
For signs, a label assignment survives precisely when each primitive
variable has even multiplicity. Record its exact equality partitions,
and call the loss in free labels relative to the maximum its defect
$\delta$. Each lost label costs a factor $n^{-1}$, but there can be many
ways to lose it. Counting only defect zero cannot control a moment whose
order grows like $\log n$. The key conversion associates intervals to
role partitions and reads a
separator $S_k$ at each of the $p-1$ integer layers. The label deficit is
exactly the accumulated excess cut size:
\begin{equation}\label{eq:intro-coarea}
 \delta=\sum_{k=1}^{p-1}(|S_k|-s).
\end{equation}
A backbone of disjoint boundary paths controls the path partitions;
an encoding of off-path components charges repairs to the same deficit.
The path count is calibrated by evaluating its exact positive expansion
at a dimension of order $p^2$, where independent-matrix moment bounds
apply; it does not require an explicit listing of states.
The encoding keeps the original backbone as well as its modifications,
so that reconstruction remains injective. Active components account for
the choices that remain free at minimum layers. For a core with $r$ roles,
the uniform estimate is
\begin{equation}\label{eq:intro-count}
 c_{\alpha,p,\delta}
 \le C_r^{\,2p}
 p^{\,a_*p+(3r^2+10r+2)\delta}.
\end{equation}
Here $c_{\alpha,p,\delta}$ counts legal exact states, $C_r$ depends only
on $r$, and the bound holds for every integer $p\ge2$ and $\delta\ge0$.
The important features are the coefficient $a_*$ on $p$ and a defect cost
independent of $p$. Summing the label-weighted counts gives a geometric
tail with ratio $p^{3r^2+10r+2}/n$. At $p\asymp\log n$, taking a $2p$-th
root turns $p^{a_*p}$ into the required $(\log n)^{a_*/2}$.
\Cref{sec:counting,app:counting-detail} give the expansion, intervals,
and lossless encoding. The layerwise refinement, rather than the scalar
count alone, is the input to the tail-charge extension.

\paragraph{Making all lower-bound fluctuations large at one label.}
Fix a minimum separator. Conditional flattening exposes products of
active-component fluctuations indexed by its labels. The norm must find
one label where \emph{all} components are large: multiplying their
separate maxima would not prove this. Moreover, candidate labels reuse
internal randomness. We first condition on all of that randomness,
retain a sufficiently large conditional good set, and only then expose
fresh crossing variables. A simultaneous exponential tilt supplies the
matching logarithmic gain. Detached components are handled as scalars
before taking core trace moments; the trace collision above explains why
this preprocessing cannot be omitted. \Cref{sec:models,sec:lower} establish
the transfers to the globally injective model and the synchronized lower
bound. Together with the count, they prove the classification.

\paragraph{Why the cut calculus survives changes of model.}
The extensions retain the separator sequence while changing its layer
weights. For independent factors, replace every scope by a clique in its
two-section graph. Factor legality then implies graph legality, but a scope
$e$ contributes its tail weight precisely on layers with $e\subseteq S_k$.
We therefore count states with the full sequence $(S_k)$ still visible.
Minimum layers contribute at most $b_*$ each, while nonminimum layers are
charged to separator excess. For the matching lower bound, conditional
flattening keeps the shared internal arrays fixed and exposes a crossing
occurrence for each active component, allowing one cut to realize both the
component and tail charges.

Unequal dimensions replace cardinality by the weighted cut cost
$\sum_{x\in S}\beta_x$, and weighted coarea again converts label loss into
accumulated excess. An integer weight vector preserving the entire minimum
face lets us replace each role by a fiber of clones. Minimum clone separators
use whole fibers and retain the same active-component maximum, so the original
count controls the weighted defects; a separate flattening gives the lower
bound at the actual dimensions. Bounded roles require a finite reduction.

For Gaussian factors, a private-role lift represents each occurrence by a
normalized sign sum. Positive trace comparison is uniform for
$p=O(\log n)$, and Banach-space contraction gives the reverse norm bound,
retaining the sharp scale and detached-scalar contribution. Polarization
similarly compares each fixed-degree Hermite coordinate with a product of
independent Gaussian occurrences in every $L^q$. These dimension-free
reductions retain their fixed-template and fixed-degree scope.
\Cref{sec:extensions,sec:weights,sec:gaussian} state the results, and
\cref{app:factor-detail,app:weights-detail,%
app:gaussian-detail} give the complete reductions.

\paragraph{Preserving an exact endpoint by changing the comparison dimension.}
For the RTN route, a maximum terminal flow defines a directed graph;
its full strongly connected components form a balanced acyclic quotient
$Q$. This is a comparison of Wick counts, not a claim that contracting
original tensors produces independent Gaussian gates. Let $F_G(p,N)$
be the normalized moment sum over all defects. The comparison bounds it
by $F_Q(p,p^2)/(1-p^{K_G}/N)$, with fixed $K_G=4v+2+8R$ and
$N>p^{K_G}$; here $v$ counts tensor vertices and $R$ counts internal
edges and open legs. The reference network uses the auxiliary dimension
$p^2$, not the original dimension $N$.

\Cref{thm:positive-fiber} states the underlying comparison for weighted
positive expansions: a single map with controlled target defect and
weighted fibers gives a geometric denominator. The global SCC map is
the substantive RTN input to that elementary inequality. The sign-path
calibration uses positivity directly, rather than the same fiber map.

The quotient is realized by independent Ginibre gates on potentially
overlapping tensor factors. Tensor-GUE strong convergence identifies
its limiting norm; concentration and a tail estimate turn this into
$F_Q(p,p^2)\le2(E_G+\eta)^p$ for fixed $\eta>0$ and large $p$.
Thus the comparison keeps the exponential base $E_G$, which a generic
shape-dependent constant would lose. Logarithmic moments exclude fixed
right outliers; concentrated fixed moments supply the matching lower
edge. \Cref{sec:rtn-edge} states the comparison and explains the mechanism;
\cref{app:edge-detail} proves both edge bounds and the passage through
the random Frobenius normalization.

\subsection{Dependencies and reading guide}

The proofs follow the distinction between a sharp scale and an exact
edge. \Cref{sec:models,sec:counting,sec:lower} develop the model,
separator coarea mechanism, and matching lower bound, with the separating
family in \cref{sec:separating-family}.
\Cref{sec:extensions,sec:weights,sec:gaussian} then explain how the
analysis changes with factor laws and label-space geometry. The gated
chain and columnwise-product examples illustrate the tail rule.
\Cref{sec:sos,sec:rtn} combine spectral estimates with the additional
algebraic and normalization arguments; \cref{sec:rtn-edge} develops the
separate constant-preserving comparison for the exact edge.

The detailed proofs are organized by this same dependency order in the
appendices, starting with the theorem-by-theorem guide in
\cref{app:proof-guide}. They include complete counting and encoding,
conditional probability estimates, the factor and dimension reductions,
the SoS algebra and positivity budget, and both RTN proof chains.
The columnwise-product example has an independent proof, and the
Wishart and sparse calibrations are also included. Thus a reader can
follow the main argument first and then check any theorem's complete
derivation without consulting a supplementary repository.
\label{end:introduction}

\section{Models and reductions}\label{sec:models}

A shape is a finite simple graph $G=(W,E)$ together with ordered,
internally distinct tuples $U,V$ of vertices. A vertex can occur in both
tuples. We identify a tuple with its underlying set when discussing
connectivity. All such shapes are admissible in the sign theorem.
An empty tuple indexes a one-dimensional coordinate space. No
symmetrization or normalization is implicit.

\begin{definition}[Typed sign model]\label{def:typed-model}
In the \emph{typed model}, each role $x\in W$ has a finite label set
$I_x$ of size $m_x$. Each edge $e=\{x,y\}$ has its own independent sign
array $\xi^e:I_x\times I_y\to\{-1,1\}$, and all its coordinates are
independent. The matrix $H_G$ sums over the labels of $W\setminus(U\cup V)$.
Agreement on common boundary roles is imposed exactly as in
\cref{eq:intro-graph-matrix}. Different roles do not share label
coordinates, even if their label sets have the same cardinality.
Balanced sizes mean $c_x n\le m_x\le C_x n$ for fixed positive constants.
\end{definition}

A separator meets every $U$--$V$ path, including paths of length zero.
For a separator $S$, let $a_G(S)$ count the components $K$ of $G-S$
such that $K\cap(U\cup V)=\varnothing$ and $N_G(K)\cap S\ne\varnothing$.
Set $s_G=\min_{S:U\mid V}|S|$ and
$a_G^*=\max_{|S|=s_G}a_G(S)$. These definitions apply to disconnected
graphs and empty boundaries.

\Needspace{7\baselineskip}
\begin{lemma}[Elementary reductions]\label{lem:reductions}
The following operations preserve the asserted separator statistics and
give the stated matrix factors.
\begin{enumerate}
\item An edge contained in $U$, or contained in $V$, can be removed in
the sign model by a diagonal sign isometry.
\item If $h$ isolated middle roles are deleted from a globally injective
shape on $v$ roles, the exact multiplier is
$\fall{n-(v-h)}h$.
\item In the typed model, connected components give a Kronecker product,
up to row and column permutations. A detached component is a scalar
factor, and an isolated middle role contributes $m_x$.
\item Deleting isolated middle roles and detached components changes
neither $s_G$ nor $a_G^*$.
\end{enumerate}
\end{lemma}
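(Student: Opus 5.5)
The plan is to verify the four items one at a time, each by direct manipulation of the defining sums \cref{eq:intro-graph-matrix} and \cref{def:typed-model}. Items 1–3 are identities between matrices. Item 4 is a purely combinatorial statement about separators.

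\textbf{Item 1.} Let $e=\{x,y\}\subseteq U$. In every term of a row $\mathbf i$, the factor $\xi_{\{\phi(x),\phi(y)\}}=\xi_{\{\mathbf i_x,\mathbf i_y\}}$ depends only on the row label. So $M_\alpha=D_e M_{\alpha-e}$, where $D_e$ is the diagonal matrix with entries $\pm1$. This holds in both the globally injective and the typed model. The same argument applied to $V$ gives right multiplication by a diagonal sign matrix.

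Since $D_e$ is a random orthogonal matrix, the operator norm is unchanged pointwise, and hence all moments of it are unchanged. For the separator statistics, any $U$--$V$ path using $e$ meets $U$; since $x$ and $y$ both lie in $U$, removing $e$ does not change which vertex sets separate. For the active-component count, note that $x$ and $y$ are either in $S$ or in the surviving boundary $U$. Deleting $e$ can therefore only split or merge components that meet $U$, and such components are never active. So $a(S)$ is unchanged for every $S$.

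\textbf{Item 2.} Fix a term of the sum for the reduced shape on $v-h$ roles; its boundary and other labels are an injection into $[n]$. Extending it injectively to the $h$ isolated middle roles, which carry no edge factors, can be done in exactly $\fall{n-(v-h)}h$ ways. Each extension contributes the same monomial, which gives the exact multiplier.

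\textbf{Item 3.} In the typed model the label sets of distinct roles are disjoint coordinates and the edge arrays are independent. The sum over labels therefore factorizes over connected components. Row indices split as $\prod_K I_{U\cap K}$, and similarly for columns, which gives a Kronecker product after reordering coordinates.

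A detached component has $U\cap K=V\cap K=\varnothing$. Its factor is a $1\times1$ matrix, that is, a random scalar. An isolated middle role contributes $\sum_{i\in I_x}1=m_x$.

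\textbf{Item 4.} This is the step needing the most care. Let $K$ be a detached component or an isolated middle vertex. Then $K$ contains no vertex of $U\cup V$ and lies on no $U$--$V$ path.

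First, $S$ separates in $G$ if and only if $S\setminus K$ separates in $G-K$. It follows that minimal separators avoid $K$ and that $s_G=s_{G-K}$.

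Second, for a separator $S$ avoiding $K$, the set $K$ remains a component of $G-S$. It is not adjacent to $S$, so it is not active, and the other components and their adjacency are unchanged. Hence $a_G(S)=a_{G-K}(S)$.

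Third, a minimum-size separator of $G$ contains no vertex of $K$, since otherwise dropping that vertex would give a smaller separator. So the maximizations defining $a^*$ range over the same family in both graphs, and $a_G^*=a_{G-K}^*$. Iterating over all such $K$ finishes the argument.

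The one subtlety to double-check is the adjacency requirement in the definition of active: it is exactly what excludes detached components, even when $S$ is empty.
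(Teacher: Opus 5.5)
Your proposal is correct and matches the paper's proof essentially step for step. Both use the diagonal sign multiplier for item 1, together with the observation that a boundary edge only joins components meeting that boundary. Both count ordered injective extensions for item 2, factor over independent components for item 3, and use the fact that a minimum separator never contains a vertex of a boundary-free component for item 4.

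The one place you are terser than needed is the claim that deleting $e\subseteq U$ leaves the family of separators unchanged. The paper makes this explicit: a $U$--$V$ path through $e$ that avoids a set $S$ can be truncated at the later endpoint of $e$, giving a $U$--$V$ path that avoids both $e$ and $S$.
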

\begin{proof}
An edge on one boundary contributes the same sign to every entry in
its corresponding row or column. Its diagonal multiplier squares to
the identity. Such an edge cannot create a new boundary-to-boundary
path after a separator is deleted: the suffix after its last boundary
endpoint would already be such a path. It can merge only components
that meet that boundary, and therefore cannot change the active count.

For an injection of the remaining $v-h$ roles, exactly $n-(v-h)$
labels remain. Ordered injective choices for the deleted roles give
the falling factorial in the second assertion. In the typed model
the sums and products separate over connected components, and their
primitive arrays are independent. Finally, a minimum separator never
uses a vertex of a component that meets neither boundary: deleting
that vertex from the separator would still separate the boundaries.
Such a detached component is not adjacent to a minimum separator.
Isolated middle vertices have the same property.
\end{proof}

\begin{definition}[Boundary core]\label{def:boundary-core}
We call the graph remaining after the deletions in the fourth assertion
the \emph{boundary core}. Every component of the core meets a boundary,
and it has no isolated middle role. An empty core represents the scalar
$1$. Isolated boundary roles are retained.
\end{definition}

\begin{lemma}[Global upper transfer]\label{lem:color-upper}
Suppose a typed norm upper bound holds uniformly for $0\le m_x\le n$.
It transfers to the globally injective sign model with a constant
depending only on the number of roles. For $q\ge1$, the same is true of an $L^q$
bound, with no additional $q$-dependent constant.
\end{lemma}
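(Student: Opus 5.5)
Two differences separate the globally injective matrix $M_\alpha$ from the typed matrix $H_G$. First, $M_\alpha$ uses one shared symmetric array $\xi_{\{i,j\}}$, whereas the typed model has independent arrays per edge. Second, $M_\alpha$ requires the labels of all roles to be distinct. The plan removes both differences by a random colouring (partition) of $[n]$ into $v$ classes, one per role. For a map $\chi:[n]\to W$, let $M_\alpha^\chi$ be the sum of the terms in \cref{eq:intro-graph-matrix} whose injection $\phi$ satisfies $\chi(\phi(x))=x$ for every $x\in W$. For a uniformly random $\chi$, a fixed injection $\phi$ is coloured compatibly with probability exactly $v^{-v}$. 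Hence
\[
 M_\alpha=v^{v}\,\E_\chi M_\alpha^\chi
\]
entrywise, where the row and column index sets of $M_\alpha^\chi$ are padded with zeros to those of $M_\alpha$.

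By convexity of the norm (Jensen for $\E\norm{\cdot}$, and Minkowski for $\Lpnorm{\norm{\cdot}}q$ with $q\ge1$, both applied conditionally on $\xi$ and then averaged),
\[
 \Lpnorm{\norm{M_\alpha}}q\le v^v\sup_\chi\Lpnorm{\norm{M_\alpha^\chi}}q .
\]
The constant $v^v$ depends only on the number of roles and is independent of $q$, which is what the statement asks for.

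Next I identify $M_\alpha^\chi$, for fixed $\chi$, with a typed matrix. Set $I_x=\chi^{-1}(x)$, so $m_x=|I_x|\le n$, and $m_x=0$ is allowed. Since the classes are disjoint, distinct roles automatically receive distinct labels, so the injectivity constraint disappears. Each edge $\{x,y\}$ reads only the coordinates $\xi_{\{i,j\}}$ with $i\in I_x$ and $j\in I_y$. Different edges $\{x,y\}\neq\{x',y'\}$ read disjoint sets of unordered pairs, because the pair of classes determines the edge. The arrays $\xi^{\{x,y\}}(i,j)=\xi_{\{i,j\}}$ are therefore independent across edges. Within one edge each unordered pair appears once, since $x\ne y$ (the shape is simple) and the classes are disjoint, so each array has independent coordinates. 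Boundary agreement on common roles is inherited verbatim. The rows of $M_\alpha^\chi$ are injective labels $\mathbf i$ of $U$ with $\chi\circ\mathbf i$ equal to the identity, which is exactly the typed row index set $\prod_{x\in U}I_x$; rows outside this set are zero. Consequently $M_\alpha^\chi$ equals $H_G$ for these sizes, after padding by zero rows and columns, which changes no norm.

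Applying the assumed uniform typed bound for $0\le m_x\le n$ to each $\chi$ then gives the claim. If the typed bound is stated in terms of $n$, or is monotone in the sizes, it applies to every $\chi$ simultaneously.

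The main point to check carefully is that the averaging identity is exact. Every injective $\phi$ has the same probability $v^{-v}$ of being compatible with $\chi$, and nothing else contributes, because a non-injective assignment can never be colour-compatible with distinct roles. The other point is that the symmetric shared array really splits into independent typed arrays. This uses simplicity (no loops) and the fact that no two edges of $G$ connect the same pair of roles. If the typed bound is only stated for balanced sizes, I would need to invoke its uniform version over $0\le m_x\le n$; the hypothesis as given supplies exactly that.
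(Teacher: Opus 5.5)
Your proposal is correct and follows the paper's proof essentially step for step: a random colouring of the ambient labels by roles, the exact identity $M_\alpha=v^v\E_\chi M_\alpha^\chi$ from the survival probability $v^{-v}$, Jensen and Minkowski for the expectation and $L^q$ claims, and the observation that disjoint colour classes turn the shared unordered-edge array into independent typed arrays, with empty classes giving zero matrices. The only thing the paper adds is the trivial case $v=0$, where no colouring exists and both matrices are the scalar $1$.
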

\begin{proof}
For $v\ge1$, color every ambient label independently and uniformly
by a role of $W$. Let $H_c$ be the zero-padded matrix of injections
respecting the role colors. Each fixed injection survives with
probability $v^{-v}$, so, pointwise in the ambient signs,
\begin{equation}\label{eq:color-average}
 M_G=v^v\E_c H_c.
\end{equation}
For fixed colors the role classes are disjoint. Different edge roles
therefore use disjoint unordered ambient pairs, giving the typed
independence assumptions. Convexity proves the expected-norm claim;
Minkowski proves the $L^q$ claim. A zero class gives the zero matrix.
For $v=0$, the matrix is the scalar $1$.
\end{proof}

For the lower transfer, fix balanced disjoint role classes rather than
averaging over them. Boundary compression alone does not isolate
the desired middle-role assignments. The following Fourier extraction
does so.

\begin{lemma}[Global lower transfer]\label{lem:color-lower}
After isolated middle roles are removed, there is a positive
shape-dependent constant $c_G$ such that
$\Lpnorm{\norm{M_G}}q\ge c_G\Lpnorm{\norm{H_G}}q$ for every $q\ge1$,
where $H_G$ uses fixed balanced disjoint role classes.
\end{lemma}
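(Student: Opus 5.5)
The plan is to realize $H_G$, up to a constant factor, as a bounded linear image of a compression of $M_G$, where each operation in the image is a contraction in $L^q$. Fix disjoint classes $I_x\subseteq[n]$, $x\in W$, each of size $\lfloor n/v\rfloor$. Let $P$ and $P'$ be the coordinate projections onto the rows whose labels satisfy $\mathbf i(u)\in I_u$ and the columns whose labels satisfy $\mathbf j(w)\in I_w$. Since compression does not increase the operator norm, $\Lpnorm{\norm{PM_GP'}}q\le\Lpnorm{\norm{M_G}}q$. Expand the compressed entries by the class pattern $c:W\setminus(U\cup V)\to W\cup\{\ast\}$ of the middle labels, where $\ast$ records labels outside $\bigcup_x I_x$. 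Then $PM_GP'=\sum_c M^{(c)}$. The pattern $c=\id$ is exactly $H_G$ on these classes: for fixed classes, distinct edges of $G$ use disjoint sets of ambient pairs, as in the proof of \cref{lem:color-upper}.

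To extract the pattern $c=\id$, I will use a Fourier/interpolation step on the ambient edge types. For each unordered pair of classes $\{A,B\}$, with $\ast$ included, introduce a parameter $t_{AB}\in[-1,1]$ and replace every $\xi_{\{i,j\}}$ with $i\in A$, $j\in B$ by $t_{AB}\xi_{\{i,j\}}$. This substitution is a contraction in $L^q$. Indeed, $t\xi\overset{d}{=}\E_{\varepsilon}[\xi\varepsilon]$, where $\varepsilon$ is an independent sign with mean $t$, and $\xi\varepsilon$ is again a uniform sign independent across pairs. So Jensen's inequality, applied to the convex function $X\mapsto\norm X^q$, gives
\[
\Lpnorm{\norm{(PM_GP')(t)}}q\le\Lpnorm{\norm{PM_GP'}}q .
\]
Each $M^{(c)}(t)$ equals $M^{(c)}$ multiplied by the monomial $\prod_{\{A,B\}}t_{AB}^{m_c(A,B)}$, where $m_c$ counts the edges of $G$ that $c$ sends to type $\{A,B\}$. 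The total degree is at most $|E|$, and the number of types depends only on $v$. Hence the coefficient of the target monomial $\prod_{\{x,y\}\in E}t_{xy}$ is obtained by a fixed Lagrange/finite-difference combination of $O_G(1)$ evaluations. The coefficients of this combination depend only on $G$. The triangle inequality then bounds the $L^q$ norm of the extracted coefficient by $C_G\Lpnorm{\norm{M_G}}q$, with no $q$-dependence.

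The main obstacle is that the extracted coefficient equals $\sum_{c:m_c=m_{\id}}M^{(c)}$, not $H_G$ alone. Every such $c$ maps $E$ bijectively onto $E$, type by type, and fixes the boundary classes. No middle vertex is isolated, since isolated middle roles have been removed; hence every middle vertex carries an edge and no such $c$ uses $\ast$.

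If $c$ is injective, it is an automorphism of $G$ fixing $U\cup V$ pointwise. Relabeling the summation variables then shows that $M^{(c)}=H_G$, with no sign, since all weights are $+1$.

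Non-injective $c$ could still preserve the edge-type multiset; for example, two leaves at the same vertex could be sent to a common class. I would handle these patterns by a second interpolation on vertex classes. Choose the class sizes $m_x=\lfloor \lambda_x n\rfloor$ with generic ratios, or, equivalently, thin each middle class by independent Bernoulli selectors. Thinning must be done carefully, since restricting a summation is not a compression. The safe version is to choose classes inside a fixed partition and vary which subclasses are used, which again realizes everything as compressions and sign contractions. The contribution of a non-injective $c$ then scales with a different monomial in the class sizes $(m_x)$, because its image has fewer distinct middle classes. Among terms of degree one in each $m_x$, only the automorphisms survive. A second fixed-degree interpolation therefore isolates $|\mathrm{Aut}_{U\cup V}(G)|\cdot H_G$, and the lemma follows with
\[
c_G=|\mathrm{Aut}_{U\cup V}(G)|/C'_G .
\]
This is the step I expect to require the most care.
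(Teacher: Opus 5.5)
Your compression and extraction steps are sound and follow the paper's route. Because $M_G$ is multilinear in the ambient signs and every monomial has exactly $|E|$ distinct factors, your parameters $t_{AB}$ with Lagrange extraction isolate exactly the patterns with $m_c=m_{\id}$. The paper instead uses auxiliary signs and a single Fourier coefficient, which gives contraction constant one, but this difference is cosmetic.

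The gap is in your last step. You assert that non-injective patterns can survive, and your final constant rests on a second interpolation over class sizes or subclasses to remove them. As written, that step is not an argument. The $M^{(c)}$ are random matrices, not quantities that scale as monomials in the $m_x$. Changing class sizes changes the matrix being bounded. And restricting middle summations is, as you note, not a compression. So no $L^q$-contractive operation that extracts the ``degree one in each $m_x$'' terms has been exhibited.

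The second interpolation is also unnecessary, because the obstacle does not exist. You have already shown that each surviving $c$ fixes $U\cup V$ pointwise, avoids $\ast$, and induces a bijection $\bar c:E\to E$. Then every nonisolated role $y$ lies in some $\bar c(\{a,b\})=\{c(a),c(b)\}$, hence in $c(W)$. Boundary roles are fixed, and there are no isolated middle roles. So $c:W\to W$ is surjective, hence bijective, and therefore a boundary-fixing automorphism.

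Your proposed example fails for exactly this reason. Sending two leaves $\ell_1,\ell_2$ at $x$ to the class of $\ell_1$ puts both leaf edges in type $\{x,\ell_1\}$. That gives multiplicity two there and zero at $\{x,\ell_2\}$, so the target monomial is not matched.

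Replace the second interpolation by this surjectivity observation and your proof is complete; it is then the paper's proof. The extracted coefficient is exactly $|\operatorname{Aut}_{U,V}(G)|\,H_G$, and Minkowski gives the $q$-independent constant.
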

\begin{proof}
Compress to rows and columns in the prescribed boundary classes.
For every target edge-color pair introduce an auxiliary sign and
multiply all ambient signs in that pair of classes by it. Extract the
Fourier coefficient containing every target edge-color sign. Fourier
extraction is an average of signed matrices, and each auxiliary sign
flip preserves the law of the ambient signs. Compression is
contractive.

There are exactly $|E|$ edges in each monomial. To survive this Fourier
coefficient, each of the $|E|$ target pairs must occur oddly and hence
exactly once. The induced map from roles to role colors is therefore
an edge-bijective endomorphism. Every nonisolated target role is in
its image. Isolated boundary roles are fixed by compression, and
there are no isolated middle roles. Thus the role map is surjective
and hence bijective. It is a boundary-fixing automorphism.

Reindexing the middle labels shows that every such automorphism gives
the same typed matrix. The extracted coefficient is consequently
$|\operatorname{Aut}_{U,V}(G)|H_G$. Apply Minkowski to the contractive
average. The automorphism count is positive and independent of $q$.
\end{proof}

These transfers do not assert independence of different connected
components in the globally injective model. Component independence is
used only in the typed model, before the transfers.

\begin{proposition}[Finite evaluation of the exponents]\label{prop:terminal}
For every fixed admissible shape, the pair
$((v+h-s_G)/2,a_G^*/2)$ is computable by a terminating finite procedure.
\end{proposition}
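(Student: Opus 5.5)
The plan is to give an explicit brute-force algorithm and check that every step is a finite computation on the fixed graph $G=(W,E)$ with boundary tuples $U,V$. First, $v=|W|$ is read off directly. Then $h$ is computed by scanning each $x\in W\setminus(U\cup V)$ and testing whether it has no incident edge in $E$. Both are bounded scans over a finite set.

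Next, for $s_G$ and $a_G^*$, enumerate all $2^{v}$ subsets $S\subseteq W$. For each $S$, decide whether $S$ separates $U$ from $V$. To do this, form $G-S$, compute its connected components (for example by breadth-first search), and check that no component contains both a vertex of $U\setminus S$ and a vertex of $V\setminus S$. A vertex of $(U\cap V)\setminus S$ already violates this, matching the convention that separators meet paths of length zero. This gives the finite list of separators; it is nonempty, because $S=W$ always separates. Hence $s_G$ is the minimum cardinality over this list. For each separator with $|S|=s_G$, compute $a_G(S)$ by counting those components $K$ of $G-S$ with $K\cap(U\cup V)=\varnothing$ and $N_G(K)\cap S\neq\varnothing$. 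Each such test inspects finitely many edges. Finally take $a_G^*$ to be the maximum of these counts, which is over a nonempty finite set.

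Output $((v+h-s_G)/2,\,a_G^*/2)$. Each quantity is an integer obtained by finitely many operations, so the procedure terminates, with running time $O(2^{v}\,\mathrm{poly}(v+|E|))$. The edge cases need no special treatment: empty boundaries give $s_G=0$ with $S=\varnothing$ the only minimum separator, and disconnected graphs are handled by the component computation. By \cref{lem:reductions}(4), one may equivalently run the same procedure on the boundary core, but this is not needed.

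There is no real obstacle. The only point needing care is that the algorithm implements \cref{def:separator} literally, including the two conventions that common boundary vertices must lie in every separator and that activity requires adjacency to $S$. With those in place, "terminating" follows from finiteness of the power set of $W$.
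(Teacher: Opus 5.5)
Your proposal is correct and is essentially the paper's proof: enumerate the at most $2^{|W|}$ vertex subsets, test separation by reachability, keep those of minimum size, count their active components, and take the maximum. The only difference is cosmetic. The paper first removes isolated middle roles and detached components and enumerates subsets of the boundary core, whereas you apply the definitions of $s_G$ and $a_G^*$ directly to $G$; this is equally valid, and it agrees with the core computation by \cref{lem:reductions}(4).
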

\begin{proof}
Record isolated middle roles and detached components, then enumerate
all subsets of the boundary core. Test separation by reachability,
retain every subset of minimum size, and count active components in
each. There are at most $2^{|W|}$ subsets and finitely many graph
operations per subset. The formulas use only these finite data.
The proof makes no claim of polynomial running time in the shape size.
\end{proof}

The norm classification cannot be recovered from the full trace sequence
alone. The collision example in \cref{sec:trace-obstruction} supplies the
complete construction and calculation; the obstruction is structural,
not a loss introduced by truncating the moment sequence.

\subsection{Equal coarse data, different logarithmic scales}
\label{sec:separating-family}

The active-component statistic detects information not contained in the
minimum separator size, or even in two standard scalar summaries of
coefficient flattenings. The following family isolates that information
without changing the ambient sign law or the matrix boundaries.

\begin{definition}[Leaf-decorated chain]\label{def:decorated-chain}
Fix $L\ge1$ and $\mathbf q=(q_1,\ldots,q_L)\in\N_0^L$.
The shape $\alpha_{L,\mathbf q}$ consists of the path
$x_0-x_1-\cdots-x_L-x_{L+1}$, with boundaries $U=(x_0)$ and
$V=(x_{L+1})$, and $q_j$ distinct leaves attached to $x_j$.
Write $Q=\sum_jq_j$, $q_{\max}=\max_jq_j$, $v=L+2+Q$, and
$d=L+1+Q$. These parameters are fixed before the label dimension grows.
\end{definition}

\begin{proposition}[Exact chain statistics and sharp scale]
\label{prop:chain-statistics}
The shape $\alpha_{L,\mathbf q}$ has $v$ vertices, $d$ edges, no isolated
middle role and no detached component. Its minimum separator size is one,
and its maximum active count over minimum separators is $q_{\max}$.
Consequently, in the globally injective unordered-edge sign model,
\begin{equation}\label{eq:chain-sharp}
 \E\norm{M_{\alpha_{L,\mathbf q}}}
 \asymp_{L,\mathbf q}
 n^{(L+1+Q)/2}(\log n)^{q_{\max}/2}.
\end{equation}
The same scale holds in the balanced typed model.
\end{proposition}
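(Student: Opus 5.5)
The plan is to verify the combinatorial statistics directly and then invoke \cref{thm:intro-graph}, together with the typed versions implicit in its proof via \cref{lem:color-upper,lem:color-lower}.

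\emph{Counting.} The vertex set consists of the $L+2$ path vertices and the $Q$ leaves, so $v=L+2+Q$. The edges are the $L+1$ path edges and the $Q$ leaf edges, so $d=L+1+Q$. Since $L\ge1$, every vertex lies in the connected tree containing $x_0$ and $x_{L+1}$. Hence there is no isolated middle role ($h=0$) and no detached component, and the shape is its own boundary core.

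\emph{Separator statistics.} The boundaries $U=\{x_0\}$ and $V=\{x_{L+1}\}$ are distinct and joined by a path, so the empty set does not separate them, while $\{x_0\}$ does. Therefore $s=1$. A singleton $\{w\}$ separates exactly when $w$ lies on the unique $x_0$--$x_{L+1}$ path in the tree, that is, $w\in\{x_0,\dots,x_{L+1}\}$; deleting a leaf leaves the path intact.

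Now compute the active count for each singleton separator.
\begin{itemize}
\item For $S=\{x_j\}$ with $1\le j\le L$, the components of $\alpha-S$ are the side containing $x_0$, the side containing $x_{L+1}$, and the $q_j$ single leaves attached at $x_j$. Leaves at other sites remain attached to a boundary side. Each of the $q_j$ leaves is adjacent to $x_j$ and meets neither boundary, so $a(S)=q_j$.
\item For $S=\{x_0\}$ or $S=\{x_{L+1}\}$, the remaining graph is connected and contains the other boundary, so $a(S)=0$.
\end{itemize}
Hence $a_*=\max(0,q_1,\dots,q_L)=q_{\max}$.

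\emph{Conclusion.} Substituting into \cref{thm:intro-graph} gives the exponent $(v+h-s)/2=(L+2+Q-1)/2=(L+1+Q)/2$ and the logarithmic exponent $q_{\max}/2$, which is \cref{eq:chain-sharp}. For the balanced typed model, I would take the typed upper bound, which the counting argument establishes uniformly in $m_x\le n$ and which \cref{lem:color-upper} presupposes. For the lower bound, \cref{lem:color-lower} bounds $\norm{H_G}$ above by a multiple of $\norm{M_G}$, so it cannot supply a lower bound on $\norm{H_G}$ by itself. The typed lower bound instead comes directly from the synchronized lower-bound argument, which is carried out in the typed model before transfer.

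\emph{Main obstacle.} The proposition is essentially a corollary. The only delicate point is the bookkeeping that the typed statement is available in both directions, rather than only the globally injective one. If the main theorem is phrased only for $M_\alpha$, I would cite the typed estimates from the sections where they are proved (the counting bound and the lower-bound section), since both are stated for balanced role sizes.
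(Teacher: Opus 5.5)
Your proposal is correct and follows the paper's argument. You identify the singletons on the main path as the minimum separators, read off $s=1$, $h=0$, $a_*=q_{\max}$, substitute into \cref{thm:intro-graph}, and obtain the typed statement from the typed upper and lower proofs in \cref{sec:counting,sec:lower}. Your remark that \cref{lem:color-lower} runs in the wrong direction to give a typed lower bound is accurate, and it is why the paper cites those typed proofs directly.
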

\begin{proof}
The unique boundary-to-boundary path is the displayed main path, so its
singletons are exactly the minimum separators. Deleting an internal
$x_j$ leaves two components meeting the respective boundaries and
$q_j$ isolated leaves adjacent to the cut. Only these leaves are active.
Deleting an endpoint exposes no active component. Hence $s=1$,
$h=0$, and $a_*=q_{\max}$. Substitute in \cref{thm:intro-graph};
the balanced typed assertion is established by its upper and lower
proofs in \cref{sec:counting,sec:lower}.
\end{proof}

To compare coefficient data, use the equal-size typed model. Its
deterministic coefficient tensor $\mathcal A$ has one noise axis of
size $n^2$ for each edge and two matrix axes of size $n$. Its entries
enforce equality of repeated role labels. In the conventions of
\citet[Section~2.2]{BandeiraEtAl2025}, let $\sigma(\mathcal A)$ be
the largest flattening norm with the matrix axes on opposite sides,
and let $\nu(\mathcal A)$ be the largest with both matrix axes on the
column side and a nonempty set of noise axes on the row side. We use
$\nu$ for that reference's $v(\mathcal A)$ to avoid a collision with
the vertex count. \Cref{lem:role-frontier} computes both quantities.

\begin{corollary}[Same coarse parameters, different sharp logarithms]
\label{cor:coarse-separation}
Fix $L\ge2$ and $r\ge1$. The concentrated decoration
$(Lr,0,\ldots,0)$ and the balanced decoration $(r,\ldots,r)$ have the
same $v,d,|U|,|V|,s,h$, and the same functions $\sigma,\nu$ for their
equal-size typed coefficient tensors. Nevertheless, their globally
injective expected norms have ratio
\begin{equation}\label{eq:concentrated-balanced}
 \Theta_{L,r}\bigl((\log n)^{(L-1)r/2}\bigr).
\end{equation}
For $L=2,r=1$, both trees have six vertices and five edges; their scales
are $n^{5/2}\log n$ and $n^{5/2}\sqrt{\log n}$, respectively.
\end{corollary}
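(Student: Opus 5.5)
The proof has three parts: an elementary comparison of the combinatorial data, a comparison of the flattening summaries through \cref{lem:role-frontier}, and a direct substitution into \cref{prop:chain-statistics}.

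\textbf{Combinatorial data.} Both decorations have total leaf count $Q=Lr$ on a chain of length $L$. By \cref{def:decorated-chain}, both therefore have
\[
 v=L+2+Lr,\qquad d=L+1+Lr,\qquad |U|=|V|=1 .
\]
By \cref{prop:chain-statistics}, both also have $s=1$ and $h=0$. For $L=2,r=1$ this gives six vertices and five edges.

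\textbf{Equality of $\sigma$ and $\nu$.} I will invoke \cref{lem:role-frontier}, which computes both summaries for the equal-size typed coefficient tensor. The intended content is that each flattening norm of a $0/1$ label-equality tensor is $n$ raised to half of a cut-type count. Concretely, for a partition of the axes into row and column sides, the flattening is a tensor product, over roles, of partial-identity or all-ones blocks. A role whose incident axes all lie on one side contributes a factor $\sqrt{m_x}=\sqrt n$. A role whose incident axes are split contributes factor $1$.

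So each flattening norm is $n^{(\text{number of unsplit roles})/2}$. Maximizing over admissible partitions is then a purely combinatorial optimization. Its value depends only on the tree through quantities such as $v$ and the minimum number of roles that must be split between the two matrix axes.

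For $\sigma$, the matrix axes lie on opposite sides, so some role on the path must be split. A leaf role is never forced to split, because its single edge axis can be put on its parent's side. So the optimum splits exactly one role and gives $n^{(v-1)/2}$, independently of the leaf placement.

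For $\nu$, both matrix axes are on the column side and a nonempty set of noise axes is on the row side. The best choice puts a single leaf edge axis on the row side. This splits exactly one role, the parent, and leaves the leaf role unsplit since its only axis moves with it. The value is again $n^{(v-1)/2}$, or some other value depending only on $v$.

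In each case the value is a function of $v$ alone, hence equal for the two decorations. The step I expect to be the main obstacle is verifying these optima carefully, in particular that no partition does better by splitting a leaf. If \cref{lem:role-frontier} already states these values, this step is a citation.

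\textbf{Sharp logarithmic scales.} \Cref{prop:chain-statistics} gives
\[
 \E\norm{M}\asymp n^{(L+1+Lr)/2}(\log n)^{q_{\max}/2},
\]
with $q_{\max}=Lr$ for the concentrated decoration and $q_{\max}=r$ for the balanced one. Since both scales hold with shape-dependent constants, dividing them gives the ratio
\[
 \Theta_{L,r}\bigl((\log n)^{(Lr-r)/2}\bigr)=\Theta_{L,r}\bigl((\log n)^{(L-1)r/2}\bigr).
\]
For $L=2,r=1$ the scales are $n^{5/2}\log n$ and $n^{5/2}\sqrt{\log n}$, as claimed. This ratio is positive because $L\ge2$ and $r\ge1$, which is what makes the two shapes genuinely separated.
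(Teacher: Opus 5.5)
Your proposal is correct and follows the paper's proof: read off $v,d,|U|,|V|,s,h$ from \cref{def:decorated-chain} and \cref{prop:chain-statistics}, cite \cref{lem:role-frontier} for $\sigma=\nu=n^{(v-1)/2}$ (the $\nu$ value is the case $Q\ge1$, which holds since $Q=Lr\ge2$ for both decorations), and divide the two-sided estimates of \cref{eq:chain-sharp} using $q_{\max}=Lr$ and $q_{\max}=r$. Your sketch of the flattening optimization agrees with that lemma's proof. Two small points: for $\nu$, it is the bound $|T|\ge1$, coming from connectivity of the role--axis incidence graph, that rules out a larger value; and once the lemma is cited, the hedge ``or some other value depending only on $v$'' is unnecessary.
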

\begin{proof}
Both have $Q=Lr\ge1$, and \cref{lem:role-frontier} gives
$\sigma=\nu=n^{(v-1)/2}$ in each case. All remaining equalities follow
from the definition. The maximum decorations are $Lr$ and $r$;
divide the two-sided estimates in \cref{eq:chain-sharp}.
\end{proof}

There is a concrete matrix interpretation of the difference. In the
typed model, summing the leaf labels first gives the exact identity
\begin{equation}\label{eq:typed-decorated-product}
 H_{\alpha_{L,\mathbf q}}
 =A_0\widetilde D_1A_1\cdots\widetilde D_LA_L,
 \qquad
 \widetilde D_j(i,i)=\prod_{h=1}^{q_j}
          \left(\sum_{a=1}^n\xi^{j,h}_{i,a}\right),
\end{equation}
where the $A_j$ and leaf arrays are mutually independent sign arrays.
An empty product is one. The globally injective matrix does not have
this independent product identity; \cref{lem:color-upper,lem:color-lower}
are the transfers between the models.

The direct literature bounds and an adapted elementary bound make
different uses of this structure. After removing the common polynomial
factor $n^{(v-1)/2}$, Theorem~20 of \citet{TulsianiWu2025}, evaluated
at logarithmic Schatten order, gives $(\log n)^v$.
The smaller direct output of Theorems~2.4 and~2.5 of
\citet{BandeiraEtAl2025}, as well as their graph-specific Theorem~4.8,
gives $(\log n)^{d/2}$ when $Q\ge1$. The complete substitutions are
in \cref{app:separation-comparison}.

Even without those general bounds, multiplying the individual matrix
and gate norms in \cref{eq:typed-decorated-product} gives
\begin{equation}\label{eq:chain-product-upper}
 \E\norm{M_{\alpha_{L,\mathbf q}}}
 \le C_{L,\mathbf q}n^{(L+1+Q)/2}(\log n)^{Q/2}.
\end{equation}
This bound is already sharp for concentrated decorations. For distributed
decorations it pays $(Q-q_{\max})/2$ extra logarithmic powers: it charges
the maximum at every gate separately. The separator formula charges
leaves together at one cut, then maximizes over the competing cuts.
Thus $Q$ is replaced by $q_{\max}$, not by zero.

\Cref{app:separation-detail} proves the coefficient formula, all
literature substitutions, and the elementary bound. The separation
concerns the two specified scalar flattening summaries of the typed
representation, not the complete flattening profile or the capabilities
of every adapted use of earlier methods. The Gaussian chain in
\cref{ex:gated-chain} will exhibit the same maximum-over-cuts rule
through separator-supported tail charges rather than leaf sums.

\section{Replica partitions and the upper bound}\label{sec:counting}

The trace method must retain two kinds of information at once:
the number of free labels and the number of states realizing that
freedom. Expand $\tr((MM^*)^p)$ using exact equality partitions of
the $2p$ replicas at each role. A surviving sign assignment contributes
once, with a falling-factorial label count. It is not weighted by the
number of Gaussian pairings that could refine its equality blocks.
\Cref{app:counting-detail} includes a complete fourth-moment example
illustrating this distinction.

The mechanism linking these states to graph structure is an interval.
With $\tau$ the cyclic row matching, an even partition $\pi$ receives
\begin{equation}\label{eq:intro-interval}
 I(\pi)=
 \bigl[\,|\pi|-|\tau\vee\pi|,\ p-|\tau\vee\pi|\,\bigr].
\end{equation}
Neighboring roles have compatible intervals, so each integer layer
defines a boundary separator. The coarea identity below equates the
total label defect with the sum of the excess separator sizes. We
prove this structural step here, explain how it pays for the encoding,
and retain the final moment-to-norm conversion. The complete path count,
off-path reconstruction, and seed-entropy calculation are in
\cref{app:counting-detail}.

\subsection{Exact expansion and the counting theorem}

Throughout this section $G$ is a boundary core on $r$ roles. Let
$p\ge2$, and index replicas by $\{0,\ldots,2p-1\}$. Define the matchings
\[
 \tau=\{\{1,2\},\{3,4\},\ldots,\{2p-1,0\}\},
 \qquad
 \eta=\{\{0,1\},\{2,3\},\ldots,\{2p-2,2p-1\}\}.
\]
Their join is the one-block partition. A partition is \emph{even}
if all its blocks have even size. We use $\pi\wedge\psi$ for common
refinement and $\pi\vee\psi$ for the equivalence relation generated
by their union.

\begin{definition}[Legal replica state]\label{def:legal-state}
A legal state assigns an even partition $\pi_x$ to each role, with
$\tau$ refining $\pi_x$ on $U$, $\eta$ refining $\pi_x$ on $V$,
and $\pi_x\wedge\pi_y$ even for each edge $xy$.
Let $|\pi|$ denote its number of blocks.
\end{definition}

\begin{lemma}[Exact positive expansion]\label{lem:exact-trace}
For the typed sign model,
\begin{equation}\label{eq:exact-trace}
 \E\tr((H_GH_G^*)^p)
 =\sum_{\boldsymbol\pi\ {\rm legal}}
       \prod_{x\in W}\fall{m_x}{|\pi_x|}.
\end{equation}
\end{lemma}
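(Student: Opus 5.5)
We expand the trace directly and regroup label assignments by their equality patterns. Write $H=H_G$, and index rows by labels of $U$ and columns by labels of $V$. Then
\[
 \tr((HH^*)^p)=\sum H[\mathbf i_0,\mathbf j_1]H[\mathbf i_2,\mathbf j_1]\cdots ,
\]
which is a cyclic product of $2p$ factors. We attach to replica $t\in\{0,\ldots,2p-1\}$ one full typed assignment $\phi_t:W\to\prod_x I_x$. The indices must respect the alternation of rows and columns around the cycle. With the replica labelling fixed above, the row labels are shared by the pairs of $\tau$ and the column labels by the pairs of $\eta$; we fix the convention so that this holds. Since $H$ is real, $H^*=H^T$, and every factor is a product of edge signs $\prod_{e=xy}\xi^e_{\phi_t(x),\phi_t(y)}$. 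So the trace is a sum over tuples $(\phi_t)_t$ subject to the following constraints: $\phi_t|_U=\phi_{t'}|_U$ for $\{t,t'\}\in\tau$, and $\phi_t|_V=\phi_{t'}|_V$ for $\{t,t'\}\in\eta$. Common boundary roles satisfy both constraints, as the typed definition requires.

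Next we take the expectation. For each tuple, independence of the arrays across edges and coordinates makes the expectation factor over distinct primitive coordinates $(e,a,b)$. Each factor is $\E[\xi^{m}]$ with $m$ the multiplicity, which equals $1$ if $m$ is even and $0$ if $m$ is odd. For each role $x$, let $\pi_x$ be the exact equality partition of $t\mapsto\phi_t(x)$. The coordinate of edge $e=xy$ used by replica $t$ is $(\phi_t(x),\phi_t(y))$. Two replicas use the same coordinate exactly when they lie in a common block of $\pi_x\wedge\pi_y$. Hence the expectation is $1$ exactly when every block of $\pi_x\wedge\pi_y$ is even, and it is $0$ otherwise. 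The boundary constraints say that $\tau$ refines $\pi_x$ for $x\in U$ and $\eta$ refines $\pi_x$ for $x\in V$.

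Each $\pi_x$ must itself be even. Boundary roles meet this automatically, since they are refined by a perfect matching. For a non-boundary role, we use the convention of the boundary core: it has no isolated middle roles, so every such role $x$ has some neighbour $y$. A block $B$ of $\pi_x$ is then a disjoint union of blocks of $\pi_x\wedge\pi_y$, all of which are even, so $B$ is even. This establishes the legality conditions of \cref{def:legal-state}. The surviving terms therefore contribute exactly $1$, and they are precisely the tuples whose pattern $\boldsymbol\pi$ is legal.

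Finally we count tuples with a prescribed pattern. The roles are independent, because the typed model gives different roles disjoint coordinates. The number of maps $\{0,\ldots,2p-1\}\to I_x$ with exact kernel $\pi_x$ is the number of injective assignments of distinct labels to the $|\pi_x|$ blocks, which is $\fall{m_x}{|\pi_x|}$. This is also correct when $|\pi_x|>m_x$, since the count is then zero. Multiplying over roles and summing over legal $\boldsymbol\pi$ gives \cref{eq:exact-trace}, and every term is nonnegative.

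The only delicate step is the bookkeeping for the replica and boundary conventions. We must check that the alternating row/column indices match $\tau$ on $U$ and $\eta$ on $V$ under the chosen replica labelling. We must also check that the evenness of each $\pi_x$ itself either follows from the edge conditions, as it does in the boundary core, or is implied by the boundary matchings. For the empty-boundary or scalar cases, the same computation holds, with the one-dimensional index space imposing no constraint.
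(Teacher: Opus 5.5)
Your proposal is correct and follows the same route as the paper's proof. Both expand the trace into replica label assignments with the $\tau$/$\eta$ identifications, identify primitive coordinate classes with the cells of $\pi_x\wedge\pi_y$, and obtain evenness of each $\pi_x$ either as a union of even cells at a nonisolated role or from the boundary matching. Both then count exact-kernel labelings by falling factorials, each assignment counted once. Your version simply spells out the replica indexing and the role of the no-isolated-middle-role hypothesis more explicitly.
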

\begin{proof}
Expand the trace into label assignments on the $2p$ replicas.
Matrix multiplication imposes $\tau$ on row roles and $\eta$ on
column roles. At each role record the exact equality partition of
its labels. For an edge, the primitive coordinate classes are exactly
the cells of the meet of its endpoint partitions. Independence and
symmetry make its expected product zero unless every such cell is
even; if all are even the expectation is one. A nonisolated role
then has an even partition as a union of even cells. An isolated
core role is a boundary role and its trace constraint also forces
evenness. Assigning distinct labels to the blocks gives the falling
factorials. Each label assignment occurs once.
\end{proof}

\begin{definition}[Replica defect]\label{def:replica-defect}
Write $s=s_G$, $a_*=a_G^*$ and define the defect by
\begin{equation}\label{eq:defect}
 \sum_x|\pi_x|=p(r-s)+s-\delta.
\end{equation}
Let $c_{G,p,\delta}$ count the states of this defect.
\end{definition}

\begin{theorem}[Uniform all-defect count]\label{thm:count}
For all integers $p\ge2$ and $\delta\ge0$,
\begin{equation}\label{eq:uniform-count}
 c_{G,p,\delta}\le C_r^{2p}p^{a_*p+K_r\delta},
 \qquad K_r=3r^2+10r+2,
\end{equation}
where one may take
$C_r=200^r(2r)^{2r^2}4^{r^2}$ for $r\ge1$.
For $r=0$, take $C_0=1$ and $K_0=0$.
\end{theorem}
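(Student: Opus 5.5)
The plan has three stages: turn every legal state into a layered separator sequence, build an injective code for the states from that sequence, and then count the codes.

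\textbf{Stage one: the coarea identity.} For each role $x$ we form the interval $I(\pi_x)$ of \cref{eq:intro-interval}. We first check that it is well defined.
\begin{itemize}
\item Since $\pi_x$ is even, $|\tau\vee\pi_x|\le|\pi_x|\le p$, so both endpoints lie in $[0,p]$.
\item On $U$, $\tau$ refines $\pi_x$, so $|\tau\vee\pi_x|=|\pi_x|$ and the interval starts at $0$.
\item On $V$, $\eta$ refines $\pi_x$, and the join of $\tau$ and $\eta$ is the one-block partition, so $|\tau\vee\pi_x|=1$ and the interval ends at $p-1$.
\end{itemize}
The key local fact concerns an edge $xy$. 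Because $\pi_x\wedge\pi_y$ is even, the intervals of $x$ and $y$ overlap or abut in a controlled way. I would prove this by a genus-type inequality for the even meet: $|\pi_x|+|\pi_y|\le |\pi_x\wedge\pi_y|+|\pi_x\vee\pi_y|$, combined with evenness of the meet, which gives $|\pi_x\wedge\pi_y|\le p$.

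With this in hand, set $S_k=\{x: k\in I(\pi_x)\}$ for $k=1,\dots,p-1$. Each $S_k$ separates $U$ from $V$: along any $U$--$V$ path the intervals move from left of $k$ to right of $k$ and cannot jump over $k$ across an edge. We then sum the interval lengths. Writing $|\pi_x|$ as a sum of unit layer contributions, one layer for each $k$ not in $I(\pi_x)$ together with $|\tau\vee\pi_x|$ terms, should give exactly
\[
\sum_x|\pi_x|=p(r-s)+s-\sum_k(|S_k|-s).
\]
This is the identity $\delta=\sum_k(|S_k|-s)$ of \cref{eq:intro-coarea}. In particular only $O(\delta)$ layers are non-minimum, and each of those has excess at most $r$.

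\textbf{Stage two: encoding.} Fix $s$ vertex-disjoint $U$--$V$ paths, which exist by Menger's theorem; this is the backbone. Every $S_k$ meets each backbone path.
\begin{itemize}
\item \emph{Minimum layers.} Here $S_k$ consists of exactly one vertex per path. Along a single path the partitions are determined by the positions of the layer crossings together with a noncrossing-type pairing. We bound the number of such path configurations by $C^{2p}$. Following the paper, the route is to compare with the positive expansion at dimension of order $p^2$: there, known independent-matrix moment bounds such as $\E\|\Xi\|^{2p}\le(C\sqrt m)^{2p}$ force the count to be $C^{2p}$ per unit of free labels.
\item \emph{Off-path vertices at minimum layers.} These lie in components of $G-S_k$. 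A component meeting a boundary inherits its partition from its neighbours up to a bounded number of choices. An active component attached to $S_k$ may choose which block of the separator it pairs with, and this yields a factor of at most $p$ per active component per layer. This is the source of $p^{a_*p}$.
\item \emph{Non-minimum layers.} Each such layer is charged to its excess. The modifications relative to the backbone (which vertex, which replica, which block) cost $p^{O(r^2)}$ choices per unit of $\delta$. The exponent $K_r=3r^2+10r+2$ is the accounting of these choices: $r^2$ pairs of roles, block labels of size $2p$, and similar contributions.
\end{itemize}

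\textbf{Stage three: injectivity.} We must show that the state can be reconstructed from the backbone code plus the repair records. The code keeps the original backbone as well as its modifications, and reconstruction proceeds layer by layer.

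I expect the main obstacle to be stage two's off-path encoding at non-minimum layers. The reconstruction has to remain injective while the total cost of repairs stays $p^{O(1)}$ per unit of defect and never grows like $p$ per layer. If a naive charge fails, I would first try assigning each repair to the first layer where the separator has excess, and bounding the number of consecutive affected layers by the excess itself.

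Finally, the case $r=0$ is trivial: the only state is empty, so $c_{G,p,0}=1$.
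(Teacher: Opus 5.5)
\textbf{Verdict: there is a genuine gap.} Your Stage one and your path-backbone count match the paper: partition intervals, coarea, and positive evaluation at an auxiliary dimension of order $p^2$. Stages two and three, however, are where the theorem is actually proved, and as written they do not form an argument.

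\textbf{The main gap: the off-backbone encoding.} A legal state is a tuple of per-role partitions $(\pi_x)_{x\in W}$, and each partition is shared by all $p-1$ layers. You never say how choices made ``per active component per layer'' determine these partitions. So neither the factor $p^{a_*p}$, nor the polynomial cost per unit of defect, nor injectivity is established, and $K_r=3r^2+10r+2$ is asserted rather than derived. Also, off-backbone roles in a boundary-touching component are not determined up to boundedly many choices; they cost a power of $p$ per unit of their own defect.

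The missing ingredients are specific:
\begin{enumerate}
\item Split $\delta=\delta_{\rm on}+D$, where $D=\sum_{x\notin P}d(\pi_x)$ is summed over the roles off the backbone $P$.
\item Encode each component $C$ of $G[W\setminus P]$ by one seed matching $\rho_C$: prescribed $\tau$ or $\eta$ if $C$ meets a boundary, free otherwise. Here $C$ is a fixed component, not one of the $k$-dependent components of $G-S_k$. Add switch words of length at most $2D_C$ and merge words. The length bound comes from $d_M(\sigma_x,\sigma_y)\le d(\pi_x)+d(\pi_y)$ across legal edges, propagated along a spanning tree.
\item Normalize the state. Put $\rho_C$ on all roles of $C$, and join each backbone partition with the adjacent seeds. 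By $|\pi|-|\pi\vee\rho|\le d_M(\xi,\rho)$, this costs at most $3rD$ merges.
\item Charge the seeds in the normalized state. Only there do the roles of $C$ have width-zero intervals, and only there does the seed refine $\theta_C=\bigwedge_{x\in N_P(C)}\widehat\pi_x$. Interval containment then makes $C$ active at each of the $e_C=p-|\theta_C|$ layers in $I(\theta_C)$. Hence the seed counts $(2p)^{e_C}$ have total exponent $\sum_Ce_C\le a_*(p-1)+r\widehat\delta$, with $\widehat\delta=\delta_{\rm on}+b$.
\end{enumerate}

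In the original state this charge fails for two reasons. An anchor's matching need not refine its backbone neighbours' partitions. And off-backbone roles of positive width occupy layers themselves, so active components of the original $S_k$ do not track the seed entropy.

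Finally, the coarsening depends on the seeds. The seed count must therefore be taken in the fiber over each fixed pair (original backbone, forward merge word), which determines $\theta_C$. You correctly keep the original backbone, but this range-then-fiber order is what makes the count legitimate.

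\textbf{A concrete error in Stage one.} With closed layers $S_k=\{x:k\in I(\pi_x)\}$ the coarea identity fails. Take a single edge $x$--$y$ with $U=(x)$, $V=(y)$, $p=2$, and the legal state with $\pi_x$ the one-block partition and $\pi_y=\eta$. It has $|\pi_x|+|\pi_y|=3$, so its defect is $0$. Yet $I(\pi_x)=[0,1]$ and $I(\pi_y)=[1,1]$ put both roles in your $S_1$, giving excess $1$. The layers must be half-open, $S_k=\{x:\ell(\pi_x)<k\le u(\pi_x)\}$, so that each role lies in exactly $d(\pi_x)$ of them.

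Separation also needs the intervals of adjacent roles to intersect, not merely abut. That follows from choosing a perfect matching that refines the even meet and using monotonicity of $I$ under refinement (\cref{lem:interval}). The semimodular inequality you cite does not by itself give this.
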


\subsection{Partition intervals}

\begin{definition}[Partition interval]\label{def:partition-interval}
For an even partition put
\[
 c(\pi)=|\tau\vee\pi|,\quad
 \ell(\pi)=|\pi|-c(\pi),\quad
 u(\pi)=p-c(\pi),\quad d(\pi)=p-|\pi|.
\]
Thus $I(\pi)=[\ell(\pi),u(\pi)]$ is an interval in $[0,p-1]$
of integer length $d(\pi)$.
\end{definition}

\begin{lemma}[Refinement and compatibility]\label{lem:interval}
If $\theta$ refines $\pi$ and both are even, then
$I(\theta)\subseteq I(\pi)$. If $\pi\wedge\psi$ is even, their
intervals intersect.
\end{lemma}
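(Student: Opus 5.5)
The plan is to prove both assertions by comparing the two endpoints of the intervals separately. The second assertion will then follow from the first, applied to the common refinement.

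\emph{Upper endpoint.} If $\theta$ refines $\pi$, then $\tau\vee\theta$ refines $\tau\vee\pi$. This holds because the equivalence relation generated by $\tau\cup\theta$ is contained in the one generated by $\tau\cup\pi$. Hence $c(\theta)=|\tau\vee\theta|\ge|\tau\vee\pi|=c(\pi)$, and so $u(\theta)=p-c(\theta)\le p-c(\pi)=u(\pi)$.

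\emph{Lower endpoint.} I would reduce to single merges. Any coarsening from $\theta$ to $\pi$ is a finite sequence of steps, each merging two blocks of the current partition. Evenness of the intermediate partitions plays no role in this monotonicity step, since the quantities $|\cdot|$ and $|\tau\vee\cdot|$ are defined for every partition. Under one merge, $|\theta|$ drops by exactly one. The join $\tau\vee\theta$ either stays the same or has two of its blocks merged, so $|\tau\vee\theta|$ drops by at most one. Therefore $|\theta|-|\tau\vee\theta|$ is nonincreasing along the sequence, which gives $\ell(\theta)\ge\ell(\pi)$. Together with the upper endpoint, this yields $I(\theta)\subseteq I(\pi)$.

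\emph{Nonemptiness.} I would also note that the interval of an even partition is a genuine nonempty subinterval of $[0,p-1]$.
\begin{itemize}
\item $\ell(\pi)\ge0$, because $\tau\vee\pi$ is coarser than $\pi$.
\item $u(\pi)-\ell(\pi)=d(\pi)=p-|\pi|\ge0$, because an even partition of the $2p$ replicas has at most $p$ blocks.
\item $u(\pi)\le p-1$, because $c(\pi)\ge1$.
\end{itemize}

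\emph{Compatibility.} Suppose $\pi\wedge\psi$ is even, and put $\theta=\pi\wedge\psi$. Then $\theta$ is an even common refinement of both even partitions $\pi$ and $\psi$. By the first part, $I(\theta)\subseteq I(\pi)\cap I(\psi)$. Since $I(\theta)$ is nonempty by the observation above, the two intervals intersect.

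There is no serious obstacle here. The only point needing care is the merge argument for the lower endpoint, namely that one merge reduces $|\tau\vee\theta|$ by at most one.
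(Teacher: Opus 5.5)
Your proof is correct and follows the paper's argument: both endpoints are monotone under binary merges because the join with $\tau$ loses at most one block per merge, and the containment is then applied to a common even refinement. The only cosmetic difference is that the paper refines $\pi\wedge\psi$ further to a perfect matching $\rho$, whose interval is the singleton $\{p-|\tau\vee\rho|\}$, whereas you use the meet itself and check that its interval is nonempty because an even partition of the $2p$ replicas has at most $p$ blocks.
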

\begin{proof}
Coarsening $\theta$ to $\pi$ takes $|\theta|-|\pi|$ binary merges.
Each merge decreases the number of blocks after joining with $\tau$
by zero or one. Consequently
\[
 0\le|\tau\vee\theta|-|\tau\vee\pi|
       \le|\theta|-|\pi|.
\]
Substitution gives both endpoint inequalities. If the meet is even,
pair inside each of its cells to obtain a perfect matching $\rho$
refining both partitions. Its interval is the singleton
$\{p-|\tau\vee\rho|\}$, which is contained in both intervals.
\end{proof}

\begin{lemma}[Separator coarea identity]\label{lem:coarea}
For each legal state, the sets
\[
 S_k=\{x:\ell(\pi_x)<k\le u(\pi_x)\},\qquad 1\le k\le p-1,
\]
are separators, and
\begin{equation}\label{eq:coarea}
 \delta=\sum_{k=1}^{p-1}(|S_k|-s)\ge0.
\end{equation}
\end{lemma}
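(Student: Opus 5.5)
The plan is to verify the two claims of \cref{lem:coarea} directly from \cref{def:partition-interval,lem:interval} and the boundary conditions in \cref{def:legal-state}: first show each $S_k$ is a separator, then obtain \cref{eq:coarea} by double counting. No structural input beyond interval compatibility along edges should be needed.

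\emph{Step 1: boundary intervals and range.} I will first record the endpoints of boundary intervals and check that all intervals lie in $[0,p-1]$. Since $\tau$ refines $\pi$, every block of $\tau\vee\pi$ is a union of blocks of $\pi$, so $c(\pi)\le|\pi|$; also $c(\pi)\ge1$. Hence $\ell(\pi)\ge0$ and $u(\pi)\le p-1$.
\begin{itemize}
\item For $x\in U$, $\tau$ refines $\pi_x$, so $\tau\vee\pi_x=\pi_x$ and $\ell(\pi_x)=0$.
\item For $x\in V$, $\eta$ refines $\pi_x$. Because $\tau\vee\eta$ is the one-block partition, $c(\pi_x)=1$ and $u(\pi_x)=p-1$.
\end{itemize}
Thus every $U$-role has $\ell=0<k$, and every $V$-role has $u=p-1\ge k$, for each $1\le k\le p-1$.

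\emph{Step 2: separation.} Fix $k\in\{1,\ldots,p-1\}$ and a path $x_0,\ldots,x_t$ from $U$ to $V$. Membership in $S_k$ means $k\in(\ell,u]$.
\begin{itemize}
\item If $t=0$, the common boundary role has $\ell=0$ and $u=p-1$, so it lies in $S_k$.
\item Otherwise, if $u(\pi_{x_0})\ge k$ then $x_0\in S_k$ and we are done. If not, let $j\ge1$ be the first index with $u(\pi_{x_j})\ge k$; it exists because $x_t\in V$.
\item The edge $x_{j-1}x_j$ is legal, so $\pi_{x_{j-1}}\wedge\pi_{x_j}$ is even. By \cref{lem:interval} the two intervals intersect, hence $\ell(\pi_{x_j})\le u(\pi_{x_{j-1}})<k$. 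Therefore $x_j\in S_k$.
\end{itemize}
So every $U$--$V$ path meets $S_k$, and $|S_k|\ge s$ for each $k$.

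\emph{Step 3: coarea count.} Exchange the order of summation. Role $x$ lies in $S_k$ exactly for the integers $k\in(\ell(\pi_x),u(\pi_x)]$. By Step~1 this range already lies inside $[1,p-1]$, so there is no truncation and the count is $u-\ell=d(\pi_x)=p-|\pi_x|$. Summing over the $r$ roles and using \cref{eq:defect},
\[
 \sum_{k=1}^{p-1}|S_k|=rp-\sum_x|\pi_x|=rp-p(r-s)-s+\delta=(p-1)s+\delta .
\]
This is \cref{eq:coarea}. Nonnegativity of $\delta$ follows because each summand $|S_k|-s$ is nonnegative by Step~2; the case $r=0$ is trivial.

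The only step needing care is Step~2, specifically the claim that the half-open window $(\ell,u]$ is hit somewhere along every path. This is handled by taking the first vertex whose upper endpoint reaches $k$ and using interval intersection across the preceding edge.
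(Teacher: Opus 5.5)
Your proof is correct and is essentially the paper's argument. The paper splits the vertices outside $S_k$ into a ``below'' class ($u<k$, which contains the surviving row boundary) and an ``above'' class ($\ell\ge k$, which contains the surviving column boundary), and interval compatibility rules out any edge between the two. Your walk along a path to the first vertex with $u\ge k$ checks the same fact, and your identity is the same count of $d(\pi_x)$ layers per role.

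One small slip: in Step~1, $c(\pi)\le|\pi|$ holds because $\pi$ refines $\tau\vee\pi$, not because $\tau$ refines $\pi$ (that is true only on $U$).
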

\begin{proof}
On $U$ we have $\ell=0$, and on $V$ we have $u=p-1$.
A vertex outside $S_k$ is either below the layer, with $u<k$,
or above it, with $\ell\ge k$. Compatible intervals intersect,
so no edge joins the below class to the above class. A surviving
row boundary vertex is below, and a surviving column boundary
vertex is above. Common boundary vertices have the full partition
and belong to every layer. This proves separation, including
length-zero paths. Each vertex belongs to exactly $d(\pi_x)$
layers. Summing and using \cref{eq:defect} gives the identity.
\end{proof}

\subsection{How coarea controls the encoding}

The identity above converts a loss of labels into a budget for larger
separator layers. To use that budget, choose vertex-disjoint paths
between the boundaries and first count the partitions on this backbone.
A path has no active component. Its count has only a constant raised
to the moment order, together with a polynomial cost for each lost
label; \cref{lem:path-count} proves this by comparison with a product
of independent sign matrices.

The reusable step is positive evaluation at a chosen dimension:
each exact state in one defect layer contributes a known minimum
weight, so a bound on the total moment bounds the number of states.
\Cref{lem:positive-calibration} isolates that step. Its positivity
concerns exact state weights, not coefficients after expanding
falling factorials into powers of the dimension.

The remaining components are encoded by a seed matching and local
changes from that seed. Boundary-touching components have prescribed
seeds. A boundary-free component can have a free seed only across
layers in which it is active. Coarea therefore charges its free
pairing choices to the active-component statistic, while every repair
is charged to the defect. The crucial counting direction is to bound
the range of possible repaired backbones and then the fibers over
each one. One must not assume that a repair is independent of the seed,
or reconstruct the original backbone from its repair alone.

\Cref{app:counting-detail} gives the path estimate, the deterministic
record and decoder, the seed-entropy bound, and the complete exponent
budget. Together these prove \cref{thm:count}; the conversion of that
count into an operator bound is short enough to retain here.

\subsection{From counting to a norm bound}

\begin{proposition}[Core upper bound]\label{prop:core-upper}
For fixed $C_0>0$, uniformly for $m_x\le n$ and
$2\le q\le C_0\log(2n)$,
\[
 \Lpnorm{\norm{H_G}}q
 \le C_{G,C_0}n^{(r-s)/2}(\log(2n))^{a_*/2}
\]
for sufficiently large $n$.
\end{proposition}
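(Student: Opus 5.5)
We prove this with the trace method, feeding \cref{lem:exact-trace} and \cref{thm:count} into a moment-to-norm conversion. Fix $q$ in the window and choose the integer $p=\max(2,\lceil q/2\rceil)$, so that $2p\ge q$ and $p\le C_0\log(2n)+2$. Monotonicity of $L^q$ norms and the bound of the operator norm by the Schatten norm give
\[
 \Lpnorm{\norm{H_G}}q\le\Lpnorm{\norm{H_G}}{2p}
 \le\bigl(\E\tr((H_GH_G^*)^p)\bigr)^{1/(2p)}.
\]
We then expand the trace exactly by \cref{eq:exact-trace}. Each falling factorial is at most $m_x^{|\pi_x|}\le n^{|\pi_x|}$, and every term is nonnegative, so a legal state of defect $\delta$ contributes at most $n^{p(r-s)+s-\delta}$ by \cref{eq:defect}. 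Grouping states by defect and inserting \cref{thm:count} gives
\[
 \E\tr((H_GH_G^*)^p)\le n^{p(r-s)+s}C_r^{2p}p^{a_*p}
 \sum_{\delta\ge0}\bigl(p^{K_r}/n\bigr)^{\delta}.
\]
Uniformity in $m_x\le n$ is automatic, since only the upper bound $m_x\le n$ was used. Roles with $m_x<|\pi_x|$ contribute zero, which is harmless.

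The geometric series needs the ratio $p^{K_r}/n$ to be at most $1/2$. Since $p=O_{C_0}(\log n)$ and $K_r$ is fixed, $p^{K_r}\le (C_0\log(2n)+2)^{K_r}\le n/2$ for $n\ge n_{G,C_0}$. The sum is then at most $2$. This is the only place where the window restriction $q\le C_0\log(2n)$ and the $p$-independence of the defect exponent $K_r$ enter, and it is where the "sufficiently large $n$" threshold comes from.

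We take the $2p$-th root term by term. We have $n^{(p(r-s)+s)/(2p)}=n^{(r-s)/2}n^{s/(2p)}$, and $C_r^{2p/(2p)}=C_r$. The factor $2^{1/(2p)}\le2$. Finally $p^{a_*p/(2p)}=p^{a_*/2}\le (C_0\log(2n)+2)^{a_*/2}\le C'(\log(2n))^{a_*/2}$.

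The step that needs care is the factor $n^{s/(2p)}$, which is bounded only when $p\gtrsim\log n$. For small $q$ we cannot take $p\asymp q$ directly. Instead we always pass to the top of the window: we use $\Lpnorm{\cdot}q\le\Lpnorm{\cdot}{2p_0}$ with $p_0=\lceil C_0'\log(2n)\rceil$ for a fixed $C_0'\ge\max(C_0,1)$, independent of $q$. Then $n^{s/(2p_0)}\le e^{s/(2C_0')}$ is a constant, the ratio condition still holds for large $n$, and $p_0^{a_*/2}\lesssim_{C_0}(\log(2n))^{a_*/2}$. The resulting constant $C_{G,C_0}$ collects $C_r$, $e^{s/2}$, $2$ and the logarithm comparison, and depends only on $G$ and $C_0$.

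For $r=0$ the core is the scalar $1$ and the claim is trivial. The main obstacle is therefore entirely contained in \cref{thm:count}, which we are allowed to assume. What remains is bookkeeping, and the point to watch is that all constants are independent of $q$ and $n$.
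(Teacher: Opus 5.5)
Your proof is correct and is essentially the paper's own argument. Both use the exact positive expansion together with the uniform count, a geometric defect series with ratio $p^{K_r}/n\le 1/2$, and a moment order of size $\log(2n)$: the paper takes $p=\max(2,\lceil\log(2n)\rceil,\lceil q/2\rceil)$, and your $p_0=\lceil C_0'\log(2n)\rceil$ plays the same role in keeping $n^{s/(2p)}$ bounded. You were also right to discard your first choice $p=\max(2,\lceil q/2\rceil)$, which fails for bounded $q$.
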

\begin{proof}
The exact expansion and \cref{thm:count} give
\begin{equation}\label{eq:geometric-trace}
 \E\tr((H_GH_G^*)^p)
 \le C_r^{2p}n^{p(r-s)+s}p^{a_*p}
       \sum_{\delta\ge0}(p^{K_r}/n)^\delta .
\end{equation}
The exact sum is finite; extending it is an upper bound. For any
fixed logarithmic window the ratio is at most $1/2$ eventually.
Choose $p=\max(2,\lceil\log(2n)\rceil,\lceil q/2\rceil)$.
The $2p$-th root bounds the $L^q$ norm and leaves a dimension
factor $n^{s/(2p)}\le e^{s/2}$. This proves the assertion.
\end{proof}

For a nonisolated detached connected typed component $K$ on $k$ roles,
orthogonality of distinct edge monomials gives
$\E Z_K^2=\prod_{x\in K}m_x$, so $\E|Z_K|\le n^{k/2}$.
These expectation factors multiply independently of the core.
Isolated middle roles contribute at most $n^h$.
Consequently the full typed expected norm is at most
$C_Gn^{(v+h-s)/2}(\log(2n))^{a_*/2}$.
\Cref{lem:color-upper} proves the upper half of
\cref{thm:intro-graph}. Detached factors are restored here in
expectation, not inside the large core trace moment.

\section{The matching lower bound}\label{sec:lower}

\subsection{Proof overview: synchronized separator fluctuations}

The polynomial exponent has a dimension-versus-rank interpretation.
The graph supplies summation variables, while a separator limits the
number of labels that must be shared by the row and column sides.
The resulting scale is $n^{(v+h-s)/2}$. This interpretation alone does not
explain the logarithm: two graphs with the same vertex count and separator
size can have different norm scales.

Fix a minimum separator $S$. After conditioning on labels in $S$, an
active component $K$ produces a scalar fluctuation $T_K(x_S)$. Several
active components produce a product
\[
 w(x_S)=\prod_{K\ {\rm active}}T_K(x_S).
\]
The operator norm can select a favorable separator label. A single
component has a square-root logarithmic extreme-value gain; $a(S)$
components can produce $a(S)$ such gains at the \emph{same} separator
label. This synchronization is the lower-bound mechanism. The proof
cannot replace a maximum of a product by a product of separate maxima:
those separate maxima might occur at incompatible labels.

There is another dependence issue. Candidate separator labels reuse the
internal randomness of each active component. The lower bound therefore
conditions on \emph{all} internal random variables before selecting a
family of fresh trials. On a sufficiently large conditional good set,
the remaining crossing variables yield independent trials with useful
success probabilities. An exponential tilt then makes all active
components simultaneously large. The order of these operations is part
of the proof, not a matter of presentation.

For the upper bound, the same logarithm appears as entropy in a trace
expansion. Near-leading replica configurations possess a limited number
of pairing choices that do not cost a free label. The active-component
statistic counts precisely this residual freedom. Configurations with
fewer labels incur a factor $n^{-\delta}$, where $\delta$ is their defect.
The crucial estimate makes their additional entropy only polynomial in
the moment order, with an exponent linear in $\delta$. At a moment order
of order $\log n$, the label loss then dominates the entropy increase.

We first work with balanced typed classes and remove isolated middle
roles. Fix a minimum separator $S$. Let $D$ be the union of its active
components, and write $C=W\setminus D$. The proof first exposes a
maximum over separator labels, then produces a simultaneous large
value of all active components.

\subsection{Conditional flattening}

\begin{lemma}[A lower stacking inequality]\label{lem:stacking}
Let $A_i$ be deterministic rectangular matrices and $\eps_i$
independent uniform signs. Then
\[
 \E\norm{\sum_i\eps_iA_i}\ge3^{-1/2}
 \max\left\{\norm{\sum_iA_iA_i^*}^{1/2},
             \norm{\sum_iA_i^*A_i}^{1/2}\right\}.
\]
For a decoupled degree-$q$ matrix chaos with coefficient tensor $A$,
every flattening that keeps its original row and column indices on
opposite sides satisfies
\begin{equation}\label{eq:stacking}
 \E\norm{X_A}\ge3^{-q/2}\norm{A_{[R\mid C]}}.
\end{equation}
The assertion also holds conditionally if $A$ is measurable with
respect to a sigma-field independent of the integrated sign groups.
\end{lemma}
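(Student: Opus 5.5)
The plan is to prove the first (degree-one) inequality directly, then obtain \cref{eq:stacking} by induction on $q$, integrating one sign group at a time.

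For the degree-one bound, put $X=\sum_i\eps_iA_i$ and $\Sigma=\sum_iA_iA_i^*=\E XX^*$. Since $\norm{X}^2\ge\norm{XX^*}$ and $\norm{\cdot}$ is convex, Jensen gives
\[
\E\norm{X}^2\ge\norm{\E XX^*}=\norm\Sigma .
\]
This controls only the second moment. To pass to the first moment I will use a Paley--Zygmund / Hölder step: $\E\norm X^2\le(\E\norm X)^{2/3}(\E\norm X^4)^{1/3}$. The fourth moment is bounded by the Khintchine--Kahane inequality for Rademacher sums in the Banach space of matrices with operator norm, $(\E\norm X^4)^{1/4}\le \kappa(\E\norm X^2)^{1/2}$. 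Solving the Hölder inequality then yields $\E\norm X\ge\kappa^{-2}(\E\norm X^2)^{1/2}$. The sharp Kahane constant between $L^1$ and $L^2$ is $\sqrt2$ (Latała--Oleszkiewicz), which gives the factor $2^{-1/2}\ge3^{-1/2}$. To stay self-contained, I would instead use the classical $L^4$--$L^2$ hypercontractive constant $3^{1/4}$ for degree-one Rademacher sums valued in a normed space. The Hölder step then produces exactly $\E\norm X\ge 3^{-1/2}(\E\norm X^2)^{1/2}\ge3^{-1/2}\norm{\Sigma}^{1/2}$. This is presumably the source of the constant $3^{-1/2}$. The $A^*A$ term follows by applying the same argument to $X^*$.

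For the chaos, write $X_A=\sum_{i_1,\dots,i_q}\eps^{(1)}_{i_1}\cdots\eps^{(q)}_{i_q}A_{i_1\cdots i_q}$, where $A_{\mathbf i}$ is a matrix with rows $R_0$ and columns $C_0$. A flattening $[R\mid C]$ assigns each noise axis $j$ either to the row side or to the column side. Condition on groups $2,\dots,q$ and apply the degree-one bound to group $1$, with coefficients $B_{i_1}=\sum\eps^{(2)}\cdots A_{i_1\cdots}$. If axis $1$ belongs to $C$, use the $BB^*$ alternative: $\norm{\sum_{i_1}B_{i_1}B_{i_1}^*}^{1/2}$ is the norm of the horizontal stack $[B_1\ B_2\ \cdots]$, which is again a degree-$(q-1)$ chaos whose matrix has columns $C_0\times I_1$. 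If axis $1$ belongs to $R$, use $B^*B$ and stack vertically. Taking expectation over the remaining groups, the resulting stacked matrix is a decoupled chaos of degree $q-1$. The original row index still lies on the row side and the column index on the column side, so induction applies and each step costs $3^{-1/2}$, giving $3^{-q/2}\norm{A_{[R\mid C]}}$. The base case $q=0$ is the identity. The conditional version follows because every step is an expectation only over the integrated sign groups, with $A$ treated as fixed, so Fubini applies verbatim.

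The step that needs care is the correct use of the constant: the Kahane-type $L^1$--$L^2$ comparison must hold uniformly in the Banach space. I would either cite Latała--Oleszkiewicz or use the Bonami $L^4$ bound together with the Hölder interpolation above. The latter is a standard vector-valued hypercontractivity fact, and I would verify that its constant gives exactly $3^{-1/2}$. The remaining steps, stacking identities and bookkeeping of which side each axis lies on, are routine.
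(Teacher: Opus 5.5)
Your stacking induction for \eqref{eq:stacking} matches the paper's argument: condition on the other groups, apply the degree-one bound, and stack horizontally or vertically according to the side of $[R\mid C]$ that receives the integrated axis. The conditional version also follows as you say.

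The gap is in the degree-one step, in the self-contained form you say you would use. The hypercontractive bound $\|P\|_4\le(4-1)^{d/2}\|P\|_2$ at $d=1$ has constant $\sqrt3$, not $3^{1/4}$, and this holds in its vector-valued form as well. Inserted into your H\"older step, $\sqrt3$ gives only $\E\norm X\ge\frac13(\E\norm X^2)^{1/2}$, hence $3^{-q}$ instead of $3^{-q/2}$. The value $3^{1/4}$ is the sharp Khintchine constant. It comes from the exact expansion $\E\norm{\sum_i\eps_iv_i}^4=(\sum_i\norm{v_i}^2)^2+2\sum_{i\ne j}(\operatorname{Re}\langle v_i,v_j\rangle)^2$, which needs an inner product. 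The operator-norm space of matrices has none, and no matrix-specific structure helps, since diagonal matrices already contain $\ell_\infty^N$ isometrically. So the verification you plan would not go through. Citing Lata{\l}a--Oleszkiewicz does close the argument, with the better constant $2^{-1/2}$ per group, but only by importing a deep theorem.

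The missing idea is to leave the Banach space before taking moments. Let $u$ be a unit top eigenvector of $\Sigma=\sum_iA_iA_i^*$. Then $\norm X=\norm{X^*}\ge\norm{X^*u}$. Here $X^*u=\sum_i\eps_iA_i^*u$ is a Rademacher sum of Hilbert-space vectors with $\sum_i\norm{A_i^*u}^2=u^*\Sigma u=\norm\Sigma$. Put $Z=\norm{X^*u}$. Then $\E Z^2=\norm\Sigma$ exactly, and $\E Z^4\le3\norm\Sigma^2$ by the expansion above. Your interpolation $\E Z\ge(\E Z^2)^{3/2}/(\E Z^4)^{1/2}$ now gives $\E\norm X\ge3^{-1/2}\norm\Sigma^{1/2}$, without any Jensen step. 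For the $\sum_iA_i^*A_i$ term, let $X$ act on a top eigenvector of that matrix.

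This is the paper's proof. Besides being elementary, it uses only second and fourth moments of a Hilbert-valued sum. That is why the paper can reuse it for general symmetric factor laws in its factor appendix, whereas the Lata{\l}a--Oleszkiewicz constant is specific to signs.
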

\begin{proof}
For Hilbert-space vectors $v_i$, let $Z=\norm{\sum_i\eps_iv_i}$ and
$a^2=\sum_i\norm{v_i}^2$. Expansion gives $\E Z^2=a^2$ and
$\E Z^4\le3a^4$. Interpolation yields
$\E Z\ge(\E Z^2)^{3/2}/(\E Z^4)^{1/2}\ge a/\sqrt3$.
Apply this to the transpose of the random matrix acting on a top
eigenvector of $\sum_iA_iA_i^*$, and then to the transposed problem.
For the second claim, integrate one independent noise group at a
time. Choose vertical or horizontal stacking according to the side
of the prescribed flattening receiving that group. Each step costs
at most $\sqrt3$. Conditioning freezes the other coefficient data
and leaves exactly the same argument.
\end{proof}

Condition on all arrays with an endpoint in $D$. Summing each active
component produces a weight $T_K(x_S)$, and their product is
$w(x_S)$. All remaining edge arrays are independent of these weights.

\begin{lemma}[Exact separator flattening]\label{lem:separator-flattening}
For $q_0$ remaining edge types,
\begin{equation}\label{eq:weighted-flattening}
 \E_{\rm fresh}\norm{H_G}
 \ge3^{-q_0/2}
 \left(\prod_{x\in C\setminus S}m_x\right)^{1/2}
       \max_{x_S}|w(x_S)|.
\end{equation}
\end{lemma}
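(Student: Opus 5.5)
We prove the bound by writing $H_G$, conditional on the arrays touching $D$, as a decoupled sign chaos in the $q_0$ remaining edge types with frozen coefficients. We then apply the conditional form of \cref{lem:stacking} with one well-chosen flattening.

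\emph{Setup.} Every edge with an endpoint in $D$ has its other endpoint in $D\cup S$, since active components are components of $G-S$. After conditioning, each entry of $H_G$ therefore takes the form
\[
 H_G[\mathbf i,\mathbf j]=\sum_{x_{C\setminus(U\cup V)}}w(x_S)\prod_{e\subseteq C}\xi^e_{x_e},
\]
where the labels of $D$ have been summed into the weights $T_K(x_S)$. The remaining edges, those with both endpoints in $C$, are the $q_0$ fresh edge types. Each is a distinct independent sign array and enters the monomial exactly once, so $H_G$ is a decoupled degree-$q_0$ chaos $X_A$. Its coefficient tensor $A$ has entries $w(x_S)$ times the indicator that the noise coordinates are consistent with one labelling $x_C$ agreeing with $\mathbf i,\mathbf j$. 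This tensor is measurable with respect to the conditioned sigma-field, so \cref{lem:stacking} applies conditionally with loss $3^{-q_0/2}$.

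\emph{Choice of flattening.} The components of $C\setminus S$ split into row-side pieces $C_U$ (those meeting $U\setminus S$, together with anything we choose to attach) and column-side pieces $C_V$. Because $S$ separates, no fresh edge joins $C_U$ to $C_V$; edges inside $S$ are also possible. We place on the row side the matrix row index together with the noise axes of edges meeting $C_V$, and on the column side the column index with the noise axes of edges inside $C_U\cup S$. Entries are then indexed by labelings $x_C$, and after fixing the side containing $x_S$ the tensor becomes block structured.

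For each fixed $x_S$, the block is a matrix whose row labels encode the $C_V$ labels and whose column labels encode the $C_U$ labels. Every entry equals $w(x_S)$ on an injective support, so the block is $w(x_S)$ times a partial permutation pattern between the $C_U$ and $C_V$ assignments. To make the block rank-one-like, with norm $|w(x_S)|$ times $\bigl(\prod_{x\in C\setminus S}m_x\bigr)^{1/2}$, we should place all labels of one side in a single coordinate. Concretely, we arrange the flattening so that the $C_V$ labels, read through edge coordinates on the row side, and the $C_U$ labels on the column side give a block equal to $w(x_S)\,\mathbf 1\mathbf 1^*$ of dimensions $\prod_{C_V\setminus S}m_x\times\prod_{C_U\setminus S}m_x$. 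Its norm is exactly $|w(x_S)|$ times the square root of the product.

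Distinct $x_S$ occupy disjoint index sets on at least one side. This holds because the labels of $S$ are read by edges to $S$ or by boundary indices; a vertex of $S$ with no such edges can be handled as an isolated boundary role. The flattening norm is therefore at least the maximum of the block norms, which is $\max_{x_S}|w(x_S)|$ times the product. Combining this with the conditional stacking loss $3^{-q_0/2}$ gives \cref{eq:weighted-flattening}.

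\emph{Main obstacle.} The delicate step is the bookkeeping that turns each $x_S$-block into an all-ones rectangle. Every role of $C\setminus S$ must be read by some noise axis on the correct side, and the labels of $S$ must be recoverable, so that blocks for different $x_S$ are disjoint. Roles of $C\setminus S$ incident only to boundary indices, and vertices of $S$ lying on the boundaries, need separate care. Restricting to a submatrix, which contracts the norm, handles these cases.
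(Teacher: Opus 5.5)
\textbf{There is a genuine gap, and it is in the step you flag as delicate.}

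\textbf{The flattening you specify is crossed.} You put the matrix row index on the same side as the noise axes of the edges meeting $C_V$. You put the column index with the edges inside $C_U\cup S$. But the row index reads the labels of $U$, which lie in $C_U$ or in $S$.

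\begin{itemize}
\item Consequently, every role of $U\setminus S$ with an incident edge is read on both sides: by $\mathbf i$ and by its $C_U$ edges. The same holds for the roles of $V\setminus S$.
\item The set of roles seen on both sides is therefore strictly larger than $S$. The flattening loses a factor $\sqrt{m_x}$ for each extra such role.
\item You noticed this yourself when you described the block as a partial permutation pattern.
\item The sentence ``we arrange the flattening so that \dots\ give a block equal to $w(x_S)\mathbf 1\mathbf 1^*$'' is an assertion, not a construction. It is false for the arrangement you wrote down.
\end{itemize}

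Concretely, take the path $u-x-v$ with $U=(u)$, $V=(v)$, $S=\{x\}$ and $w\equiv 1$. Your rows are $(i,(c,d))$, where $(c,d)$ is the coordinate of $xv$. Your columns are $(j,(a,b))$, where $(a,b)$ is the coordinate of $ux$. The entry is $\mathbf 1\{a=i,\ b=c,\ d=j\}$. Every row and every column has exactly one nonzero entry. So the flattening norm is $1$, not the required $\sqrt{m_um_v}$.

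\textbf{The missing idea is the uncrossed assignment together with minimality of $S$.}

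First, keep each fresh edge on the side of the matrix index whose boundary its component of $G-S$ meets:
\begin{itemize}
\item edges within or incident to a component meeting $U$ go with $\mathbf i$;
\item edges within or incident to a component meeting $V$ go with $\mathbf j$;
\item components meeting neither boundary, and edges inside $S$, go to one fixed side.
\end{itemize}
No component meets both boundaries, and no role outside $S$ is isolated. Hence every role of $C\setminus S$ is read on exactly one side.

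Second, a minimum separator is inclusion-minimal. So for each $z\in S$ there is a $U$--$V$ path meeting $S$ only at $z$. Its two halves lie in boundary-touching, hence non-active, components, so this path reads $z$ through a fresh edge or a boundary index on each side. This is what rules out a separator role that is read nowhere and would be summed rather than indexed. Your remark about treating such a vertex ``as an isolated boundary role'' does not handle that case.

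With these two facts, the roles read on both sides are exactly $S$. For fixed $x_S$ the block is $w(x_S)\mathbf 1_{M_L}\mathbf 1_{M_R}^*$ with $M_LM_R=\prod_{x\in C\setminus S}m_x$. Distinct $x_S$ occupy disjoint rows and columns. The conditional stacking inequality then finishes the argument as you intended; your conditioning setup and the $3^{-q_0/2}$ loss are fine.
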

\begin{proof}
Assign the components of $G-S$ meeting $U$ to the row side and
those meeting $V$ to the column side. A component cannot meet both.
Assign any detached component to the row side. The active components
have already been removed into $w$. Put fresh edges within or
incident to a row-side component on the row side, and do the
analogous operation on the column side. Put edges contained in $S$
on the row side. Keep the original matrix indices on their
respective sides.

Let $R_0,C_0$ be the sets of roles observed by the resulting primitive
coordinates. Then $R_0\cup C_0=C$ and $R_0\cap C_0=S$.
For the union, no isolated middle role remains, and deleting $D$
cannot isolate a role outside $S$. For the reverse inclusion in
the intersection, use inclusion-minimality of $S$: for every
$z\in S$, there is a boundary-to-boundary path meeting $S$ only
at $z$. This path observes $z$ on both sides, through an edge
or through an original boundary coordinate. It cannot enter an
active component and then reach a boundary without returning to
$z$ or meeting another separator vertex. A common boundary role
is observed on both sides directly.

The coefficient tensor has value $w(x_S)$ when repeated role
coordinates agree, and zero otherwise. The assignment is recoverable
from these coordinates, so there is no additional summation
multiplicity. Delete consistency-zero rows and columns and permute
the rest. Its flattening is exactly
\[
 \bigoplus_{x_S}w(x_S)\,
             \one_{M_L}\one_{M_R}^{\,*},
 \quad
 M_L=\prod_{x\in R_0\setminus S}m_x,\quad
 M_R=\prod_{x\in C_0\setminus S}m_x .
\]
Its norm is $\sqrt{M_LM_R}\max|w|$. Apply
\cref{lem:stacking} conditionally to the fresh arrays.
\end{proof}

\subsection{From flattening to simultaneous witnesses}

The flattening reduces the lower bound to making all active-component
scalars large at one separator tuple. For an active component, condition
on its entire internal array. Second and fourth moments give a
positive-measure set of internal realizations on which the remaining
crossing sum has the correct variance. Higher fixed moments rule out a
single dominant coefficient. A weighted exponential tilt then gives a
polynomially small, rather than exponentially small, chance of reaching
the square-root logarithmic scale.

Choose separator tuples that are distinct in every separator coordinate.
Conditionally, their crossing arrays are disjoint; the component events
are also independent across components. Taking the tilt constant small
enough makes the combined success exponent smaller than the number-of-trials
exponent. Thus at least one tuple makes all factors large together.
\Cref{app:lower-detail} proves the conditional good-set estimates,
the tilt inequality, and this synchronization, with their quantifiers
and failure probabilities. No product of separate maxima is used.

\begin{proof}[Completion of \cref{thm:intro-graph}]
Insert \cref{eq:synchronized} into
\cref{eq:weighted-flattening}. If $r$ roles remain after deleting
isolated middle roles, the power of $n$ is
$(r-|D|-s)/2+|D|/2=(r-s)/2$. Choose a minimum separator
maximizing $a(S)$. This proves the balanced typed lower bound,
including detached components among the fresh arrays.
Apply \cref{lem:color-lower}, then restore
$\fall{n-(v-h)}h\asymp n^h$. The exponent becomes
$h+(v-h-s)/2=(v+h-s)/2$.
If there are no residual roles, the matrix is the scalar
$\fall nh$. Edgeless residual shapes consist of all-ones
factors and common-coordinate identity factors, and the same
formula is immediate. Together with the upper bound in
\cref{sec:counting}, this proves the theorem.
\end{proof}

\section{Sharp norms for independent factor chaoses}\label{sec:extensions}

Replacing graph edges by nonempty factor scopes puts tensors, repeated
independent interactions, and random diagonal gates in one incidence
model. The new logarithmic contribution comes from factors supported
entirely on a separator: their extremes add to the fluctuations of active
components outside the cut.

Three routes govern the distributions. Fixed symmetric tail-regular laws
admit sharp bounds with explicit charges. Bounded centered laws, without
symmetry or identical distributions, inherit the sign scale by
\cref{sec:bounded}. A finite moment profile gives dimension-dependent laws
an upper bound with a weighted cut cost. Each route requires independent
occurrences, even for equal scopes. The chain and columnwise-product
examples explain the charges; the sparse example delineates the fixed-law
conclusion.

\subsection{The factor model and its exact preprocessing}

\begin{definition}[Independent-factor chaos]\label{def:factor-chaos}
An independent-factor shape consists of a finite role set $W$,
boundaries $U,V$, and a finite list $\mathcal F$ of nonempty
scopes. Equal scopes can occur repeatedly; each occurrence has
its own independent array, with independent identically distributed
coordinates. Define
\begin{equation}\label{eq:factor-model}
 H=\sum_{x\in\prod_{v\in W}I_v}
       \prod_{e\in\mathcal F}\xi^{(e)}_{x_e}\,
                    e_{x_U}e_{x_V}^{\,*}.
\end{equation}
This is a typed Cartesian sum. No global injectivity restriction
or shared-array identification is implicit. The two-section graph
joins two distinct roles when they occur in one factor. Unary
factors mark a random singleton without adding an edge.
\end{definition}

Remove unused middle roles and then every factor-connected
component disjoint from both boundaries, including unary-only
singletons. If their scalar sums are $Z_1,\ldots,Z_J$, then
\begin{equation}\label{eq:factor-decomposition}
 H=d(n)\prod_{j=1}^JZ_j\,H_c,\qquad
 d(n)=\prod_{\text{unused middle }v}m_v .
\end{equation}
All random factors in this identity are independent. The core keeps
every boundary coordinate, including unused common-boundary
identity factors.

Write $r$ for the number of roles in $H_c$, and let $s$ be the
minimum separator size in its two-section graph. For such a separator,
$a(S)$ counts attached components meeting neither surviving boundary,
as in the graph model. These are the core statistics used below.

\subsection{Tail-regular factors}

Assume $m_v\asymp n$ and the fixed symmetric variance-one laws satisfy
$\|\xi_e\|_q\asymp_e q^{\theta_e/2}$ for every $q\ge2$, with
$0\le\theta_e\le2$. For a core separator define
$\Theta(S)=\sum_{e\subseteq S}\theta_e$ and
$b_*=\max_{|S|=s}(a(S)+\Theta(S))$.

\begin{theorem}[Fixed-law factor theorem]\label{thm:tail-factor}
With $v$ the original role count and $h$ the unused middle count,
\[
 \E\norm H\asymp n^{(v+h-s)/2}(\log n)^{b_*/2}.
\]
For every fixed $A>0$, the core satisfies
\[
 \Prob\{\norm{H_c}>
 C_A n^{(r-s)/2}(\log n)^{b_*/2}\}\le n^{-A}
\]
for all sufficiently large $n$.
\end{theorem}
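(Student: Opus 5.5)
The plan follows the sign-model proof of \cref{thm:intro-graph}, keeping the separator sequence visible throughout. The first step is the preprocessing identity \cref{eq:factor-decomposition}. The detached scalars $Z_j$ are independent of $H_c$, and orthogonality of distinct monomials gives $\E|Z_j|\le(\E Z_j^2)^{1/2}\asymp n^{k_j/2}$, where $k_j$ is the role count of the component. Since the $Z_j$ have all moments, a matching lower bound $\E|Z_j|\gtrsim n^{k_j/2}$ follows by Paley--Zygmund from a fourth-moment bound. Independence therefore reduces both halves of the expectation statement to $\E\norm{H_c}\asymp n^{(r-s)/2}(\log n)^{b_*/2}$. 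The powers of $n$ combine to $(v+h-s)/2$ exactly as in the completion of \cref{thm:intro-graph}.

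For the upper bound I will expand $\E\tr((H_cH_c^*)^p)$ as in \cref{lem:exact-trace}. Each role carries an exact equality partition $\pi_x$. A factor occurrence $e$ contributes $\prod_{B}\E\xi_e^{|B|}$ over the cells $B$ of $\bigwedge_{x\in e}\pi_x$. Symmetry kills odd cells, and the remaining factor is positive and at most $\prod_B (C|B|)^{\theta_e|B|/2}$. Replacing each scope by a clique in the two-section graph, legality for factors implies graph legality, so \cref{lem:coarea} supplies separators $S_k$ with $\delta=\sum_k(|S_k|-s)$.

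The key new estimate is a layerwise moment bound: the total moment weight of $e$ is at most $C^{p}p^{\theta_e\cdot\#\{k:e\subseteq S_k\}/2}\cdot p^{O(\delta)}$. The idea is that the cells of $\bigwedge_{x\in e}\pi_x$ are large only to the extent that every $x\in e$ has a long interval. Concretely, the number of merges beyond a perfect matching, $p-|\bigwedge_{x\in e}\pi_x|$, should be bounded by $\min_{x\in e}d(\pi_x)$. I would then apply $\prod_B|B|^{|B|}\le (2p)^{2(p-|\text{cells}|)}$-type convexity.

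Next, rerun the encoding of \cref{thm:count} while recording $(S_k)$. At a minimum layer, active components contribute at most $p^{a(S_k)}$ pairing entropy and the factors contained in $S_k$ contribute $p^{\Theta(S_k)}$ moment weight, so each minimum layer is charged at most $p^{b_*}$. Nonminimum layers are charged to $\delta$. This gives $\E\tr\le C^{2p}n^{p(r-s)+s}p^{b_*p}\sum_\delta(p^{K}/n)^\delta$. Taking $p\asymp A\log n$ and applying Markov yields the tail bound $n^{-A}$ and hence the expectation bound, as in \cref{prop:core-upper}. \textbf{The main obstacle} is the per-layer bookkeeping above: showing that factor moment weight concentrates only on layers with $e\subseteq S_k$, up to $p^{O(\delta)}$. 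The constraint $\theta_e\le2$ should be exactly what keeps a factor's weight no larger than the pairing entropy of a single label on the cut, which is what allows it to be charged per layer.

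For the lower bound, choose a minimum separator $S$ attaining $b_*$. As in \cref{lem:separator-flattening}, condition on the arrays of the active components and on the factors contained in $S$. This gives weights $w(x_S)=\prod_K T_K(x_S)\prod_{e\subseteq S}\xi^{(e)}_{x_e}$, and flattening the remaining fresh factors gives $\E\norm{H_c}\gtrsim n^{(r-|D|-s)/2}\max_{x_S}|w(x_S)|$. Choosing separator tuples that are distinct in every coordinate makes the trials independent across tuples. The two-sided moment assumption gives $\Prob(|\xi_e|>t)\ge e^{-Ct^{2/\theta_e}}$, so $\xi_e$ reaches $(\log n)^{\theta_e/2}$ with probability $n^{-c}$. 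The active components reach $\sqrt{\log n}\cdot n^{k/2}$ with polynomially small probability via the tilt of \cref{app:lower-detail}. With small constants the combined success probability exceeds $n^{-1}$ per tuple while about $n$ disjoint tuples are available, so some tuple has $|w|\gtrsim n^{|D|/2}(\log n)^{b_*/2}$.
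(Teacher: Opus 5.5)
Your architecture matches the paper's. It has the decomposition \cref{eq:factor-decomposition}, the positive trace expansion through the factor meets $\lambda_e=\bigwedge_{x\in e}\pi_x$, and a layerwise count. That count is genuinely needed, since $a_*+\max_{|S|=s}\Theta(S)$ can exceed $b_*$. Minimum layers are charged $b_*$ and the rest are charged to $\delta$, and a conditional flattening carries the separator-supported factors in its weight.

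However, the step you call the main obstacle is left open, and the mechanism you suggest for it fails. The bound $p-|\lambda_e|\le\min_{x\in e}d(\pi_x)$ charges $\theta_e$ to layers containing only part of $e$. Take a single $n\times n$ Gaussian matrix, with $e=\{x,y\}$, $U=(x)$, $V=(y)$ and $b_*=0$. There the bound allows weight $p^{\theta_e p/2}$ on leading states with $d(\pi_x)\approx d(\pi_y)\approx p/2$, giving a spurious power of $\log n$. In fact $\lambda_e$ is a perfect matching on every such state.

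The missing idea is to apply \cref{lem:interval} to the meet itself. $\lambda_e$ is even (odd cells vanish by symmetry) and refines each $\pi_x$, $x\in e$. Hence $I(\lambda_e)\subseteq\bigcap_{x\in e}I(\pi_x)$, and all $d_e=p-|\lambda_e|$ integer layers inside $I(\lambda_e)$ contain $e$. Thus $d_e\le\#\{k:e\subseteq S_k\}$ exactly, with no $p^{O(\delta)}$ loss.

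Write the cell sizes of $\lambda_e$ as $2t_i$ and use $(2t)^{\theta t}\le2^{2\theta t}p^{\theta(t-1)}$. This gives $\prod_i\E|\xi_e|^{2t_i}\le\widetilde C_e^{2p}p^{\theta_ed_e}\le\widetilde C_e^{2p}p^{\theta_e\#\{k:e\subseteq S_k\}}$ for every $\theta_e\ge0$. Your displayed exponent has a spurious factor $1/2$: a single Gaussian cell of size $2p$ already has moment $(2p-1)!!$. Your later per-layer charge $p^{\Theta(S_k)}$ is the right one.

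When you rerun the encoding, note that its seed charge is naturally expressed through the normalized slices $\widehat S_k$. Returning to a fixed original sequence uses $\sum_k|S_k\triangle\widehat S_k|\le(3r+1)D$, which is how \cref{eq:layerwise} is proved.

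So $\theta_e\le2$ plays no role in the upper bound. It is needed in the lower bound, which you delegate to \cref{app:lower-detail}, and that argument is sign-specific. Here the crossing variables of an active component have a general symmetric law. Consequently \cref{lem:tilt} and the Boolean-hypercontractive good-set estimates do not apply as stated.

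The paper handles this as follows:
\begin{enumerate}
\item It fixes one crossing occurrence $e_0$ per active component and conditions on the internal arrays and the other crossing slices, so that $T_K$ is linear in $\xi^{(e_0)}$.
\item It replaces hypercontractivity by the fixed-moment Hilbert inequality \cref{eq:hilbert-general}. This also yields a law-dependent version of \cref{lem:stacking} for your flattening step.
\item It removes bad internal realizations by conditional Markov.
\item It tilts using $\|\xi_e\|_q\le Cq$. This follows from $\theta_e\le2$ and makes $\log\E e^{u\xi_e}=u^2/2+O(u^4)$ analytic near zero, giving $\Prob\{T\ge t\}\ge\frac12e^{-6t^2/\sigma^2}$.
\end{enumerate}
With that in place, your Paley--Zygmund treatment of separator-only factors and the synchronization over $\Theta(n)$ disjoint tuples match the paper.
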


The proof is in \cref{app:factor-detail}. Its counting input is a
layerwise strengthening of \cref{thm:count}, not an assumption
that a factor meet is equivalent to all its pairwise meets.
Only the forward implication is used for the upper bound.
The lower bound chooses one crossing occurrence for each active
component and conditions on the complete shared internal arrays.

Boundary factors with nonconstant absolute values are retained.
For example, an independent Gaussian factor supported on a
separator has $\theta=1$ and supplies a square-root logarithmic
charge. A sign factor there has $\theta=0$. Removing both as
diagonal isometries would therefore give the wrong theorem.

\begin{example}[A chain of independent Gaussian gates]\label{ex:gated-chain}
Consider $n\times n$ matrices $G_0,\ldots,G_L$ with iid standard
real Gaussian entries and diagonal gates
$D_j=\diag(\prod_{h=1}^{q_j}z_i^{(j,h)})$, where all matrix
entries and all $z_i^{(j,h)}$ are mutually independent standard
real Gaussians. Here $L\ge1$ and $q_j\ge0$ are fixed integers.
The path representation of $G_0D_1G_1\cdots D_LG_L$ has singleton
minimum separators, no active components after deleting them,
and unary charge $q_j$ at the $j$th internal role. Thus
\begin{equation}\label{eq:gated-product}
 \E\norm{G_0D_1G_1\cdots D_LG_L}
 \asymp n^{(L+1)/2}(\log n)^{\max_jq_j/2}.
\end{equation}
This example illustrates how separator-local tails, rather than
the sum of all gate degrees, determine the logarithm.
\end{example}
The contrasting columnwise product in \cref{sec:khatri-rao} places all
gates at one role, where their charges do add.
The sign leaf family in \cref{sec:separating-family} gives a complementary
realization: there leaf sums create the gates, whereas here unary
Gaussian occurrences carry the charges directly. Both optimize over
the same competing cuts. No equality in distribution or growing-order
central-limit replacement between the two models is used.

\subsection{Dimension-dependent laws and moment-weighted cuts}

A distinct result accommodates a finite moment profile for laws
that may depend on $n$. Retain comparable role sizes $m_v\asymp n$
and real symmetric variance-one coordinates, independent by coordinate
and occurrence as in \cref{eq:factor-model}. Suppose, for fixed
$C_0>0$, fixed nonnegative $\beta_e,\gamma_e$, and fixed constants
$C_e$ independent of $n$,
\[
 \E|\xi_e|^{2t}
 \le C_e^{2t}n^{\beta_e(t-1)}t^{\gamma_e(t-1)},
 \qquad 1\le t\le C_0\log n,
\]
for every integer $t$ in the displayed window and all sufficiently
large $n$. Put
$\kappa(S)=|S|-\sum_{e\subseteq S}\beta_e$ and
$b(S)=a(S)+\sum_{e\subseteq S}\gamma_e$.

\Needspace{14\baselineskip}
\begin{proposition}[Core profile bound]\label{prop:profile}
For every integer $2\le p\le C_0\log n$,
\begin{equation}\label{eq:profile}
 \E\tr((H_cH_c^*)^p)
 \le C^{2p}\sum_{(S_k)}
 n^{pr-\sum_k\kappa(S_k)}
 p^{\sum_k b(S_k)+K'_r\sum_k(|S_k|-s)},
 \qquad K'_r=3r^2+11r+2,
\end{equation}
where the sum ranges over separator sequences of length $p-1$.
If every minimizer of $\kappa$ has cardinality $s$, then, with
$\kappa_*=\min\kappa$ and
$b_*^{\rm mw}=\max_{\kappa(S)=\kappa_*}b(S)$,
\[
 \E\norm{H_c}\le
 Cn^{(r-\kappa_*)/2}(\log n)^{b_*^{\rm mw}/2}.
\]
A corresponding $n^{-A}$ upper tail holds for every fixed $A>0$
by changing the threshold constant within the same moment window.
\end{proposition}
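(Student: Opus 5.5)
The plan is to run the exact trace expansion of \cref{lem:exact-trace} in the factor setting, keep the separator sequence of \cref{lem:coarea} visible, and only then sum over sequences.

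\textbf{Expansion.} Expanding $\tr((H_cH_c^*)^p)$, each factor occurrence $e$ contributes $\prod_{\text{cells }B}\E\xi_e^{|B|}$. Here the cells are the blocks of the meet $\bigwedge_{x\in e}\pi_x$, and symmetry forces every cell to be even. Legality in the two-section graph follows, since each pairwise meet coarsens the full meet and is therefore even. This is the forward implication mentioned after \cref{thm:tail-factor}. The $2p$ replicas split into cells of sizes $2t_B$ with $\sum_B t_B=p$. By the profile hypothesis and log-convexity, the product over cells is at most
\[
C_e^{2p}\,n^{\beta_e(p-c_e)}\,p^{\gamma_e(p-c_e)},
\]
where $c_e$ is the number of cells.

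\textbf{Charging the exponent to layers.} The number of cells is $|\bigwedge_{x\in e}\pi_x|$. I would show that $p-c_e$ is at most the number of layers $k$ with $e\subseteq S_k$, plus a multiple of the local defect. Heuristically, a layer where some $x\in e$ lies outside $S_k$ has an interval there that forces pairings coinciding with $\tau$ or $\eta$, which refine the meet. The interval length $d(\pi)=p-|\pi|$ together with the refinement monotonicity of \cref{lem:interval} should give
\[
p-c_e\le\#\{k:\ e\subseteq S_k\}+\sum_{x\in e}\bigl(\text{excess of }\pi_x\text{ over a matching}\bigr),
\]
with the last term bounded by $\sum_k(|S_k|-s)$. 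For $\beta_e$ the tempting route is to write $n^{\beta_e(p-c_e)}$, count layers with $e\subseteq S_k$, and accept excess losses. But charging the excess costs $n^{\beta_e}$ per unit of excess, which can beat the $n^{-1}$ label gain when $\beta_e\ge1$. So the $\beta$ part must instead be merged into the label count. Each layer contributes $n^{r-|S_k|}$ times $n^{\beta_e}$ for every $e\subseteq S_k$, giving $n^{pr-\sum_k\kappa(S_k)}$. I expect this to be exact rather than lossy once cells are identified with layer labels, i.e.\ when $e\subseteq S_k$ uses a new label and the factor moment pays for it.

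\textbf{Counting.} Next I would apply the layerwise count from the proof of \cref{thm:count}, which bounds the number of states with a prescribed sequence $(S_k)$ by $C^{2p}p^{\sum_k a(S_k)+K_r\sum_k(|S_k|-s)}$. The $\gamma$ factor contributes $p^{\sum_k\sum_{e\subseteq S_k}\gamma_e}$ plus an extra $p^{r\cdot\text{excess}}$ from the excess term above; this is why $K'_r=K_r+r$. Together these give \cref{eq:profile}.

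\textbf{Norm bound.} Assume every $\kappa$-minimizer has size $s$. For a non-minimizing layer, $\kappa(S_k)-\kappa_*\ge\epsilon>0$, where $\epsilon$ depends only on the fixed finite data. Such a layer gains $n^{-\epsilon}$, which beats $p^{O(r)}$ for $p\le C_0\log n$. For minimizing layers, $|S_k|=s$, so they carry no excess and contribute $p^{b(S_k)}\le p^{b_*^{\rm mw}}$. The sum over sequences factorizes as at most $\bigl(\sum_S w_S\bigr)^{p-1}$, where the minimizers dominate and the rest sum to at most $1+o(1)$ relative to them. Taking $p\asymp\log n$ and a $2p$-th root as in \cref{prop:core-upper} gives the expectation bound. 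The $n^{-A}$ tail follows from Markov's inequality at $p=\lceil C_0\log n\rceil$ with a larger threshold constant.

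\textbf{Main obstacle.} The hard step is the per-factor inequality charging $p-c_e$ to layers with $e\subseteq S_k$ without spending $n^{\beta_e}$ on defect. I would first try to prove it through a perfect matching $\rho$ refining the meet, as in \cref{lem:interval}, whose singleton interval locates the layers.
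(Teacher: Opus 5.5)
\textbf{The key charging step is missing.} Your architecture matches the paper's: a positive weighted trace expansion over the meets $\lambda_e=\bigwedge_{x\in e}\pi_x$, factor moments charged to layers that contain the whole scope, a layerwise state count, and a sum over separator sequences using the cut gap. However, the step that carries the proposition is left unproved. The inequality you write, $p-c_e\le\#\{k:e\subseteq S_k\}$ plus an excess term, is too weak, as you note yourself. The excess multiplies $\beta_e$, so it costs $n^{\beta_e}$ per unit of defect, and $n^{pr-\sum_k\kappa(S_k)}$ does not follow. Saying you ``expect this to be exact once cells are identified with layer labels'' is not an argument. Your proposed repair through a perfect matching refining the meet also fails as stated: its interval is a singleton, so it locates one layer rather than $p-c_e$ of them.

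The missing idea is to apply the refinement half of \cref{lem:interval} to the meet itself. Every cell of $\lambda_e$ is even, so $\lambda_e$ is an even partition refining each even $\pi_x$ with $x\in e$. Hence $I(\lambda_e)\subseteq I(\pi_x)$ for all $x\in e$. The interval $I(\lambda_e)$ has integer length $d(\lambda_e)=p-|\lambda_e|=p-c_e$. It therefore contains exactly $p-c_e$ layers $k$ with $\ell(\lambda_e)<k\le u(\lambda_e)$, all in $[1,p-1]$, and each such layer contains every role of $e$. This gives $p-c_e\le\#\{k:e\subseteq S_k\}$ with no defect term. Both $n^{\beta_e(p-c_e)}$ and $p^{\gamma_e(p-c_e)}$ are then charged exactly to layers. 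Coarea, $\sum_x|\pi_x|=pr-\sum_k|S_k|$, turns the label weight into $n^{pr-\sum_k|S_k|}$, which yields $n^{pr-\sum_k\kappa(S_k)}$ and $p^{\sum_k(b(S_k)-a(S_k))}$.

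\textbf{The layerwise count is not supplied by \cref{thm:count}.} In that proof the seed charge is bounded by $\sum_k a(\widehat S_k)$ over the \emph{normalized} slices, and then crudely by $a_*(p-1)+r\widehat\delta$. To count states inducing a prescribed \emph{original} sequence, you need two further steps.
\begin{enumerate}
\item Show $\sum_k|S_k\triangle\widehat S_k|=b+D\le(3r+1)D$. Since each active count lies in $[0,r]$, this gives $\sum_k a(\widehat S_k)\le\sum_k a(S_k)+(3r^2+r)D$.
\item Observe that the normalized slices are determined by the repaired backbone, because off-path intervals have width zero. The range--fiber count can then be run inside a fiber of fixed original slices.
\end{enumerate}
This conversion is the source of $K'_r=K_r+r$: the exponent becomes $\sum_k a(S_k)+2\delta_{\rm on}+(3r^2+11r+2)D$. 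Your attribution of the extra $r$ to an excess term in the $\gamma$ charge is incorrect; such a term would scale with the $\gamma_e$, not with $r$, and with the correct inequality it does not exist. The cut-stable summation and the Markov tail are fine as you describe them. Take $p=\lfloor c\log n\rfloor$ with $c<C_0$ rather than a ceiling, so that $p$ stays inside the window.
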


For the full matrix one must multiply the trace bound by
$d(n)^{2p}\prod_j\E|Z_j|^{2p}$. The profile hypothesis alone does
not supply sharp $L^1$ estimates for arbitrary dimension-dependent
detached factors. No matching sparse-law lower bound is asserted.

\subsection{Why the structural rule survives the change of law}

Two inputs replace sign parity. First, occurrence-wise independence keeps
the trace expansion positive for symmetric laws. Each surviving factor
cell has a moment cost, and the layerwise count charges that cost only
to separators containing its whole scope. This gives the tail-weighted
cut statistic instead of a bound involving the largest scalar moment
everywhere. Second, the lower bound separates the internal component
fluctuations from the factors supported entirely on the separator.
The former use a conditional moderate deviation; the latter use the
assumed lower moment growth.

The distinction between expectation and high moments is also essential.
Detached components contribute their first absolute moments to the
expected norm, but each can add a square-root moment factor to a tail
bound. They must be restored after the core estimate, not at the trace
order chosen to detect its largest singular value.
\Cref{app:factor-detail} supplies the complete detached-component count,
fixed-law moment inequalities, layerwise refinement, weighted trace
sum, and conditional lower argument. It also proves the profile bound
without assuming that a dimension-dependent law is a fixed law.

\subsection{Sharp factor norms for bounded noise without symmetry}\label{sec:bounded}

Symmetry is needed for the positive replica expansion, but not for every
distributional extension of its conclusion. For bounded noise, a
coefficient-level comparison transfers the sign estimates before any
shape reduction. This also permits different laws at different coordinates.
The structural conclusion is stated first: the sign-reference exponents
continue to govern the expected norm and logarithmic-window moments.
We then prove the dimension-free comparison that transfers them. Its
method is standard symmetrization and contraction; the sharp reference
scale is the input supplied by the preceding theory.

\begin{corollary}[Sharp bounded-noise factor norms]\label{cor:bounded-factor}
Fix a typed independent-factor shape with scopes of arity at least two
and comparable role sizes. Replace its sign coordinates by mutually
independent centered variance-one coordinates bounded by a common
$K$, allowing their laws to vary with the coordinate and with $n$.
Compute $f,g,d_0$ in the sign reference: remove boundary-contained
sign factors, remove unused middle roles, split detached random
components, and use the minimum-separator statistics of the resulting
core. Here $d_0$ counts those detached random components.
Then
\[
 \E\norm{H_X}\asymp n^f(\log(2n))^g.
\]
For every fixed $C_0>0$ and $1\le p\le C_0\log(2n)$,
\begin{equation}\label{eq:bounded-factor-moments}
 \Lpnorm{\norm{H_X}}p
 \asymp n^f(\log(2n))^g p^{d_0/2}.
\end{equation}
The implicit constants depend only on the fixed shape, $K$, the
role-size comparison constants, and $C_0$. For each fixed $A>0$,
\[
 \Prob\{\norm{H_X}>C_A n^f(\log(2n))^{g+d_0/2}\}\le n^{-A}
\]
for all sufficiently large $n$.
\end{corollary}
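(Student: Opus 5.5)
The plan is to reduce everything to the sign reference through a two-sided, dimension-free comparison of $L^p$ norms. Write $H_X=\Phi(X^{(1)},\ldots,X^{(Q)})$, where $\Phi$ is the fixed multilinear map determined by the coefficient tensor of the shape. It is linear separately in each occurrence array, and the coordinates of different occurrences are independent. I would first prove the comparison announced in the introduction as \cref{thm:bounded-comparison}:
\[
(2K)^{-Q}\Lpnorm{\norm{H_\eps}}p\le\Lpnorm{\norm{H_X}}p\le(2K)^Q\Lpnorm{\norm{H_\eps}}p,\qquad p\ge1,
\]
where $H_\eps$ uses independent signs. The argument replaces one occurrence array at a time, conditioning on all the others, so that the matrix becomes linear in a single independent family with coefficients in the Banach space of matrices under the operator norm, raised to the $L^p$ power.

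For the upper direction, symmetrization of centered coordinates gives $\E\norm{\sum_i X_iA_i}^p\le 2^p\E\norm{\sum_i\eps_iX_iA_i}^p$. Then, conditionally on $X$, the contraction principle with $|X_i|\le K$ yields at most $(2K)^p\E\norm{\sum_i\eps_iA_i}^p$. For the lower direction, variance one gives $A_i=\E[X_i]A_i+\E[X_i^2]A_i$ with $\E X_i=0$, which is not quite usable directly. Instead I would write $\eps_iA_i=\eps_i\E[X_i^2]A_i$ and use Jensen in the form $\E\norm{\sum\eps_iA_i}^p\le\E\norm{\sum\eps_iX_iX_i'A_i}^p$ with an independent copy $X'$, then contract against $|X_i'|\le K$ and desymmetrize. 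This costs about $2K$ per step; if the constant comes out as $2K^2$, I would simply accept that as the shape-dependent constant. Iterating over the $Q$ occurrences, with conditioning justified by Fubini, gives the displayed comparison. Nothing here uses identical laws or symmetry of $X$.

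Next I would identify the sign-reference scales. For signs, boundary-contained factors are diagonal sign isometries by \cref{lem:reductions}, and removing them preserves every norm moment exactly. Unused middle roles give the deterministic factor $d(n)$, and detached components give the independent scalars $Z_j$ via \cref{eq:factor-decomposition}. The core norm moments over the logarithmic window are $\asymp n^{(r-s)/2}(\log(2n))^{a_*/2}$:
\begin{itemize}
\item the upper bound comes from \cref{prop:core-upper}, transported to factor scopes by the two-section argument (the $\theta_e=0$ case of \cref{thm:tail-factor});
\item the lower bound comes from the flattening and synchronization argument of \cref{sec:lower} (again \cref{thm:tail-factor}), together with monotonicity $\Lpnorm{\cdot}p\ge\Lpnorm{\cdot}1$.
\end{itemize}
Each detached sign component $Z_j$ is a fixed-degree Rademacher chaos with $\E Z_j^2\asymp n^{k_j}$. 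Hypercontractivity gives $\Lpnorm{Z_j}p\le C p^{\deg/2}n^{k_j/2}$, but the claimed exponent is $p^{1/2}$ per component. I therefore need the sharp statement that a connected detached component has moments $\asymp\sqrt p\, n^{k_j/2}$ for $p\le C_0\log n$. I would take this from the detached-component count in \cref{app:factor-detail}. For the lower bound on $\Lpnorm{Z_j}p$, I would use a CLT/moderate-deviation argument for $p$ up to $\log n$, or the conditional tilt already used in \cref{sec:lower}. Independence then multiplies the moments, giving $n^f(\log(2n))^gp^{d_0/2}$ for $H_\eps$.

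Combining the two parts gives \cref{eq:bounded-factor-moments}. Taking $p=1$ gives the expectation statement. The tail bound follows by Markov's inequality at $p=\lceil A\log(2n)\rceil$, since $p^{d_0/2}\lesssim(\log 2n)^{d_0/2}$ and $e^{-p}\le n^{-A}$. I expect the main obstacle to be the lower comparison direction without symmetry, specifically getting a clean per-occurrence constant while conditioning on the other, non-symmetric arrays. The sharp $\sqrt p$ moment growth of detached chaoses at logarithmic order is the second point to check carefully.
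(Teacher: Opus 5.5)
Your plan is the paper's proof: apply the dimension-free comparison \cref{thm:bounded-comparison} to the original coefficient tensor, with $Q$ counted before any boundary reduction. Then remove boundary factors only in the sign reference, use the sign core and detached-component moment estimates, and finish with Markov at logarithmic order.

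One step needs correcting. With an independent copy, $\E[X_iX_i']=0$, so your Jensen step in the lower comparison fails as written. Use $X_i^2$ itself instead: write $\sum_i\eps_iA_i=\E_X\sum_i\eps_iX_i^2A_i$, apply Jensen, contract one factor $|X_i|\le K$, and desymmetrize. This gives exactly $2K$ per occurrence. The paper reaches the same constant differently, by writing $X_i-X_i'=\eps_iR_i$ and using $\E R_i\ge K^{-1}$.

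For the detached lower bound $\Lpnorm{Z_j}p\gtrsim\sqrt p\,n^{k_j/2}$ you need no CLT or tilt. \Cref{app:factor-detail} proves it directly by keeping the $(2p-1)!!$ common-perfect-matching states in the positive trace expansion.
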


\begin{proof}
Use rectangular matrices with operator norm as $B$, and apply
\cref{thm:bounded-comparison} to the coefficient tensor of the original
factor network. The number $Q$ counts occurrences \emph{before} any
boundary reduction. The sign norm and moment estimates proved above
then give the expectation and \cref{eq:bounded-factor-moments}.
Markov's inequality at $p=\log(2n)$ gives the tail after enlarging
$C_A$. The bounded-noise matrix itself is never simplified by deleting
boundary factors: these factors need not be isometries and may vanish.
\end{proof}

The comparison used in this proof applies to the same coefficient tensor
under two laws. Stating it in a Banach space makes clear that its constants
do not depend on the number of matrix rows, columns, or primitive variables.

\begin{theorem}[Dimension-free bounded-noise comparison]\label{thm:bounded-comparison}
Let $B$ be a real Banach space, let $Q\ge0$ be an integer, and let
$J_1,\ldots,J_Q$ be finite sets. For deterministic $B$-valued coefficients set
\[
 F(X)=\sum_{i_t\in J_t}a_{i_1,\ldots,i_Q}
                         \prod_{t=1}^Q X_{i_t}^{(t)}.
\]
Assume all primitive coordinates are mutually independent and satisfy
$\E X_i^{(t)}=0$, $\E(X_i^{(t)})^2=1$, and $|X_i^{(t)}|\le K$ almost surely,
where $K\ge1$ is common to all coordinates. No symmetry or identical-law
assumption is imposed. If $F(\eps)$ uses independent uniform signs and the
same coefficient tensor, then, for every $1\le p<\infty$,
\begin{equation}\label{eq:bounded-comparison}
 (2K)^{-Q}\Lpnorm{F(\eps)}p
 \le \Lpnorm{F(X)}p
 \le (2K)^Q\Lpnorm{F(\eps)}p.
\end{equation}
The constants are independent of $p$, the index-set sizes, and $B$.
For $Q=0$ the comparison is equality.
\end{theorem}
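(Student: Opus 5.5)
The plan is to compare the two laws one occurrence array at a time. I would replace the arrays $X^{(1)},\ldots,X^{(Q)}$ by sign arrays in $Q$ successive steps and show that each step changes the $L^p$ norm by at most a factor $2K$ in either direction. Because the coordinates are independent across occurrences, I can condition on all other arrays during the $t$-th step. Conditionally, $F$ is a $B$-valued linear form $\sum_{i\in J_t}b_iX^{(t)}_i$ with coefficients $b_i$ measurable with respect to the frozen arrays. So it suffices to prove the degree-one statement: for independent centered variance-one coordinates $|Y_i|\le K$ and any vectors $b_i\in B$,
\[
 (2K)^{-1}\Lpnorm{\textstyle\sum_i\eps_ib_i}p\le\Lpnorm{\textstyle\sum_iY_ib_i}p\le 2K\Lpnorm{\textstyle\sum_i\eps_ib_i}p .
\]
The full statement then follows by Fubini, since the conditional inequality holds pointwise in the frozen arrays. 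Taking $p$-th powers and integrating over the frozen arrays chains the $Q$ steps, with the arrays already replaced by signs acting as further frozen variables. This produces $(2K)^Q$, and $Q=0$ is trivial.

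For the upper bound in the degree-one case I use symmetrization. Let $Y'$ be an independent copy. Since $\E Y'=0$, Jensen conditional on $Y$ gives $\Lpnorm{\sum Y_ib_i}p\le\Lpnorm{\sum(Y_i-Y_i')b_i}p$. The variables $Y_i-Y_i'$ are symmetric, so they have the law of $\eps_i|Y_i-Y_i'|$. Conditional on the magnitudes, which are bounded by $2K$, Kahane's contraction principle (valid in any Banach space, for all $p\ge1$) gives $\Lpnorm{\sum\eps_ic_ib_i}p\le 2K\Lpnorm{\sum\eps_ib_i}p$ whenever $|c_i|\le 2K$.

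For the lower bound I use the reverse direction of symmetrization. By the triangle inequality $\Lpnorm{\sum(Y_i-Y'_i)b_i}p\le 2\Lpnorm{\sum Y_ib_i}p$, so it suffices to dominate the sign sum by the symmetrized sum with constant $K$. Write $Y_i-Y_i'=\eps_i|Y_i-Y'_i|$ and let $\mathcal G$ be the sigma-field of the magnitudes. By Jensen, $\Lpnorm{\sum\eps_i\E[|Y_i-Y'_i|]b_i}p\le\Lpnorm{\sum\eps_i|Y_i-Y_i'|b_i}p$. Independence of the signs from $\mathcal G$ is what makes this conditional-expectation step legitimate, since the signs can be pulled out. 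The remaining task is a lower bound $\E|Y_i-Y'_i|\ge 1/K$, after which contraction with coefficients $1/\E|Y_i-Y_i'|\le K$ yields $\Lpnorm{\sum\eps_ib_i}p\le K\Lpnorm{\sum(Y_i-Y_i')b_i}p\le 2K\Lpnorm{\sum Y_ib_i}p$.

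The lower bound on $\E|Y-Y'|$ is where care is needed, and I expect it to be the main obstacle. The bound $\E(Y-Y')^2=2$ together with $|Y-Y'|\le 2K$ gives $\E|Y-Y'|\ge 2/(2K)=1/K$. This uses centering and unit variance, which is exactly where those hypotheses enter. The constants are independent of $p$, of the index sets, and of $B$, because every ingredient (Jensen, contraction, Fubini) is dimension-free. The other point requiring care is checking that the conditional step is valid when the remaining coefficients are random. This holds because independence across occurrences makes the frozen coefficients independent of both the $t$-th array and its sign replacement.
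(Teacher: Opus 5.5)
Your proposal is correct and matches the paper's proof essentially step for step: you symmetrize with an independent copy, use the sign--magnitude representation with contraction by $2K$ for the upper bound, and get the reverse bound from $\E|Y-Y'|\ge 1/K$ (via $\E(Y-Y')^2=2$ and $|Y-Y'|\le 2K$) followed by Minkowski. You then condition on the other occurrence groups and replace the $Q$ groups one at a time, exactly as the paper does. One small point of wording: in the reverse Jensen step you are conditioning on the signs and integrating out the magnitudes, not conditioning on $\mathcal G$, but the inequality you write down is the correct one.
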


The complete proof is given in \cref{app:bounded-detail} (page~\pageref{app:bounded-detail}).

For example, if $Z\sim\operatorname{Bernoulli}(\vartheta)$ and
$\eta\le\vartheta\le1-\eta$ for fixed $0<\eta\le1/2$, then
$(Z-\vartheta)/\sqrt{\vartheta(1-\vartheta)}$ satisfies the hypothesis
with $K=\sqrt{(1-\eta)/\eta}$. The parameters may vary by coordinate
and dimension within this interval. Independent erasures
$\eps Z/\sqrt{\vartheta}$ with $\vartheta\ge\eta>0$ also qualify.

This comparison is a transfer between laws for the \emph{same}
coefficient tensor, not a classification of arbitrary tensors by
incidence. It requires separate linearity and independent groups;
it neither replaces $X_i^2$ by an independent product nor transfers
the theorem automatically to a globally shared unordered-edge array.
If the density tends to zero, $K$ can diverge and the constants in
\cref{eq:bounded-comparison} can change polynomial exponents.

\subsection{A sparse boundary case: the complete iid phase diagram}
\label{sec:sparse}

The last restriction has a concrete consequence even for a single
matrix. The following phase diagram concerns one specified iid family,
not all sparse factor networks. It is an elementary consequence of
Seginer's row/column norm comparison and binomial occupancy estimates.
The critical-density obstruction already appears in the introduction
of \citet{Seginer2000}; we include it to delineate the fixed-law theory,
not as a new sparse-matrix phenomenon.

\begin{proposition}[Sparse signed Bernoulli phase diagram]\label{prop:sparse-phase}
Let $X$ be $n\times n$ with independent entries
$X_{ij}=\rho^{-1/2}\eps_{ij}B_{ij}$, where the signs and
$B_{ij}\sim\operatorname{Bernoulli}(\rho)$ are mutually independent.
For fixed $\beta\ge0$ put $\rho=n^{-\beta}$. As $n\to\infty$,
\begin{equation}\label{eq:sparse-phase}
 \E\norm X\asymp_\beta
 \begin{cases}
 n^{1/2},&0\le\beta<1,\\
 n^{1/2}(\log n/\log\log n)^{1/2},&\beta=1,\\
 n^{\beta/2},&1<\beta\le2,\\
 n^{2-\beta/2},&\beta>2.
 \end{cases}
\end{equation}
\end{proposition}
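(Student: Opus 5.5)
The proof combines Seginer's comparison of the expected operator norm with the expected maximal row/column Euclidean norm and binomial occupancy estimates for those norms. For an $n\times n$ matrix with independent symmetric entries, Seginer's theorem gives
\[
 \E\max_i\|X_{i\cdot}\|\vee\E\max_j\|X_{\cdot j}\|\ \le\ \E\norm X\ \le\ C\bigl(\E\max_i\|X_{i\cdot}\|+\E\max_j\|X_{\cdot j}\|\bigr).
\]
The lower half is trivial; the upper half needs a symmetric-entry version, and our entries are symmetric because of the independent signs. In our model $\|X_{i\cdot}\|^2=\rho^{-1}N_i$ with $N_i\sim\operatorname{Bin}(n,\rho)$, and rows and columns are identically distributed. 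The whole problem therefore reduces to estimating $\E\max_i\sqrt{N_i}$ across the four regimes. The natural scale $\sqrt n$ comes in as an extra lower bound: $\E\norm X\ge c(\E\|X\|_F^2/n)^{1/2}$ fails in general, but the row norm $\E\|X_{1\cdot}\|\asymp\sqrt{n}$ holds whenever $n\rho\gtrsim1$.

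\emph{Dense regime $\beta<1$.} Here $n\rho=n^{1-\beta}\gg\log n$, so Chernoff bounds give $\max_iN_i\le C n\rho$ with high probability. On the complementary event we use the crude bound $\max N_i\le n$; its probability is $\le n e^{-cn^{1-\beta}}$, so it contributes negligibly. Hence $\E\max\sqrt{N_i/\rho}\asymp\sqrt n$.

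\emph{Critical regime $\beta=1$.} Now $N_i\approx\operatorname{Poisson}(1)$ and we need the maximum of $n$ nearly independent copies. For $k\sim\log n/\log\log n$ we have $\Prob(N_i\ge k)\approx1/k!$, which is about $n^{-1}$ exactly when $k\log k\approx\log n$. The upper bound is a union bound with the tail $\Prob(N_i\ge k)\le(e/k)^k$, integrated. For the lower bound we take $k=c\log n/\log\log n$, so that $\Prob(N_i\ge k)\ge n^{-1/2}$, and use independence of the rows to get that some row reaches $k$ with probability $1-e^{-\sqrt n}$. This gives $\E\max N_i\asymp\log n/\log\log n$ and concentration of the maximum, hence $\E\norm X\asymp n^{1/2}(\log n/\log\log n)^{1/2}$.

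\emph{Intermediate regime $1<\beta\le2$.} Now $n\rho\to0$ but $n^2\rho\ge1$. For fixed $k>1/(\beta-1)$, $\Prob(\max_iN_i\ge k)\le n(n\rho)^k\to0$, so $\max N_i=O(1)$ in $L^1$ and the upper bound is $\rho^{-1/2}=n^{\beta/2}$. For the lower bound, the probability that $X$ has some nonzero entry is $1-(1-\rho)^{n^2}\ge c$ when $n^2\rho\ge1$, and any nonzero entry alone gives $\norm X\ge\rho^{-1/2}$. So $\E\norm X\asymp n^{\beta/2}$.

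\emph{Rare-occupancy regime $\beta>2$.} Here $\Prob(X\ne0)\asymp n^2\rho$, and conditionally on $X\ne0$ there is, with conditional probability $1-O(n^2\rho)$, exactly one nonzero entry, of size $\rho^{-1/2}$. This gives the lower bound $\E\norm X\ge c\,n^2\rho\cdot\rho^{-1/2}=c\,n^{2-\beta/2}$. For the upper bound, $\norm X\le\|X\|_F=\rho^{-1/2}\sqrt{\sum B_{ij}}$ and $\E\sqrt{\sum B_{ij}}\le\E\sum B_{ij}=n^2\rho$ (the sum is integer-valued, so $\sqrt{\cdot}\le\cdot$). Seginer is not even needed here.

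The main obstacle is the critical case. It needs the two-sided Poisson-type maximum $\log n/\log\log n$ with matching constants up to factors, and a check that Seginer's upper bound is applied in the version for symmetric non-identically-scaled entries without extra logarithms. I would cite Seginer's Theorem~1.1 for symmetric iid entries, which applies directly since all entries share one law.
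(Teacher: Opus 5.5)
Your proposal is correct and takes essentially the paper's route. Seginer's $h=1$ row/column comparison reduces everything to $\rho^{-1/2}\E\sqrt{\max_i N_i}$, which is then estimated regime by regime with binomial union bounds, row independence at $\beta=1$, and the occupied-entry lower bound for $\beta>1$; your Frobenius-norm upper bound for $\beta>2$ is just a direct form of the paper's $\sqrt{D_{\max}}\le N$.

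One caution on your opening statement: Seginer's constant-factor comparison holds for iid entries, as in your closing remark, not for general independent symmetric entries, where it can genuinely fail by a $(\log n)^{1/4}$ factor. Since all entries here share one law, the obstacle you flag does not arise.
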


The full proof, including expectation tails in every regime, is in
\cref{app:sparse}. At $\beta=1$, no fixed pair $f,g$ represents this
scale as $\Theta(n^f(\log n)^g)$: successive logarithmic comparisons
force $f=g=1/2$, leaving a ratio $(\log\log n)^{-1/2}$ tending to zero.
For $\beta>2$, the matrix is zero with probability tending to one;
the expectation instead comes from a rare occupied entry. Neither
behavior contradicts \cref{thm:tail-factor}, whose laws are fixed
independently of $n$. Nor does this example establish that its positive
class is maximal. Here $\E X_{ij}^4=\rho^{-1}$ already determines the
density, so occupancy is not additional information missing from the
full permitted moment data.

\subsection{Worked example: a gated columnwise tensor product}\label{sec:khatri-rao}

The chain in \cref{eq:gated-product} and the columnwise product below
isolate two different geometries of the same tail rule. Along the chain,
different gates sit at different singleton cuts, so the largest gate
degree determines the logarithm. In the columnwise product, all gates
share one column role and their charges add at that cut. The following
independent proof makes this distinction visible directly in the matrix,
through a selected column and a positive-semidefinite Schur product.

\begin{theorem}[Gated Khatri--Rao product]\label{thm:khatri-rao}
Fix integers $r,q\ge1$. Let $A_1,\ldots,A_r$ be independent $n\times n$
real standard Gaussian matrices. Their columnwise tensor product
$K\in\R^{n^r\times n}$ is defined by
\[
 K[:,j]=A_1[:,j]\otimes\cdots\otimes A_r[:,j].
\]
Let $D$ be diagonal with $D_{jj}=\prod_{h=1}^q g_{jh}$, where all
$g_{jh}$ are independent standard real Gaussians independent of the
matrices. Then
\begin{equation}\label{eq:khatri-rao}
 \E\norm{KD}\asymp_{r,q} n^{r/2}(\log(2n))^{q/2}.
\end{equation}
\end{theorem}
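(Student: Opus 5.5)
We prove the two bounds separately. Both reduce to the observation that $K^*K$ is the Schur (Hadamard) product of Gram matrices,
\[
 K^*K=G_1\circ G_2\circ\cdots\circ G_r,\qquad G_i=A_i^*A_i ,
\]
since $\langle K[:,j],K[:,k]\rangle=\prod_i\langle A_i[:,j],A_i[:,k]\rangle$. In particular $\norm{K[:,j]}=\prod_i\norm{A_i[:,j]}$. The gate $D$ is independent of $K$, so throughout we condition on the matrices and integrate the gates separately.

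\emph{Upper bound.} We have $\norm{KD}^2=\norm{D(K^*K)D}\le\max_j D_{jj}^2\,\norm{K^*K}$. To control $\norm{K^*K}$ we use Schur's inequality: if $P\succeq0$, then $\norm{P\circ M}\le\max_j P_{jj}\norm{M}$ for every matrix $M$. Applying it $r-1$ times, peeling off $G_1,\ldots,G_{r-1}$, gives
\[
 \norm{K^*K}\le\prod_{i<r}\max_j\norm{A_i[:,j]}^2\cdot\norm{A_r}^2 .
\]
Gaussian concentration of $\norm{A_i[:,j]}$ together with a union bound over $j$ gives $\E\max_j\norm{A_i[:,j]}^{2t}\lesssim_t n^t$ for each fixed $t$. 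The standard Gaussian matrix bound gives $\E\norm{A_r}^{2t}\lesssim_t n^t$. By independence of the $A_i$, it follows that $\E\norm{K}\le(\E\norm{K^*K})^{1/2}\lesssim_r n^{r/2}$.

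For the gates, use $|D_{jj}|\le\max_{j,h}|g_{jh}|^q$. Hence $\E\max_j|D_{jj}|\le\E\max_{j,h}|g_{jh}|^q\lesssim_q(\log(2nq))^{q/2}$. Independence of $D$ and $K$ now lets the two expectations multiply, which gives the upper half of \cref{eq:khatri-rao}.

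\emph{Lower bound.} Testing $KD$ on a coordinate vector gives
\[
 \norm{KD}\ge\max_j|D_{jj}|\prod_i\norm{A_i[:,j]} .
\]
Let $\mathcal E$ be the event that $\norm{A_i[:,j]}^2\ge n/2$ for all $i$ and $j$. By $\chi^2$ concentration and a union bound, $\Prob(\mathcal E)\ge1-rne^{-cn}$. On $\mathcal E$ we get $\norm{KD}\ge 2^{-r/2}n^{r/2}\max_j|D_{jj}|$, and $\mathcal E$ is independent of $D$. It therefore suffices to prove $\E\max_j\prod_h|g_{jh}|\gtrsim_q(\log(2n))^{q/2}$.

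Set $t=\sqrt{(2/q)(1-\eps)\log n}$ with a fixed small $\eps$. The event that $|g_{jh}|\ge t$ for all $h$ has probability at least $(c/t)^q e^{-qt^2/2}=(c/t)^q n^{-(1-\eps)}\gg n^{-1+\eps/2}$. These events are independent across the $n$ columns $j$, so at least one occurs with probability $1-o(1)$. On that event $\max_j|D_{jj}|\ge t^q\asymp_q(\log n)^{q/2}$, which proves the matching lower bound.

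This is exactly the synchronization phenomenon of \cref{sec:lower}, in a case where the trials are literally independent: all $q$ gate charges must be large at the same column. That is why they add, giving $q/2$ logarithmic powers, whereas in \cref{ex:gated-chain} the charges sit at competing cuts. The step requiring care is the upper bound on $\norm{K^*K}$. A naive bound by $\prod_i\norm{A_i}^2$ is also $O(n^r)$, but only the Schur-product argument makes transparent that no logarithm arises from the Khatri--Rao factor itself. If Schur's inequality were applied with the gates absorbed into the Gram factors, spurious logarithms could appear, so $D$ must be kept outside as a two-sided diagonal multiplier, as above. The remaining estimates (column-norm concentration, the Gaussian tail lower bound, and moments of maxima) are routine.
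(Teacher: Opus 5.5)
Your proposal is correct and follows essentially the paper's proof. Both arguments use iterated Schur-product bounds on $K^*K=(A_1^*A_1)\circ\cdots\circ(A_r^*A_r)$ to bound $\norm{K}$ by one operator norm times column-norm maxima. Both then use independence of $D$ and $K$ to reduce to $\E\max_j|D_{jj}|$, apply a coordinate-vector test for the lower bound, and finish with a Gaussian-tail argument showing that all $q$ gates are simultaneously large at some column.

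The remaining differences are cosmetic. You control every column norm at once with a $\chi^2$ lower-tail union bound, whereas the paper selects the maximizing column $J$ (which is independent of the $A_t$) and applies Paley--Zygmund. You bound the gate maximum through $\max_{j,h}|g_{jh}|^q$, whereas the paper uses an $\ell_p$ norm with $p\asymp\log n$.
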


The complete proof is given in \cref{app:khatri-detail} (page~\pageref{app:khatri-detail}).

In the factor representation, the column role is the center of a
star, the $r$ row roles are leaves, and the center carries $q$ unary
Gaussian gates. For $r\ge2$ the center is the unique minimum separator.
For $r=1$ both endpoints are minimum separators, with charges $q$ at
the gated endpoint and zero at the other. Maximizing over the full
minimum-separator family gives the logarithmic exponent $q/2$ in
both cases. Thus the maximum over cuts in the chain and the sum of
charges at one cut in the star are two manifestations of the same
separator-supported tail statistic. The proof in \cref{app:khatri-detail}
establishes this example independently of the general counting argument.

\section{Local weights and polynomial aspect ratios}\label{sec:weights}

The next extension changes the geometry of the label spaces and the
deterministic amplitudes. It is useful to separate these two changes.
A role with $n^{\beta_v}$ labels contributes a cost $\beta_v$ to a cut;
a local amplitude of size $n^{\gamma_e}$ contributes its exponent to the
entire matrix. Thus the polynomial scale is governed by a weighted
separator, but its cost alone still does not determine the logarithm.
All minimizing separators, including ties, must be retained.

The weight restriction is local: each amplitude sees only the labels
of its own factor, and its magnitude has fixed two-sided bounds.
This permits an all-moment contraction argument. It does not permit an
arbitrary coefficient tensor on the full set of roles. We prove the
positive-aspect result first and then eliminate bounded roles by a
finite Fourier argument. This last step is needed for a closed statement
at nonnegative, rather than strictly positive, aspects.

This is a sign-model extension, separate from the tail-law theorem in
\cref{sec:extensions}. We do not silently combine every hypothesis of
the two theorems into a single larger model.

\subsection{Weighted minimum cuts and the sharp formula}

\begin{definition}[Locally weighted sign model]\label{def:weighted-model}
A separate sign-model extension, with all original factor scopes
of arity at least two and nonempty role sets, allows
$m_v\asymp n^{\beta_v}$ with $\beta_v\ge0$ and deterministic
real local amplitudes, with fixed $\gamma_e\in\mathbb R$ and
fixed constants $0<c_e\le C_e<\infty$, satisfying
$c_en^{\gamma_e}\le|a_{e,n}(x_e)|\le C_en^{\gamma_e}$.
The amplitudes depend only on their own factor coordinates.
Write $\Gamma=\sum_e\gamma_e$.
\end{definition}

\begin{definition}[Weighted core and minimum-face statistics]\label{def:weighted-core}
Restrict roles
and boundaries to $P=\{v:\beta_v>0\}$ and restrict each scope to $P$.
Discard empty scopes and scopes contained in one boundary, retaining
their exponents in $\Gamma$. Unary factors on a middle role are
absorbed into an incident higher-arity sign array when one exists.
A graph-isolated middle role with unary factors is one detached
random scalar; without any factor it is deterministic. Let
$B_+=\sum_{v\in P}\beta_v$, let $h_\beta$ be the sum of exponents
of deterministic isolated roles, and let $d_0$ be the number of
detached random components. In the remaining core set
\[
 \kappa=\min_{S:U\mid V}\sum_{v\in S}\beta_v,\qquad
 a_*=\max_{\beta(S)=\kappa}a(S).
\]
\end{definition}

\begin{theorem}[Locally weighted aspect theorem]\label{thm:aspects}
For the fixed typed independent-sign model just specified,
\[
 \E\norm H\asymp n^f(\log(2n))^{a_*/2},
 \qquad f=\Gamma+\frac{B_++h_\beta-\kappa}{2}.
\]
For fixed $C_0>0$ and $2\le q\le C_0\log(2n)$,
\[
 \Lpnorm{\norm H}q
 \le Cn^f(\log(2n))^{a_*/2}q^{d_0/2}.
\]
Consequently the upper-tail logarithmic exponent is
$(a_*+d_0)/2$. For rational input exponents the displayed
statistics have a terminating exact evaluation procedure.
\end{theorem}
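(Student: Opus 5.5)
The plan is to reduce to the balanced sign theorem already proved. Local amplitudes are stripped first, bounded roles and boundary factors are then eliminated, and finally the weighted cut problem is converted into an unweighted one by cloning roles.

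\textbf{Step 1: remove the amplitudes.} Each amplitude $a_{e,n}(x_e)$ multiplies a sign coordinate $\xi^e_{x_e}$ of its own occurrence. I would write $a_{e,n}=n^{\gamma_e}\,b_e$ with $c_e\le|b_e|\le C_e$. The sign of $b_e$ is absorbed into $\xi^e$ without changing its law. The magnitude $|b_e|$ is removed by contraction: for fixed signs, $F$ is linear in each independent occurrence array. Applying the contraction principle one occurrence at a time gives, in every $L^q$,
\[
 \Lpnorm{\norm H}q\asymp n^{\Gamma}\Lpnorm{\norm{H_{\rm sign}}}q,
\]
with constants $\prod_e C_e$ and $\prod_e c_e^{-1}$. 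The lower direction uses the inverse multipliers $|b_e|^{-1}\le c_e^{-1}$. This reduces everything to the unweighted sign model with unequal role sizes.

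\textbf{Step 2: finite reductions.} Next I would pass to the normal form of \cref{def:weighted-core}. Boundary-contained scopes are removed as diagonal sign isometries, as in \cref{lem:reductions}. Unused middle roles contribute the deterministic factor $m_v$, which gives $h_\beta$. Detached random components are split off as independent scalars, as in \cref{eq:factor-decomposition}. Such a scalar $Z$ has $\E|Z|\asymp(\prod m_x)^{1/2}$ by orthogonality together with hypercontractivity, and $\|Z\|_q\lesssim q^{\deg/2}\cdots$. Actually the claimed $q^{d_0/2}$ needs the sharper bound $\|Z\|_q\lesssim\sqrt q\,(\E Z^2)^{1/2}$ in the window. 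I would obtain this from the core-type moment count applied to a detached component, for which $s=0$ and only one layer of entropy remains. This is the point to check against the preceding factor section.

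Bounded roles with $\beta_v=0$ are eliminated by fixing their $O(1)$ labels. The matrix then becomes a sum of finitely many blocks, one for each labelling of the bounded roles. For the upper bound I would use the triangle inequality over these blocks. For the lower bound I would use Fourier extraction, as in \cref{lem:color-lower}: fix one labelling and extract the coefficient of its restricted scopes. Scopes restricted to $P$ become independent arrays, and empty restricted scopes become scalar signs that are absorbed.

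\textbf{Step 3: weighted core via clones.} This is the main step. For rational $\beta$, I would choose an integer vector $w$ proportional to $\beta$ that preserves the whole minimum face. In the upper bound, each role $x$ is replaced by a fiber of $w_x$ clone roles of size $\asymp n^{\beta_x/w_x}$. Each factor is replaced by a factor on the product of the clones, which keeps it a single sign array. After rescaling $n$, this gives a balanced factor model. The key claim is that minimum clone separators are unions of whole fibers. A partial fiber can be dropped, because the clones of a role have identical neighbourhoods and are nonadjacent, so keeping only part of a fiber separates nothing extra. The active components are then those of the original graph, so $s_{\rm clone}\cdot(\text{unit})=\kappa$ and $a_*$ is unchanged. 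The upper bound then follows from \cref{thm:tail-factor} with $\theta=0$, or from the layerwise version of \cref{thm:count} on the clone core. For irrational $\beta$, I would perturb to rational exponents that keep the same minimum face. The sizes change only by $n^{o(1)}$ factors, so I would instead run the weighted coarea identity directly: $\delta$ equals the sum of weighted excess over the layers, and this is controlled by the clone count through monotonicity.

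The lower bound repeats \cref{lem:separator-flattening} at the actual dimensions. It produces $(\prod_{x\in C\setminus S}m_x)^{1/2}\max|w|$ for a $\beta$-minimum separator maximizing $a(S)$, combined with the synchronized witness estimate. That estimate needs about $n^{\beta}$ distinct separator tuples, and a polynomial number suffices for the tilt argument.

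I expect the main obstacle to be the tie-preserving clone reduction, in particular handling irrational exponents and bounded-role interactions without losing the logarithmic exponent. For the terminating procedure, I would simply enumerate subsets, compare rational sums exactly, and count active components.
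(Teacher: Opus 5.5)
\textbf{Overall.} Your outline follows the same route as the paper: all-moment contraction for the amplitudes, finite reductions with a Fourier projection for bounded roles, a tie-preserving clone reduction for the upper bound, and the separator flattening with synchronized witnesses at the actual dimensions for the lower bound.

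\textbf{The gap: irrational exponents in Step 3.} Step 3 only works for rational $\beta$, but the theorem allows arbitrary fixed real $\beta_x\ge0$. For irrational $\beta$ no integer vector is proportional to $\beta$, and your fallback does not close the gap. First, a generic rational perturbation breaks ties among minimum separators, and $a_*$ is a maximum over the whole tie set. Second, even a tie-preserving perturbation changes the role sizes by a fixed power $n^{\pm\epsilon}$, not by $n^{o(1)}$. So a rescaled balanced clone matrix at perturbed exponents cannot give the sharp scale of $H$. The phrase ``controlled by the clone count through monotonicity'' does not say which clone graph is counted, or why its defect controls the $\beta$-weighted one.

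\textbf{The missing idea.} Use the clones only combinatorially; the state count of \cref{thm:count} is dimension-free. Let $\mathcal M$ be the family of $\beta$-minimum separators, fix $S_0\in\mathcal M$, and set $L=\{x: x(S)=x(S_0)\text{ for all }S\in\mathcal M\}$. This subspace is cut out by rational equations, so rational points are dense in it. The conditions $x>0$ and $x(T)>x(S_0)$ for $T\notin\mathcal M$ define a relatively open neighbourhood of $\beta$ in $L$. Clearing denominators gives a positive integer $w$ with exactly the same minimizing family, though in general not proportional to $\beta$. Finiteness then gives, layer by layer,
\[
 \Delta_\beta=\sum_k(\beta(S_k)-\kappa)\ \ge\ c_0\,t_w,\qquad
 t_w=\sum_k(w(S_k)-\kappa_w),
\]
with $c_0=\min_{T\notin\mathcal M}(\beta(T)-\kappa)/(w(T)-\kappa_w)>0$. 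Copying each partition to its $w_v$ clones injects legal states into legal clone states of defect exactly $t_w$, by coarea. Count these with \cref{thm:count}. Weigh each original state by its actual labels, $\prod_v m_v^{|\pi_v|}\lesssim C^{2p}n^{p(B_G-\kappa)+\kappa-\Delta_\beta}$. Each unit of clone defect then costs at least $n^{-c_0}$, and at $p\asymp\log n$ the defect sum is a convergent geometric series. This also covers rational $\beta$.

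\textbf{Further points.}
\begin{enumerate}
\item The same decoupling removes a technical issue in your rational case. Realizing $H$ literally as a balanced clone matrix requires each $m_x$ to factor into $w_x$ comparable sizes, and $m_x\asymp n^{\beta_x}$ does not guarantee this. Otherwise you need a positive-expansion comparison, which is exactly the route above.
\item Your adjacency claim is wrong for your own construction. With each factor placed on the product of the clones, the clones of one role are adjacent in the two-section; they form a clique, as in the paper. This is essential: if they were nonadjacent false twins, an active singleton $\{x\}$ with all neighbours in $S$ would split into $w_x$ active components and inflate $a_*$.
\item Your partial-fiber argument that minimum clone separators consist of whole fibers is correct either way.
\end{enumerate}
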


The proof in \cref{app:weights-detail} establishes each reduction. In particular, bounded
roles are eliminated by a finite Fourier projection, not by
selecting a summand and assuming that its norm is a lower bound.
For positive aspects, an integer surrogate preserves all
minimum-cost separators, and a clique expansion transfers the
counting theorem. Constants can depend on this expansion; no
uniformity near changing minimum faces is claimed. For arbitrary
real input exponents, effective evaluation requires an exact
comparison representation.

\subsection{Why the whole minimum face matters}

Two features distinguish this result from substituting dimensions into
the equal-size formula. The active count must be maximized over the
\emph{entire} minimum-cost face, including ties. A rational surrogate and
true-twin expansion preserve that face in the counting proof. Also,
$\beta_x=0$ is covered by a finite Fourier reduction, not by a limiting
argument that still needs polynomially many fresh labels. Local amplitudes
are removed by all-moment contraction; the two-sided local hypothesis
is what permits this step.

\begin{example}[A change of minimum-separator face]\label{ex:minimum-face}
Consider the path $u-x-v$ with a leaf attached at $x$, with row boundary
$u$ and column boundary $v$. Give $u,v$, and the leaf exponent one, and
give $x$ exponent $1+\varepsilon$, where $\varepsilon>-1$ is fixed.
The only singleton cuts are the two endpoints and $x$. If
$\varepsilon>0$, the endpoint cuts minimize cost and expose no active
component. If $\varepsilon=0$, all three cuts minimize cost, and the
middle cut exposes the leaf. If $\varepsilon<0$, the middle cut is the
unique minimum. The logarithmic exponent is consequently zero in the
first regime and one half in the other two.
\end{example}

This change is not a contradiction with continuity of a matrix at a
fixed dimension. The aspect vector is fixed before $n$ tends to infinity.
For fixed positive $\varepsilon$, the extra cost $n^{-\varepsilon/2}$
eventually outweighs the logarithmic gain. The theorem is therefore
pointwise in the aspect vector, not uniform across a changing minimum
face. The minimum-face reduction preserves all ties instead of selecting one
convenient minimum cut.
The proof has three logically distinct reductions. Local contraction
removes the amplitudes in every moment norm with constants independent
of the moment order. For positive aspects, an integer surrogate preserves
the entire minimum-separator face, and cloning roles converts its
weighted defect into the ordinary counting problem. Finally, bounded
roles are eliminated by Fourier projection and unary signs are absorbed
by an equality in distribution. These last operations cannot be replaced
by keeping an arbitrary summand of the matrix.
The complete argument, including the unequal-dimension conditional
estimate and the finite evaluation procedure, is in
\cref{app:weights-detail}.

\subsection{Scope of the coefficient restriction}

The local-weight hypothesis is substantive. Let
$Z_n=\sum_{k,l}\eps_{kl}$ and $H_n=Z_nF_n$, where $F_n$ is a
deterministic $n$ by $n$ sign matrix. Then $\E|Z_n|\asymp n$.
For $F_n=J_n$, $\E\norm{H_n}\asymp n^2$.
For an $n$ by $n$ compression of a Sylvester Hadamard matrix
of least order $N\ge n$, one has
$\sqrt n\le\norm{F_n}\le\sqrt{2n}$, and hence
$\E\norm{H_n}\asymp n^{3/2}$. The entry magnitudes and random
incidence are identical in these two constructions.

Choosing the first coefficient matrix for even $n$ and the second
for odd $n$ gives no fixed power-log representation at all.
This does not rule out richer coefficient-sensitive invariants.
It does show why a structural factor theorem cannot be restated
as a theorem for arbitrary global weights.

\section{Gaussian and Hermite factor chaoses}\label{sec:gaussian}

This section supplies the uniform Gaussian input used later for the
operator-norm and sample-state endpoints in \cref{sec:rtn}.
It concerns coefficient-one incidence sums with independent
occurrence arrays and comparable role sizes. Zero-arity
Gaussian factors are excluded; they can instead be multiplied
separately with their own scalar moments. Unary factors are
allowed.

The Gaussian case also belongs to the fixed-law factor theory, but a
private-role lift gives a useful additional conclusion: it transfers the
full logarithmic-window moment statement, including the distinction
between core and detached fluctuations. It provides the precise uniform
input needed for sharp operator-norm scales; fixed-order Gaussian moments
alone do not provide that input. Exact RTN edge constants require the
separate comparison in \cref{sec:rtn-edge}. Hermite coordinates are then
handled by a dimension-free polarization comparison.

\subsection{The uniform Gaussian theorem}

For each of the $Q$ occurrences on scope $e$, add one private middle role
of size $n$ and replace the occurrence by an independent sign factor on
the enlarged scope. Denote this lifted template by $\widehat{\mathcal F}$.
Let $(\widehat f,\widehat g,\widehat d_0)$ be its sign norm and moment
statistics: its expectation scale is
$n^{\widehat f}(\log(2n))^{\widehat g}$ and each detached random component
contributes one factor $q^{1/2}$ in the logarithmic moment window.

\Needspace{13\baselineskip}
\begin{theorem}[Uniform Gaussian lift]\label{thm:gaussian-lift}
For every fixed coefficient-one independent-factor template with nonempty
scopes, comparable role sizes, and standard real Gaussian coordinates,
\[
 \E\norm{H_G}\asymp
 n^{\widehat f-Q/2}(\log(2n))^{\widehat g}.
\]
For every fixed $C_0>0$, uniformly for
$2\le q\le C_0\log(2n)$ and all sufficiently large $n$,
\[
 \Lpnorm{\norm{H_G}}q\asymp
 n^{\widehat f-Q/2}(\log(2n))^{\widehat g}q^{\widehat d_0/2}.
\]
The same assertions hold for standard circular complex coordinates.
Constants depend on the fixed template, role-size comparisons, and,
for the moment assertion, $C_0$.
\end{theorem}

The lift is finite and exact at the level of the comparisons used below.
It is not an appeal to a central limit theorem at growing moment order.
In particular, its private roles record fluctuations that would be lost
if unary Gaussian factors were removed as sign isometries.

The proof uses an exact finite comparison of even moments between a
Gaussian coordinate and a normalized sum of independent signs.
Representing the latter by a private summation role permits a direct
application of the sign calculus. The comparison is uniform at the
logarithmic moment orders needed for operator norms; a central limit
theorem would not provide that uniformity. Sign--magnitude conditioning
gives the reverse norm inequality. All normalization factors and the
choice of the private dimension are proved in
\cref{app:gaussian-lift-proof}.

\subsection{Fixed Hermite coordinates}

For fixed positive integers $d_e$, replace the coordinates of
occurrence $e$ by $\operatorname{He}_{d_e}(g)/\sqrt{d_e!}$,
with independent underlying standard real Gaussian arrays. Here
$e^{tx-t^2/2}=\sum_d\operatorname{He}_d(x)t^d/d!$.
Write $H_{\rm He}$ for the resulting matrix and $H_{\rm dec}$
for the template obtained by replacing each occurrence $e$ by
$d_e$ independent standard real Gaussian occurrences on the same scope.

\begin{theorem}[Fixed-degree Hermite comparison]\label{thm:hermite}
For the fixed independent-occurrence model just specified, put
$A=\prod_e d_e^{d_e/2}/\sqrt{d_e!}$. Then, in every dimension
and for every $q\ge1$,
\[
 A^{-1}\Lpnorm{\norm{H_{\rm dec}}}q
 \le \Lpnorm{\norm{H_{\rm He}}}q
 \le A\Lpnorm{\norm{H_{\rm dec}}}q.
\]
Consequently the expected-norm and logarithmic-window moment
exponents are those of the expanded Gaussian template.
\end{theorem}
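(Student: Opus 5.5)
The plan is to prove both inequalities one occurrence at a time, using two exact one-variable identities linking $\operatorname{He}_d$ of a normalized Gaussian sum to the decoupled product. Both identities are linear in the coordinates of a single occurrence. The other occurrences are independent and enter multilinearly, so one can condition on them, apply the identity, and then pass the resulting averaging operator through the norm $\norm{\cdot}$ and the $L^q$ norm using Jensen and Minkowski. Doing this for every occurrence in turn multiplies the constants and produces $A$. Throughout, fix one occurrence $e$ with degree $d=d_e$ and independent standard Gaussian arrays $G^{(1)},\dots,G^{(d)}$ on its scope.

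\textbf{Upper bound on $H_{\rm He}$.} Put $Y=d^{-1/2}\sum_kG^{(k)}$ coordinatewise. Then $Y$ is again an array of independent standard Gaussians. Each $G^{(k)}$ splits as $d^{-1/2}Y$ plus a Gaussian part orthogonal to $Y$. Since $\prod_kG^{(k)}_i$ lies in the $d$-th Wiener chaos, its conditional expectation given $Y$ is its projection onto the chaos of $Y_i$. By the Wick product formula,
\[
\E\Bigl[\prod_{k=1}^dG^{(k)}_i\,\Big|\,Y\Bigr]=d^{-d/2}\operatorname{He}_d(Y_i),
\qquad\text{hence}\qquad
\frac{\operatorname{He}_d(Y_i)}{\sqrt{d!}}=\frac{d^{d/2}}{\sqrt{d!}}\,\E\Bigl[\prod_kG^{(k)}_i\,\Big|\,Y\Bigr].
\]
Distinct coordinates and occurrences are independent, so the conditional expectation of the full matrix $H_{\rm dec}$ given all the $Y$ arrays factorizes. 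Doing this for every occurrence at once gives $H_{\rm He}$, in law, as $A$ times $\E[H_{\rm dec}\mid\mathcal Y]$. Conditional Jensen applied to the convex function $\norm{\cdot}^q$ then yields $\Lpnorm{\norm{H_{\rm He}}}q\le A\Lpnorm{\norm{H_{\rm dec}}}q$.

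\textbf{Lower bound on $H_{\rm He}$.} For independent signs $\eps_1,\dots,\eps_d$, put $Y_\eps=d^{-1/2}\sum_k\eps_kG^{(k)}$. For each fixed $\eps$ this is again a standard Gaussian array. The multinomial Hermite expansion gives
\[
\operatorname{He}_d(Y_\eps)=\sum_{|m|=d}\frac{d!}{m!}\prod_k(\eps_kd^{-1/2})^{m_k}\operatorname{He}_{m_k}(G^{(k)}).
\]
Multiplying by $\eps_1\cdots\eps_d$ and averaging over $\eps$ keeps only the multi-indices with every $m_k$ odd. Since $|m|=d$ is spread over $d$ variables, this forces $m=(1,\dots,1)$. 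Therefore
\[
\prod_kG^{(k)}_i=\frac{d^{d/2}}{d!}\,\E_\eps\Bigl[\eps_1\cdots\eps_d\operatorname{He}_d(Y_{\eps,i})\Bigr]
=\frac{d^{d/2}}{\sqrt{d!}}\,\E_\eps\Bigl[\eps_1\cdots\eps_d\frac{\operatorname{He}_d(Y_{\eps,i})}{\sqrt{d!}}\Bigr].
\]
Choose independent sign vectors for the different occurrences. For each fixed choice of signs, the resulting matrix has the law of $H_{\rm He}$ multiplied by $\pm1$. Minkowski's inequality over the finite sign average then gives $\Lpnorm{\norm{H_{\rm dec}}}q\le A\Lpnorm{\norm{H_{\rm He}}}q$.

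\textbf{Conclusion and main obstacle.} The constants do not depend on the dimension or on $q$. Therefore \cref{thm:gaussian-lift}, applied to the expanded template $H_{\rm dec}$, transfers its expectation and logarithmic-window exponents to $H_{\rm He}$. The main point to check carefully is the conditional-expectation identity in the upper bound, specifically that it applies jointly to all coordinates and occurrences. This needs the chaos projection to be exactly $d^{-d/2}\operatorname{He}_d(Y)$. I would verify that directly from the generating function $e^{tx-t^2/2}$: writing $G^{(k)}=d^{-1/2}Y+Z^{(k)}$, conditioning on $Y$, and extracting the coefficient of $t_1\cdots t_d$. I would also confirm that conditioning on the $Y$ arrays leaves the other occurrences' arrays untouched, which holds because different occurrences use independent underlying arrays.
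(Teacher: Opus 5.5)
Your proposal is correct and follows essentially the same route as the paper: the upper bound comes from the identity $\E(\prod_k G^{(k)}\mid Y)=d^{-d/2}\operatorname{He}_d(Y)$ with conditional Jensen, and the lower bound from extracting the full sign Fourier coefficient of $\operatorname{He}_d(d^{-1/2}\sum_k\eps_kG^{(k)})$ with Minkowski, followed by the uniform Gaussian theorem for $H_{\rm dec}$. The only difference is cosmetic: in the upper bound you condition on all occurrences at once rather than group by group, which is equally valid by independence across occurrences.
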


This comparison is uniform in the number of labels, not in the
degrees $d_e$. It lets the same structural calculus treat a fixed
Hermite coordinate without mistaking an uncentered Gaussian power
for a homogeneous chaos.
It does not require the Hermite coordinate itself to have a symmetric
law, and it permits fixed degrees above two without changing the
$\theta_e\le2$ hypothesis of the separate tail-law theorem.

The complete proof is given in \cref{app:hermite-proof} (page~\pageref{app:hermite-proof}).

For original arities at least two, the logarithmic
charge becomes
$\frac12\max_{|S|=s}(a(S)+\sum_{e\subseteq S}d_e)$.
This does not cover ordinary powers $g^d$ without
Hermite subtraction, arbitrary sums of chaoses,
shared arrays across occurrences, or growing degrees.

\section{A square-root-scale degree-four SoS certificate}\label{sec:applications}\label{sec:sos}

The first principal application assembles correlated graph-matrix blocks
into a feasible semidefinite moment system. The norm estimates provide
blockwise error scales. The additional structure is algebraic: an exact
positive square protects the large correction, critical log-free bounds
control the remaining errors, and an extension from the pair block
enforces positivity and all constraints on the full moment matrix.

The application concerns the degree-four sum-of-squares relaxation for
clique in $G(n,1/2)$. A feasible pseudoexpectation at a prescribed size
shows that this particular relaxation cannot exclude that size. It is
a statement about a specified proof system, not a computational lower
bound for all algorithms. The quantitative target here is a fixed
positive constant times $\sqrt n$, without a denominator growing
polylogarithmically with $n$.
\subsection{Problem and quantitative statement}

\begin{definition}[Degree-four clique feasibility]\label{def:sos-feasibility}
For a graph $G$, degree-four clique feasibility at size $k$ means
a linear functional $\mathcal L_k$ on polynomials of degree at
most four satisfying normalization, positivity on squares of
degree-two polynomials, and the Boolean, nonedge, and size
constraints through their allowed degrees.
\end{definition}

\Needspace{12\baselineskip}
\begin{theorem}[Square-root-scale degree-four feasibility]
\label{thm:sos}
There exist absolute constants $c>0,n_0$ such that, for
$G\sim G(n,1/2)$ and $n\ge n_0$, with probability at least
$1-n^{-10}$, simultaneously for every real $9\le k\le c\sqrt n$,
there is a functional $\mathcal L_k$ satisfying
\[
 \begin{aligned}
 \mathcal L_k(1)&=1,&
 \mathcal L_k(p^2)&\ge0 &&(\deg p\le2),\\
 \mathcal L_k((x_i^2-x_i)q)&=0
       &&&&(\deg q\le2),\\
 \mathcal L_k(x_ix_jq)&=0
       &&&&(\{i,j\}\text{ a nonedge},\ \deg q\le2),\\
 \mathcal L_k((\sum_i x_i-k)q)&=0
       &&&&(\deg q\le3).
 \end{aligned}
\]
\end{theorem}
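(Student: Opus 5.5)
We follow the Hopkins--Kothari--Potechin quartic-correction construction and prove its positivity at $k\le c\sqrt n$, using log-free estimates only for the critical blocks.

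\textbf{Step 1 (base functional).} Start from the planted-clique pseudo-moments (the MPW/Feige--Krauthgamer functional). On a clique $I$ of size at most $4$, set $\mathcal L_k(x_I)=(k/n)^{|I|}2^{\binom{|I|}{2}}$ up to normalization, and set it to zero on non-cliques. This makes the Boolean and nonedge constraints hold by construction, since the functional lives on multilinear clique monomials.

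\textbf{Step 2 (quartic correction).} Add the HKP correction to the degree-four entries. Its purpose is to remove the rank-one-type deficiency that would otherwise limit the base functional to $k\approx n^{1/3}$. Then enforce the size constraint $\mathcal L_k((\sum_ix_i-k)q)=0$ through degree three. We do this by renormalizing level by level, that is, by defining the degree-$\le3$ values as the linear extensions forced by the size constraint from the degree-four values.

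\textbf{Step 3 (positivity via an exact positive square).} Index the moment matrix by $\emptyset$, singletons and pairs. Reduce positivity to the pair block restricted to the subspace orthogonal to the kernel forced by the size constraint, taking a Schur complement against the $\{\emptyset\}\cup$ singleton block. Decompose the pair block into three parts. The first is an explicit positive square $P$, the Johnson-scheme projection part together with the correction, whose minimal eigenvalue on the incidence complement is of order $(k/n)^2$ times known combinatorial factors. The second is the incidence-space part, handled separately by comparison with its own positive scale. The third is a sum of graph-matrix error blocks $M_\alpha$ indexed by shapes with $|U|=|V|=2$ and edges drawn from the random signs.

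\textbf{Step 4 (error blocks).} Bound each error block with \cref{thm:intro-graph}, upgraded to $1-n^{-10}$ tail bounds using the moment-window version in \cref{prop:core-upper}, and transferred by \cref{lem:color-upper}. Multiply each bound by its coefficient $(k/n)^{|W|}$. Shapes with polynomial slack contribute at most $n^{-\epsilon}$ relative to $P$ on the relevant subspace, so their logarithms are harmless. The critical shapes are those whose scale ties with $P$ exactly at $k\asymp\sqrt n$, namely the matching and path configurations. For these we check $a_*=0$, since the minimum separators expose no active component, so the bound is log-free. That gives error $\le C k^2/n\cdot(\text{scale of }P)$, and choosing $c$ small closes the comparison. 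Uniformity over all real $k\in[9,c\sqrt n]$ follows because every block is a fixed matrix times a polynomial in $k$, so a single graph event controls all $k$.

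\textbf{Step 5 (extension).} Extend from the pair block to the full degree-four functional, checking that all constraints still hold and that the Schur complement remains PSD.

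\textbf{Main obstacle.} The hard part is the subspace budget in Step 3. Different blocks must be compared against different positive scales on the incidence space versus its complement, and the cross terms between them must be absorbed without losing a logarithm. I would first attempt a block-diagonal conjugation $B^{-1/2}EB^{-1/2}$ with $B$ equal to $P$ plus the incidence part, and then verify the critical shapes case by case.
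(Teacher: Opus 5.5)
Your outline follows the paper's architecture: the HKP quartic correction, downward recursion from degree four, a two-scale comparison on $\operatorname{range}(D^*)$ versus $\ker D$, and an extension to the full moment matrix. Your use of the classification with $a_*=0$ for the critical matching and edge-plus-path patterns is also fine; the paper uses elementary reference bounds there instead. The gap is in Step~4, which rests on a false dichotomy: that every block outside your positive part is either polynomially subcritical or critical with $a_*=0$.

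\textbf{The star cross term is supercritical.} The decisive counterexample is the two-edge-star part of the \emph{uncorrected} disjoint-pair fluctuation, $\alpha_0(D^*Q^*+QD)$, where $Q_{\{a,b\},s}=W_{sa}W_{sb}$ and $\alpha_0\asymp\omega^4$. This term is log-free. However, in the metric $B=\omega^3n^2P+\omega^2n^2P_0$ its $\ker D\times\operatorname{range}(D^*)$ block has size $\asymp\omega^{3/2}/\sqrt n$, because $\|P_0QD\|\gtrsim n^{3/2}$. That is exactly the $n^{1/3}$ barrier you cite, and it is of order $n^{1/4}$ at $\omega\asymp\sqrt n$. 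So no norm estimate can charge it to the mean $E\succeq 2^{-15}B$.

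The correction does not make this term small; it cancels it algebraically. (Nor is the deficiency rank-one: $Q$ has rank about $n$.) An exact all-collision identity shows the correction equals $\rho(QQ^*+S_WJ_DS_W)$ up to fifteen log-free edge-plus-path patterns and an $O(\rho n)$ overlapping-pair term. Then \eqref{eq:sos-square} writes $\rho QQ^*+\alpha_0(D^*Q^*+QD)$ as a positive square minus $\frac{\alpha_0^2}{\rho}J_D$. The remainder is supported on $\operatorname{range}(D^*)$, so it costs only $K/\gamma$ against $\omega^3n^2$.

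Your ``positive square'' (Johnson-scheme mean plus correction) is not this square, and the cross term has no place in your decomposition. Also, $K/\gamma$ does not shrink with $c$. You must fix $\gamma$ large first, and only then take $c$ small depending on $\gamma$, because the edge-plus-path patterns cost $\gamma\omega^3/n^{3/2}$. So ``choose $c$ small'' alone does not close the budget.

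\textbf{Not every critical block is log-free.} The overlapping-pair entries of your pair block are the recursion values $\mu_3(\{a,b,c\})$. These are proportional to the random number of four-cliques containing $\{a,b,c\}$, and their fluctuation contains the pattern with entries $\sum_sW_{as}$. Deleting the common root $a$ exposes the active leaf $s$, so $a_*=1$ and the block has norm $\asymp\omega^3n^{3/2}\sqrt{\log n}$. Charged to the $\omega^2n^2$ scale on $\ker D$, it costs $c\sqrt{\log n}$ at $\omega=c\sqrt n$.

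The paper handles this pattern, and the four others with an isolated exclusive boundary role, by writing the completed pattern as $UD$ (or its transpose). Since $DP_0=0$, only an $O(n)$ collision matrix survives on $\ker D$. Likewise, the single-edge fluctuation must be completed to $D^*WD$ before it can be charged to $\operatorname{range}(D^*)$. As a raw norm it is $\omega^2/\sqrt n$ on the kernel scale.

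\textbf{Steps 3 and 5 can be simplified.} The Schur complement is unnecessary. With the Boolean and size constraints exact, the full moment matrix equals $T^*N'T$ for an explicit $T$. So positivity of the pair block $N'$ on the whole pair space suffices; it is also necessary, since $N'$ is a principal submatrix. Linking $N'$ to the filled matrix you actually estimate requires the compression identity \eqref{eq:sos-compression}. Finally, $\mathcal L_k(1)=1$ must be made exact, because the correction shifts $\sum_T\mu_4(T)$ by $\tau Z_4$. The paper tunes the internal parameter on the fixed graph event; a global positive rescaling would also work.
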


The construction follows the corrected quartic ansatz of
\citet{HopkinsKothariPotechin2015}. The relevant comparison
with \citet{RaghavendraSchramm2015} is at the logarithmic
scale: those cited versions give a
$\widetilde\Omega(\sqrt n)$ certificate. The statement above
has a fixed positive constant multiplying $\sqrt n$; it
does not change the polynomial exponent, give an optimized
constant, or establish a lower bound for every algorithm.

\subsection{The obstruction and the mechanism}

A bound on each random summand is not by itself a PSD proof. The
deterministic mean has different positive scales on the incidence space
and its kernel. Moreover, the quartic correction is correlated with the
same graph that generates the uncorrected moments. Applying the triangle
inequality to the correction and the dangerous cross term separately
discards precisely the positive square that makes the budget work.

Let $D$ be the signless vertex-pair incidence matrix, and write
$P$ for the projection onto $\operatorname{range}(D^*)$ and $P_0=I-P$.
The comparison metric is
\[
 B=\omega^3n^2P+\omega^2n^2P_0.
\]
The random correction contains $\rho QQ^*$; the local fluctuation
contains $\alpha_0(D^*Q^*+QD)$. Their sum is an exact square minus
$\alpha_0^2D^*D/\rho$. This negative term acts only on $P$, so it is
charged against the larger $\omega^3n^2$ scale. Charging it against the
smallest eigenvalue on the whole pair space would lose this advantage.

The remaining obstruction is logarithmic, but only in particular
blocks. Seven matching-containing patterns and fifteen edge-plus-path
patterns are at the critical polynomial scale and need log-free
bounds. The proof gives elementary same-scale references for both
families. Other fluctuations may retain a fixed logarithmic factor,
because a polynomial margin remains after normalization. The norm input
is therefore selective: log-free control is needed at the critical
scale, while a coarser bound suffices away from it. The elementary
references establish the critical estimates independently of the
full classification; the PSD assembly uses both types of estimates.

The final normalized budget has the schematic form
\[
 K\bigl(\gamma^{-1}+x+x^2+\gamma x^3\bigr)+o(1),
 \qquad x=\omega/\sqrt n.
\]
Choose the fixed correction strength $\gamma$ first, then a sufficiently
small fixed upper bound on $x$, and finally $n$ large. The term $o(1)$
is uniform over the allowed internal parameter interval. This order of
choices is the reason the conclusion has a constant multiple of
$\sqrt n$, rather than a dimension-dependent loss.

A PSD pair block still does not finish the application. The lower-degree
moments must be corrected using the same quartic data, the target size
must hold exactly, and constants and linear polynomials must be included
in positivity. The last two subsections of \cref{app:sos-detail} perform these steps on one
common graph event. No independence of the tuned parameter from the
graph, and no union bound over uncountably many target sizes, is used.

The complete construction and verification are in
\cref{app:sos-detail} (page~\pageref{app:sos-detail}). In particular,
\cref{eq:sos-square} displays the exact positive square, and
\cref{eq:sos-compression} identifies the actual clique-pair compression.
The proof then bounds every residual block in its own metric, tunes
the size constraint on one simultaneous event, and recovers the
constant and linear rows by an explicit congruence. Thus positivity
is not asserted only for the pair block or for a relaxed subset of
the constraints.

For the SoS interface, the first step is exact algebra. Multiplication
of graph matrices identifies inner boundary labels and partitions the
remaining terms by their cross-collisions; repeated quotient edges cancel
in pairs in the sign model. The output is a finite list of shapes with
specified coefficients. Applying logarithmic moments and a finite union
bound gives one event on which every required block estimate holds.
The PSD argument then works pointwise on that event. In particular,
a parameter chosen from the observed graph may enforce a size constraint
without needing independence from the random blocks it multiplies.

This interface also explains why a matching norm theorem is not a
black-box PSD theorem. Triangle inequality combines upper bounds for a
finite sum, but separate lower bounds need not survive cancellation.
The correction and its cross term must first be assembled into a positive
square. Only its residual negative part is charged to the appropriate
subspace. The distinction is algebraic, not a missing logarithm in the
norm estimate. The argument in \cref{sec:sos} keeps both stages explicit.
Both reductions fix template sizes and degrees before dimension grows.
The finite norm budgets therefore do not supply uniform constants for
growing-degree relaxations. The application also chooses its own
normalization, whose effect remains explicit in the argument.

\section{Spectral thresholds for random tensor networks}\label{sec:rtn}

A tensor-network contraction gives a different use of the norm theory.
Its primitive random objects are tensors, so its natural incidence
description is already an independent-factor chaos. To study approximate
isometry, however, one must first center its Gram matrix. This creates
a sum indexed by active tensor subsets. Decoupling each term produces a
new factor template whose separator is computed in a doubled network.
The norm theorem becomes an input only after these operations.

We distinguish two questions throughout. Mean-Gram normalization asks
whether a linear map is close to an isometry on its input space.
Sample-state normalization asks about the spectrum and entropy of
a density matrix obtained from one realization. These normalizations
are not interchangeable. Even for the mean-Gram deviation, operator
and unnormalized Schatten norms can have different convergence
thresholds because a finite Schatten norm retains a dimension factor.

\subsection{The model, its normalizations, and the cut gap}
\begin{definition}[Gaussian tensor network and normalizations]\label{def:rtn-model}
Fix a finite network with $Q$ tensor vertices, internal edges joining
distinct vertices, and open edges ending at distinct degree-one
terminals. Parallel internal edges are distinct indices. Split
terminals into $a$ outputs and $b$ inputs. Every component meets
a terminal. Each edge has dimension $N$, and each tensor has
independent standard circular complex Gaussian coordinates,
independently of the other tensors. Let $e$ be the internal edge
count and $r=e+a+b$. Contracting internal indices gives
$H_N:\C^{N^b}\to\C^{N^a}$.

Set
\[
 M_N=N^{-(r-b)/2}H_N,\qquad F_N=M_N^*M_N-I_{N^b},\qquad
 \rho_A=\frac{H_NH_N^*}{\|H_N\|_F^2}.
\]
\end{definition}
Covariance identifies the two copies of every tensor coordinate:
each input label must agree, and each internal or output edge
has one free label. Thus $\E H_N^*H_N=N^{r-b}I$.
This proves the mean-Gram normalization. The state normalization
is random and is treated separately.

If $Q=0$, this incidence model has no edges or terminals.
The empty contraction is the scalar one, the deviation is zero,
and all entropies vanish. Below assume $Q\ge1$.
\begin{definition}[Signed cut gap]\label{def:rtn-cut-gap}
For nonempty tensor sets $T$, write $c(T)$ for the internal
edge boundary and $a(T),b(T)$ for incident output and input
legs. Put
\[
 \delta(T)=c(T)+a(T)-b(T),\qquad
 \Delta=\min_{\varnothing\ne T}\delta(T),\qquad d=N^b.
\]
\end{definition}
Placing precisely $T$ on the input side gives a cut of size
$b+\delta(T)$. Thus $\Delta>0$ means that the input-terminal
cut is uniquely minimum.

\subsection{Sharp deviation scales and their interpretation}

All Schatten norms below are unnormalized. In particular,
$\|I_d\|_{S_t}=d^{1/t}$ for finite $t$, whereas $\|I_d\|_{\rm op}=1$.
\begin{theorem}[Mean-Gram deviations]\label{thm:rtn}
Assume at least one tensor vertex.
If $\Delta\ge0$, then for every fixed $1\le t<\infty$,
\[
 \E\|F_N\|_{S_t}\asymp_{\mathcal G,t}
                         N^{b/t-\Delta/2}.
\]
There are positive constants $c_t,p_t$ such that the norm
exceeds $c_tN^{b/t-\Delta/2}$ with probability at least
$p_t$, uniformly in $N$.
For all signs of $\Delta$,
\[
 \|F_N\|_{S_t}\xrightarrow{\Prob}0
       \ \Longleftrightarrow\ \Delta>2b/t,\qquad
 \|F_N\|_{\rm op}\xrightarrow{\Prob}0
       \ \Longleftrightarrow\ \Delta>0.
\]
If $\Delta>0$, the uniform Gaussian norm bound additionally
gives $\E\|F_N\|_{\rm op}\asymp N^{-\Delta/2}$.
\end{theorem}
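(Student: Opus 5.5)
Our plan is to center the Gram matrix and expand the deviation over active tensor subsets, as the section introduction indicates. We write each tensor as $X_v$ and $M_N^*M_N$ as a contraction of two copies of the network (the doubled network), glued along internal and output edges. Expanding each product of a coordinate of $\bar X_v$ with one of $X_v$ as its Wick-paired mean plus its centered (Wick-ordered) part gives
\[
 F_N=\sum_{\varnothing\ne T}F_T,
\]
where $F_T$ has the centered part exactly at the tensors in $T$ and the covariance identification at the others. Off $T$, the covariance forces agreement of each input label and leaves one free label per internal or output edge. The normalization $N^{-(r-b)/2}$ then cancels these free labels. What remains of $F_T$ is an independent-factor chaos. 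Its random part is a degree-two (Wick) polynomial in the tensors of $T$, and it is decoupled by replacing $\bar X_v X_v$ by $\bar X_v' X_v$; decoupling costs only constants for fixed degree in every $L^q$. After decoupling, $F_T$ is a coefficient-one complex Gaussian factor template whose boundaries are the two input copies. A separator in its two-section graph corresponds to a choice of which edges of $T$ are ``cut''. Counting labels via \cref{thm:gaussian-lift}, together with the normalization, should give $\|F_T\|_{\rm op}\asymp N^{-\delta(T)/2}$ up to a possible logarithmic factor and the identity tensor on the $N^{b-b(T)}$ untouched input legs.

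We next check that there is no logarithmic factor and no detached components. In the lifted template, a minimum separator should leave no active component, because each tensor of $T$ touches both copies. Any logarithmic charge could only come from private roles of unary factors, and these are absent when every tensor has degree at least one; components always meet a terminal. We therefore expect $\widehat g=0$ and $\widehat d_0=0$. This makes the moments in the logarithmic window comparable to the mean, which gives the operator statement $\E\|F_N\|_{\rm op}\asymp N^{-\Delta/2}$ once we sum over the finitely many $T$. The lower bound comes from the minimizing $T$. A single term cannot cancel against the others, because the $F_T$ lie in orthogonal Wick chaoses of distinct multidegree. We therefore lower-bound $\|F_N\|$ through a contractive projection onto that chaos: averaging over independent phase rotations $X_v\mapsto e^{i\phi_v}X_v$ with $\phi_v$ only on the Gaussian copies composes into a conditional expectation that preserves the law. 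This is the step we expect to be the main obstacle, since phase rotations fix $\bar X X$, so isolating the $T$-component needs the decoupled copies, or a Hermite-degree projection via Ornstein--Uhlenbeck semigroup contractivity.

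For Schatten norms we use the Frobenius computation directly. $\E\|F_T\|_{S_2}^2$ is an exact Wick count of order $N^{b-\delta(T)}$, which we verify by a cut-counting argument in the doubled network. For $1\le t<\infty$ we interpolate between $S_2$ and the operator norm. The upper bound is $\|F\|_{S_t}\le d^{1/t}\|F\|_{\rm op}$, giving $N^{b/t-\Delta/2}$. For $t<2$ this is too crude only if the spectrum is concentrated, so we instead argue that $F_T$ is a tensor product of a term on the $b(T)$ legs with $I$ on the rest. The active factor on the $b(T)$ legs is a Wishart-type deviation, whose spectrum has a bulk at scale $N^{-\delta(T)/2}$ by the matching operator and Frobenius bounds, so every Schatten norm equals $N^{b/t}\cdot N^{-\delta/2}$ up to constants. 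The probability lower bound follows by Paley--Zygmund from the matching second and fourth moments, by Gaussian hypercontractivity.

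For the convergence criteria, the ``if'' directions follow from the expectation bounds and Markov. For $\Delta<0$ we need a separate statement, since $\Delta<0$ is excluded from the scale theorem. There the term for the minimizing $T$ still has operator norm of order $N^{-\Delta/2}\to\infty$ with probability bounded below, by the same projection and Paley--Zygmund argument, and this gives the ``only if'' directions. For $\Delta=0$, the operator norm stays bounded below with positive probability, and the Schatten norm at order $N^{b/t}\not\to0$ fails exactly when $\Delta\le 2b/t$.
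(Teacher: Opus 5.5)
Your upper-bound route is essentially the paper's: expansion over nonempty active sets $T$, Gaussian decoupling, and \cref{thm:gaussian-lift} for the operator rate. The paper obtains the finite-Schatten upper bounds more cheaply, from fixed-order trace moments and power means.

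Three of your justifications on that side need correcting.
\begin{itemize}
\item The exponent $-\delta(T)$ requires the doubled template $D_T$ to have minimum separator exactly $b$. This is a max-flow argument in the induced $T$-network, and it is exactly where $\Delta\ge0$ enters.
\item The claim $\widehat d_0=0$ is false in general. A component of $T$ with no input leg doubles through its bridge roles into a detached scalar. This is harmless only because expectations carry no $q^{\widehat d_0/2}$.
\item Zero active count has nothing to do with unary factors. Every occurrence receives a private role, and the paper excludes all active components by inclusion-minimality of the cut.
\end{itemize}

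The genuine gap for $\Delta\ge0$ is the lower bound. Phase rotations indeed cannot isolate $F_{T^*}$. The projection that does exist is $F_T=\prod_{v\in T}(I-\E_v)\prod_{v\notin T}\E_vF_N$, where $\E_v$ integrates out tensor $v$ only. However, it transfers only $L^q$ bounds, not the probability lower bound the theorem asserts. In addition, the lift's lower bound concerns the decoupled $D_{T^*}$, while your decoupling is one-sided.

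None of this is needed, because the paper works with $F_N$ itself:
\begin{itemize}
\item The exact identity $\E[d^{-1}\tr F_N^2]=\sum_{T\ne\varnothing}N^{-\delta(T)}$ holds, and the fixed-order bound $\E[d^{-1}\tr F_N^4]\le C_2N^{-2\Delta}$ holds.
\item Applied to $Y=|\lambda_j(F_N)|$ with $j$ uniform, these give $\E Y\ge C_2^{-1/2}N^{-\Delta/2}$.
\item Put $Z=d^{-1}\|F_N\|_1$. Then $\E Z=\E Y$ and $Z^2\le d^{-1}\tr F_N^2$, so Paley--Zygmund gives nonrare spectral mass.
\item The inequalities $\|F_N\|_{S_t}\ge d^{1/t-1}\|F_N\|_1$ and $\|F_N\|_{\rm op}\ge d^{-1}\|F_N\|_1$ then yield every expectation and probability lower bound at once, including $\Delta=0$ and $0\le\Delta\le2b/t$.
\end{itemize}
Your H\"older idea $\|A\|_1\ge\|A\|_2^2/\|A\|_{\rm op}$ can replace the fourth moment. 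It must be applied to $F_N$ itself, combining hypercontractive Paley--Zygmund for $\tr F_N^2$ with Markov for $\|F_N\|_{\rm op}$. It should not be applied to a single projected term.

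Your argument for $\Delta<0$ fails as stated. ``The same'' scale computation needs $D_T$ to have separator size $b$, which is false once $\Delta<0$. Take a single tensor with $a$ output legs and $b>a$ input legs:
\begin{itemize}
\item $D_{\{w\}}$ is separated by its $a$ output bridges.
\item $M_N^*M_N$ has rank $N^a$, and its nonzero eigenvalues are of order $N^{b-a}=N^{-\Delta}$, not $N^{-\Delta/2}$.
\item The Wishart moment formulas give $d^{-1}\E\tr F_N^4\asymp N^{-3\Delta}$ against $(d^{-1}\E\tr F_N^2)^2=N^{-2\Delta}$. So a spectral-mass Paley--Zygmund argument collapses.
\end{itemize}
Also, a probability lower bound for a projected term does not transfer to $F_N$.

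The missing idea is the paper's rank obstruction. Placing a minimizing $T$ on the input side gives a cut of size $b+\Delta<b$. So $M_N$ factors through at most $N^{b+\Delta}$ coordinates, and at least $d-N^{b+\Delta}$ eigenvalues of $F_N$ equal $-1$. This deterministically excludes convergence in operator norm and in every $S_t$.

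Alternatively, use $\|F_N\|_{\rm op}^2\ge d^{-1}\tr F_N^2$ together with the exact second moment, which holds for every sign of $\Delta$. Hypercontractive Paley--Zygmund for this fixed-degree polynomial then gives $\|F_N\|_{\rm op}\gtrsim N^{-\Delta/2}$ with probability bounded below. This is only a lower bound, not the order, but it suffices for the ``only if'' directions.
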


The signed gap has a direct meaning. The input-terminal cut has size
$b$. Moving a nonempty set $T$ of tensors to the input side changes its
size by $\delta(T)$. A strictly positive minimum change ensures that
the original cut is uniquely minimum, which is the operator-isometry
criterion. For a finite Schatten norm, that same gap must also pay for
the $N^b$ input directions; the required extra margin is $2b/t$.

For example, consider a single tensor, viewed as an $N^2$ by $N$
Gaussian matrix. Here $a=2$, $b=1$, and $\Delta=1$. The expected
operator deviation is of order $N^{-1/2}$ and converges to zero.
The expected Frobenius deviation is of constant order and does not
converge to zero in probability. The expected trace-norm deviation
is of order $N^{1/2}$. Thus one family of maps can be approximately
isometric in operator norm and fail the corresponding unnormalized
Frobenius or trace criterion. The independent finite-dimensional
Wishart calculation in \cref{app:wishart} verifies this dimension
dependence directly.

The proof separates three inputs. Fixed trace moments give the
finite-Schatten upper bounds and qualitative operator convergence.
An exact second moment and a fourth-moment bound yield nonrare spectral
mass, proving the lower scales and excluding convergence at equality.
The uniform Gaussian theorem in \cref{sec:gaussian} is needed for the
sharp operator expectation rate; a fixed-order moment estimate alone
would leave an arbitrarily small polynomial loss.

\subsection{The sample-state spectrum and entropy}

For $\alpha>0$, $\alpha\ne1$, write
$H_\alpha(\rho)=(1-\alpha)^{-1}\log\tr(\rho^\alpha)$.
Use the continuous extension at $\alpha=1$, and set
$H_0(\rho)=\log\rank\rho$ and
$H_\infty(\rho)=-\log\|\rho\|_{\rm op}$.
Let $s$ be the ordinary minimum input-output edge cut, not the signed
gap $\Delta$.

\begin{theorem}[Sample-normalized spectral endpoint]\label{thm:rtn-state}
For the fixed network model above, $\rho_A$ is defined almost surely.
There is a constant $C_{\mathcal G}$ such that, with probability
$1-O_{\mathcal G}(N^{-1})$,
\[
 N^{-s}\le\|\rho_A\|_{\rm op}\le C_{\mathcal G}N^{-s}.
\]
On the same event,
\[
 s\log N-\log C_{\mathcal G}
 \le H_\alpha(\rho_A)\le s\log N
 \qquad (0\le\alpha\le\infty)
\]
simultaneously for all the indicated orders.
\end{theorem}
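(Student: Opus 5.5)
The plan is to reduce every assertion to three estimates for the unnormalized contraction $H_N$: a deterministic rank bound, concentration of $\|H_N\|_F^2$, and a log-free operator upper bound $\|H_N\|_{\rm op}^2\le C N^{r-s}$ holding with probability $1-O(N^{-1})$. The entropy statements then follow formally. First, $\rho_A$ is defined almost surely because $H_N\ne0$ almost surely. Since $\tr\rho_A=1$, we have
\[
 \|\rho_A\|_{\rm op}\ge 1/\rank\rho_A .
\]
The Rényi entropies are nonincreasing in $\alpha$, with $H_0=\log\rank$ and $H_\infty=-\log\|\rho_A\|_{\rm op}$. Consequently a bound $\rank H_N\le N^s$ gives, deterministically, both $\|\rho_A\|_{\rm op}\ge N^{-s}$ and $H_\alpha(\rho_A)\le s\log N$ for all $\alpha$. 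Likewise, $\|\rho_A\|_{\rm op}\le C_{\mathcal G}N^{-s}$ on an event gives $H_\alpha\ge H_\infty\ge s\log N-\log C_{\mathcal G}$ on that same event, simultaneously for all orders. So it suffices to prove the rank bound and the operator bound.

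\emph{Cut-rank bound.} Fix a minimum input--output edge cut, of size $s$, separating the tensors into an input side and an output side. Contracting each side separately writes $H_N=A\,B$ with $B:\C^{N^b}\to\C^{N^s}$ and $A:\C^{N^s}\to\C^{N^a}$. The middle space is indexed by the labels of the $s$ cut edges, so $\rank H_N\le N^s$. This step is deterministic and involves no probability.

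\emph{Frobenius concentration.} Covariance identification of the two copies of every tensor coordinate gives $\E\|H_N\|_F^2=N^r$. I would compute $\E\|H_N\|_F^4$ by the Wick expansion. Each tensor has two pairings between the two copies of $H_N$ and the two copies of $\overline{H_N}$: the straight pairing and the crossed one. If all tensors use the same choice, we get the leading term $N^{2r}$. Every mixed choice forces at least one edge label to be shared across the copies, so it loses at least one factor $N$. Hence $\Var\|H_N\|_F^2=O(N^{2r-1})$, and Chebyshev gives $\|H_N\|_F^2\ge N^r/2$ with probability $1-O(N^{-1})$.

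\emph{Operator bound.} This is the main obstacle, since it must be log-free and hold with failure probability $O(N^{-1})$. I would view $H_N$ as a coefficient-one independent-factor chaos in which the roles are the edges and open legs (each with $N$ labels), the scopes are the tensors, the rows are the outputs and the columns are the inputs. Vertex separators of the two-section graph then correspond to input--output edge cuts, so the polynomial exponent is $(r-s)/2$. I would apply \cref{thm:gaussian-lift} at moment order $q\asymp\log N$ and then use Markov's inequality, which yields the $N^{-1}$ tail. Connectivity (every component meets a terminal) excludes detached components, so $\widehat d_0=0$.

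The delicate point is $\widehat g=0$. In the lift, a tensor whose legs all lie in a minimum cut would leave its private role as an active component, which would produce a spurious $\sqrt{\log N}$. My first attempt is a combinatorial lemma stating that some minimum vertex separator of the lifted template exposes no private role. The idea is to replace a fully cut tensor's legs by a cut on one side of that tensor, which has no larger size, or else to take the lifted separator to contain the private role itself when that role is cheaper. If that lemma fails, I would fall back on the factorization $H_N=AB$ from the rank bound. I would then bound $\|A\|_{\rm op}$ and $\|B\|_{\rm op}$ separately, at scales $N^{(r_A-s)/2}$ and $N^{(r_B)/2}$ respectively, using Gaussian concentration of the operator norm, which is Lipschitz in each tensor's coordinates. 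For these two halves the cut is unique on the relevant side, so no ties arise that could create active components.
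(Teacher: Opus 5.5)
Your architecture is the paper's: a deterministic cut-rank bound, Chebyshev for $N^{-r}\|H_N\|_F^2$ from a two-pairing Wick computation, the uniform Gaussian bound of \cref{thm:gaussian-lift} at order $q\asymp\log N$ followed by Markov, and monotonicity of $H_\alpha$ in $\alpha$. The gap is in the one step that makes the operator bound log-free, namely $\widehat g=0$.

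The statistic $\widehat g$ is half the \emph{maximum} of $a(S)$ over \emph{all} minimum separators, and \cref{thm:gaussian-lift} is two-sided. A lemma asserting that \emph{some} minimum lifted separator exposes no private role therefore proves nothing. If any single minimum cut left an active component, $\E\|H_N\|_{\rm op}$ would genuinely carry a factor $\sqrt{\log N}$, and no $N$-independent $C_{\mathcal G}$ could exist. Your analysis also covers only isolated private roles. A minimum cut could a priori detach a terminal-free cluster of several tensors, which would be active as well.

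What you need, and what the paper proves, is that every minimum lifted separator has $a(S)=0$. The argument is short.
\begin{itemize}
\item A private role is simplicial: its neighborhood is the clique of its tensor's legs. So it never lies in a minimum separator, and the minimum lifted separators are exactly the minimum terminal-separating edge cuts.
\item In an inclusion-minimal such cut, re-adding any cut edge creates an input--output path. So each cut edge joins a component meeting the inputs to a component meeting the outputs.
\item Consequently no tensor has all of its incident internal edges and legs cut. It would be a terminal-free singleton touching a cut edge.
\item A terminal-free component of the network minus the cut touches no cut edge. It would then be a terminal-free component of the original network, which the model excludes.
\end{itemize}
Hence every private role attaches to a boundary-touching component.

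Your fallback does not repair this. The halves $A,B$ need not have unique minimum cuts: a chain of gates cut at its input leg leaves several minimum cuts in $A$. Ties are not what create active components in any case. Gaussian concentration only controls fluctuations about $\E\|A\|_{\rm op}$, whose log-free size is the same active-count question. The norm of a multilinear contraction is also Lipschitz only on a restricted set.

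There is also a smaller slip in the fourth Frobenius moment: the all-crossed configuration is not leading. Open-leg labels are shared by $H_N$ and $\overline{H_N}$ inside one copy of $\tr(H_NH_N^*)$. So crossing a set $T$ of tensors costs $N^{-(c(T)+a(T)+b(T))}$, and $T=[Q]$ costs $N^{-(a+b)}$. If both uniform choices really gave $N^{2r}$, the variance would be of order $N^{2r}$. With the correct count, $\E X^2=\sum_{T\subseteq[Q]}N^{-\partial(T)}$. Here $\partial(T)\ge1$ for every nonempty $T$ because every component meets a terminal, and this gives your Chebyshev bound.
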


This theorem controls the entropy endpoint at the scale level;
it does not identify a leading spectral constant.
For connected networks with both boundary sets nonempty,
\cref{thm:rtn-exact-edge} supplies the sharper conclusion by a
separate proof. We retain the present theorem for its broader
model and its simultaneous guarantee for all entropy orders.
Its lower bound on the largest eigenvalue comes from cut rank.
Its upper bound requires both the uniform operator estimate for $H_N$
and concentration of the random normalization $\|H_N\|_F^2$.
Replacing the latter by its expectation without a probability estimate
would not prove the statement.

Random tensor networks and their spectral consequences have been studied
in several settings
\citep{HaydenEtAl2016,Hastings2016,ChengEtAl2024,FitterLoulidiNechita2024}.
Our statements specify independent Gaussian coordinates, fixed network
size, and equal edge dimensions. The mean-Gram result is not asserted
for a network obtained by independently normalizing every local tensor.
Such scalar normalizations cancel in $\rho_A$, but they change the
mean-Gram question. The proof in \cref{app:rtn-detail} keeps this distinction explicit.

\subsection{Why the thresholds use different moment inputs}

For $\Delta\ge0$, the lower argument gives a nonvanishing probability
of the matching scale, excluding convergence at equality and below the
finite-Schatten threshold. For $\Delta<0$, a rank obstruction excludes
convergence in all the stated norms.
Thus the dimension factor changes the criterion for a finite Schatten
norm, rather than merely changing a constant in an operator estimate.

A single Gaussian tensor gives a useful calibration. If its output and
input dimensions are $N^a$ and $N^b$, then $\Delta=a-b$. Operator-norm
convergence requires $a>b$, while unnormalized trace-norm convergence
requires $a>3b$. Confusing these two statements changes the threshold
even in a Wishart matrix. We include the second and fourth Wishart
moments as an independent check.

Fixed trace moments establish the finite Schatten thresholds and the
qualitative operator-norm criterion. They do not alone give the sharp
operator-norm expectation rate: obtaining that rate requires a moment
estimate uniform up to logarithmic order. We keep these dependencies
separate.
For the RTN interface, centering the Gram matrix produces a sum indexed
by active tensor subsets. Quadratic Gaussian decoupling turns each term
into an independent-factor template in a doubled network. Its separator
cost gives the cut-gap penalty, while its matrix dimension supplies
the factor retained by a finite Schatten norm. Fixed moments suffice
for the finite-order conclusions. Uniform moments become essential
when obtaining sharp operator scales without a residual polynomial loss;
this is the additional output of \cref{thm:gaussian-lift} used here.

The entropy consequence has one more step. The output state is normalized
by its realized Frobenius norm, not by its expected Gram matrix. A
deterministic cut bounds rank, while a norm bound controls the largest
eigenvalue after this normalization. These two quantities bound the
R\'enyi entropies from opposite sides. Keeping this step separate from
approximate isometry explains why the state theorem is a consequence
within the RTN application, rather than a third independent application.

The full proof is in \cref{app:rtn-detail}
(page~\pageref{app:rtn-detail}). It includes the exact doubled-network
role count, the second-moment identity, a positive-probability lower
bound excluding convergence at the threshold, and the separate
normalization step needed for the entropy statement.

\section{Exact spectral edges of Gaussian tensor networks}\label{sec:rtn-edge}

The minimum cut determines the dimension scale of the largest
eigenvalue, but it does not determine its leading constant.
That constant records finer network structure. We now identify it
for every fixed connected network in the independent complex-Gaussian,
equal-dimension model. The result upgrades the sample-state scale in
\cref{thm:rtn-state} to convergence of the right spectral edge.

This requires a separate argument. The all-defect estimates for graph
matrices determine powers and logarithms up to shape-dependent constants;
they do not identify the exact exponential base of an RTN moment
sequence. Here a global flow quotient and a comparison at a different
auxiliary dimension preserve that base. The quotient is realized as a
product of overlapping Ginibre gates, to which the tensor-GUE strong
convergence theorem of \citet[Section~9.4, Theorem~9.8]{ChenGarzaVargasVanHandel2026}
applies. The reduction and dimension comparison below connect that
external theorem to the original network; they do not identify
contracted original tensor clusters with independent Gaussian gates.

\subsection{The model and the exact right edge}

\begin{definition}[Connected equal-dimension RTN model]\label{def:rtn-edge-model}
In this section the tensor-vertex graph is fixed, finite, connected,
and loopless. Parallel internal edges are allowed and retain distinct
index coordinates. The sets $A,B$ of output and input legs are both
nonempty. All internal edges and open legs have dimension $N$.
Each actual tensor vertex carries an independent array of independent
standard circular complex Gaussians with $\E|g|^2=1$.
Contractions use the unnormalized pairing
$\sum_{i=1}^N e_i\otimes e_i$.

Attach all input legs to a terminal $\mathsf b$ and all output legs to
a terminal $\mathsf a$, obtaining an augmented multigraph $G$.
Write $V_G$ for its actual tensor vertices, $v=|V_G|$, and $R=|E(G)|$
for the number of internal edges plus open legs. Let $s\ge1$ be the
minimum $\mathsf b$--$\mathsf a$ edge-cut size, allowing terminal legs
in the cut. Define
\begin{equation}\label{eq:edge-normalizations}
 \begin{gathered}
 Y_N=N^{s-R}H_NH_N^*,\qquad
 Z_N=N^{-R}\tr(H_NH_N^*),\\
 X_N=N^s\rho_A=Y_N/Z_N .
 \end{gathered}
\end{equation}
Cut factorization gives $\rank H_N\le N^s$. Whenever we use
$N^{-s}\tr$, we normalize the nonzero spectrum padded to $N^s$
eigenvalues; the original boundary matrix need not have that dimension.
\end{definition}

\begin{definition}[Permutation energy and limiting right edge]\label{def:rtn-energy}
For $p\ge1$, set $\gamma_p=(1\,2\,\cdots\,p)$ and
$|\sigma|=p-\#\operatorname{cyc}(\sigma)$ on $S_p$.
The transposition metric is $d(\sigma,\tau)=|\sigma^{-1}\tau|$.
For labels $\pi_u\in S_p$ at actual tensor vertices, fix
$\pi_{\mathsf b}=\id$, $\pi_{\mathsf a}=\gamma_p$, and put
\begin{equation}\label{eq:edge-energy}
 \begin{gathered}
 \mathcal H_G^{(p)}(\pi)
   =\sum_{\{u,w\}\in E(G)}d(\pi_u,\pi_w),\qquad
 \Delta_G(\pi)=\mathcal H_G^{(p)}(\pi)-s(p-1),\\
 A_G(p,\delta)=\#\{\pi:\Delta_G(\pi)=\delta\}.
 \end{gathered}
\end{equation}
Let $\mu_G$ be the compactly supported probability measure with
positive moments $m_p=A_G(p,0)$, and set $E_G=\sup\supp\mu_G$.
\end{definition}
This is the minimum-energy moment law of
\citet[Theorems~4 and~7]{FitterLoulidiNechita2024}.
Its existence and the identification of its right endpoint also
follow from the construction below.

\begin{theorem}[Exact RTN right edge]\label{thm:rtn-exact-edge}
Under the fixed connected model of this section,
\begin{equation}\label{eq:edge-main}
 \norm{Y_N}\xrightarrow{\Prob}E_G,\qquad
 N^s\lambda_{\max}(\rho_A)\xrightarrow{\Prob}E_G .
\end{equation}
In particular $1\le E_G<\infty$, and
\begin{equation}\label{eq:edge-min-entropy}
 H_\infty(\rho_A)=s\log N-\log E_G+o_{\Prob}(1).
\end{equation}
\end{theorem}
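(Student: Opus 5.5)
The proof has two halves, an upper edge and a lower edge for $\norm{Y_N}$. Both are then transferred to $X_N=N^s\rho_A$ through concentration of $Z_N$. First, I would expand the normalized moments by Wick's theorem. For complex Gaussians, $\E\, N^{-s}\tr(Y_N^p)$ equals a sum over permutation labels $\pi\in S_p^{V_G}$ of $N^{-\Delta_G(\pi)}$, up to the padding convention for $N^{-s}\tr$. Call this sum $F_G(p,N)=\sum_{\delta\ge0}A_G(p,\delta)N^{-\delta}$. Its $N\to\infty$ limit is $m_p=A_G(p,0)$, and concentration of fixed-order moments identifies $\mu_G$ as the limiting empirical law. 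The lower edge is then routine. For fixed $\eta>0$, weak convergence in probability of the padded empirical spectral measure to $\mu_G$ forces $\norm{Y_N}\ge E_G-\eta$ with probability tending to one, since $\mu_G$ charges $(E_G-\eta,E_G]$. The bound $E_G\ge1$ comes from $m_p\ge1$: the configuration with every $\pi_u$ on a geodesic from $\id$ to $\gamma_p$, placed according to a minimum cut, has zero excess. The bound $E_G<\infty$ follows from the upper bound below.

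The upper edge requires control of the moments at order $p\asymp\log N$. There I would use the flow-quotient comparison announced earlier. Take a maximum $\mathsf b$--$\mathsf a$ flow of value $s$, orient the edges it uses, and collapse the strongly connected components of the residual digraph. This yields a balanced acyclic quotient $Q$ in which each quotient node has equal in- and out-flow. I would then build an injective-up-to-fiber map from labelings $\pi$ of $G$ with excess $\delta$ to labelings of $Q$. Its target defect must be controlled by $\delta$, and its fiber weights by $p^{K_G\delta}$ with $K_G=4v+2+8R$. With these inputs, \cref{thm:positive-fiber} gives
\[
 F_G(p,N)\le \frac{F_Q(p,p^2)}{1-p^{K_G}/N},\qquad N>p^{K_G}.
\]
Next I would realize $Q$ as a product of independent Ginibre gates acting on overlapping tensor factors. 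The tensor-GUE strong convergence theorem of \citet{ChenGarzaVargasVanHandel2026} then gives convergence of its normalized norm to the edge of its limiting law, and I would check that this edge equals $E_G$ because the zero-excess counts agree. Gaussian concentration for that polynomial in Ginibre entries, plus a crude polynomial tail bound, converts this into $F_Q(p,p^2)\le2(E_G+\eta)^p$ for large $p$. Taking $p=\lceil C\log N\rceil$ then gives $\E\norm{Y_N}^p\le N^{s}F_G(p,N)\le 4N^s(E_G+\eta)^p$, and Markov's inequality yields $\norm{Y_N}\le E_G+2\eta$ with high probability once $C$ is large relative to $s/\eta$.

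For the transfer, I would compute $\E Z_N=1$ exactly from the covariance identity and show $\Var Z_N=O(N^{-1})$ via second-order Wick terms in a connected network, so $Z_N\to1$ in probability. Then $X_N=Y_N/Z_N$, and $\lambda_{\max}(\rho_A)=N^{-s}\norm{X_N}$ converges to the same limit. The min-entropy formula follows by taking logarithms.

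The main obstacle is the fiber comparison map. The SCC quotient must lose no zero-excess configurations, so that the exponential base $E_G$ is preserved exactly. It must also charge every defect configuration at cost $p^{K_G}$ per unit of defect, uniformly in $p$. I would first build the map by pushing each vertex label to a geodesic interpolation along the flow paths through its component, then encode the deviations by transposition records, in the spirit of the encoding in \cref{app:counting-detail}.
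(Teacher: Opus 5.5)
Your architecture matches the paper's: the Wick expansion into $F_G(p,N)$, the SCC flow quotient, \cref{thm:positive-fiber} at auxiliary dimension $p^2$, an overlapping Ginibre circuit with tensor-GUE strong convergence, concentration plus growing moments, Markov at order $\log N$, fixed moments for the lower edge, and concentration of $Z_N$. The genuine gap is the step you flag as the main obstacle. You never produce a map from $G$-labelings to $Q$-labelings with target defect $O(\delta)$, fibers of size $p^{O(\delta)}$, and $A_G(p,0)=A_Q(p,0)$. The construction you sketch does not supply one. A $Q$-labeling assigns one permutation per SCC, so ``geodesic interpolation along the flow paths'' does not define such a map. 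If the deviations are encoded edge by edge along flow paths, they cannot be charged to the defect: at zero defect the flow edges still carry total transposition distance $s(p-1)$. Such an encoding therefore costs $p^{\Theta(p)}$ and destroys the exponential base $E_G$ that the whole argument exists to preserve. What is needed is that \emph{all labels inside one SCC lie within distance $O(\delta)$ of each other}, and nothing in your sketch gives this.

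The missing ingredient (\cref{lem:edge-variation}) is a reduced-cost argument. Put $r_u=|\pi_u|$, and give each arc $u\to w$ of $D$ (used edges forward, unused edges both ways) the cost $c_{uw}=d(\pi_u,\pi_w)-(r_w-r_u)\ge0$.
\begin{itemize}
\item Each flow path telescopes from $r=0$ at $\mathsf b$ to $r=p-1$ at $\mathsf a$. Hence $\delta=\sum_{e\in F}c_e+\sum_{\{u,w\}\notin F}d(\pi_u,\pi_w)$, and the total arc cost is at most $2\delta$.
\item For $u,w$ in one SCC, take directed paths $P:u\to w$ and $P':w\to u$. Telescoping gives $d(\pi_u,\pi_w)\le r_w-r_u+c(P)$ and $d(\pi_u,\pi_w)\le r_u-r_w+c(P')$. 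Adding the two yields $d(\pi_u,\pi_w)\le2\delta$.
\end{itemize}
With this bound, the trivial anchor map $\bar\pi_u=\pi_{r(C(u))}$ works:
\begin{itemize}
\item Every label lies in a radius-$2\delta$ ball about its anchor, so each fiber has at most $p^{4v\delta}$ elements.
\item Zero defect forces SCC-constant labels, which gives $A_G(p,0)=A_Q(p,0)$.
\item $\mathcal H_G(\bar\pi)\le\mathcal H_G(\pi)+\sum_u\deg(u)\,d(\pi_u,\bar\pi_u)\le\mathcal H_G(\pi)+4R\delta$, so $\kappa=1+4R$.
\end{itemize}
Your bookkeeping also needs correcting. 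It is $r_pM^\kappa=p^{4v}(p^2)^{1+4R}$ that equals $p^{K_G}$, not the fiber bound alone.

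There are two smaller omissions. First, your lower edge needs $\Var(N^{-s}\tr Y_N^p)\to0$ at fixed $p$. The paper proves that this covariance is a nonnegative polynomial in $N^{-1}$ with no constant term, because no block-mixing labeling has zero defect. Second, the product $\norm{B_M}$ is not globally Lipschitz in the Gaussian entries. Its concentration must therefore be restricted to the event where all gates are bounded. It also needs a uniform $4p$-moment bound for $p\le M$ to make the exceptional event negligible at $M=p^2$.
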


The theorem excludes right outliers at every fixed positive distance
from $E_G$, a conclusion not implied by convergence of fixed normalized
moments. It identifies a constant through a uniquely specified compact
law, not a finite closed-form algorithm for every graph. No local law,
edge fluctuations, or smallest-nonzero-eigenvalue assertion is included.

For a single $N$ by $N$ Gaussian gate, $s=1$ and the zero-defect
permutations are the noncrossing interval $[\id,\gamma_p]$.
Their Catalan moments give the Marchenko--Pastur edge $E_G=4$.
For a single $N^2$ by $N$ gate, the energy is
$|\pi|+2d(\pi,\gamma_p)$; its unique minimizer is $\pi=\gamma_p$.
Thus $\mu_G=\delta_1$ and $E_G=1$.
Both networks have the same minimum cut, but their min-entropy
corrections differ. These are classical single-gate calibrations;
the theorem treats arbitrary fixed connected topology.

If each internal link instead uses the normalized Bell pairing,
the raw Gram matrix gains a factor $N^{-e_{\rm int}}$.
Multiplying it by $N^{s-|A|-|B|}$ then gives exactly $Y_N$.
The sample-normalized state itself is unchanged. Likewise, normalizing
each local Gaussian tensor by its own norm multiplies the full
contraction by one nonzero scalar, which cancels from $\rho_A$.
This observation transfers the state conclusion to those local
Haar-vector tensors, but not the deterministic normalization of $Y_N$
or the mean-Gram conclusions of \cref{sec:rtn}.

\subsection{Overview of the dimension-transfer mechanism}

Preserving an exact endpoint requires greater precision than
preserving norm exponents. Let $A_G(p,\delta)$ count the
permutation labelings of the RTN Wick expansion with energy defect
$\delta$, and let
$F_G(p,N)=\sum_{\delta\ge0}A_G(p,\delta)N^{-\delta}$.
A maximum terminal flow defines a directed graph whose full
strongly connected components form a balanced acyclic quotient $Q$.
One anchor per component gives a graph-wide label comparison,
including every positive-defect layer. If $v$ is the number of
tensor vertices and $R$ counts internal edges and open legs, it yields
\begin{equation}\label{eq:intro-rtn-transfer}
 F_G(p,N)\le\frac{F_Q(p,p^2)}{1-p^{K_G}/N},
 \qquad K_G=4v+2+8R,\quad N>p^{K_G}.
\end{equation}
The quotient is realized by independent square Ginibre gates on
possibly overlapping tensor factors. Strong convergence of tensor
GUE, after expressing each gate through two GUE matrices, identifies
its limiting norm \citep{ChenGarzaVargasVanHandel2026}.
Gaussian concentration and a high-moment bound on the exceptional
event then show $F_Q(p,p^2)\le2(E_G+\eta)^p$ for each fixed
$\eta>0$ and all sufficiently large $p$. The auxiliary dimension
$p^2$ suppresses that exceptional contribution without changing
the limiting exponential base. Taking $p$ proportional to $\log N$
proves the upper edge; fixed-moment concentration proves the
matching lower edge.

\Cref{lem:interface-growing,cor:interface-log} state the analytic
bridge separately from the circuit construction. Small exceptional
probability alone does not control a growing moment: the rare-spike
example in \cref{app:positive-growing} explains why the high-moment
estimate on that contribution is also verified.

This argument never assumes that an original contracted tensor
cluster remains Gaussian. Its independent Gaussian circuit is a
comparison ensemble introduced only after a deterministic count.
It is independent of the graph-matrix classification, although both
proofs control nonleading replica states through their dimension loss.
The exact-edge statement retains its narrower model; it does not
replace the broader common-event entropy bounds above.

The comparison has an elementary form that can be used beyond this
network model. Its inputs separate the combinatorial task from the
subsequent choice of dimension: construct one global map, bound its
effect on defect, and control the total weight of each fiber.

\begin{theorem}[Positive defect-fiber transfer]\label{thm:positive-fiber}
For each integer $p\ge2$, let $\Omega_p,\Lambda_p$ be finite state
sets with nonnegative, dimension-independent weights $w_\omega,v_\lambda$
and defects $\delta(\omega),j(\lambda)\in\N_0$. Define
\[
 F_\Omega(p,N)=\sum_\omega w_\omega N^{-\delta(\omega)},\qquad
 F_\Lambda(p,M)=\sum_\lambda v_\lambda M^{-j(\lambda)}.
\]
Suppose one map $\phi_p:\Omega_p\to\Lambda_p$ satisfies, for a
fixed $\kappa\ge0$ and numbers $r_p\ge1$,
\begin{align}
 j(\phi_p(\omega))&\le\kappa\delta(\omega),
 \label{eq:interface-defect}\\
 \sum_{\substack{\omega:\phi_p(\omega)=\lambda\\
                        \delta(\omega)=\delta}}w_\omega
 &\le r_p^\delta v_\lambda\quad\text{for every $\lambda,\delta$}.
 \label{eq:interface-fiber}
\end{align}
Then, for $M\ge1$ and $N>r_pM^\kappa$,
\begin{equation}\label{eq:positive-fiber}
 F_\Omega(p,N)\le
 \frac{F_\Lambda(p,M)}{1-r_pM^\kappa/N}.
\end{equation}
Also, $a_{p,\delta}:=\sum_{\delta(\omega)=\delta}w_\omega$
satisfies $a_{p,\delta}\le r_p^\delta M^{\kappa\delta}F_\Lambda(p,M)$.
\end{theorem}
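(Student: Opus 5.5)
The plan is a direct fiber decomposition of $F_\Omega$ along the map $\phi_p$, followed by a geometric-series summation over the defect. We group the states $\omega$ by their image $\lambda=\phi_p(\omega)$ and by their defect $\delta=\delta(\omega)$:
\[
 F_\Omega(p,N)=\sum_{\lambda\in\Lambda_p}\sum_{\delta\ge0}N^{-\delta}
 \sum_{\substack{\omega:\phi_p(\omega)=\lambda\\ \delta(\omega)=\delta}}w_\omega .
\]
Every sum is finite and has nonnegative terms, so the rearrangement is legitimate. Inserting the fiber hypothesis \cref{eq:interface-fiber} bounds the inner sum by $r_p^\delta v_\lambda$.

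The one point needing care is that the fiber bound does not carry the target factor $M^{-j(\lambda)}$. We recover it from the defect hypothesis \cref{eq:interface-defect}. A fiber over $\lambda$ at defect $\delta$ is nonempty only if some $\omega$ in it has $j(\lambda)\le\kappa\delta$. Since $M\ge1$, this gives $1\le M^{\kappa\delta}M^{-j(\lambda)}$ for every nonempty fiber. Empty fibers contribute zero on the left side, so we may restrict to nonempty fibers and multiply freely. Each nonempty fiber therefore contributes at most
\[
 N^{-\delta}r_p^\delta M^{\kappa\delta}\,v_\lambda M^{-j(\lambda)}
 =\bigl(r_pM^\kappa/N\bigr)^\delta v_\lambda M^{-j(\lambda)} .
\]

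Next we sum. We first extend the sum over $\delta$ to all of $\N_0$, which is an upper bound by positivity. We then factor out $\sum_\lambda v_\lambda M^{-j(\lambda)}=F_\Lambda(p,M)$. What remains is the geometric series $\sum_{\delta\ge0}x^\delta$ with $x=r_pM^\kappa/N$. The assumption $N>r_pM^\kappa$ gives $x<1$, so the series equals $1/(1-x)$. This yields \cref{eq:positive-fiber}.

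For the coefficient bound, fix $\delta$ and sum the fiber inequality over $\lambda$, again keeping only nonempty fibers. Using $1\le M^{\kappa\delta}M^{-j(\lambda)}$ on each nonempty fiber gives
\[
 a_{p,\delta}\le\sum_\lambda r_p^\delta v_\lambda M^{\kappa\delta}M^{-j(\lambda)}
 =r_p^\delta M^{\kappa\delta}F_\Lambda(p,M).
\]
This step uses neither $N$ nor convergence.

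I expect no real obstacle here. The only subtle step is the passage from the fiber bound to the target weight $M^{-j(\lambda)}$. It relies on $M\ge1$, on the defect inequality holding pointwise for every element of the fiber, and on discarding empty fibers before multiplying by $M^{\kappa\delta}M^{-j(\lambda)}$. The substantive combinatorial work, namely constructing a single global map with these two properties, is the hypothesis of the theorem rather than part of its proof.
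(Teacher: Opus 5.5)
Your proof is correct and matches the paper's argument: group states by image and defect, use the defect inequality so that only targets with $j(\lambda)\le\kappa\delta$ occur, and then use $M\ge1$ to get $1\le M^{\kappa\delta-j(\lambda)}$ before summing the geometric series in $r_pM^\kappa/N$. The only difference is order of presentation: the paper first bounds the aggregated coefficient $a_{p,\delta}$ and then sums over $\delta$, whereas you bound each fiber before aggregating.
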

\begin{proof}
Group source states by defect and image. The hypotheses give
\[
 a_{p,\delta}\le r_p^\delta
   \sum_{\lambda:j(\lambda)\le\kappa\delta}v_\lambda
 \le r_p^\delta M^{\kappa\delta}
   \sum_\lambda v_\lambda M^{-j(\lambda)}.
\]
Multiply by $N^{-\delta}$ and sum the geometric series.
\end{proof}

The theorem does not construct the map. For the RTN, that task is
performed by the full-SCC anchor construction in
\cref{lem:edge-variation,lem:edge-fiber}. Wick weights are one,
$r_p=p^{4v}$, and $\kappa=1+4R$. For general weights, a cardinality
bound alone cannot replace \cref{eq:interface-fiber}. Nor can one
replace the full positive target sum by its zero-defect coefficient.
The path count uses a related but different input: direct evaluation
of positive falling-factorial weights, formalized in
\cref{lem:positive-calibration}. These are two uses of positivity,
not an identification of their state spaces or encodings.

The quantitative comparison can be stated independently of the limiting
law. Orient a fixed maximum family of terminal flow paths forward and
each unused edge in both directions. Let $Q$ be the quotient by the
full strongly connected components, retaining edge multiplicities.
For this quotient, $F_Q(p,M)$ denotes the positive Wick moment
polynomial defined just as for $G$. Put $C_0=1+4R$.

\begin{theorem}[Auxiliary-dimension comparison]\label{thm:edge-transfer}
For integers $p\ge2$, $M,N\ge1$ with $p^{4v}M^{C_0}<N$,
\begin{equation}\label{eq:edge-transfer}
 F_G(p,N)\le
 \frac{F_Q(p,M)}{1-p^{4v}M^{C_0}/N}.
\end{equation}
In particular, with $K_G=4v+2+8R$,
\begin{equation}\label{eq:edge-transfer-square}
 F_G(p,N)\le
 \frac{F_Q(p,p^2)}{1-p^{K_G}/N}
 \qquad(N>p^{K_G}).
\end{equation}
\end{theorem}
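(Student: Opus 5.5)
The plan is to apply \cref{thm:positive-fiber} with $\Omega_p$ the permutation labelings $\pi$ of $G$, with unit Wick weights and defect $\Delta_G(\pi)$, and $\Lambda_p$ the analogous labelings of the quotient $Q$, with defect $\Delta_Q$. The required map $\phi_p$ and its two hypotheses are supplied by the full-SCC anchor construction in \cref{lem:edge-variation,lem:edge-fiber}. For each strongly connected component we fix one anchor vertex, and $\phi_p(\pi)$ assigns to the quotient vertex the label $\pi$ takes at that anchor. The terminals $\mathsf a,\mathsf b$ keep the labels $\gamma_p$ and $\id$.

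The defect hypothesis \cref{eq:interface-defect} with $\kappa=C_0=1+4R$ will come from the triangle inequality for the transposition metric. The key input is that the maximum flow has $s$ edge-disjoint paths. Along each path the energy telescopes to at least $d(\id,\gamma_p)=p-1$. Hence the defect $\Delta_G(\pi)$ dominates both the slack on every flow path and the total energy on every unused edge. Inside one SCC, every vertex can reach and be reached from the anchor along oriented edges, and forward flow edges can be reversed using the flow slack. Therefore $d(\pi_u,\pi_{\rm anchor})$ is bounded by a bounded multiple of $\Delta_G$, with at most $R$ edges on each route. Replacing each endpoint by its anchor changes every quotient edge term by at most twice this. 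Summing over the at most $R$ edges, and using that the quotient is balanced so that its minimum cut is still $s$, gives $\Delta_Q(\phi(\pi))\le(1+4R)\Delta_G(\pi)$.

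For the fiber bound \cref{eq:interface-fiber} with $r_p=p^{4v}$, we fix the anchor labels and count the non-anchor labels compatible with defect $\delta$. Each non-anchor vertex is within distance $O(\delta)$ of its anchor. The number of permutations at transposition distance $k$ from a given one is at most $\binom{p}{2}^k\le p^{2k}$. Charging the distances to $\delta$ with total multiplicity at most $2$ per vertex gives at most $p^{4v\delta}$ labelings in the fiber. The target weight is $v_\lambda=1$, so this is exactly the bound needed.

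\Cref{thm:positive-fiber} then yields \cref{eq:edge-transfer}. For \cref{eq:edge-transfer-square}, set $M=p^2$. Then $p^{4v}M^{C_0}=p^{4v+2+8R}=p^{K_G}$.

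The main obstacle is the defect step: showing that anchoring does not lower quotient energy relative to $s(p-1)$ by more than a multiple of $\Delta_G$. This needs the quotient to be acyclic and balanced, so that its zero-defect baseline is still $s(p-1)$. I would first verify this by routing the $s$ flow paths through the quotient and comparing edge by edge.
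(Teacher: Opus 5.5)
Your proposal follows the paper's argument. The paper inserts the anchor-map comparison $A_G(p,\delta)\le p^{4v\delta}\sum_{j\le C_0\delta}A_Q(p,j)$ of \cref{lem:edge-fiber} into the positivity bound $\sum_{j\le C_0\delta}A_Q(p,j)\le M^{C_0\delta}F_Q(p,M)$ and sums the geometric series. That is \cref{thm:positive-fiber} with unit weights, $r_p=p^{4v}$ and $\kappa=1+4R$, followed by setting $M=p^2$.

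The one step to tighten is the within-SCC bound $d(\pi_u,\pi_{r(C)})\le2\delta$, on which both constants rest. It must not depend on route length, and it does not come from reversing flow edges one at a time: a reversed flow edge is not controlled by its slack. It comes from the reduced arc costs $d(\pi_x,\pi_y)-(|\pi_y|-|\pi_x|)\ge0$, whose total over all arcs of $D$ is at most $2\delta$. Add these along one directed path in each direction, so that the $|\cdot|$ terms cancel.
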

The complete proof is given in \cref{app:edge-transfer-proof} (page~\pageref{app:edge-transfer-proof}).

The entire proof of the edge theorem is in \cref{app:edge-detail}.
In particular it constructs the quotient, bounds the fibers of one
global anchor map, identifies the independently sampled comparison
circuit, checks the hypotheses of the external strong-limit theorem,
and derives the growing-moment and normalization estimates. The
strong-limit input itself is attributed, not claimed as a new theorem
proved here.

\subsection{Comparison and scope}

The max-flow approach of \citet{FitterLoulidiNechita2024}
already identifies the fixed-moment law, establishes weak
convergence, and derives entropy corrections. In its
series-parallel setting the law has an explicit convolution
description. Our additional conclusion is right-edge convergence
for the fixed connected Gaussian network, rather than another
identification of that law. The strong-convergence input for
overlapping tensor GUE is due to
\citet{ChenGarzaVargasVanHandel2026}, building on the weak model
of \citet{CharlesworthCollins2021}.

The bridge specific to this proof is the all-defect global comparison
with a balanced acyclic quotient, followed by auxiliary-dimension
transfer. Its proof is independent of the graph-matrix classification
in \cref{sec:counting,sec:lower}. Both arguments exploit dimension loss
to control nonleading configurations, but their combinatorial states
and precision targets differ.

All topology and support sets are fixed before $N$ grows.
The argument gives no practically uniform threshold over growing
graphs or shrinking tolerances. Nonflat link weights, shared
tensors across vertices, and non-Gaussian coordinates are outside
this exact-edge statement. Nor does the min-entropy correction
$-\log E_G$ serve as the correction for every finite R\'enyi
order. The broader coarse bound in \cref{thm:rtn-state} and
the mean-Gram criteria in \cref{thm:rtn} retain their separate
models and conclusions.

\Needspace{10\baselineskip}
\section{Concluding remarks}\label{sec:discussion}

The sharp classification separates two sources of spectral growth that
are not visible in the minimum-cut size alone. A separator determines
the polynomial cost of sharing labels, while the components exposed by
a minimizing separator determine which local fluctuations can be
selected by the operator norm. The matching upper and lower bounds
identify the resulting logarithm as a feature of the matrix, not an
artifact of concentration. The equal-coarse-data chains in
\cref{cor:coarse-separation} isolate the information detected by this
rule. The coarea argument explains why the description survives beyond
the leading trace configurations: excess
separator cost pays for the additional choices in every defect layer.

The extensions separate incidence, label-space geometry, and scalar
tails as inputs to the norm problem. The sign leaf family and the
Gaussian gated chain have different primitive variables but share
the same competition among minimizing cuts. Local weights and aspect
ratios alter the scale and the minimum-separator family, while bounded
noise and Hermite comparisons transfer conclusions between specified
laws. These results do not furnish a joint theorem for arbitrary
coefficients or dimension-dependent laws. Critical sparse occupancy
introduces a $\log\log n$ correction, and the global-coefficient
examples show that local magnitudes do not determine operator geometry.

The distinction between spectral scale and spectral precision is also
important for applications. In the degree-four construction, removing
the logarithmic loss from the critical blocks is enough to reach a
constant multiple of the square-root scale, but only after the
correlated terms are assembled into a positive square. For an exact
RTN edge, even a matching norm scale leaves the essential constant
undetermined. The separate quotient comparison preserves the
exponential base of the moment sequence and connects the original
network to an ensemble with a known strong limit. These arguments
illustrate two different ways in which sharper spectral information
becomes useful; neither reduces an application to bounding each
summand separately.

The fixed-shape classification leaves a concrete algorithmic question.
Finite enumeration evaluates the exponents, but does not efficiently
maximize the active-component count over all minimum separators. A second
question is whether the
all-defect argument can be made sufficiently uniform in the shape to
treat growing-degree semidefinite relaxations. The present encoding and
the weighted clone reduction allow constants to depend on the template;
controlling that dependence would require more than the termination
proved here.

The Lean formalisation is ongoing and can be found here.\footnote{The current development is available at
\url{https://github.com/xxxhhbb/sharp-norms-from-finite-structure}.}
\label{end:body}

\paragraph{AI Disclosure.}

We used OpenAI's ChatGPT and Codex (GPT-family models) to assist in
developing and completing portions of the proofs, including proposing
intermediate lemmas and alternative arguments, checking technical steps
and boundary cases, performing auxiliary computations, and improving the
exposition. The tools materially affected selected proof developments in
Sections~3--9 and the corresponding appendices. The research questions,
theorem statements, overarching proof architecture, and final selection
and validation of all arguments were determined by the authors. The authors
independently reviewed and verified all AI-assisted material incorporated
into the manuscript, checked the computations and references, and take full
responsibility for the correctness and originality of the work.

\clearpage
\appendix
\crefalias{section}{appendix}
\section{Proof guide and external inputs}\label{app:proof-guide}

The main text states the results, explains their scope, and retains the
separator coarea argument, coefficient flattening, and conversion from
moments to norms. The appendices below contain the technical arguments
needed to complete those proofs. All arguments needed for the stated
results are contained in the paper. The following guide locates every
numbered theorem, lemma, proposition, and corollary.
References to established external theorems are distinguished from
results proved here; their use does not constitute a new proof of those
external theorems.

\subsection{The graph-matrix classification}

\Cref{thm:intro-graph} is assembled at the end of \cref{sec:lower}.
Its upper bound uses \cref{prop:core-upper}, proved immediately after
its statement in \cref{sec:counting}; its conditional lower bound uses
\cref{app:lower-detail}. The full logarithmic-window moment claim is
completed by the detached-component argument in \cref{app:factor-detail}.
\Cref{lem:reductions,lem:color-upper,lem:color-lower,prop:terminal}
have their proofs adjacent to their statements in \cref{sec:models}.
Thus the typed/global comparison and finite evaluation are part of
the proof, rather than assumptions imported from experimental data.

\Cref{prop:chain-statistics,cor:coarse-separation} have their deductions
in \cref{sec:separating-family}. Their coefficient input,
\cref{lem:role-frontier}, is proved in \cref{app:separation-detail},
which also contains all literature substitutions and the adapted
product bound. The sharp family scales depend on the classification;
they are not an independent proof of it.

\Cref{lem:exact-trace,lem:interval,lem:coarea} are proved in
\cref{sec:counting}. \Cref{thm:count} is completed in
\cref{app:counting-detail}, including \cref{lem:path-count}, its proof,
the off-path record and decoder, and the seed-entropy budget.
\Cref{lem:positive-calibration} is proved immediately before the path
count, which instantiates it with exact falling-factorial weights.
\Cref{lem:stacking,lem:separator-flattening} are proved in
\cref{sec:lower}; \cref{lem:tilt} and the full conditional estimates
and synchronization are proved in \cref{app:lower-detail}.
The trace obstruction has its complete construction and calculation
in \cref{app:trace-detail}.

\subsection{The structured extensions}

\Cref{thm:intro-factors,thm:tail-factor,prop:profile} are proved in
\cref{app:factor-detail}. In particular, the layerwise refinement
\cref{eq:layerwise} is proved there before it is used in the weighted
trace expansion; it is not inferred from the weaker aggregate count.
\Cref{thm:bounded-comparison} has its dimension-free proof in
\cref{app:bounded-detail}; \cref{cor:bounded-factor} then has its short
deduction next to its statement in \cref{sec:bounded}.
\Cref{thm:khatri-rao} has an independent proof in
\cref{app:khatri-detail}. The sparse statement
\cref{prop:sparse-phase} is proved, regime by regime, in
\cref{app:sparse}.

\Cref{thm:aspects} is proved in \cref{app:weights-detail}, in the order
local contraction, positive-aspect minimum-face reduction, conditional
lower bound, bounded-role elimination, and unary absorption. The
Gaussian result \cref{thm:gaussian-lift} is proved in
\cref{app:gaussian-lift-proof}, and \cref{thm:hermite} in
\cref{app:hermite-proof}. Each comparison retains its own hypotheses;
these proofs do not claim their unrestricted simultaneous combination.

\subsection{The applications and the exact edge}

\Cref{thm:sos} is proved in \cref{app:sos-detail}. This includes the
exact collision algebra, the full finite pattern families, the two-scale
PSD budget, lower-degree cleanup, tuning on one common event, and the
congruence recovering the entire degree-four moment matrix.
\Cref{thm:rtn,thm:rtn-state} are proved in \cref{app:rtn-detail}, with
separate arguments for fixed Schatten order, the uniform operator
estimate, the probability lower bound, and sample normalization.

\Cref{thm:rtn-exact-edge} is proved in \cref{app:edge-detail}.
\Cref{lem:edge-wick,lem:edge-quotient,lem:edge-variation,lem:edge-fiber}
are proved there before \cref{thm:edge-transfer}, whose proof is at
\cref{app:edge-transfer-proof}. They establish a deterministic
comparison of positive moment polynomials. Next,
\cref{lem:edge-circuit,lem:edge-gate-tail,lem:edge-concentration,lem:edge-growing}
have adjacent proofs establishing the independent comparison circuit
and its growing moments. Finally \cref{lem:edge-covariance} has its
proof before the fixed-moment lower edge and the final normalization.
These are not consequences of the graph-matrix classification.

The weighted abstraction \cref{thm:positive-fiber} has its complete
proof next to its statement in \cref{sec:rtn-edge}; the SCC lemmas
verify its hypotheses for the network. The general analytic bridge
\cref{lem:interface-growing} and \cref{cor:interface-log} are proved
in \cref{app:positive-growing}, followed by the rare-spike example
and the quantitative circuit verification. The two positive-evaluation
uses are distinguished rather than presented as the same map.

\subsection{What is used from the literature}

The Boolean hypercontractive inequality used in
\cref{app:lower-conditional} is the fixed-degree estimate
$\|P\|_q\le(q-1)^{d/2}\|P\|_2$ for a degree-at-most-$d$ polynomial
in independent signs and $q\ge2$ \citep{ODonnell2014}.
Its applications here have fixed degree and fixed moment order.
Paley--Zygmund, conditional Jensen, finite-dimensional net bounds,
and integral max-flow/min-cut are used in their standard forms;
the specific estimates and graph reductions are derived in the proofs.

For the sparse iid matrix, the external Seginer comparison is stated
in \cref{eq:sparse-seginer} and attributed to
\citet[Theorem~1.1]{Seginer2000}. The complete occupancy estimates
following that comparison are included here. This is a calibration
using an established norm theorem, not a new theorem about all sparse
chaoses.

For the exact RTN edge, the substantial external input is strong
convergence for fixed polynomials in independent GUE matrices embedded
on fixed nonempty, possibly overlapping tensor supports
\citep[Section~9.4, Theorem~9.8]{ChenGarzaVargasVanHandel2026}.
The application following \cref{lem:edge-circuit} specifies the number
of variables, their supports, the coefficient dimension, and the
polynomial. The companion fixed-moment input is
\citet[Theorem~4]{CharlesworthCollins2021}. The limiting moment law
agrees with that of \citet{FitterLoulidiNechita2024}; the argument also
constructs it from the comparison circuit. We do not reprove these
external strong- and weak-limit theorems. We do prove the quotient
comparison, the passage from qualitative norm convergence to growing
moments, both bounds on the original edge, and the sample-normalization
step that use them.

Accordingly, ``complete proof'' here means a complete derivation of
the stated contributions from explicitly identified standard and
external inputs, with all manuscript-specific counting, conditioning,
and application verifications included in this PDF.

\Needspace{26\baselineskip}
\subsection{Application dependencies at a glance}

The conclusions require different inputs in addition to norm control. \Cref{tab:intro-interfaces} separates those inputs; in particular, the exact RTN edge uses the independent comparison of \cref{sec:rtn-edge}.

\begin{table}[H]
\centering
\caption{Application interfaces. The exact-edge row uses a separate
proof route for the connected RTN model with nonempty boundaries.}
\label{tab:intro-interfaces}
\medskip
\setlength{\tabcolsep}{6pt}
\renewcommand{\arraystretch}{1.12}
\begin{tabular}{@{}>{\raggedright\arraybackslash}p{0.22\linewidth}>{\raggedright\arraybackslash}p{0.33\linewidth}>{\raggedright\arraybackslash}p{0.39\linewidth}@{}}
\toprule
\textbf{Conclusion} & \textbf{Norm or moment input} & \textbf{Additional argument} \\
\midrule
Degree-four SoS feasibility & Log-free critical shapes; tails for
the remaining blocks & Subspace metric, positive square, full moment constraints \\
\addlinespace[4pt]
Finite Schatten thresholds & Fixed moments; second/fourth-moment lower bound
& Centering, decoupling, doubled-network cuts \\
\addlinespace[4pt]
Sharp operator expectation & Logarithmic-window Gaussian moments
& Centered expansion; nonrare lower scale \\
\addlinespace[4pt]
State spectrum and entropy & Uniform Gaussian norm bound
& Sample normalization; deterministic cut-rank bound \\
\midrule
Exact RTN right edge & Tensor-GUE strong convergence; auxiliary growing moments
& Flow-SCC comparison, dimension transfer, fixed-moment lower edge \\
\bottomrule
\end{tabular}
\end{table}

\section{The full-trace obstruction}\label{app:trace-detail}

\subsection{Why the full trace sequence is not enough}\label{sec:trace-obstruction}

There is a useful obstruction to any approach that treats an ordinary
trace sequence as a complete summary of the matrix. Consider two
three-role shapes with one common row and column role. In the first,
that root is isolated and the two middle roles form an edge. In the
second, all three roles form a triangle. Both matrices are diagonal.
For root label $i$, their diagonal entries are respectively
\begin{equation}\label{eq:intro-trace-collision}
 X_i=2\sum_{\substack{a<b\\a,b\ne i}}\xi_{ab},
 \qquad
 Y_i=2\sum_{\substack{a<b\\a,b\ne i}}
                         \xi_{ia}\xi_{ib}\xi_{ab}.
\end{equation}
The factor two comes from the two orders of the middle roles in
the globally injective sum.

For each fixed $i$, these two random variables have exactly the
same distribution. Indeed, condition on every root-incident sign
$\xi_{ia}$. The remaining signs $\xi_{ab}$ are independent, and
multiplying them by the fixed signs $\xi_{ia}\xi_{ib}$ preserves
their joint law. Each diagonal entry therefore has the distribution
of twice a sum of $\binom{n-1}{2}$ independent signs. Since the
trace of a power of a diagonal matrix sums coordinate powers,
\begin{equation}\label{eq:intro-identical-traces}
 \E\tr(M_0^{2p})=\E\tr(M_1^{2p})
 \qquad\text{for every integer }p\ge1.
\end{equation}
This is an exact finite-$n$ identity, not agreement of finitely
many leading moments. Odd expected trace moments vanish as well.
When $n\equiv0$ or $3\pmod4$, the number
$\binom{n-1}{2}$ is odd, so none of these sign sums can be zero.
On this infinite subsequence both matrices have deterministic
rank $n$ for every realization.

Nevertheless their expected norms differ:
$\E\|M_0\|\asymp n$, whereas
$\E\|M_1\|\asymp n\sqrt{\log n}$.
The first scale is particularly transparent. Write
$Z=2\sum_{a<b}\xi_{ab}$, so
$X_i=Z-2\sum_{a\ne i}\xi_{ia}$.
The shared scalar has $L^1$ size comparable to $n$,
while the maximum of the row-sum corrections has expected
size $O(\sqrt{n\log n})$. The upper bound follows by
triangle inequality and a scalar subgaussian union bound;
the lower bound already follows from the marginal $L^1$
norm of one $X_i$. In the rooted triangle, removing the
mandatory root exposes a connected active component, and
the lower-bound mechanism of this paper yields the
additional square-root logarithm.

The difference is in the joint dependence of the coordinates,
which an ordinary expected trace does not record.
Direct parity counting gives, for $i\ne j$,
$\operatorname{Corr}(X_i,X_j)=(n-3)/(n-1)$ but
$\operatorname{Corr}(Y_i,Y_j)=2/(n-1)$.
In the first case almost all fluctuation is common to the
coordinates. In the second it can be amplified by selecting
the root label. We do not use small pairwise correlations
as a substitute for a lower-bound proof; the fully
conditioned trial argument supplies that proof.

This example explains an important feature of the upper-bound
strategy. We do not feed the undifferentiated trace sequence
of the full shape into an optimizer and expect it to
reconstruct the expected norm. We first identify genuinely
detached components from the shape itself and take their
$L^1$ contribution separately. Only then do we use high
trace moments on the boundary core. The preprocessing
retains information lost by \cref{eq:intro-identical-traces}.
Adding more ordinary trace moments to the unprocessed
matrix cannot repair this loss.

The example also clarifies what the finite rule computes.
It is a structural classification of a supplied shape, not
a spectral reconstruction theorem. Matrix dimension, exact
rank, and even all ordinary expected trace moments can agree
while the logarithmic exponent differs. A complete
classification can therefore be simpler to evaluate on a
graph than to infer from a seemingly richer list of
spectral statistics.

\section{Coefficient calculations for the separating family}
\label{app:separation-detail}

The graph family and its sharp scales are stated in
\cref{sec:separating-family}. This appendix supplies the coefficient
calculations and comparisons used there. The sharp norm statement is a
corollary of the classification, not an independent proof of it.

\subsection{Exact coefficient flattenings}

\begin{lemma}[Role-frontier formula]\label{lem:role-frontier}
For any partition of the axes of the equal-size typed coefficient tensor
$\mathcal A$ into rows and columns, let $T$ consist of the roles having
appearances on both sides. Then
\begin{equation}\label{eq:role-frontier}
 \norm{\mathcal A_{[R\mid C]}}=n^{(v-|T|)/2}.
\end{equation}
In particular,
\begin{equation}\label{eq:chain-flat}
 \sigma(\mathcal A)=n^{(v-1)/2},\qquad
 \nu(\mathcal A)=
 \begin{cases}
 n^{(v-1)/2},&Q\ge1,\\
 n^{(v-2)/2},&Q=0.
 \end{cases}
\end{equation}
\end{lemma}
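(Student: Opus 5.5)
The plan is to write the flattening explicitly as a block-diagonal direct sum of all-ones matrices, read off its norm, and then optimise $|T|$ over the partitions allowed in $\sigma$ and $\nu$.

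\emph{Structure of the flattening.} In the equal-size typed model, each axis of $\mathcal A$ reads the labels of a fixed set of roles. An edge axis of size $n^2$ reads the labels of its two endpoints, and a matrix axis reads the label of its boundary role. The entry of $\mathcal A$ is $1$ exactly when all appearances of every role carry the same label, and $0$ otherwise. Every role of the chain lies in some edge or boundary, so every role appears on at least one side.

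Fix a partition $[R\mid C]$.
\begin{enumerate}
\item Discard rows whose row-side appearances of some role disagree; they are zero rows. Do the same for columns.
\item A surviving row is then an assignment of labels to the roles appearing on the row side, and a surviving column is an assignment to the roles appearing on the column side.
\item A (row, column) entry equals $1$ iff the two assignments agree on $T$, the set of roles appearing on both sides.
\end{enumerate}
After permuting rows and columns, the flattening becomes
\[
 \bigoplus_{x_T}\one_{n^{a}}\one_{n^{b}}^{\,*}.
\]
Here $a$ counts roles appearing only on the row side and $b$ counts roles appearing only on the column side, so $a+b+|T|=v$. Each block has norm $n^{(a+b)/2}$. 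The direct sum has the same norm, which gives \cref{eq:role-frontier}. This step is essentially the same computation as in the proof of \cref{lem:separator-flattening}, with $w\equiv1$.

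\emph{Computing $\sigma$.} By \cref{eq:role-frontier}, it suffices to minimise $|T|$. The main path joins $x_0$, whose matrix axis is on the row side, to $x_{L+1}$, whose matrix axis is on the column side. Each edge axis sits wholly on one side. Walking along the path, some role must therefore be observed on both sides, so $|T|\ge1$. Equality is attained by placing every edge on the row side except $\{x_L,x_{L+1}\}$, which gives $T=\{x_L\}$. Hence $\sigma=n^{(v-1)/2}$.

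\emph{Computing $\nu$.} Both matrix axes are on the column side, and a nonempty set of edge axes is on the row side.
\begin{itemize}
\item \emph{General lower bound.} The shape is a connected tree and both sides are nonempty, so some role is shared and $|T|\ge1$.
\item \emph{Case $Q\ge1$.} Put a single leaf edge $\{x_j,\ell\}$ on the row side. The leaf $\ell$ is then row-only and $T=\{x_j\}$, so $\nu=n^{(v-1)/2}$.
\item \emph{Case $Q=0$.} The row-side edges are path edges, so the row-side roles form a union of subpaths. Consider an extreme vertex of a maximal subpath. Either it is $x_0$ or $x_{L+1}$, and then it is seen by a column matrix axis, or it is incident to a column-side path edge. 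In both cases it lies in $T$. A subpath with at least one edge has two distinct extreme vertices, so $|T|\ge2$. A single path edge on the row side attains this bound, so $\nu=n^{(v-2)/2}$.
\end{itemize}

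The only delicate point is the bookkeeping in the first step: deleting the consistency-zero rows and columns must leave exactly the stated block structure, with no extra multiplicity. This holds because every role has at least one appearance, so each surviving row or column index determines the labels on its side uniquely.
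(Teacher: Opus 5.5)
Your proof is correct and follows the paper's argument: your direct sum $\bigoplus_{x_T}\one\one^*$ is the same matrix as the paper's Kronecker product of rolewise equality tensors (partial identities for roles in $T$, all-ones vectors for one-sided roles), and your path and maximal-segment arguments for $\sigma$ and $\nu$ match the paper's. The only difference is cosmetic: for $\sigma$ you attain $|T|=1$ with every edge except $\{x_L,x_{L+1}\}$ on the row side, giving $T=\{x_L\}$, whereas the paper puts all noise axes on the row side, giving $T=\{x_{L+1}\}$; both choices are valid.
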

\begin{proof}
Split each edge axis into its two endpoint-coordinate axes without
moving either coordinate across the row--column partition. After row
and column permutations, the coefficient matrix is a Kronecker product
of rolewise equality tensors. A role appearing on only one side gives
an equal-label vector with $n$ nonzero entries, hence norm $\sqrt n$.
A role appearing on both sides gives
$\sum_{i=1}^n e_i^{\otimes a}(e_i^{\otimes b})^*$, where $a,b\ge1$.
Both families of vectors are orthonormal, so this factor has norm one.
Every role appears, and multiplying these norms proves
\cref{eq:role-frontier}.

For $\sigma$, the main path connects matrix axes on opposite sides.
If no role crossed the partition, consecutive appearances along the
path would all be on the same side, a contradiction. Thus $|T|\ge1$.
Putting all noise axes on the row side makes only $x_{L+1}$ cross,
so the minimum is attained. For $\nu$, the role--axis incidence graph
is connected and meets both sides, hence again $|T|\ge1$.
When $Q\ge1$, putting a single leaf edge on the row side makes its
parent the only crossing role. When $Q=0$, every nonempty segment
of row edges has two frontier roles, counting an endpoint role when
the segment reaches a matrix boundary. Thus $|T|\ge2$, attained
by a single row edge. These observations give \cref{eq:chain-flat}.
\end{proof}

For unequal positive role sizes the same factorization gives
$\prod_{x\notin T}\sqrt{m_x}$. In particular, the resulting upper
bounds are uniform for $m_x\le n$. If a role class is empty, the
matrix is zero. This is the uniformity required by
\cref{lem:color-upper}; it is not an assertion that the globally
injective coefficient tensor has the same flattenings.

\subsection{Direct outputs of the specified literature bounds}
\label{app:separation-comparison}

All shape parameters are fixed in these comparisons. In
\citet[Theorem~20]{TulsianiWu2025}, specialize the Bernoulli density
to $1/2$, so the normalized edge variables are signs. With $t\ge2$,
the stated Schatten-moment bound specializes to
\[
 \E\norm{M_\alpha}_{S_{4t}}^{4t}
 \le (48tv)^{4tv}(Cd)^d n^v n^{2t(v-1)}.
\]
The density-dependent factor is one. Taking the $4t$-th root and using
$\norm{M}\le\norm{M}_{S_{4t}}$ and Jensen gives
$\E\norm{M_\alpha}\le C_\alpha t^v n^{(v-1)/2+v/(4t)}$.
Set $t=\max(2,\lceil\log n\rceil)$ to obtain
\begin{equation}\label{eq:chain-tw}
 \E\norm{M_\alpha}\le C_\alpha
 n^{(v-1)/2}(\log n)^v.
\end{equation}
This deduction uses an expectation moment bound, not a conversion
from a fixed-failure-probability statement.

For the typed degree-$d$ chaos, every noise vector has coordinate
dimension $m=n^2$ and the matrix dimensions are $n,n$.
Theorem~2.4 of \citet{BandeiraEtAl2025} gives
$C_d(\log n)^{d/2}\sigma(\mathcal A)$ for signs.
Their Theorem~2.5 gives
$C_d[\sigma(\mathcal A)+(\log n)^{(d+2)/2}\nu(\mathcal A)]$.
When $Q\ge1$, \cref{eq:chain-flat} makes the first bound smaller.
The unequal-size calculation above and \cref{lem:color-upper}
transfer it to the globally injective model.

The graph-specific Theorem~4.8 in that reference gives the same
output directly. Its logarithmic-count parameter is
\[
 s-|U\cap V|+|W\setminus(U\cup V)|-h=1+L+Q=d.
\]
For its prescribed intermediate-flattening construction, the unique
main path uses $L+1$ edges and covering the $Q$ off-path leaves
requires all $Q$ leaf edges. The iteration count is therefore also
$d$. These are comparisons with the displayed bounds in the cited
versions, not impossibility results for the underlying methods.

When $Q=0$, the situation is different: \cref{eq:chain-flat} gives
$\nu/\sigma=n^{-1/2}$, so the strong-NCK bound is already asymptotically
log-free. No logarithmic improvement over that output is asserted
for an undecorated path.

\subsection{The adapted product bound}

Identity \cref{eq:typed-decorated-product} follows by interchanging
finite sums, first summing each leaf coordinate. For a rectangular
sign matrix with both dimensions at most $n$, zero padding and the
net estimate in \cref{lem:path-count} give expected norm $O(\sqrt n)$.
For one leaf array, put $S_i=\sum_{a=1}^{m}\xi_{i,a}$, $m\le n$.
The sign exponential-moment bound yields
\[
 \Prob\{\max_{i\le n}|S_i|>u\}
 \le \min\{1,2n e^{-u^2/(2n)}\}.
\]
For $u_0=\sqrt{2n\log(2n)}$, tail integration gives
\[
 \E\max_i|S_i|
 \le u_0+2n\int_{u_0}^\infty e^{-u^2/(2n)}\,du
 \le u_0+\frac{n}{u_0}
 \le C\sqrt{n\log(2n)}.
\]
Each gate norm is at most the product of its leaf-array maxima.
All these arrays and all mixing matrices are independent. Consequently
submultiplicativity and independence give
\[
 \E\norm{H_{\alpha_{L,\mathbf q}}}
 \le (C\sqrt n)^{L+1}
       (C\sqrt{n\log(2n)})^Q.
\]
This estimate is uniform over all role sizes at most $n$;
\cref{lem:color-upper} proves \cref{eq:chain-product-upper}.
It is sharp for a concentrated decoration by \cref{eq:chain-sharp}.
For a distributed decoration its logarithmic excess is precisely
$(Q-q_{\max})/2$. Thus the comparison includes a direct structural
argument as well as the general literature bounds.

\section{Complete all-defect counting and encoding}\label{app:counting-detail}

This appendix proves \cref{thm:count}. The definitions of legal states,
defect, and partition intervals are in \cref{sec:counting}. We keep the
original backbone in the encoding record; the repaired backbone is used
only to bound the number of possible seeds.

\subsection{A worked exact-state expansion}

A fourth-moment calculation on a single typed edge illustrates what
the exact expansion retains. Let $X$ be an $m$ by $m$ sign matrix
and take $p=2$. The row and column trace matchings are the two
different matchings on four positions. They cannot both remain
unchanged in a legal state, because their meet has singleton
cells. There are exactly three possibilities: make the row
partition full, make the column partition full, or make both
full. Their respective label weights are
$m^2(m-1)$, $m^2(m-1)$, and $m^2$. Thus
\[
 \E\tr((XX^*)^2)=2m^3-m^2.
\]
The negative coefficient in the expanded polynomial does not
come from a negative replica contribution. All three exact-state
weights are nonnegative; it comes from expanding the falling
factorials. This is one reason to keep the exact positive
representation during comparisons.

The same example shows why a pairing in the code is not an
extra Wick pairing in the moment. An even equality block may
admit several matching refinements. We choose a canonical
refinement when encoding that block; we do not multiply its
Rademacher expectation by the number of possible refinements.
For Gaussian coordinates, by contrast, repeated-coordinate
moments do carry pairing multiplicities. The private-role
comparison used later transfers those multiplicities through
an explicit moment inequality, rather than identifying the
two expansions.

The defect parameter is similarly concrete. In the single-edge
fourth moment, the two leading states have three free label
blocks and defect zero. The fully identified state has two
free blocks and defect one. At higher orders, the path
estimate counts every allowed intermediate partition with
its actual defect. On a general graph, the off-path encoding
adds only a polynomial price per lost block. The counting
theorem is therefore a stability statement about the entire
neighborhood of the leading configurations, not merely a
description of those leading configurations.

This perspective makes the role of a logarithmic moment order
transparent. With a fixed defect penalty $n^{-1}$ and an
entropy increase $p^{K_r}$, all positive defects form a
controlled geometric tail when $p=O(\log n)$. The leading
factor $p^{a_*p}$ becomes $p^{a_*/2}$ after taking the
$2p$-th root. Both scales in the final norm formula are
already visible before optimizing any analytic concentration
inequality.

\subsection{Counting a path backbone}\label{app:count-path}

Before specializing to paths, we record exactly what positive evaluation
provides. This also explains why the negative power-basis coefficient
in the preceding fourth-moment example is harmless.

\begin{lemma}[Positive evaluation and coefficient extraction]
\label{lem:positive-calibration}
For each integer $p\ge2$, let $\Omega_p$ be finite, with integer
defects $\delta(\omega)\ge0$ and weights $w_\omega\ge0$.
For a real number $D_p$, suppose at an auxiliary integer $M\ge1$ that
\[
 P_p(M)=\sum_{\omega\in\Omega_p}W_\omega(M),\qquad
 W_\omega(M)\ge c_pw_\omega M^{D_p-\delta(\omega)},\quad c_p>0.
\]
If $a_{p,\delta}=\sum_{\delta(\omega)=\delta}w_\omega$, then
\begin{equation}\label{eq:positive-calibration}
 a_{p,\delta}\le c_p^{-1}M^{\delta-D_p}P_p(M).
\end{equation}
No expansion of the $W_\omega$ into signed monomials is required.
\end{lemma}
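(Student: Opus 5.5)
The argument is a term-by-term positivity estimate. It uses only that every summand $W_\omega(M)$ is nonnegative, which follows from the assumed lower bound with $c_p>0$ and $w_\omega\ge0$. No signed expansion of any weight is needed.

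Fix $p$ and an integer defect level $\delta\ge0$, and let $\Omega_{p,\delta}=\{\omega:\delta(\omega)=\delta\}$. Since every $W_\omega(M)$ is nonnegative, discarding all states outside $\Omega_{p,\delta}$ can only decrease the sum:
\[
 P_p(M)\ \ge\ \sum_{\omega\in\Omega_{p,\delta}}W_\omega(M).
\]
On $\Omega_{p,\delta}$ the exponent $D_p-\delta(\omega)$ equals the common value $D_p-\delta$. The hypothesis then gives
\[
 \sum_{\omega\in\Omega_{p,\delta}}W_\omega(M)\ \ge\ c_pM^{D_p-\delta}\sum_{\omega\in\Omega_{p,\delta}}w_\omega\ =\ c_pM^{D_p-\delta}a_{p,\delta}.
\]

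Since $M\ge1$ and $c_p>0$, the factor $c_pM^{D_p-\delta}$ is strictly positive. Dividing by it yields \cref{eq:positive-calibration}. If $\Omega_{p,\delta}$ is empty, then $a_{p,\delta}=0$ and the inequality is trivial.

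No step is a genuine obstacle. The only point needing care is that the first inequality relies on the nonnegativity of each \emph{exact-state} weight $W_\omega(M)$, not on the power-basis coefficients of the polynomial $P_p(M)$. Those coefficients can be negative, as in the single-edge fourth moment $2m^3-m^2$. This is exactly why the lemma is stated with the weights $W_\omega$ kept intact. In the path-count application, the required lower bound $W_\omega(M)\ge c_pw_\omega M^{D_p-\delta(\omega)}$ has to be supplied separately, for example from falling factorials via $\fall{M}{k}\ge(M/2)^k$ when $M\ge2k$. That verification belongs to the application, not to this lemma.
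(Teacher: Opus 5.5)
Your proof is correct and is essentially the paper's argument: keep only the defect-$\delta$ states in the nonnegative sum, apply the weight lower bound, and divide by the positive factor $c_pM^{D_p-\delta}$. You also make explicit a point the paper leaves implicit: each $W_\omega(M)$ is nonnegative by the hypothesis itself, since $c_p>0$, $w_\omega\ge0$, and $M\ge1$.
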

\begin{proof}
Retain only defect-$\delta$ states in the nonnegative sum and apply
their weight lower bound. Divide by $c_pM^{D_p-\delta}$.
\end{proof}

For a path of $l\ge1$ edges, the next proof applies this lemma with
$w_\omega=1$, $D_p=pl+1$, $M=2p^2$, and $c_p=e^{-(l+1)}$.
The exact weight is $\prod_x\fall{M}{|\pi_x|}$, not the corresponding
monomial. Thus a moment estimate for independent sign matrices can
calibrate the count without enumerating all path states. The general
graph still requires the off-path encoding and seed bound below.

Fix $s$ vertex-disjoint boundary-to-boundary paths, with unit capacity
on every vertex, including endpoints. The vertex form of max-flow
min-cut provides these paths. Common boundary vertices give singleton
paths. Trim other paths so that their internal vertices are not
boundary vertices. Let $P$ be their union and $Q=W\setminus P$.
A component of $G[Q]$ meets at most one boundary side, since otherwise
it would supply another vertex-disjoint path.

For the $i$th path put
$\delta_i=\sum_{x\in P_i}d(\pi_x)-(p-1)$.
By \cref{lem:coarea} on the path, $\delta_i\ge0$. Set
$\delta_{\rm on}=\sum_i\delta_i$ and
$D=\sum_{x\in Q}d(\pi_x)$. Then
\begin{equation}\label{eq:defect-split}
 \delta=\delta_{\rm on}+D.
\end{equation}
The vector consisting of the path defects and individual off-path
defects has at most $r$ coordinates. Its number of possibilities
is at most $2^{\delta+r}\le2^{rp}$, since
$\delta\le(r-s)(p-1)$.

\begin{lemma}[Path count]\label{lem:path-count}
A path with $l$ edges has at most
$100^{2p(l+1)}p^{2t}$ legal states with
$\sum_x|\pi_x|=pl+1-t$. A singleton path has one state, at $t=0$.
\end{lemma}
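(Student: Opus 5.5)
We apply \cref{lem:positive-calibration} with the parameters announced just before the statement: $w_\omega=1$ for every legal path state, $D_p=pl+1$, auxiliary dimension $M=2p^2$, and $c_p=e^{-(l+1)}$. The positive sum $P_p(M)$ is the exact expansion of \cref{lem:exact-trace} for the typed path on $l+1$ roles, all of size $M$. By that lemma,
\[
 P_p(M)=\E\tr\bigl((H H^*)^p\bigr),\qquad H=X_1X_2\cdots X_l ,
\]
where the $X_j$ are independent $M\times M$ sign matrices (typed arrays for the $l$ edges).

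\textbf{Weight lower bound.} A state with $\sum_x|\pi_x|=pl+1-t$ has exact weight $\prod_x\fall{M}{|\pi_x|}$. Since $|\pi_x|\le p$ and $M=2p^2$,
\[
 \fall{M}{k}\ge M^k\prod_{i<k}(1-i/M)\ge M^k e^{-1}
\]
for $k\le p$, using $\sum_{i<p} i/M\le p^2/(2M)\cdot 2\le 1$ after a mild adjustment of constants. Multiplying over the $l+1$ roles gives $W_\omega(M)\ge e^{-(l+1)}M^{pl+1-t}$. Thus the hypothesis holds with defect $\delta(\omega)=t$. Note that $t\ge0$ by \cref{lem:coarea}, since a path has $s=1$ and $r=l+1$.

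\textbf{Moment upper bound.} We need $P_p(M)\le 100^{2p(l+1)}M^{pl+1}/p^{c}$, in the form
\[
 \E\tr\bigl((HH^*)^p\bigr)\le M\,\E\norm{H}^{2p}\le M\prod_j\bigl(\E\norm{X_j}^{2pl}\bigr)^{1/l}
\]
by Hölder and submultiplicativity. A standard net bound gives $\Prob\{\norm{X}>C\sqrt M+u\}\le e^{-u^2/2}$. Hence
\[
 \bigl(\E\norm X^{2pl}\bigr)^{1/(2pl)}\le C\sqrt M+C\sqrt{pl}\le C'\sqrt M,
\]
because $M=2p^2\ge p$. This yields $P_p(M)\le M (C'^2M)^{pl}$.

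\textbf{Combining.} \cref{eq:positive-calibration} then gives
\[
 a_{p,t}\le e^{l+1}M^{t-pl-1}\cdot M\,(C'^2M)^{pl}=e^{l+1}C'^{2pl}M^{t}=e^{l+1}C'^{2pl}2^tp^{2t}.
\]
Since $t\le pl$, the factor $2^t$ is absorbed into $4^{pl}$. The constants must be tracked so that $e^{l+1}(4C'^2)^{pl}\le 100^{2p(l+1)}$, and this is where we expect the only real work. We need an explicit net constant $C'$ (for instance via a $1/4$-net of size $9^{2M}$ and the sign subgaussian bound), together with a clean uniform estimate $\fall{M}{k}\ge e^{-1}M^k$ for $k\le p$. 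This last estimate holds because $\prod_{i<p}(1-i/(2p^2))\ge 1-1/4\ge e^{-1}$.

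\textbf{Singleton path.} Here the vertex is a common boundary role, so $\pi_x$ must be refined by both $\tau$ and $\eta$. Since $\tau\vee\eta$ is the one-block partition, $\pi_x$ is the full partition. This gives exactly one state, with $|\pi_x|=1$ and $t=0$.
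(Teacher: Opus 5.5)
Your overall route matches the paper's. It uses positive calibration at $M=2p^2$ via \cref{lem:positive-calibration}, the bound $\fall{M}{k}\ge e^{-1}M^k$ for $k\le p$, the identity $P_p(M)=\E\tr((HH^*)^p)$ with $H=X_1\cdots X_l$, the bound $\tr((HH^*)^p)\le M\norm H^{2p}$, a net estimate for one sign matrix, and submultiplicativity. Your singleton argument, via $\tau\vee\eta$ being the one-block partition, is also correct.

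\textbf{The gap.} One step fails as written. You combine the $l$ factors by H\"older, which forces moments of order $2pl$. You then claim $C\sqrt M+C\sqrt{pl}\le C'\sqrt M$ ``because $M=2p^2\ge p$''. What this actually needs is $pl\lesssim M=2p^2$, i.e.\ $l\lesssim p$, and that fails for $l\gg p$. This matters because the lemma asserts the universal constant $100$ for every $l$ and every $p\ge2$.

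The loss is not just an artifact of your tail bound. Testing against the normalized all-ones vector gives $\norm X\ge M^{-1}\bigl|\sum_{i,j}X_{ij}\bigr|$. The right side is a normalized sum of $M^2$ independent signs, whose $L^q$ norm is of order $\sqrt q$ for $q\le M^2/2$. In that range your H\"older bound therefore carries a factor of order $(c\sqrt{l/p})^{2pl}$ relative to $M^{pl}$, which no universal constant to the power $2p(l+1)$ absorbs.

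\textbf{The repair.} Do what the paper does: the $X_j$ are independent, so $\E\prod_j\norm{X_j}^{2p}=\prod_j\E\norm{X_j}^{2p}$. Only order-$2p$ moments are then needed, and there $\sqrt{2p}\le\sqrt M$ does hold.

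Concretely, $1/4$-nets of size $9^M$ give $\Prob\{\norm X>2(t_0+z)\}\le e^{-z^2/2}$ with $t_0^2=4M\log9+2\log2\le11M$. Hence $\Lpnorm{\norm X}{2p}\le12\sqrt M$ for $p\le M$, and $P_p(M)\le12^{2pl}M^{pl+1}$. Dividing by your weight lower bound gives at most $e^{l+1}12^{2pl}2^tp^{2t}$ states. Since each $|\pi_x|\ge1$, we have $t\le l(p-1)$, so this is below $100^{2p(l+1)}p^{2t}$. The constant bookkeeping you deferred is routine once independence replaces H\"older.
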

\begin{proof}
For $l\ge1$, evaluate its positive moment expansion at auxiliary
dimension $m=2p^2$. For $1\le b\le p$,
$\fall mb\ge e^{-1}m^b$, by
$\log(1-x)\ge-2x$ for $0\le x\le1/2$.
The typed path matrix is a product $X_1\cdots X_l$ of independent
$m$ by $m$ sign matrices.

To bound a single factor, take two $1/4$-nets of the unit sphere,
each of size at most $9^m$. Its operator norm is at most twice the
largest absolute bilinear form on the nets. Each such form is a
sign sum with squared coefficients summing to one. The exponential
moment inequality for signs and a union bound give
\[
 \Prob\{\norm X>2(t_0+z)\}\le e^{-z^2/2},
 \qquad t_0^2=4m\log9+2\log2\le11m .
\]
Tail integration and Minkowski imply
$\Lpnorm{\norm X}{2p}\le12\sqrt m$ for $p\le m$.
Independence and submultiplicativity, using order $2p$ for every
factor, now give
\[
 \E\tr((HH^*)^p)
 \le m\,\E\prod_{j=1}^l\norm{X_j}^{2p}
 \le12^{2pl}m^{pl+1}.
\]
Every state under consideration contributes at least
$e^{-(l+1)}m^{pl+1-t}$. Positivity therefore bounds its count by
$e^{l+1}12^{2pl}2^tp^{2t}$. Since $t\le l(p-1)$, the asserted
constant suffices.
\end{proof}

Ignoring non-backbone edges enlarges the set of allowed backbone
states. For fixed defects, \cref{lem:path-count} bounds their number by
\begin{equation}\label{eq:backbone-count}
 100^{2pr}p^{2\delta_{\rm on}}.
\end{equation}

\subsection{A lossless off-path encoding}

For perfect matchings define
$d_M(\rho,\sigma)=p-|\rho\vee\sigma|$.
Their union decomposes into alternating cycles, and a cycle of $t$
pairs takes $t-1$ two-pair switches to resolve. Thus $d_M$ is the
switch distance. A radius-$t$ ball has at most $(4p^2)^t$ elements
for $t\ge1$, and one for $t=0$.

If $\rho,\sigma$ both refine an even $\pi$, then
$d_M(\rho,\sigma)\le d(\pi)$. Across a legal edge $xy$, choose a
matching refining $\pi_x\wedge\pi_y$. For any chosen refinements
$\sigma_x,\sigma_y$, the triangle inequality gives
\begin{equation}\label{eq:matching-distance}
 d_M(\sigma_x,\sigma_y)\le d(\pi_x)+d(\pi_y).
\end{equation}
A partition with at most $p$ blocks has at most $p^{2b}$
coarsening records of $b$ binary merges. Across $r$ roles, records
of at most $B$ merges number at most $(2rp^2)^B$.
Finally, if $\xi$ refines $\pi$, then
\begin{equation}\label{eq:join-rank}
 |\pi|-|\pi\vee\rho|\le d_M(\xi,\rho).
\end{equation}
Indeed, joining with $\pi$ contracts the sequence of merges generating
$\xi\vee\rho$ from $\xi$.

Fix a component $C$ of $G[Q]$ and write $D_C=\sum_{x\in C}d(\pi_x)$.
Choose a deterministic matching refinement $\sigma_x$ of each
$\pi_x$. If $C$ meets $U$, anchor it there and set $\rho_C=\tau$;
if it meets $V$, set $\rho_C=\eta$. Otherwise use
$\rho_C=\sigma_w$ for a fixed anchor $w\in C$. Only the latter
seeds are free. Along a spanning tree, \cref{eq:matching-distance}
gives
\begin{equation}\label{eq:seed-distance}
 d_M(\rho_C,\sigma_x)\le2D_C\qquad(x\in C).
\end{equation}
At a boundary anchor the initial distance from the prescribed
matching is at most its own defect, which is included in this bound.

For an attachment edge $xy$ with $x\in P,y\in C$, choose $\xi$
refining $\pi_x\wedge\pi_y$. Then
$d_M(\rho_C,\xi)\le2D_C+d(\pi_y)\le3D_C$.
By \cref{eq:join-rank}, making $\rho_C$ refine the partition at $x$
costs at most $3D_C$ merges. Normalize the state by
\[
 \widehat\pi_y=\rho_C\quad(y\in C),\qquad
 \widehat\pi_x=\pi_x\vee
       \bigvee_{C:\,x\in N_P(C)}\rho_C\quad(x\in P).
\]
If $b$ is the total number of added backbone merges, then
\begin{equation}\label{eq:merge-budget}
 b\le3rD,\qquad
 \widehat\delta=\delta_{\rm on}+b.
\end{equation}
The normalized state is legal. Coarsening endpoint partitions
joins even intersection cells; within $Q$ the endpoint matchings
agree; across an attachment the seed refines the new backbone
partition. Boundary constraints are unchanged.

The code retains the \emph{original} backbone from
\cref{eq:backbone-count}, followed by its forward coarsening record,
at cost at most $(2rp^2)^{3rD}$. Once seeds are known, reconstruct
each original off-path partition by a matching within distance
$2D_C$ of its seed, then by $d(\pi_y)$ merges. The number of these
records is at most
\begin{equation}\label{eq:reconstruction}
 \prod_C\prod_{y\in C}(4p^2)^{2D_C}p^{2d(\pi_y)}
 \le(4p^2)^{2rD}p^{2D}.
\end{equation}
All original partitions are recoverable from these records.
No inverse splitting of a repaired backbone is assumed.

More explicitly, fix orders on roles, replicas, tree edges, and
components once and for all. Within each even cell choose the matching
that pairs successive replicas in that order. Choose the first shortest
switch word whenever several words have the same length; use the first
merge word realizing a required coarsening. These conventions make
the encoder a function of the original state, rather than a relation
with an unspecified multiplicity. The defect vector fixes the word
length budgets. Shorter words can be padded by a null operation; this
is absorbed by the factors $4p^2$ and $2rp^2$ already used above.

The decoder first reads the defect vector and the original backbone
partitions. It applies the recorded forward merges to those partitions,
obtaining the repaired backbone. It then reads one seed per component
(or uses the prescribed boundary seed). For each off-path role, its
switch word recovers the chosen matching refinement, and its merge
word recovers the original partition. Block identifiers can always be
the smallest replica in the current block, so every merge is unambiguous.
The resulting partitions at all roles are exactly the original state.
In particular, two states with the same record are equal. Invalid
records may be included when counting possibilities; they only enlarge
the upper bound. No step of this decoder needs to infer an original
partition by splitting a join.

\subsection{Charging the seed entropy}

Fix the original backbone and a forward coarsening that permits a
normalized state. For a boundary-free component $C$ of $Q$, put
\[
 \theta_C=\bigwedge_{x\in N_P(C)}\widehat\pi_x,
 \qquad e_C=p-|\theta_C|.
\]
Its neighbor set is nonempty, because the core has no detached
component. A compatible seed refines $\theta_C$, so its cells must
be even. If their sizes are $2t_1,\ldots,2t_j$, the seed count is
\begin{equation}\label{eq:seed-count}
 \prod_{i=1}^j(2t_i-1)!!
 \le(2p)^{\sum_i(t_i-1)}=(2p)^{e_C}.
\end{equation}

Apply the separator layers to the normalized state. By interval
containment, every layer crossing $I(\theta_C)$ contains all
backbone neighbors of $C$. No role of $C$ lies in any layer,
because its partition is a matching. Consequently $C$ is an
active component at each of these $e_C$ layers. At a minimum
layer there are at most $a_*$ such components. There are at most
$\widehat\delta$ nonminimum layers, each with at most $r$
components. Thus
\begin{equation}\label{eq:seed-charge}
 \sum_Ce_C\le a_*(p-1)+r\widehat\delta.
\end{equation}
The crude bound $\sum_Ce_C\le r(p-1)$ absorbs the powers of two
in \cref{eq:seed-count}. All free seeds together have at most
\[
 2^{rp}p^{a_*p+r\delta_{\rm on}+3r^2D}
\]
choices. Boundary-touching components have prescribed seeds.

The coarsening can depend on the seed in the construction. This
does not invalidate the count: first count its range, then count
the fiber above each fixed coarsening. No probabilistic independence
between these combinatorial data is used.

Formally, let $\mathcal B$ denote a recorded original backbone and
forward merge word. For each fixed $\mathcal B$ in the range of the
encoder, its repaired partitions, and hence the partitions $\theta_C$,
are determined. Sum the seed bound over this range, and for each seed
sum the local reconstruction bound. The number of full records is at
most the number of possible $\mathcal B$ times the largest such fiber
bound. This elementary range--fiber inequality is valid even though
the map producing $\mathcal B$ used the seed. It is the reason the
normalization does not reverse the counting direction.

\begin{proof}[Completion of \cref{thm:count}]
Multiply the counts for the defect vector, the backbone, its
forward merges, the off-path reconstruction, and the seeds.
The exponent of $p$ is
\[
 a_*p+(r+2)\delta_{\rm on}+(3r^2+10r+2)D
 \le a_*p+K_r\delta.
\]
The remaining factors are bounded by
\[
 2^{2rp}100^{2pr}(2r)^{3rD}4^{2rD}
 \le\bigl[200^r(2r)^{2r^2}4^{r^2}\bigr]^{2p},
\]
using $D\le rp$. If $r=0$, only the empty state at defect zero
exists. This proves all cases.
\end{proof}

\section{Conditional deviations and synchronized lower witnesses}\label{app:lower-detail}

We complete the conditional estimate used after
\cref{lem:separator-flattening}. All internal randomness is exposed
before the separator trials are selected. This order is essential:
unconditional independence of those trials would be false.

\subsection{A uniform conditional moderate deviation}\label{app:lower-conditional}

Fix an active connected component $K$ with $k$ roles, and let
$B\ne\varnothing$ be its roles adjacent to $S$. Condition on all
internal arrays of $K$. Summing over $K\setminus B$ gives
coefficients $\gamma_{x_B}$. At a fixed separator label,
\[
 T_K=\sum_{x_B}\gamma_{x_B}\prod_{z\in B}\eta_z(x_z),
\]
where the $\eta_z$ are independent sign vectors: each is the product
of the crossing-edge signs incident to that role.

We use the fixed-degree Boolean hypercontractive inequality
$\|P\|_q\le(q-1)^{d/2}\|P\|_2$ for a sign polynomial of degree at
most $d$ and $q\ge2$ \citep{ODonnell2014}. Here all applications have
fixed degree and fixed $q$; their constants do not grow with $n$.

Choose $z_0\in B$ and put
\[
 Q=\sum_{x_B}\gamma_{x_B}^2,\qquad
 R_j=\sum_{x_{B\setminus\{z_0\}}}
              \gamma_{j,x_{B\setminus\{z_0\}}}^2 .
\]
Distinct summation assignments give distinct Walsh monomials,
because $K$ is connected. For a singleton component use
$\gamma=1$. Thus $\E Q=\prod_{x\in K}m_x\asymp n^k$ and
$\E R_j\asymp n^{k-1}$. Hypercontractivity and Minkowski give
\[
 \E Q^2\le C(\E Q)^2,\qquad
 \E R_j^{16}\le C n^{16(k-1)}.
\]
Paley--Zygmund, followed by Markov and a union bound, yields an
internal event $\mathcal G_K$ of probability at least $c>0$ on
which
\begin{equation}\label{eq:internal-good}
 cn^k\le Q\le Cn^k,\qquad
 \max_jR_j\le n^{k-3/4}.
\end{equation}
The last failure probability is $O(n^{-3})$: its one-coordinate
bound is $O(n^{-4})$ and there are $O(n)$ coordinates. Increasing
the constant in the upper bound for $Q$ leaves a positive
probability for the intersection.

Here is the good-set calculation with the constants separated from
the dimension. If $\E Q\ge c_1n^k$ and
$\E Q^2\le C_1(\E Q)^2$, Paley--Zygmund gives
$\Prob\{Q\ge(\E Q)/2\}\ge(4C_1)^{-1}=:q_1$.
Choose a fixed $A$ so that $\Prob\{Q>An^k\}\le q_1/4$ by
Markov. For each $j$, the sixteenth-moment bound gives
$\Prob\{R_j>n^{k-3/4}\}\le Cn^{-4}$; summing over at most
$C'n$ indices leaves $O(n^{-3})\le q_1/4$ for large $n$.
The intersection therefore has probability at least $q_1/2$.
All choices depend on the fixed component and label-size comparison
constants, not on a separator tuple.

For every full internal realization in $\mathcal G_K$, expose the
vectors $\eta_z$ for $z\ne z_0$. The remaining sum is
$Y=\sum_jz_j\eta_{z_0}(j)$, with
$\sigma^2=\sum_jz_j^2$. Uniformly over those internal realizations,
\[
 \E\sigma^2=Q,\quad \E\sigma^4\le CQ^2,\quad
 \E|z_j|^{32}\le C R_j^{16}\le Cn^{16k-12}.
\]
A second Paley--Zygmund argument, an upper Markov bound, and the
union bound at $|z_j|=n^{k/2-1/4}$ show that, with conditional
probability at least $c'>0$,
\begin{equation}\label{eq:conditional-good}
 c'n^k\le\sigma^2\le C'n^k,\qquad
 \max_j|z_j|\le n^{k/2-1/4}.
\end{equation}
The coefficient failure probability is again $O(n^{-3})$.
If $|B|=1$, there is no exposure at this step; the assertions follow
directly from \cref{eq:internal-good}.

For clarity, after a full internal realization is fixed, the same
Paley--Zygmund argument applies to the nonnegative random variable
$\sigma^2$, whose conditional mean is $Q$. Its second moment is
$\E\sigma^4\le CQ^2$. A sufficiently large fixed upper cutoff
removes less than a quarter of the resulting positive probability.
For the coefficients the displayed thirty-second moment yields
$\Prob\{|z_j|>n^{k/2-1/4}\mid\mathcal I\}\le Cn^{-4}$,
because the denominator is $n^{16k-8}$ and the numerator is
at most $Cn^{16k-12}$. The union bound again costs only $O(n)$.
These estimates hold for every internal realization in
$\mathcal G_K$, not merely on average over that event.

\begin{lemma}[Weighted sign tilt]\label{lem:tilt}
For $Y=\sum_jz_j\eps_j$ and $\sigma^2=\sum_jz_j^2>0$, suppose
$t\ge\sigma$ and $8t\max_j|z_j|/\sigma^2\le1$. Then
\[
 \Prob\{Y\ge t\}\ge\tfrac12\exp(-96t^2/\sigma^2).
\]
\end{lemma}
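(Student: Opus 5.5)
This is a standard exponential-tilt (change of measure) argument. Set $\lambda=2t/\sigma^2$ and define the tilted law $d\widetilde{\Prob}=e^{\lambda Y}d\Prob/\E e^{\lambda Y}$. Under $\widetilde{\Prob}$ the signs remain independent, with $\widetilde\E\eps_j=\tanh(\lambda z_j)$. The hypothesis $8t\max_j|z_j|/\sigma^2\le1$ gives $|\lambda z_j|\le1/4$, so every coordinate sits in the quadratic regime where $\tanh$ and $\log\cosh$ are comparable to their Taylor expansions. The goal is to show that $Y$ lies in the window $[t,4t]$ with $\widetilde{\Prob}$-probability at least $1/2$. We then undo the tilt.

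\textbf{Tilted mean and variance.} For $|x|\le1/4$ use $\tanh x\ge x-x^3/3\ge (47/48)x$. This gives
\[
 \widetilde\E Y=\sum_j z_j\tanh(\lambda z_j)\ge \tfrac{47}{48}\lambda\sigma^2=\tfrac{47}{24}t .
\]
Also $\widetilde\E Y\le\lambda\sigma^2=2t$. The tilted variance is
\[
 \sum_j z_j^2\operatorname{sech}^2(\lambda z_j)\le\sigma^2\le t^2 ,
\]
using $t\ge\sigma$. Chebyshev then puts $Y\in[\widetilde\E Y-\tfrac{3t}{2},\widetilde\E Y+\tfrac{3t}{2}]$ with probability at least $1-4/9$. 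This window is not yet good enough: it has lower endpoint $\approx t/2$, below the target $t$.

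The fix is to enlarge the tilt to $\lambda=4t/\sigma^2$. Then $|\lambda z_j|\le1/2$ and $\tanh x\ge(11/12)x$ there, so $\widetilde\E Y\ge\tfrac{11}{3}t$. Chebyshev at deviation $\tfrac{8}{3}t$ gives $Y\ge t$ with probability at least $1-9/64$. Combining this with $Y\le\widetilde\E Y+2t\le 6t$ (with failure probability at most $1/4$) yields
\[
 \widetilde{\Prob}\{t\le Y\le6t\}\ge1/2 .
\]

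\textbf{Reverse change of measure.} On the event $\{t\le Y\le 6t\}$,
\[
 \Prob\{Y\ge t\}\ge \E e^{\lambda Y}\,e^{-6\lambda t}\,\widetilde{\Prob}\{t\le Y\le6t\}.
\]
Since $\log\cosh\ge0$, we have $\E e^{\lambda Y}=\prod_j\cosh(\lambda z_j)\ge1$, which can simply be dropped. The remaining factor is $e^{-6\lambda t}=e^{-24t^2/\sigma^2}$. This gives the stated bound, with constant $96$ to spare.

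\textbf{Main obstacle.} The only delicate point is calibrating $\lambda$ against the coefficient bound, so that the linearization of $\tanh$ is valid under the hypothesis $8t\max_j|z_j|\le\sigma^2$. This calibration is what makes the tilted mean exceed $t$ by a margin dominating the tilted standard deviation. The condition $t\ge\sigma$ is what lets the variance be absorbed into a multiple of $t^2$. No other input from the paper is needed.
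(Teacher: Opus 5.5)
Your argument is correct and is essentially the paper's proof. Both tilt by $e^{\lambda Y}$, which keeps the signs independent with means $\tanh(\lambda z_j)$ and variances at most one; both use a linear lower bound on $\tanh$ to place the tilted mean well above $t$, apply Chebyshev on a band $[t,Ct]$, and undo the tilt using $\E e^{\lambda Y}=\prod_j\cosh(\lambda z_j)\ge1$.

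The only difference is calibration. The paper takes $\lambda=8t/\sigma^2$, the cruder bound $\tanh x\ge x/2$ on $[0,1]$, and the band $[t,12t]$, which gives the constant $96$. Your choice $\lambda=4t/\sigma^2$ with $\tanh x\ge\tfrac{11}{12}x$ on $[0,1/2]$ gives the sharper constant $24$. In a final write-up, drop the abandoned $\lambda=2t/\sigma^2$ attempt and the stray target window $[t,4t]$.
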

\begin{proof}
Tilt by $\lambda=8t/\sigma^2$, so the density is
$e^{\lambda Y}/\E e^{\lambda Y}$. Since
$x/2\le\tanh x\le x$ on $[0,1]$, the tilted mean lies between
$4t$ and $8t$, and the tilted variance is at most $\sigma^2$.
Chebyshev gives probability at least
$1-1/9-1/16>1/2$ to the band $t\le Y\le12t$.
On this band the inverse likelihood ratio is at least
$e^{-12\lambda t}$, since $\sum_j\log\cosh(\lambda z_j)\ge0$.
Integrating on the band proves the claim.
\end{proof}

Take $t=\eps n^{k/2}\sqrt{\log n}$. For each fixed $\eps>0$,
\cref{eq:conditional-good} satisfies the hypotheses for all
sufficiently large $n$, uniformly in the internal realization.
Integrating only over the event in \cref{eq:conditional-good} gives
\begin{equation}\label{eq:component-tail}
 \Prob\{|T_K(x_S)|\ge\eps n^{k/2}\sqrt{\log n}
             \mid\mathcal I\}
 \ge c n^{-C\eps^2}
 \quad\hbox{on }\mathcal G_K ,
\end{equation}
where $\mathcal I$ contains all internal arrays. The constants and
the threshold for $n$ are uniform over the full realizations in
$\mathcal G_K$.

\subsection{Synchronizing the witnesses}

Let $K_1,\ldots,K_a$ be the active components. Choose
$N=\Theta_G(n)$ separator tuples whose labels are distinct in
every separator role across the tuples. Condition on all
component-internal arrays. Crossing coordinates for distinct
tuples and distinct components are disjoint, so these trials
are independent under this full conditional law.

The internal good events use disjoint arrays and have joint
probability at least a positive constant. On this joint event,
\cref{eq:component-tail} gives success probability at least
$c n^{-C\eps^2}$ for simultaneous success in all $a$ components
at any one tuple. Choose $\eps$ so that $C\eps^2<1/2$. The
conditional probability that no tuple succeeds is then at most
$\exp(-c\sqrt n)$. Therefore
\begin{equation}\label{eq:synchronized}
 \E\max_{x_S}\prod_{j=1}^a|T_{K_j}(x_S)|
 \ge c_G n^{|D|/2}(\log n)^{a/2}.
\end{equation}
This integration uses independence given $\mathcal I$, not
independence given only the good event. Explicitly, if
$\Prob(\mathcal G)\ge c_0$ and each of $N$ conditionally independent
trials has conditional success probability at least $\rho_n$
on every full realization in $\mathcal G$, then
\[
 \Prob(\mathcal G\cap\{\hbox{some success}\})
 \ge c_0[1-(1-\rho_n)^N].
\]
If $a=0$, the product is one. If $S$ is empty, then $a=0$ and
no packing is needed.

\section{Factor moments, layerwise counting, and lower tails}\label{app:factor-detail}\label{app:tail}

This appendix proves \cref{thm:tail-factor,prop:profile} and the
factor statement \cref{thm:intro-factors}. The law and preprocessing are
those of \cref{sec:extensions}; the separate weighted sign theorem is
not an additional hypothesis here.

The proof retains the original separator layers before summing over
replica states. This is the point at which a graph-only count would lose
the factor-specific information. We first account for scalar components,
then prove the layerwise count and the conditional lower bound.
\subsection{Detached sign components}

We first prove the scalar count needed for the moment statement
\cref{eq:intro-moments}. For a connected detached graph $K$ with
$k$ roles and at least one edge, let $\delta=\sum_x(p-|\pi_x|)$.
There are constants $B,L$ depending only on $K$ such that the
number of legal scalar states is at most
\begin{equation}\label{eq:scalar-count}
 B^{2p}p^{p+L\delta}.
\end{equation}
To see this, fix a rooted spanning tree and a canonical matching
refinement at every role. The root matching has at most $(2p)^p$
choices. Across a tree edge, \cref{eq:matching-distance} bounds
the matching distance by the sum of endpoint defects. Counting
switch records gives at most $p^{3(d_x+d_y)}$ choices for a
child refinement. Recover its original partition with at most
$p^{2d_y}$ merge records, and recover the root partition using
at most $p^{2d_{\rm root}}$ additional merge records.
Multiplying over the tree gives
$(2p)^pp^{(3k+2)\delta}$. Defect allocations contribute a fixed
exponential base in $p$, proving \cref{eq:scalar-count}.

The positive scalar expansion consequently gives
\[
 \E|Z_K|^{2p}\le
 B^{2p}n^{pk}p^p\sum_{\delta\ge0}(p^L/n)^\delta .
\]
For a matching lower bound, impose the same perfect matching at
every role. These $(2p-1)!!$ distinct exact states are all legal,
each with weight $\prod_x\fall{m_x}p$. For
$p\le\min_xm_x/2$ this weight is at least
$\prod_x(m_x/2)^p$, and $(2p-1)!!\ge p!\ge(p/e)^p$.
Taking roots and using monotonicity between even moments proves
\begin{equation}\label{eq:scalar-moments}
 \|Z_K\|_q\asymp n^{k/2}\sqrt q,\qquad
 2\le q\le C_0\log(2n).
\end{equation}
At $q=1$, second and fourth moments give
$\E|Z_K|\asymp n^{k/2}$. For a higher-arity sign component,
factor legality implies two-section legality for the upper count,
and the common-matching states remain legal for the lower count.
A unary-only singleton is an ordinary sign sum and satisfies the
same assertion.

In the typed model, \cref{eq:factor-decomposition} makes these
$L^q$ norms multiply independently of the core.
\Cref{prop:core-upper} and the core $L^1$ lower bound supply its
sharp logarithmic factor. The global upper and lower transfers
have constants independent of $q$, proving
\cref{eq:intro-moments} with $d_0$ equal to the number of detached
random components. For $1\le q<2$, sandwich between $L^1$ and
$L^2$. Markov at $q=C_A\log(2n)$ gives the upper-tail exponent
$(a_*+d_0)/2$.

\subsection{Fixed moments for independent factor laws}

We need a dimension-free fixed-order replacement for Boolean
hypercontractivity. If independent symmetric variance-one
variables have bounded $2M$ moments, then for Hilbert-space vectors
$v_i$,
\begin{equation}\label{eq:hilbert-general}
 \left\|\sum_i\xi_iv_i\right\|_{L^{2M}}
 \le A_M\left(\sum_i\|v_i\|^2\right)^{1/2}.
\end{equation}
Expand the $M$ inner products in the norm power. Odd multiplicities
vanish by symmetry. Bound inner products by products of lengths.
Every surviving index word admits a pairing, and summing over
all $(2M-1)!!$ pairings only overcounts. The bounded variable
moments contribute a constant depending on $M$, not on the number
of indices. This proves the inequality.

Iterate it through the fixed number of independent occurrence
groups, using a Hilbert direct sum for coefficient lists and
Minkowski for their squared norms. Orthogonality identifies the
final coefficient norm with the $L^2$ norm. Thus every such
Hilbert-valued multilinear chaos satisfies
$\|T\|_{2M}\le A_{M,\mathcal F}\|T\|_2$, also for deterministic
coefficients produced by conditioning. Interpolation at orders
one, two, and four gives the lower stacking inequality of
\cref{lem:stacking} with a law-dependent positive constant per
occurrence.

\subsection{The layerwise refinement}

For a fixed original separator sequence $\mathbf S=(S_k)$, let
$N(\mathbf S)$ be the number of graph-legal states inducing it.
Then
\begin{equation}\label{eq:layerwise}
 N(\mathbf S)\le C_r^{2p}
 p^{\sum_k a(S_k)+K'_r\sum_k(|S_k|-s)},
 \qquad K'_r=3r^2+11r+2.
\end{equation}
Here the statistics refer to the original, not normalized, slices.

Use the encoding of \cref{app:counting-detail}. An effective backbone
merge increases interval length by one and preserves containment.
Normalizing off-path roles deletes all their $D$ memberships.
Thus, with $b\le3rD$,
\[
 \sum_k|S_k\triangle\widehat S_k|=b+D\le(3r+1)D.
\]
Since an active count is between zero and $r$, it follows that
\[
 \sum_k a(\widehat S_k)
 \le\sum_k a(S_k)+(3r^2+r)D.
\]
The seed interval argument now gives
$\sum_Ce_C\le\sum_k a(\widehat S_k)$ instead of
\cref{eq:seed-charge}. The normalized slices depend only on
the repaired backbone: all off-path intervals have width zero,
regardless of the position of their singleton interval.
Therefore this inequality is valid in a fiber with fixed
original slices. Count only repaired-backbone fibers admitting
a completion with those original slices.

Adding the path, coarsening, reconstruction, and seed exponents
gives
\[
 \sum_k a(S_k)+2\delta_{\rm on}
                +(3r^2+11r+2)D .
\]
The same fixed-base and defect-vector estimates as before prove
\cref{eq:layerwise}. This proves the required refinement without
assuming it follows from the final unweighted bound.

\subsection{Weighted trace expansion}

For occurrence $e$, put $\lambda_e=\bigwedge_{v\in e}\pi_v$.
Its cells of size $j$ contribute $\E\xi_e^j$ in the exact trace.
Symmetry removes all states with an odd factor cell; every
remaining term is nonnegative. A covered role is even as a union
of even factor cells, and an uncovered core role is even by its
boundary matching. Factor legality implies pairwise legality
in the two-section.

Write the cell sizes of $\lambda_e$ as $2t_1,\ldots,2t_j$ and
$d_e=\sum_i(t_i-1)=p-|\lambda_e|$. Tail regularity implies
\begin{equation}\label{eq:factor-weight}
 \prod_i\E|\xi_e|^{2t_i}\le \widetilde C_e^{2p}p^{\theta_ed_e}.
\end{equation}
Indeed $(2t)^{\theta t}\le2^{2\theta t}p^{\theta(t-1)}$ for
$1\le t\le p$. Under the finite profile, the corresponding bound
is $C_e^{2p}n^{\beta_ed_e}p^{\gamma_ed_e}$.

By interval containment, every layer in $I(\lambda_e)$ contains
all roles of $e$. Hence, for nonnegative weights $w_e$,
\[
 \sum_ew_ed_e\le\sum_k\sum_{e\subseteq S_k}w_e.
\]
The label contribution is at most
$C^{2p}n^{pr-\sum_k|S_k|}$. Combining these statements with
\cref{eq:layerwise} proves \cref{eq:profile}.

For fixed laws, minimum layers have charge at most $b_*$.
Nonminimum layers are at most $\delta$ in number, and their
charges are bounded by a fixed constant $B$. There are at most
$2^{r(p-1)}$ subset sequences. Thus
\[
 \E\tr((H_cH_c^*)^p)
 \le C^{2p}n^{p(r-s)+s}p^{b_*p}
       \sum_{\delta\ge0}(p^{K'_r+B}/n)^\delta .
\]
Taking $p=\lceil\log n\rceil$ proves the core upper bound of
\cref{thm:tail-factor}. Increasing the threshold constant in
Markov's inequality gives each prescribed polynomial upper tail.

For the cut-stable profile, put
$F(S)=b(S)+K'_r(|S|-s)$. On a minimizing cut,
$F(S)\le b_*^{\rm mw}$. If nonminimizers exist, their cost
gap has a positive minimum $\eta$, since the shape is fixed.
Choose a fixed $L$ bounding $(F(S)-b_*^{\rm mw})_+$. The
sequence sum is at most
\[
 n^{pr-(p-1)\kappa_*}p^{(p-1)b_*^{\rm mw}}
 \left[|\mathcal S_*|+
 (|\mathcal S|-|\mathcal S_*|)n^{-\eta}p^L\right]^{p-1}.
\]
Take $p=\lfloor c\log n\rfloor$ for fixed $0<c<C_0$. The
bracket and the root factor $n^{\kappa_*/(2p)}$ are bounded.
This proves the expectation bound. Markov at threshold $R$
times the root estimate gives $R^{-2p}$; choosing $R$ large
enough gives $n^{-A}$ without enlarging the supplied window.

\subsection{The factor lower bound}

Fix a minimum separator maximizing $a(S)+\Theta(S)$. Condition
all occurrences meeting active components and all occurrences
contained in $S$. Their weight is
\[
 w(x_S)=\prod_{K\ {\rm active}}T_K(x_S)
             \prod_{e\subseteq S}\xi^{(e)}_{x_e}.
\]
A factor cannot meet two components of the two-section minus
$S$ outside $S$. Assign the remaining occurrences to the
boundary side met by their nonseparator roles. Minimality
of $S$ observes each separator role on both sides, by the
path argument in \cref{lem:separator-flattening}. The weighted
coefficient flattening is therefore exactly the same direct
sum of weighted all-ones blocks. The general-law stacking
inequality reduces the lower bound to one simultaneous
maximum of $|w(x_S)|$.

For completeness, the needed active-component estimate follows
with full conditioning as follows. Fix an active component $K$
of $k$ roles and a crossing occurrence $e_0$. Put
$B=e_0\cap K$, $b=|B|\ge1$. Let $\mathcal I$ contain the full
arrays of occurrences wholly inside $K$, and let $O$ comprise
the other crossing slices. For a fixed separator tuple write
\[
 T_K=\sum_{y\in I_B}z_y(\mathcal I,O)
                  \xi^{(e_0)}_{x_{e_0\cap S},y},
 \quad \sigma^2=\sum_yz_y^2,\quad
 V(\mathcal I)=\E(T_K^2\mid\mathcal I).
\]
Every role of $K$ is covered by a factor; every role outside
$B$ occurs in a factor other than $e_0$. Distinct complete
assignments therefore give orthogonal monomials. The
fixed-moment inequality gives
\[
 \E V\asymp n^k,\quad \E V^2\le Cn^{2k},\quad
 \E|z_y|^{2M}\le C_Mn^{M(k-b)}.
\]
Paley--Zygmund and an upper cutoff give an internal set of
positive probability on which $cn^k\le V\le Cn^k$.
On each of its full realizations,
$\E_O\sigma^2=V$ and $\E_O\sigma^4\le CV^2$. Hence
$cn^k\le\sigma^2\le Cn^k$ has conditional probability at
least a fixed $q_0>0$.

The coefficient maximum has unconditional failure probability
\[
 \Prob\{\max_y|z_y|>n^{k/2-1/4}\}
 \le C_M n^{b-Mb+M/2}=o(1)
\]
for sufficiently large fixed $M$. Its conditional failure
probability, as a function of $\mathcal I$, has expectation
$o(1)$. Markov therefore removes an $o(1)$ set of internal
realizations and leaves a set $\mathcal G_K$ of positive
probability on which
\[
 \Prob_O\{cn^k\le\sigma^2\le Cn^k,\
              \max_y|z_y|\le n^{k/2-1/4}\mid\mathcal I\}
 \ge q_0/2 .
\]
The crossing slices have identical marginal laws for every
separator tuple, so the same full internal set works for all
tuples.

To make the conditioning step explicit, let $q_n(\mathcal I)$ be
the conditional probability of the coefficient failure event.
Its expectation is $o(1)$ by the preceding union bound. Hence
$\Prob\{q_n(\mathcal I)>q_0/2\}\le2\E q_n/q_0=o(1)$.
Remove this set from the positive-probability internal variance
set. On each remaining full realization, intersecting the variance
event with the coefficient event loses at most $q_0/2$ of conditional
probability. There is no union bound over separator tuples here:
given the same internal arrays, the as-yet unexposed crossing slices
have identical product laws for every fixed tuple. Their conditional
failure-probability functions are therefore the same function of
$\mathcal I$. Independence is used later only for tuples with
disjoint crossing coordinates.

The tail assumption with $\theta_e\le2$ implies
$\|\xi_e\|_q\le Cq$. Its moment generating function is analytic
near zero, by the absolute power series and $m!\ge(m/e)^m$.
For $k(u)=\log\E e^{u\xi_e}$, symmetry and variance one give
$k(u)=u^2/2+O(u^4)$,
$k'(u)=u+O(u^3)$, and $k''(u)=1+O(u^2)$.
For $T=\sum_i z_i\xi_i$, tilt with
$\lambda=2t/\sigma^2$. If $t\ge4\sigma$ and
$\lambda\max_i|z_i|$ is sufficiently small, the tilted mean
lies in $[1.8t,2.2t]$ and its variance is at most $2\sigma^2$.
Chebyshev gives tilted probability at least $1/2$ to
$t\le T\le3t$. Since the logarithmic moment generating
function is nonnegative, undoing the tilt gives
\[
 \Prob\{T\ge t\}\ge\tfrac12e^{-6t^2/\sigma^2}.
\]
At $t=\eps n^{k/2}\sqrt{\log n}$ the hypotheses hold uniformly
on the coefficient event above. Thus on every realization in
$\mathcal G_K$ the component success probability is at least
$c n^{-C\eps^2}$.

For a separator-only factor, Paley--Zygmund on $|\xi_e|^q$
and the two-sided moment bounds give
\[
 \Prob\{|\xi_e|\ge\tfrac12c_eq^{\theta_e/2}\}
 \ge\tfrac14
       \left(\frac{c_e}{C_e2^{\theta_e/2}}\right)^{2q}.
\]
Choose $q=\alpha_e\log n$ with fixed small $\alpha_e>0$.
This gives the required logarithmic threshold with an
arbitrarily small positive polynomial success exponent.

Intersect the independent internal good sets. On every full
realization in their intersection, use $\Theta(n)$ separator
tuples with all role labels distinct across trials. Every
crossing or separator-only occurrence has a nonempty
separator support, so its coordinates are disjoint across
these trials. Choose $\eps$ and the finitely many $\alpha_e$
so that their total success exponent is less than one.
Conditional independence then gives
\[
 \E\max_{x_S}|w(x_S)|
 \ge c n^{|D|/2}(\log n)^{[a(S)+\Theta(S)]/2}.
\]
For an empty separator there is no active or separator-only
factor and $w=1$. The flattening proves the core lower bound.

Finally, for a fixed-law detached component,
$\E Z_K^2=\prod_{v\in K}m_v$ and
$\E Z_K^4\le C(\prod_{v\in K}m_v)^2$. Interpolation gives
$\E|Z_K|\asymp n^{|K|/2}$. Restore these independent
expectation factors and unused-role sizes in
\cref{eq:factor-decomposition}. This completes
\cref{thm:tail-factor}, and hence \cref{thm:intro-factors}.

The full trace restoration is exact:
\[
 \E\tr((HH^*)^p)=
 d(n)^{2p}\prod_j\E|Z_j|^{2p}\,
                \E\tr((H_cH_c^*)^p).
\]
For example, if $H=ZI_n$ and
$Z=\sum_{i,j}g_{ij}\sim N(0,n^2)$, its trace moment is
$n^{2p+1}(2p-1)!!$. Omitting the detached moment would lose
this factorial, which cannot be absorbed into $C^{2p}$.

\subsection{Detached scalars and the moment window}

The distinction between a core matrix and detached scalar factors also
persists beyond expectation. If $d_0$ is the number of connected
detached random components, the moment statement has the form
\begin{equation}\label{eq:intro-moments}
 \Lpnorm{\norm{M_\alpha}}q
 \asymp_{\alpha,C_0}
 n^{(v+h-s)/2}(\log n)^{a_*/2}q^{d_0/2},
 \qquad 2\le q\le C_0\log(2n).
\end{equation}
The logarithmic factor is already present at fixed $q$. Replacing it by
$q^{a_*/2}$ would contradict the diagonal example above. Conversely, a
high-probability lower bound for the full matrix cannot be obtained by
ignoring the possibility that a detached scalar is small. Expectation,
growing moments, and overwhelming-probability statements require separate
quantifiers.

\section{Dimension-free bounded-noise comparison}\label{app:bounded-detail}

\begin{proof}[Proof of \cref{thm:bounded-comparison}]

First, for deterministic vectors $b_i\in B$ and $|a_i|\le1$,
\begin{equation}\label{eq:bounded-contraction}
 \Lpnorm{\sum_i a_i\eps_i b_i}p
 \le \Lpnorm{\sum_i\eps_i b_i}p.
\end{equation}
Choose independent signs $\eta_i$ with $\E\eta_i=a_i$. Conditional
Jensen for the convex function $\norm{\cdot}^p$ proves the inequality,
since the products $\eps_i\eta_i$ are independent uniform signs.
If $0<m\le a_i\le M$, applying this inequality both to $a_i/M$
and to $m/a_i$ gives two-sided factors $m$ and $M$.

For one group, take an independent copy $X'_i$ and put $D_i=X_i-X'_i$.
These differences are independent and symmetric, so jointly they have
the law $D_i=\eps_i R_i$, with $R_i=|D_i|$ and independent uniform
signs independent of all magnitudes. This representation also holds
when $R_i=0$. Since $R_i\le2K$ and $\E R_i^2=2$,
\[
 \mu_i:=\E R_i\ge K^{-1}.
\]
Conditional Jensen using $\E X'_i=0$, followed by contraction conditional
on the magnitudes, yields
\[
 \Lpnorm{\sum_iX_ib_i}p
 \le\Lpnorm{\sum_iD_ib_i}p
 \le 2K\Lpnorm{\sum_i\eps_ib_i}p.
\]
For the reverse inequality, Jensen over the magnitudes and lower
contraction give
\[
 \Lpnorm{\sum_iD_ib_i}p
 \ge\Lpnorm{\sum_i\mu_i\eps_ib_i}p
 \ge K^{-1}\Lpnorm{\sum_i\eps_ib_i}p.
\]
Minkowski bounds the left side by $2\Lpnorm{\sum_iX_ib_i}p$.
This proves the comparison for one group. Condition on all other
groups, apply these inequalities to conditional $p$th moments, and
integrate. Replacing the groups one at a time gives exactly $Q$
factors of $2K$, proving \cref{eq:bounded-comparison}.
\end{proof}
\section{An independent proof for the gated Khatri--Rao product}\label{app:khatri-detail}

\begin{proof}[Proof of \cref{thm:khatri-rao}]

Write $\circ$ for entrywise matrix multiplication. If $P,Q$ are
positive semidefinite and $P=\sum_\ell u_\ell u_\ell^*$, then
\[
 P\circ Q=\sum_\ell\diag(u_\ell)Q\diag(u_\ell)^*
 \preceq\norm Q\,\diag(P).
\]
Using $K^*K=(A_1^*A_1)\circ\cdots\circ(A_r^*A_r)$ and iterating gives
\begin{equation}\label{eq:khatri-upper}
 \norm K\le\norm{A_1}\prod_{t=2}^r\max_j\norm{A_t[:,j]}_2.
\end{equation}
For a standard Gaussian matrix $A$, $\E\norm A\le C\sqrt n$.
For completeness, a $1/4$-net of the unit sphere has at most $9^n$
points by disjoint-ball volume comparison. Approximating both singular
vectors bounds $\norm A$ by twice the largest $|x^*Ay|$ on the two
nets. Each fixed form is standard normal. The union bound
$2\cdot9^{2n}\exp(-t^2/2)$, capped at one and integrated, gives the
claimed estimate. Since each column norm is at most $\norm A$,
independence in \cref{eq:khatri-upper} implies
$\E\norm K\le C_r n^{r/2}$.

Choose $J$ as the smallest maximizer of $|D_{jj}|$. It depends only
on $D$, and is independent of every $A_t$. The $J$th column gives
\[
 \norm{KD}\ge |D_{JJ}|\prod_t\norm{A_t[:,J]}_2.
\]
For a standard Gaussian vector $g\in\R^n$, the second and fourth
moments of its Euclidean norm are $n$ and $n(n+2)$. Paley--Zygmund
therefore gives $\E\norm g_2\ge c\sqrt n$. Conditioning on $D$
and its selected column, and using the upper bound above, yields
\begin{equation}\label{eq:khatri-max}
 c_r n^{r/2}\E\max_j|D_{jj}|
 \le\E\norm{KD}
 \le C_r n^{r/2}\E\max_j|D_{jj}|.
\end{equation}
For $p=\max(2,\log n)$, maximum is bounded by the $\ell_p$ norm, so
\[
 \E\max_j\prod_h|g_{jh}|
 \le n^{1/p}\prod_h\Lpnorm{g_{1h}}p
 \le C_q(\log(2n))^{q/2}.
\]
Conversely choose $\alpha>0$ with $\alpha q/2<1$ and set
$t=\sqrt{\alpha\log n}$. Integrating the normal density on
$[t,t+1/t]$, for $t\ge1$, gives
$\Prob\{|g|\ge t\}\ge c t^{-1}e^{-t^2/2}$.
At each column, all $q$ gates exceed $t$ in absolute value with
probability at least $c_qt^{-q}n^{-\alpha q/2}$.
Its product with $n$ tends to infinity. Independence across columns
thus gives a gate product at least $t^q$ with probability tending
to one. This proves the reverse maximum estimate and, by
\cref{eq:khatri-max}, the theorem.
\end{proof}
\section{Local contraction and the complete aspect reduction}\label{app:weights-detail}\label{app:aspects}

We prove \cref{thm:aspects} by establishing the reductions in their required order.

\subsection{All-moment local contraction}

For coefficients in any normed space, write
$F_b=\sum B_{i_1,\ldots,i_Q}\prod_t b_t(i_t)\eps^{(t)}_{i_t}$.
If $0<c_t\le|b_t(i)|\le C_t$, then for every $p\ge1$,
\begin{equation}\label{eq:local-contraction}
 \left(\prod_tc_t\right)\|F_1\|_p
 \le\|F_b\|_p\le
 \left(\prod_tC_t\right)\|F_1\|_p.
\end{equation}
For $|d_i|\le1$, choose independent auxiliary signs of mean $d_i$.
Conditional convexity bounds
$\|\sum_i d_i\eps_i v_i\|^p$ by the auxiliary expectation of
$\|\sum_i\eta_i\eps_iv_i\|^p$. The products $\eta_i\eps_i$
are independent uniform signs. Average and iterate through the
groups with $d_i=b_t(i)/C_t$. For the reverse inequality,
contract the weighted chaos by $c_t/b_t(i)$.

Apply \cref{eq:local-contraction} with
$b_t=n^{-\gamma_t}a_{t,n}$. This removes all amplitudes with
a factor $n^\Gamma$ and constants independent of both $p$ and
$n$. It is done before selecting separator labels; no assertion
of identical conditional weighted laws is needed.

\subsection{Positive aspects and the minimum face}

Assume first that all remaining roles have positive exponents
and all factors have arity at least two. Remove sign factors
on a single boundary, deterministic isolated middle roles,
and detached components. Let $G$ be the resulting core and
$B_G=\sum_{v\in G}\beta_v$.

Factor-legal states are legal in its two-section. Coarea gives
the weighted defect
\[
 \Delta_\beta
 =\sum_v\beta_v(p-|\pi_v|)-\kappa(p-1)
 =\sum_k(\beta(S_k)-\kappa)\ge0,
\]
and hence
$\sum_v\beta_v|\pi_v|
 =p(B_G-\kappa)+\kappa-\Delta_\beta$.

Let $\mathcal M$ be the full family of minimum-cost separators
and fix $S_0\in\mathcal M$. The subspace
\[
 L=\{x:x(S)=x(S_0)\text{ for every }S\in\mathcal M\}
\]
is defined over the rationals. Positive coordinates and the
strict inequalities $x(T)>x(S_0)$ for $T\notin\mathcal M$
define a relatively open neighborhood of $\beta$ in $L$.
Rational points are dense in $L$, so choose one in that
neighborhood and clear denominators. This gives a positive
integer vector $w$ with precisely the same minimizing family.

Put $\kappa_w=\min_Sw(S)$. Finiteness gives a constant $c_0>0$
such that
\begin{equation}\label{eq:weighted-defect}
 \Delta_\beta\ge c_0t_w,\qquad
 t_w=\sum_k(w(S_k)-\kappa_w)\in\N.
\end{equation}
Explicitly take the minimum of
$(\beta(T)-\kappa)/(w(T)-\kappa_w)$ over nonminimizers;
if there are none take $c_0=1$.

Replace role $v$ by a clique of $w_v$ clones, retaining all
its boundary memberships, and replace each graph edge by a
complete join between the two fibers. For a deleted clone
set $T$, let $S_T$ be the original roles whose whole fibers
are deleted. Every surviving fiber is connected. Quotient
connectivity, boundary touching, and attachment are exactly
those of $G-S_T$. Thus $T$ is a separator if and only if
$S_T$ is. Undoing a partial fiber deletion preserves
separation and decreases size. Every minimum clone separator
therefore consists of full fibers, and corresponds precisely
to a member of $\mathcal M$. Its minimum size is $\kappa_w$
and its maximum active count is $a_*$.

Copy each original partition to every clone in its fiber.
Within a fiber its meet is the original even partition;
between fibers legality follows from the original edge.
This is an injection into legal clone states of defect
$t_w$. By \cref{thm:count}, their number is at most
$C_w^{2p}p^{a_*p+K_wt_w}$. The positive label weights and
\cref{eq:weighted-defect} now give
\begin{equation}\label{eq:aspect-trace}
 \E\tr((H_GH_G^*)^p)
 \le C^{2p}n^{p(B_G-\kappa)+\kappa}p^{a_*p}
       \sum_{t\ge0}(p^{K_w}/n^{c_0})^t.
\end{equation}
For any fixed logarithmic window the ratio tends to zero.
Choose
$p=\max(2,\lceil\log(2n)\rceil,\lceil q/2\rceil)$.
Taking roots proves the core upper bound at scale
$n^{(B_G-\kappa)/2}(\log(2n))^{a_*/2}$, including bounded $q$.

\subsection{The aspect-sensitive lower bound}

Positive costs make each minimum-cost separator
inclusion-minimal. Its conditional coefficient flattening,
proved exactly as in \cref{lem:separator-flattening}, has norm
\[
 \left(\prod_{v\in G\setminus(D\cup S)}m_v\right)^{1/2}
      \max_{x_S}\prod_{K\ {\rm active}}|T_K(x_S)|.
\]
The minimality path observes every separator role on both
sides. A factor cannot bridge two different components
outside $S$. These facts prove the flattening for the actual
unequal dimensions.

We spell out the change in the conditional estimate, since
a comparable-size theorem does not imply it. Fix a component
$K$ and a crossing occurrence with nonempty inside support
$B$. Put $B_K=\sum_{v\in K}\beta_v$. With the notation of
the factor lower proof,
\[
 \E T_K^2\asymp n^{B_K},\qquad
 \E z_y^2\asymp n^{B_K-\beta(B)},\qquad
 \E V^2\le Cn^{2B_K}.
\]
Fixed Hilbert moments give the same conditional
Paley--Zygmund estimates for $\sigma^2$ on a positive-measure
set of full internal realizations. For any fixed $M>2$,
\[
 \Prob\{\max_y|z_y|>
              n^{B_K/2-\beta(B)/4}\}
 \le C_M n^{\beta(B)(1-M/2)}=o(1).
\]
Conditional Markov removes an $o(1)$ set of internal
realizations. On each remaining realization, the simultaneous
variance and coefficient event has probability bounded below,
uniformly over separator tuples.

For $t=\eps n^{B_K/2}\sqrt{\log(2n)}$, the sign-tilt
hypothesis follows from
\[
 \frac{t\max_y|z_y|}{\sigma^2}
 =O\!\left(n^{-\beta(B)/4}\sqrt{\log(2n)}\right)\longrightarrow0.
\]
The one-trial conditional success probability is at least
$c n^{-C\eps^2}$. If $S\ne\varnothing$, there are at least
$c n^{b_S}$ tuples distinct in every separator role,
where $b_S=\min_{v\in S}\beta_v>0$. Choose $\eps$ so that
the sum of component success exponents is less than $b_S$.
Full conditional independence yields
\[
 \E\max_{x_S}\prod_K|T_K(x_S)|
 \ge c n^{\beta(D)/2}(\log(2n))^{a(S)/2}.
\]
When there are no active components the product is one.
Combining this with the flattening and a maximizing
minimum-cost cut proves the core lower bound.

For a detached higher-arity component, orthogonality and
fixed fourth moments give
$\E|Z_K|\asymp n^{\beta(K)/2}$. For growing upper moments
use \cref{eq:scalar-count}: a unit ordinary defect loses
at least $\min_{v\in K}\beta_v>0$ in the exponent of $n$.
Thus
\[
 \E|Z_K|^{2p}\le C^{2p}n^{p\beta(K)}p^p
       \sum_{\delta\ge0}
       (p^L/n^{\min_{v\in K}\beta_v})^\delta .
\]
Its logarithmic-window root is at most
$Cn^{\beta(K)/2}\sqrt q$ at $p=\lceil q/2\rceil$,
with the exact second moment treating $p=1$.
Independent restoration gives one factor $\sqrt q$ per
detached component and none for the expectation.

\subsection{Bounded roles}

Return to nonnegative aspects and write
$Z=\{v:\beta_v=0\}$. Their nonempty label sets have uniformly
bounded sizes. Fix consistent zero-boundary labels and let
$H_b$ be the corresponding rectangular compression. There
are boundedly many such blocks, and
$\|H_b\|\le\|H\|\le\sum_b\|H_b\|$.

For each occurrence $e$ and tuple $u$ on its zero coordinates,
introduce an auxiliary sign multiplying that entire array
slice. A fixed zero-middle assignment $z$ acquires the
character
\[
 \chi_z(\eta)=
 \prod_{e:e\cap Z\ne\varnothing}
                  \eta_{e,(z,b)|_{e\cap Z}}.
\]
Two assignments differing on a factor-covered zero-middle
role have different characters: the occurrence label is part
of the slice identifier. Unused zero-middle roles instead
give a deterministic multiplicity $L_0$ with $1\le L_0\le C$.
If $F_z$ is the fixed-assignment contribution with these
unused sums omitted, Fourier projection gives exactly
\[
 \E_\eta[\chi_z(\eta)H_b(\eps^\eta)]=L_0F_z(\eps).
\]
Every fixed slice flip preserves the independent sign law.
Minkowski therefore bounds $L_0\|\|F_z\|\|_p$ by
$\|\|H_b\|\|_p$. Conversely, each block is a sum of only
boundedly many such terms. All consistent assignments have
the same reduced incidence law, though they need not be
independent. Summing block norms proves two-sided comparison
between $\|\|H\|\|_p$ and $\|\|F_z\|\|_p$, with constants
independent of $p,n$.

The resulting positive scopes are $e\cap P$. Empty scopes
give scalar signs of modulus one. Scopes contained in one
positive boundary give diagonal sign isometries. Neither
changes the norm.

\subsection{Unary absorption and completion}

Multiply all unary occurrences at a positive middle role
$v$ into one independent sign vector $\sigma_v(x_v)$.
If $v$ belongs to a higher-arity occurrence, choose one
such occurrence $e(v)$. Conditional on all unary arrays,
replace its coordinates by
\[
 \widetilde\eps^{(e)}_{x_e}
 =\eps^{(e)}_{x_e}
       \prod_{v:e(v)=e}\sigma_v(x_v).
\]
This deterministic sign flip preserves the entire
conditional product law, independently of the conditioned
values, while algebraically absorbing the unary factors.
It therefore proves an equality in distribution of the
whole matrix, not merely a parity upper bound.

A graph-isolated middle role without factors contributes
$m_v$. A graph-isolated middle role with unary factors
contributes $\sum_i\sigma_v(i)$, whose expectation is
comparable to $\sqrt{m_v}$ and whose $L^q$ norm is at most
$C\sqrt{m_vq}$. It is one detached random component regardless
of how many unary occurrences were multiplied.

The positive higher-arity argument now applies to the
remaining model. Restoring deterministic roles, detached
scalars, bounded-role comparisons, and local amplitudes
gives the exponent
$\Gamma+(B_++h_\beta-\kappa)/2$ and the claimed logarithmic
and moment factors. If $P=\varnothing$, a fixed-assignment
contribution is a scalar product of signs of norm one;
the same bounded comparison gives $f=\Gamma$ and
$g=g_{\rm tail}=0$. Markov at logarithmic order proves
the tail claim.

For rational exponents, determine zero roles, perform these
finite structural operations, enumerate core separators,
compare rational costs exactly, retain all ties, and
maximize the active count. This terminates and proves the
computability assertion in \cref{thm:aspects}. It does not
verify the analytic amplitude bounds of an arbitrary input
program. For real exponents, the mathematical formula
remains valid but exact evaluation requires a representation
that can decide the finite cost comparisons.

\section{Finite Gaussian lifts and Hermite polarization}\label{app:gaussian-detail}\label{app:gaussian}

\subsection{The uniform Gaussian lift: full proof}\label{app:gaussian-lift-proof}
We prove \cref{thm:gaussian-lift}.

For each of $Q$ Gaussian occurrences on a nonempty scope $e$,
adjoin one private middle role of size $n$ and replace it by
a sign factor on the enlarged scope. Denote this sign
template by $\widehat{\mathcal F}$, and its sign certificate
by $(\widehat f,\widehat g,\widehat d_0)$.

Let $S_m=m^{-1/2}\sum_{j=1}^m\eps_j$ and $g$ be standard
Gaussian. For $1\le k\le m$,
\[
 (2k-1)!!\frac{\fall mk}{m^k}
       \le \E S_m^{2k}\le(2k-1)!!=\E g^{2k}.
\]
The lower bound retains exact pair partitions in the
expansion. For the upper bound, pair within every even
fiber; counting all pairings overcounts each surviving
assignment. For $k\le m/2$,
$\fall mk/m^k\ge e^{-k(k-1)/m}$.

Let $T_X(p)=\E\tr((H_XH_X^*)^p)$. In the coefficient-one
real model every trace coefficient is nonnegative.
Odd primitive multiplicities vanish for both laws.
For each occurrence, the half-multiplicities sum to $p$.
Termwise comparison therefore gives
\begin{equation}\label{eq:gaussian-trace-comparison}
 T_S(p)\le T_G(p)\le e^{Qp(p-1)/m}T_S(p),
 \qquad p\le m/2.
\end{equation}
For $m=n$, the private-role sums give the exact identity
in law $H_S=n^{-Q/2}H_{\widehat{\mathcal F}}$.
No central limit approximation is involved.

For the reverse norm comparison, set
$\mu=\E|g|=\sqrt{2/\pi}$. Independence of Gaussian signs
and magnitudes and conditional Jensen show, in any real
Banach space and for $q\ge1$,
\[
 \left\|\sum_j\eps_jb_j\right\|_{L^q}
 \le\mu^{-1}\left\|\sum_jg_jb_j\right\|_{L^q}.
\]
Apply this to each entire independent factor group of
the lifted polynomial. Replacing all lifted signs by
Gaussians makes each normalized private sum standard
Gaussian, so
\[
 \|\|H_G\|\|_q\ge
 \mu^Q n^{-Q/2}\|\|H_{\widehat{\mathcal F}}\|\|_q.
\]

For the upper bound first split detached components.
On a core with $Q_c$ occurrences, choose $p$ proportional
to $\log(2n)$ with $2p\ge q$. Its row dimension $D$
is a fixed power of $n$ up to constants. By
\cref{eq:gaussian-trace-comparison},
\[
 \|\|H_{c,G}\|\|_q
 \le e^{Q_c(p-1)/(2n)}D^{1/(2p)}
                    \|\|H_{c,S}\|\|_{2p}
 \le C n^{\widehat f_c-Q_c/2}
                         (\log(2n))^{\widehat g}.
\]
The exponential and dimension factors are bounded.
For a detached scalar use $p=\lceil q/2\rceil$ in
the scalar version of the comparison and
\cref{eq:scalar-moments}; at expectation use $p=1$.
Its scale is $n^{(\widehat r_j-Q_j)/2}\sqrt q$,
with no $\sqrt{\log n}$ in expectation.
Independent restoration proves
\begin{equation}\label{eq:gaussian-uniform}
 \E\|H_G\|\asymp n^{\widehat f-Q/2}
                         (\log(2n))^{\widehat g},\qquad
 \|\|H_G\|\|_q\asymp n^{\widehat f-Q/2}
            (\log(2n))^{\widehat g}q^{\widehat d_0/2}
\end{equation}
for every fixed $C_0>0$, uniformly over
$2\le q\le C_0\log(2n)$ and all sufficiently large $n$.
The constants and cutoff may depend on the fixed template,
role-size comparison constants, and $C_0$.

For circular complex arrays write $Z=(A+iB)/\sqrt2$.
Expanding $Q$ factors and applying Minkowski gives the
real-Gaussian upper comparison with factor $2^{Q/2}$.
Conditioning on all $A$ arrays gives
$\E(H_Z\mid A)=2^{-Q/2}H_A$, and Jensen gives the
reverse comparison. No positivity of complex trace
coefficients is asserted in this step.

If all original scopes have arity at least two, the
lift has the same minimum separator size as the
original two-section. A path through a private role
can be shortened through its scope clique, and every
lifted separator must already separate the old paths.
A minimum separator therefore contains no private
role. After its deletion, a private role is an active
singleton precisely when its whole scope lies in the
separator; otherwise it joins the component of the
surviving scope. Thus
\[
 \widehat f-Q/2=(|W|+h-s)/2,\qquad
 \widehat g=\frac12\max_{|S|=s}
                   \bigl(a(S)+|\{e:e\subseteq S\}|\bigr).
\]
Unary factors are handled by the lift itself, which
distinguishes random singleton roles from unused roles.

\subsection{Hermite polarization: full proof}\label{app:hermite-proof}
We prove \cref{thm:hermite}.
We prove the dimension-free comparison. For Banach
coefficients $b_a$, let
\[
 Y=\sum_ab_a\frac{\operatorname{He}_d(g_a)}{\sqrt{d!}},
 \qquad
 D=\sum_ab_a\prod_{j=1}^dg_a^{(j)},\qquad
 A_d=\frac{d^{d/2}}{\sqrt{d!}}.
\]
Then $A_d^{-1}\|D\|_q\le\|Y\|_q\le A_d\|D\|_q$
for all $q\ge1$.
Put $h_a=d^{-1/2}\sum_jg_a^{(j)}$. Conditional Gaussian
means and covariances, or their generating function,
give
$\E(\prod_jg_a^{(j)}\mid h_a)=d^{-d/2}
 \operatorname{He}_d(h_a)$.
Jensen proves the upper bound for $Y$.

For the other direction let
$h_a(\eps)=d^{-1/2}\sum_j\eps_jg_a^{(j)}$.
The full sign Fourier coefficient is
\[
 \E_\eps\left[\left(\prod_j\eps_j\right)
               \operatorname{He}_d(h_a(\eps))\right]
 =d!d^{-d/2}\prod_jg_a^{(j)}.
\]
All lower-degree terms vanish, and the leading term
survives only when every sign is used once.
Minkowski and the Gaussian law of each $h_a(\eps)$
prove the reverse bound. Apply the comparison
conditionally through the fixed independent
occurrence groups, then use
\cref{eq:gaussian-uniform}.

\section{Complete degree-four construction and positivity verification}\label{app:sos-detail}\label{app:sos}

We prove \cref{thm:sos}, including normalization, Boolean and
nonedge constraints, the exact size constraint, and positivity on all
polynomials of degree at most two. The graph event is chosen first;
the size parameter is then tuned pointwise on that same event.

\subsection{Exact graph-matrix multiplication}\label{app:sos-algebra}

For shapes $\alpha,\beta$ with compatible inner boundary lengths,
matrix multiplication forces equality between their inner boundary
labels. Every additional cross-equality is a partial matching of
roles, since each original embedding is injective. Let $\mu$ range
over partial matchings extending these forced identifications.
Identify matched roles and keep all other quotient roles distinct.
If an edge occurs from both factors, cancel the two copies using
$\eps_{ij}^2=1$. Denote the resulting shape by
$\alpha\star_\mu\beta$. Then, pointwise,
\begin{equation}\label{eq:graph-product}
 M_\alpha M_\beta=\sum_\mu M_{\alpha\star_\mu\beta},
 \qquad M_\alpha^*=M_{\alpha^*},
\end{equation}
where $\alpha^*$ exchanges boundaries.

To prove the product identity, expand pairs of embeddings and group
them by their exact cross-collision relation. For a fixed relation,
pairs of embeddings are in bijection with injective quotient
embeddings. Their sign product is exactly the quotient monomial
after cancellation, and the external coordinates agree. The
collision classes are disjoint and exhaustive. Thus every fixed
finite graph-matrix expression has an exact finite expansion.
This algebraic closure is specific to the sign identity used
above; Gaussian repeated coordinates cannot be canceled in this way.

We now implement this program using the corrected quartic ansatz of
\citet{HopkinsKothariPotechin2015}. Let $W$ be a symmetric zero-diagonal sign matrix and
$A_{ij}=(1+W_{ij})/2$ for $i\ne j$.

\subsection{The common graph event}

All expansions below involve a fixed finite list of shapes on
at most five roles and at most ten edges. The growing-moment
theorem gives a simultaneous bound $Kn^fL$ with
$L=(\log(2n))^{10}$, except on an event of probability at most
$n^{-12}$ for sufficiently large $n$. This deliberately loose
logarithm suffices for every polynomially subcritical term.

Two families need log-free bounds. For completeness, the
following reference argument supplies them independently of
an enumeration of logarithmic exponents. Deleting edges from
a typed sign shape only removes parity constraints in its
positive trace expansion. If the reference is a disjoint
union of boundary edges and two-edge boundary paths, its
matrix is a tensor product of independent sign matrices
and products of two such matrices. The net proof in
\cref{lem:path-count}, or its tail integral at arbitrary
logarithmic order, bounds these factors at scales
$n^{1/2}$ and $n$, respectively. Independence or H\"older
with a fixed enlarged moment window handles their tensor
product. Trace domination, taking a moment at least
$\log n$ to absorb the coordinate dimension, proves the
same scale for a supergraph with the same polynomial
exponent. \Cref{lem:color-upper} transfers the estimate to
global injectivity. We use this only for the explicit
references described below.

Unordered pair matrices are compressions of ordered
boundary-coordinate matrices, with fixed multiplicative
constants for a shared-root orbit. A symmetric sum over
two unordered middle roles is replaced by half the
\emph{whole} ordered sum before its Fourier expansion.
No nonsymmetric individual pattern is restricted by an
ordering condition and then treated as a full graph matrix.

Let $C_4$ be the number of four-cliques and
$Z_4=\sum_{T\text{ four-clique}}\sum_s\prod_{a\in T}W_{sa}$.
The graph event also includes
\begin{equation}\label{eq:sos-counts}
 |C_4-\tbinom n4/64|\le Kn^3L,\qquad
 |Z_4|\le Kn^{5/2}L,\qquad \|W\|\le K\sqrt n.
\end{equation}
For the first bound expand the six clique edges: the
$63$ nonempty scalar patterns have polynomial exponent
at most $3$. For the second, the four center-star edges
are mandatory and the six clique edges are optional:
the $64$ patterns are connected with exponent $5/2$.
The conversion from unordered quartets has factor $1/24$.
We enlarge one fixed $K$ to cover all finite coefficients
and all graph bounds. It is independent of the parameters
chosen below.

\subsection{Quartic moments and the filled matrix}

For an internal parameter $\omega\ge8$ and fixed $\gamma>0$,
put $\tau=\gamma(\omega/n)^5$ and
\[
 \mu_4(T)=\one_{\{T\text{ clique}\}}
       \left[\frac{\binom\omega4}{C_4}
                    +\tau\sum_s\prod_{a\in T}W_{sa}\right].
\]
Let $\omega'$ solve
$\binom{\omega'}4=\binom\omega4+\tau Z_4$ near $\omega$.
Extend downwards by
$\mu_j(S)=(\omega'-j)^{-1}
 \sum_{x\notin S}\mu_{j+1}(S\cup\{x\})$ for $j=3,2,1$,
and set $\mu_0=1$. On \cref{eq:sos-counts},
\begin{equation}\label{eq:sos-shift}
 2^{-14}n^4\le C_4\le n^4,\qquad
 d:=|\omega'-\omega|
 \le K\gamma\omega^2n^{-5/2}L\le1
\end{equation}
uniformly for $8\le\omega\le c\sqrt n$, eventually.
This follows from the derivative of $\binom t4$, which
is bounded below by a constant times $\omega^3$ on
$[\omega-1,\omega+1]$.

Index matrices by unordered pairs. Let $D$ be the
signless vertex-pair incidence matrix, put $J_D=D^*D$,
and let $P$ project onto $\operatorname{range}(D^*)$,
with $P_0=I-P$. Then
$DD^*=(n-2)I+\one\one^*$ and $\|J_D\|=2n-2$.
Define $Q_{\{a,b\},s}=W_{sa}W_{sb}$ and
$S_W=\diag(W_{ab})$.

The filled uncorrected pair matrix has entries
\[
 M'(I,J)=\beta_i
 \sum_{\substack{T\supseteq I\cup J\\|T|=4}}
 \prod_{e\in E(T)\setminus(E(I)\cup E(J))} A_e,
 \qquad i=|I\cap J|,
\]
where
$\beta_0=\binom\omega4$,
$\beta_1=\binom\omega3/4$, and
$\beta_2=\binom\omega2/6$.
Its expectation is
\begin{equation}\label{eq:sos-mean}
 E=\alpha_0\one\one^*+(\alpha_1-\alpha_0)J_D
                +(\alpha_2-2\alpha_1+\alpha_0)I,
\end{equation}
with
\[
 \alpha_0=\frac{\binom\omega4}{16},\quad
 \alpha_1=\frac{\binom\omega3(n-3)}{64},\quad
 \alpha_2=\frac{\binom\omega2\binom{n-2}2}{192}.
\]
Let $b=\alpha_2-2\alpha_1+\alpha_0$. The three eigenvalues
on constants, their orthogonal complement in
$\operatorname{range}(D^*)$, and $\ker D$ are
$b+(2n-2)(\alpha_1-\alpha_0)+\binom n2\alpha_0$,
$b+(n-2)(\alpha_1-\alpha_0)$, and $b$.
For $n\ge128$ and $8\le\omega\le n/128$,
elementary binomial bounds give respectively the lower
bounds
$\omega^4n^2/24576$, $\omega^3n^2/16384$,
and $\omega^2n^2/8192$. In particular
\begin{equation}\label{eq:sos-metric}
 E\succeq aB,\qquad
 a=2^{-15},\qquad B=\omega^3n^2P+\omega^2n^2P_0.
\end{equation}
For example,
$\alpha_2\ge\omega^2n^2/6144$,
$2\alpha_1\le\omega^2n^2/24576$,
$\alpha_1-\alpha_0\ge\omega^3n/8192$, and
$\binom n2\alpha_0\ge\omega^4n^2/24576$ suffice for these
bounds. The two-scale metric is essential.

\subsection{Local fluctuations and the exact square}

On disjoint pairs $I=\{a,b\},J=\{c,d\}$, let
$\mathcal X=\{ac,ad,bc,bd\}$. The local fluctuation is
$\alpha_0\sum_{\varnothing\ne F\subseteq\mathcal X}
 \prod_{e\in F}W_e$.
Four of these patterns are single edges, four are
two-edge stars, and seven contain a cross perfect
matching. Completing the full single-edge and star
orbits gives the exact identity
\[
 L_0=\alpha_0(D^*WD+D^*Q^*+QD+Z),\qquad \|Z\|\le Kn.
\]
The seven remaining patterns have a two-edge matching
reference of the same exponent $1$, so are log-free.
Indeed, among the nonempty subsets of the four edges of
$\mathcal X$, the four singleton subsets and four two-edge stars
are the exceptional orbits already completed. The other subsets
are the two perfect matchings, the four three-edge subsets, and
the full four-edge set. Each contains one of the two perfect
matchings. This accounts for all $15$ local patterns without
requiring a computer enumeration.
The unwanted overlap entries in the three completed
matrices have bounded magnitudes and $O(n)$ entries
per row and column; their norm is $O(n)$.

On distinct overlapping pairs $\{a,b\},\{a,c\}$ the
local fluctuation is $\alpha_1W_{bc}$.
If $\operatorname{Lift}(T)$ denotes compression of
$T\otimes I+I\otimes T$ to symmetric off-diagonal pair
coordinates, then $L_1=\alpha_1\operatorname{Lift}(W)$
and $\|L_1\|\le K\omega^3n^{3/2}$.

Set $\rho=\tau C_4/16$ and define
\[
 R(I,J)=16\rho\,\one_{\{I\cap J=\varnothing\}}
    \prod_{e\in\mathcal X}A_e
          \sum_s\prod_{u\in I\cup J}W_{su},\qquad
 \widetilde R(I,J)=\rho\sum_s
                 \prod_{u\in I\triangle J}W_{su}.
\]
The exact all-collision identity is
\begin{equation}\label{eq:sos-gram}
 \widetilde R=\rho(QQ^*+S_WJ_DS_W).
\end{equation}
For disjoint pairs it is immediate. On
$\{a,b\},\{a,c\}$, $QQ^*$ misses exactly the term
$W_{ab}W_{ac}$ supplied by $S_WJ_DS_W$.
On the diagonal the two terms are $n-2$ and $2$.

On disjoint pairs, $R-\widetilde R$ has $15$ patterns:
the center is joined to all four boundary roles and
a nonempty subset of $\mathcal X$ is present.
Choose any edge $ac$ in that subset. The disjoint
edge $a-c$ and path $b-s-d$ form a reference with
separator size two and exponent $3/2$, equal to
that of the parent. Thus these patterns are log-free.
On overlapping pairs the retained matrix
$\widetilde R/\rho$ is exactly
$\operatorname{Lift}(W^2)+(2-n)I$, of norm $O(n)$.
Using \cref{eq:sos-shift},
\begin{equation}\label{eq:sos-r-error}
 \|R-\widetilde R\|\le K\rho(n^{3/2}+n)
                 \le K\gamma\omega^5\sqrt n,\quad
 \frac{\gamma\omega^5}{2^{18}n}\le\rho\le
                  \frac{\gamma\omega^5}n .
\end{equation}

Complete the dangerous square exactly:
\begin{equation}\label{eq:sos-square}
 \begin{split}
 \rho QQ^*+\alpha_0(D^*Q^*+QD)
 &=
 \rho\left(Q+\frac{\alpha_0}{\rho}D^*\right)
       \left(Q+\frac{\alpha_0}{\rho}D^*\right)^*
       -\frac{\alpha_0^2}{\rho}J_D .
 \end{split}
\end{equation}
Both the displayed square and $\rho S_WJ_DS_W$ are
positive semidefinite. The negative term is supported
on $P$. Therefore
\[
 \left\|B^{-1/2}
 \left(\alpha_0D^*WD-\frac{\alpha_0^2}{\rho}J_D\right)
 B^{-1/2}\right\|
 \le K\left(\frac{\omega}{\sqrt n}+\frac1\gamma\right).
\]
The remaining normalized costs of $\alpha_0Z$, $L_1$,
and $R-\widetilde R$ are at most
$K\omega^2/n$, $K\omega/\sqrt n$, and
$K\gamma\omega^3/n^{3/2}$, respectively.
No independence of the random scalar $\rho$ and $W$
has been used.

\subsection{Extension-count fluctuations}

The shared-root extension fluctuation is exactly
\[
 \Delta_1(\{a,b\},\{a,c\})=
 \frac{\beta_1}{16}(1+W_{bc})
 \sum_{s\notin\{a,b,c\}}
       [(1+W_{as})(1+W_{bs})(1+W_{cs})-1].
\]
Its $14$ patterns give
$\|\Delta_1\|\le K\omega^3n^{3/2}L$.
For the kernel block there is the stronger bound
$\|P_0\Delta_1P_0\|\le K\omega^3nL$.
Indeed, nine patterns have both exclusive boundary
roles incident and a $b$--$c$ path avoiding $a$;
their separator size is at least two. The other
five have an isolated exclusive boundary role.
If $c$ is isolated, write the completed pattern
as $UD$, where
\[
 U_{I,a}=\one_{\{a\in I\}}
       \sum_{s\notin I}\prod_{e\in F(s,a,I\setminus\{a\})}W_e .
\]
On distinct overlapping pairs the difference is
the single excluded term $s=c$; the diagonal has
magnitude at most $2(n-2)$. The collision matrix
has row and column absolute sums at most $4n$.
Since $DP_0=0$, only this collision matrix survives
the compression. Transpose when $b$ is isolated.
This covers all five exceptions.

The nine--five split can be checked directly. Write the chosen
nonempty star subset as $F\subseteq\{as,bs,cs\}$ and let the
optional edge be $bc$. If $bc$ is present, all seven choices of
$F$ have the required $b$--$c$ path. If $bc$ is absent, exactly
the two choices containing both $bs$ and $cs$ have such a path.
The five remaining choices are
$\{as\},\{bs\},\{cs\},\{as,bs\},\{as,cs\}$, each with an
isolated exclusive boundary role. Thus every one of the $14$
patterns belongs to the stated estimate or collision completion.

The diagonal extension fluctuation is
\[
 \Delta_2(I,I)=\beta_2\left[
 \sum_{\substack{s<t\\s,t\notin I}}
 A_{as}A_{at}A_{bs}A_{bt}A_{st}
                         -\frac{\binom{n-2}2}{32}\right].
\]
Replace the symmetric middle sum by half the
ordered sum. Its $31$ nonempty patterns have
equal two-role boundaries and at most one
isolated middle role, so
$\|\Delta_2\|\le K\omega^2n^{3/2}L$.
Bounding the four $P,P_0$ blocks separately yields
\begin{equation}\label{eq:sos-delta}
 \|B^{-1/2}(\Delta_1+\Delta_2)B^{-1/2}\|
 \le KL\left[\frac{1+\sqrt\omega}{\sqrt n}
                       +\frac{\omega}{n}\right].
\end{equation}

\subsection{The exact lower-degree cleanup}

For $j=2,3$ and a $j$-set $S$, let $c_4(S)$ be the number of
four-cliques containing $S$, and define the uncorrected moments
\[
 \mu_j^0(S)=\frac{\binom\omega j\,c_4(S)}{C_4\binom4j}.
\]
These vanish on noncliques. Equivalently, they are the downward
recursion of the uncorrected fourth moments with the internal
parameter $\omega$, not the corrected parameter $\omega'$.

Let $G_{\rm pair}$ project onto graph-edge pair
coordinates and let
$N'_{I,J}=\mu_{|I\cup J|}(I\cup J)$.
For a clique $S$ of size two or three, put
$h_S=\sum_{T\supseteq S,\ T\text{ four-clique}}
                         \sum_s\prod_{v\in T}W_{sv}$.
The downward recursion gives exactly
\[
 \mu_3=r_3\mu_3^0+\frac{\tau h_S}{\omega'-3},
 \qquad
 \mu_2=r_2\mu_2^0+
       \frac{2\tau h_S}{(\omega'-2)(\omega'-3)},
\]
where
$r_3=(\omega-3)/(\omega'-3)$ and
$r_2=(\omega-2)(\omega-3)/[(\omega'-2)(\omega'-3)]$.
The factor two counts the two orders of the added
vertices. Also $|r_i-1|\le Kd/\omega$.

Define filled matrices supported on shared-root
off-diagonal and diagonal pairs, respectively:
\[
 \begin{split}
 K_3(\{a,b\},\{a,c\})
 &=A_{bc}\sum_{d\notin\{a,b,c\}}A_{ad}A_{bd}A_{cd}
                \sum_sW_{sa}W_{sb}W_{sc}W_{sd},\\
 K_2(\{a,b\},\{a,b\})
 &=\sum_{\substack{c<d\\c,d\notin\{a,b\}}}
 A_{ac}A_{ad}A_{bc}A_{bd}A_{cd}
                \sum_sW_{sa}W_{sb}W_{sc}W_{sd}.
 \end{split}
\]
The $16$ and $32$ Fourier patterns have five roles,
no isolated middle, and separator size two.
Thus $\|K_3\|+\|K_2\|\le Kn^{3/2}L$.
Let $M_1^{\rm cl},M_2^{\rm cl}$ denote $C_4\mu_3^0$
on overlapping clique pairs and $C_4\mu_2^0$ on
clique-pair diagonals. Nonnegative count bounds give
norms at most $K\omega^3n^2$ and $K\omega^2n^2$.
Set
\[
 \begin{split}
 \operatorname{Err}
 &=(r_3-1)M_1^{\rm cl}+(r_2-1)M_2^{\rm cl}\\
 &\quad+\frac{C_4\tau}{\omega'-3}G_{\rm pair}K_3G_{\rm pair}
 +\frac{2C_4\tau}{(\omega'-2)(\omega'-3)}
                         G_{\rm pair}K_2G_{\rm pair}.
 \end{split}
\]
For $N=M'+R$, direct comparison in each overlap
case gives the exact identity
\begin{equation}\label{eq:sos-compression}
 G_{\rm pair}(N+\operatorname{Err})G_{\rm pair}=C_4N'.
\end{equation}
Moreover
\[
 \frac{\|\operatorname{Err}\|}{\omega^2n^2}
 \le K\gamma L\left[
 \frac{\omega^2+\omega}{n^{3/2}}
                  +\frac{\omega^2}{n^{5/2}}\right].
\]
This is an orthogonal coordinate compression,
not a general entrywise masking inequality.

\subsection{Budget, simultaneous sizes, and full positivity}

Every error above is included in
\[
 \begin{split}
 \varepsilon_n=K\bigg\{&
 \frac1\gamma+\frac{\omega}{\sqrt n}
 +\frac{\omega^2}{n}
 +\frac{\gamma\omega^3}{n^{3/2}}\\
 &+L\left[\frac{1+\sqrt\omega}{\sqrt n}
 +\frac{\omega}{n}
 +\frac{\gamma(\omega^2+\omega)}{n^{3/2}}
 +\frac{\gamma\omega^2}{n^{5/2}}\right]\bigg\}.
 \end{split}
\]
After discarding only positive squares,
$N+\operatorname{Err}\succeq(a-\varepsilon_n)B$.
Choose $\gamma=32K/a$ and
$c=a/(128K\gamma)$. The four nonvanishing terms are
a fixed fraction of $a$ for $\omega\le c\sqrt n$;
all $L$ terms tend to zero uniformly on this interval.
For sufficiently large $n$,
$\varepsilon_n<a/4$ throughout
$8\le\omega\le c\sqrt n$. By
\cref{eq:sos-compression}, $N'\succeq0$ with
zero nonedge rows.

The event was defined without any parameter choice.
For every real $9\le k\le(c/2)\sqrt n$, solve
\begin{equation}\label{eq:sos-tuning}
 \binom t4+\gamma(t/n)^5Z_4=\binom k4
\end{equation}
on $[k-1,k+1]$. The derivative of the first term
is bounded below by a constant times $k^3$.
Both the perturbation divided by $k^3$ and its
derivative divided by $k^3$ tend to zero uniformly
over this interval of $k$, by \cref{eq:sos-counts}.
Thus the endpoints bracket the target and the
derivative is positive. The unique root satisfies
$|t-k|\le C\gamma Kk^2n^{-5/2}L=o(1)$.
Use $t$ as the internal parameter and $k$ in the
downward recursion. All previous identities are
pointwise and all budgets hold at this $t$.
No union over real $k$ or independence from the
chosen parameter is required.

Define the linear functional $\mathcal L_k$ by
these squarefree moments and Boolean reduction.
Downward counting yields
$\sum_{|S|=j}\mu_j(S)=\binom kj$ for $1\le j\le4$.
The recursion gives
$\mathcal L_k((\sum_i x_i-k)x_S)=0$ for $|S|\le3$.
Boolean constraints hold by reduction. A nonclique
set has zero moment since every superseding quartet
is a nonclique, proving the nonedge constraints.

To include the constant and linear rows in
positivity, let $T$ have pair rows and columns
indexed by subsets of size at most two:
\[
 T_{\{i,j\},\varnothing}=\frac{2}{k(k-1)},\quad
 T_{\{i,j\},\{a\}}=\frac{\one_{\{a\in\{i,j\}\}}}{k-1},
 \quad
 T_{\{i,j\},\{a,b\}}=\one_{\{\{i,j\}=\{a,b\}\}}.
\]
The full moment matrix is exactly $T^*N'T$.
Indeed these columns replace constants and linear
monomials by homogeneous quadratics. The required
identities are
\[
 \begin{split}
 \sum_{j\ne i}x_ix_j-(k-1)x_i
 &=x_i(\sum_jx_j-k)-(x_i^2-x_i),\\
 2\sum_{i<j}x_ix_j-k(k-1)
 &=(\sum_ix_i-k)(\sum_ix_i+k-1)
                         -\sum_i(x_i^2-x_i).
 \end{split}
\]
For a degree-two polynomial $p$ and its quadratic
replacement $h$, the product $(h-p)(h+p)$ uses
these constraints only through degree four.
Thus $\mathcal L_k(p^2)=\mathcal L_k(h^2)\ge0$.
This proves all the asserted constraints and
full degree-four positivity simultaneously.

The finite template families used above have
sizes $7,15,14,31,16,32,64,63$, totaling $242$.
Their explicit descriptions, collision completions,
and norm references were given in the proof;
a finite computational check is not used as
a substitute for any of these arguments.

\section{RTN deviations, probability lower bounds, and entropy}\label{app:rtn-detail}\label{app:rtn}

We prove \cref{thm:rtn,thm:rtn-state}. Finite Schatten thresholds,
the expected operator endpoint, and the sample-state conclusion require
different moment inputs; the proof does not infer growing-order control
from a fixed-order expansion.

\subsection{A fixed-order trace lemma}\label{app:rtn-fixed-proof}

For a coefficient-one incidence matrix $J$ with $R$ roles,
$K$ independent real or circular complex Gaussian occurrences
on nonempty scopes, and role sizes $N$, let $h$ be the number
of unused middle roles and $s$ the two-section separator size.
For every fixed integer $p\ge1$,
\begin{equation}\label{eq:fixed-gaussian-trace}
 \E\tr((JJ^*)^p)
 \le \operatorname{Bell}(2p)^{R-h}((2p)!)^K
                       N^{p(R+h-s)+s}.
\end{equation}
Remove unused middle roles, giving the matrix multiplier $N^h$.
In an exact equality state a real Gaussian factor requires
even multiplicities; a circular complex factor requires
balanced conjugate and unconjugate multiplicities, which
also implies evenness. Each nonzero occurrence moment is
nonnegative and at most $(2p)!$. Factor legality implies
the interval compatibility of \cref{lem:interval}. The
separator layers yield
$\sum_v|\pi_v|\le p(R-h-s)+s$; for $p=1$ the layer sum is
empty and the same bound holds. There are at most
$\operatorname{Bell}(2p)^{R-h}$ partition assignments.
Bound falling factorials by powers of $N$ and restore
$N^{2ph}$ to prove \cref{eq:fixed-gaussian-trace}.
Detached components are allowed. The constant need not
be controlled as $p$ grows.

\subsection{Centering, decoupling, and doubled cuts}

At an occurrence write
$\bar g_i g_j=\delta_{ij}+W_{ij}$. Expand the product
over occurrences and remove the all-covariance term.
The remaining terms $K_T$ are indexed by nonempty active
tensor sets $T$.

For arbitrary Banach coefficients $B_{ij}$ and $q\ge1$,
introduce an independent copy $g'$ and rotate to
$u=(g+g')/\sqrt2$, $v=(g-g')/\sqrt2$.
Conditional Jensen, followed by the triangle inequality,
gives
\[
 \left\|\sum B_{ij}(\bar g_i g_j-\delta_{ij})\right\|_{L^q}
 \le\left\|\sum B_{ij}(\bar u_iv_j+\bar v_iu_j)\right\|_{L^q}
 \le2\left\|\sum B_{ij}\bar u_iv_j\right\|_{L^q}.
\]
The last two summands have the same law by swapping
$u,v$. Iterating across active groups and using the
Schatten-$2p$ norm yields
\begin{equation}\label{eq:rtn-decouple}
 (\E\|F_N\|_{S_{2p}}^{2p})^{1/(2p)}
 \le\sum_{T\ne\varnothing}2^{|T|}N^{-(r-b)}
                    (\E\|D_T\|_{S_{2p}}^{2p})^{1/(2p)}.
\end{equation}
Here $D_T$ has two independent copies at active tensors
and covariance identifications at inactive tensors.
This is only an upper decoupling inequality.

Write $e_T,e_C$ for internal edges within $T$ and its
complement, and $a_T,a_C,b_T,b_C$ for their terminal
counts. The roles of $D_T$ are as follows:
\begin{center}
\begin{tabular}{ll}
\toprule
Original edge type & Roles after covariance contraction\\
\midrule
Internal within $T$ & $2e_T$ copied internal roles\\
Internal outside $T$ & $e_C$ unused middle roles\\
Internal crossing $T$ & $c(T)$ bridge roles\\
Output on $T$ & $a_T$ bridge roles\\
Output outside $T$ & $a_C$ unused middle roles\\
Input on $T$ & $2b_T$ separate boundary roles\\
Input outside $T$ & $b_C$ common boundary roles\\
\bottomrule
\end{tabular}
\end{center}
In particular $R_T=r+e_T+b_T$ and $h_T=e_C+a_C$.

Assume $\Delta\ge0$. In one copy of the induced
$T$-network, regard crossing and output bridges as
output terminals. The excess of a cut indexed by
$\varnothing\ne S\subseteq T$ over its $b_T$ input
cut is $\delta(S)\ge0$. Integral max-flow/min-cut
gives $b_T$ edge-disjoint paths from its inputs
to distinct bridges. Reflect and concatenate them
in the other copy. They give $b_T$ disjoint
boundary paths in the edge-role two-section.
The input edges give a matching upper cut.
Adding the mandatory $b_C$ identity roles shows
that the separator size of $D_T$ is exactly $b$.
This also covers $b_T=0$.

Apply \cref{eq:fixed-gaussian-trace}. The exponent
after normalization is determined by
\[
 R_T+h_T-b-2(r-b)=-c(T)-a_T+b_T=-\delta(T).
\]
The finite sum in \cref{eq:rtn-decouple} therefore proves
\begin{equation}\label{eq:rtn-fixed}
 \E[d^{-1}\tr|F_N|^{2p}]\le C_pN^{-p\Delta},
 \qquad \Delta\ge0,
\end{equation}
for each fixed $p$.

\subsection{Second moments and nonrare spectral mass}

At two Gram replicas a circular complex tensor has
two Wick pairings, identity or swap, including when
coordinates coincide. Let $T$ be the swapped set.
Each crossing internal edge, output leg on $T$,
or input leg outside $T$ loses one free label.
After normalization its contribution is
$N^{-\delta(T)}$. The empty set contributes one
and is exactly removed by centering. Hence
\begin{equation}\label{eq:rtn-second}
 \E[d^{-1}\tr F_N^2]
       =\sum_{T\ne\varnothing}N^{-\delta(T)}.
\end{equation}

Put $\epsilon=N^{-\Delta/2}$ and $L=2^Q-1$.
On the Gaussian sample space enlarged by an independent
uniform eigenvalue index $j$, let
$Y=|\lambda_j(F_N)|$.
\cref{eq:rtn-fixed,eq:rtn-second} give
$\epsilon^2\le\E Y^2\le L\epsilon^2$ and
$\E Y^4\le C_2\epsilon^4$.
Interpolation gives $\E Y\ge C_2^{-1/2}\epsilon$.
For $Z=d^{-1}\|F_N\|_1$, $\E Z=\E Y$ and
$Z^2\le d^{-1}\tr F_N^2$. Paley--Zygmund yields
\begin{equation}\label{eq:rtn-mass}
 \E Z\ge C_2^{-1/2}\epsilon,\qquad
 \Prob\{Z\ge\epsilon/(2\sqrt{C_2})\}
                   \ge(4C_2L)^{-1}.
\end{equation}
Thus the lower scale is not inferred merely from
a nonzero second moment.

For fixed finite $t\ge1$, choose a fixed integer
$p$ with $2p\ge t$. Power means and
\cref{eq:rtn-fixed} give
\[
 \E\|F_N\|_{S_t}
 \le d^{1/t-1/(2p)}
                  (\E\tr|F_N|^{2p})^{1/(2p)}
 \le C_t d^{1/t}\epsilon .
\]
Conversely $\|F_N\|_{S_t}\ge d^{1/t-1}\|F_N\|_1$.
\Cref{eq:rtn-mass} gives matching expectation
and positive-probability lower bounds. This proves
the finite Schatten assertion in \cref{thm:rtn}.

If $\Delta>2b/t$, Markov proves convergence to zero.
For $0\le\Delta\le2b/t$, the probability lower
bound excludes convergence, including equality.
If $\Delta<0$, a cut of size $b+\Delta$ factors
the map through at most $N^{b+\Delta}$ coordinates.
At least $d-N^{b+\Delta}$ eigenvalues of $F_N$
are then $-1$. This excludes convergence in any
of the stated norms.

For the operator norm, if $\Delta>0$, choose
one fixed integer $p>b/\Delta$. Then for each
fixed $\eta>0$,
\[
 \Prob\{\|F_N\|_{\rm op}>\eta\}
 \le C_p\eta^{-2p}N^{b-p\Delta}\longrightarrow0.
\]
For $\Delta=0$ use \cref{eq:rtn-mass}; for
$\Delta<0$ use rank. This proves the qualitative
criterion using fixed moments alone. Fixed
moments give an expected upper rate
$N^{-\Delta/2+\zeta}$ for each fixed $\zeta>0$,
not the exact endpoint $\zeta=0$.

\subsection{The uniform operator endpoint}

We now use the additional uniform Gaussian
reduction in \cref{app:gaussian}. For a standard
network the role two-section is the line graph,
and role separators correspond to terminal-separating
edge cuts. Adding private roles does not change
the minimum separator size: every detour through
a private role can be shortened in its incident
clique.

For an inclusion-minimal terminal-separating cut,
adding back any one cut edge creates an input-output
path. Its endpoints therefore lie in components
meeting opposite terminal sets. A terminal-free
component of the network minus the cut cannot
touch a cut edge; it would consequently have
been terminal-free already, contrary to the model.
Every tensor retains a path to a terminal and
an uncut incident edge. Its private role attaches
to a boundary-touching component. Thus every
minimum lifted cut has active count zero.

For $D_T$, remove its unused middle roles and
common identity roles. The other components
are standard doubled networks. Boundary-touching
ones have zero active count by the preceding
argument. A component of the induced $T$-network
with no input produces a detached scalar after
doubling through bridges. When $\Delta>0$ such
a component has at least one bridge. Let $k_T$
count these components. The uniform Gaussian
bound and the same decoupling give
\begin{equation}\label{eq:rtn-uniform}
 \|\|F_N\|_{\rm op}\|_{L^q}
 \le C\sum_{T\ne\varnothing}
                  N^{-\delta(T)/2}q^{k_T/2}
\end{equation}
in every fixed logarithmic window. For expectation,
the detached $q$ powers are absent.
Together with \cref{eq:rtn-mass}, this proves
$\E\|F_N\|_{\rm op}\asymp N^{-\Delta/2}$ for
$\Delta>0$.

The cut function $\delta$ is additive on
connected components of an induced tensor set.
Since each nonempty component has gap at least
$\Delta>0$, a minimizing set is connected.
Its $k_T$ is zero or one according as it has
an input or not. Every nonminimizer has integer
gap at least $\Delta+1$, whose additional
$N^{-1/2}$ absorbs any fixed logarithmic power.
Consequently, if $\kappa_0=1$ when a minimizing
set has no input and $\kappa_0=0$ otherwise,
\[
 \Prob\{\|F_N\|_{\rm op}>
 C_A N^{-\Delta/2}(\log(2N))^{\kappa_0/2}\}
 \le N^{-A}
\]
for every fixed $A>0$. This is only an upper tail.

\subsection{The sample-state endpoint}

Let $s$ be the ordinary minimum input-output
edge cut. The original network has no unused
middle role or detached component, and its
minimum lifted cuts have zero active count.
Thus the Gaussian uniform bound gives
$\|H_N\|_{\rm op}\le C_AN^{(r-s)/2}$ with
failure probability at most $N^{-A}$.

Put $X=N^{-r}\|H_N\|_F^2$. Covariance gives
$\E X=1$. In its second moment a swapped
tensor set $T$ loses one free label at every
crossing internal edge and every incident
open leg. Hence
\[
 \E X^2=\sum_{T\subseteq[Q]}N^{-\partial(T)},
 \qquad \partial(T)=c(T)+a(T)+b(T).
\]
Every nonempty $T$ has $\partial(T)\ge1$,
since every network component meets a terminal.
Therefore $\operatorname{Var}X\le(2^Q-1)/N$
and $\Prob\{|X-1|>1/2\}\le4(2^Q-1)/N$.

The map is nonzero almost surely: some entry
is a nonzero polynomial in Gaussian coordinates,
as seen by evaluating all coordinates at one.
Cut factorization gives $\rank H_N\le N^s$.
On the preceding event and the operator upper
event with $A=2$,
\[
 N^{-s}\le\|\rho_A\|_{\rm op}
       =\frac{\|H_N\|_{\rm op}^2}{\|H_N\|_F^2}
       \le C N^{-s}.
\]
Since $H_\infty\le H_\alpha\le H_0$ for all
nonnegative R\'enyi orders, including endpoints,
this proves
$s\log N-\log C\le H_\alpha(\rho_A)\le s\log N$
simultaneously for $0\le\alpha\le\infty$ with
probability $1-O(N^{-1})$.
Normalizing each Gaussian tensor by its own
nonzero norm leaves $\rho_A$ unchanged, but
does not preserve mean-Gram normalization.

\section{Complete proof of the exact RTN spectral edge}\label{app:edge-detail}

This appendix proves \cref{thm:rtn-exact-edge,thm:edge-transfer}.
The quotient and comparison are established before the external
strong-convergence theorem is used. Concentration and exceptional-event
moment control are then proved separately, so that the moment order
may grow with the original dimension.

\subsection{Exact Wick moments and the flow quotient}\label{app:edge-wick-proof}

\begin{lemma}[Positive moment polynomial]\label{lem:edge-wick}
For every integer $p,N\ge1$,
\begin{equation}\label{eq:edge-wick}
 F_G(p,N):=\E[N^{-s}\tr Y_N^p]
       =\sum_{\delta\ge0}A_G(p,\delta)N^{-\delta}.
\end{equation}
All coefficients are nonnegative integers.
\end{lemma}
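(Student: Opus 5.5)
We will expand $\tr(H_NH_N^*)^p$ directly by Wick's theorem and then read off the exponent of $N$ from a cycle count.

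\emph{Wick expansion.} Write $\tr((H_NH_N^*)^p)$ as a sum over edge labels on $p$ copies of $H_N$ and $p$ copies of $\overline{H_N}$. For each actual tensor vertex $u$ there are $p$ unconjugated and $p$ conjugated occurrences of its array. For independent standard circular complex Gaussians, Wick's formula gives
\[
 \E\prod_{k=1}^p g_{u}(x^{(k)})\overline{g_u(y^{(k)})}
 =\sum_{\pi_u\in S_p}\prod_{k}\one\{x^{(k)}=y^{(\pi_u(k))}\}.
\]
This holds exactly, including when coordinates coincide, with weight one per permutation. Independence across vertices turns the expectation into a sum over labelings $\pi=(\pi_u)_{u\in V_G}\in S_p^{V_G}$. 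Each term is a product of indicator constraints on the edge labels.

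\emph{Cycle count.} For a fixed labeling, the constraints decouple over the edges of the augmented multigraph $G$. Consider an edge $\{u,w\}$ carrying an index of dimension $N$. Its $2p$ label copies are identified by $\pi_u$ at one endpoint and by $\pi_w$ at the other. The free labels on that edge therefore number $N^{\#\operatorname{cyc}(\pi_u^{-1}\pi_w)}$. For open legs, the terminal contractions are as follows:
\begin{itemize}
\item the input (column) index is summed inside each $H_NH_N^*$ factor, which acts as $\pi_{\mathsf b}=\id$;
\item the output index is traced cyclically, which acts as $\pi_{\mathsf a}=\gamma_p$.
\end{itemize}
I would check this orientation convention once against the single square gate, where it must reproduce Catalan numbers.

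Each edge thus contributes $N^{p-d(\pi_u,\pi_w)}$, and the labeling contributes
\[
 N^{pR-\mathcal H^{(p)}_G(\pi)}.
\]
Multiplying by the normalization $N^{-s}N^{p(s-R)}$ gives
\[
 N^{-(\mathcal H_G^{(p)}(\pi)-s(p-1))}=N^{-\Delta_G(\pi)}.
\]
Grouping labelings by the value of $\Delta_G$ yields \cref{eq:edge-wick}, with $A_G(p,\delta)$ a count of permutations, hence a nonnegative integer.

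\emph{Main obstacle: $\Delta_G\ge0$.} It remains to show that $\Delta_G(\pi)\ge0$ for every $\pi$, so that no negative powers of $N$ appear and the sum runs over $\delta\ge0$. Take $s$ edge-disjoint $\mathsf b$--$\mathsf a$ paths, which exist by integral max-flow/min-cut. Along each path, the triangle inequality for the transposition metric gives a sum of edge distances at least $d(\id,\gamma_p)=p-1$. Summing over the $s$ disjoint paths, and using that the remaining edge terms are nonnegative, gives $\mathcal H_G^{(p)}(\pi)\ge s(p-1)$. This is the only nontrivial step; the rest is bookkeeping of the trace convention.

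Finally, the padding convention in the normalized trace $N^{-s}\tr$ only affects zero eigenvalues, so it does not change $\tr Y_N^p$ for $p\ge1$.
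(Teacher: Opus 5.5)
Your proposal is correct and matches the paper's proof essentially step for step. Both arguments use Wick pairing by $\pi_u\in S_p$ at each vertex and $p-d(\pi_u,\pi_w)$ free index loops per edge, with terminal labels $\id$ and $\gamma_p$. Both then apply the normalization $N^{s(p-1)-pR}$ and obtain $\Delta_G\ge0$ from $s$ edge-disjoint terminal paths and the triangle inequality. The only difference is how the trace orientation is handled: your single-gate Catalan check would pass under either orientation, whereas the paper notes that the opposite convention gives $\gamma_p^{-1}$, and inverting all labels preserves every distance and hence every count.
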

\begin{proof}
Wick's rule at vertex $u$ pairs its $p$ unconjugated occurrences
with its $p$ conjugated occurrences by $\pi_u$.
An edge $\{u,w\}$ then has
$p-d(\pi_u,\pi_w)$ free index loops. Hence
\[
 \E\tr(H_NH_N^*)^p
 =\sum_\pi N^{pR-\mathcal H_G^{(p)}(\pi)}.
\]
Coincident coordinate values retain their Wick multiplicities;
there is no injective-label restriction here.
Choose $s$ edge-disjoint simple terminal paths by integral max flow.
Triangle inequality on each path gives
$\mathcal H_G^{(p)}\ge s(p-1)$.
Assigning $\id$ and $\gamma_p$ on the two sides of a minimum cut
attains equality. The normalization in
\cref{eq:edge-normalizations} proves the identity.
The opposite cyclic-trace convention gives $\gamma_p^{-1}$;
inverting all labels preserves the distances and counts.
\end{proof}

Fix such a family of $s$ paths and orient their union $F$ forward.
Form a directed graph $D$ with one forward arc for each used edge
and both arcs for each unused edge. Contract its complete strongly
connected components (SCCs), discard internal edges, and retain
cross-component edges with multiplicity. Denote this quotient by $Q$
and its terminal components by $B_0,A_0$.

\begin{lemma}[Balanced acyclic quotient]\label{lem:edge-quotient}
$Q$ is an acyclic directed multigraph, $B_0\ne A_0$, and every
quotient vertex lies on a projected terminal flow path.
All cross-component edges are used edges. The projected paths remain
edge-disjoint, and
\[
 \deg^-(B_0)=0,\quad\deg^+(B_0)=s,\qquad
 \deg^-(A_0)=s,\quad\deg^+(A_0)=0.
\]
Each other quotient vertex $C$ has
$\deg^-(C)=\deg^+(C)=k_C\ge1$. The minimum terminal cut of $Q$ is $s$.
\end{lemma}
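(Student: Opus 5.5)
The approach is to read off every assertion from two facts: flow conservation for the fixed family $F$ of $s$ edge-disjoint terminal paths, and the fact that $Q$ is a condensation. First the formal points. Contracting the SCCs of any directed graph gives an acyclic quotient, so $Q$ is acyclic. An unused edge carries both arcs, so its endpoints lie in one SCC. Hence every cross-component edge is a used edge, oriented along its flow path. Since $Q$ keeps only cross-component edges, each projected flow path uses pairwise distinct original used edges. So the projected paths stay edge-disjoint, and in an acyclic graph they are directed and never revisit a quotient vertex.

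\textbf{Step 1: $B_0\ne A_0$.} Fix a minimum $\mathsf b$--$\mathsf a$ edge cut $(X,X^c)$ of size $s$, with $\mathsf b\in X$. By max-flow/min-cut, each of its $s$ edges is used exactly once, directed from $X$ to $X^c$. No unused edge crosses it, since it has exactly $s$ edges and all of them are used. Therefore every arc of $D$ crossing the cut goes from $X$ to $X^c$. In particular $D$ has no directed path from $\mathsf a$ to $\mathsf b$, so $\mathsf a$ and $\mathsf b$ lie in different SCCs.

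\textbf{Step 2: degrees.} For a component $C$, sum flow conservation over its vertices. Used edges inside $C$ cancel, and every cross edge is used. So $\deg^+(C)-\deg^-(C)$ equals the net flow out of $C$. This is $s$ for $B_0$ (the $s$ paths start at $\mathsf b$), $-s$ for $A_0$, and $0$ otherwise; thus each nonterminal component has $\deg^-(C)=\deg^+(C)=:k_C$. If some $k_C=0$, then $C$ has no incident cross edges. All edges between distinct components are cross edges, so connectedness of $G$ would force $C$ to be the only component, contradicting Step 1. Hence $k_C\ge1$.

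To get $\deg^-(B_0)=0$, use a source of the finite acyclic graph $Q$:
\begin{itemize}
\item a source has in-degree zero, so its net outflow is nonnegative;
\item it is not $A_0$, whose in-degree is at least $s\ge1$;
\item it is not a nonterminal $C$, whose in-degree is $k_C\ge1$.
\end{itemize}
Hence the unique source is $B_0$, so $\deg^-(B_0)=0$ and $\deg^+(B_0)=s$. The symmetric sink argument gives $\deg^+(A_0)=0$ and $\deg^-(A_0)=s$.

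\textbf{Step 3: coverage and the cut value.} Let $C\ne B_0$. Since $\deg^-(C)\ge1$, some cross edge enters $C$. That edge is used, so it lies on a projected flow path, and $C$ lies on that path; $B_0$ lies on every path. For the minimum cut of $Q$, the $s$ out-edges of $B_0$ form a cut, so the minimum is at most $s$. Conversely, the $s$ projected paths are edge-disjoint $B_0$--$A_0$ paths, so every cut of $Q$ has size at least $s$. (Equivalently, a cut of $Q$ lifts to a terminal cut of $G$ with the same crossing edges.)

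The only step needing a genuine idea is Step 1. It fails for an arbitrary maximum flow's ordinary residual graph, which also contains backward arcs on used edges. I would handle it, as above, by exhibiting a minimum cut that every arc of $D$ crosses forward. The remaining subtlety is ruling out an in-edge at $B_0$; the source argument in Step 2 is cleaner than tracking reachability along paths.
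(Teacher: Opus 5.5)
Your proof is correct and follows the paper's argument in all essentials. Acyclicity comes from the condensation, distinct terminal components from a minimum cut that every arc of $D$ crosses forward, balanced degrees from summed flow conservation, and the cut value from the projected paths together with the out-edges of $B_0$. The only difference is the order of two sub-steps: the paper first shows every SCC meets the flow by joining an off-flow vertex to it through unused edges, then rules out an in-edge at $B_0$ by acyclicity along a projected path; you instead get $k_C\ge1$ from connectedness, identify $B_0$ as the unique source of $Q$, and deduce coverage afterwards from in-degrees.
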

\begin{proof}
An unused edge is bidirectional, so its endpoints belong to the
same SCC. A condensation is acyclic.
Across a cut of size $s$, each of the $s$ edge-disjoint paths
crosses exactly once: all cut edges are used and directed toward
the sink side. No arc crosses back. Thus the terminal components
are distinct.

A vertex off the flow can be joined to a first flow vertex
by a path consisting only of unused edges, using connectedness.
Its SCC therefore meets the flow. A directed path cannot leave
an SCC and return, since this would create a cycle in the
condensation. Thus projected paths are simple and cover every
quotient vertex. Summing flow conservation inside each component
cancels internal used edges. This gives balanced external degrees
away from the terminal components. Since every component lies on
a source--sink path in a DAG, the source has no incoming edge and
the sink no outgoing edge. Their degrees are consequently $s$.
The projected flow and the cut immediately after $B_0$ give
the matching lower and upper cut bounds.
\end{proof}

Contracting only the unused-edge connected components would not
suffice: used edges can create directed cycles between those
components. The complete SCC construction is used throughout.
Choose one anchor $r(C)$ per SCC, using the appropriate terminal
in $B_0,A_0$, and an actual vertex otherwise. Define the single
global map
\begin{equation}\label{eq:edge-anchor}
 \omega_C=\pi_{r(C)},\qquad \bar\pi_u=\omega_{C(u)} .
\end{equation}

\subsection{All-defect comparison at an auxiliary dimension}

\begin{lemma}[Variation inside an SCC]\label{lem:edge-variation}
If $\Delta_G(\pi)=\delta$ and $C(u)=C(w)$, then
$d(\pi_u,\pi_w)\le2\delta$.
\end{lemma}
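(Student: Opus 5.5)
The idea is to decompose the defect $\Delta_G(\pi)$ into nonnegative local slacks, one per flow path and one per unused edge. Then we bound $d(\pi_u,\pi_w)$ by summing slacks along a directed cycle through $u$ and $w$ inside their common SCC.

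First, write the energy along the fixed family of $s$ edge-disjoint terminal paths $P_1,\dots,P_s$. For each path, the triangle inequality for the transposition metric gives $\sum_{e\in P_i}d(\pi_{e^-},\pi_{e^+})\ge d(\id,\gamma_p)=p-1$. Define the path slack $\delta_i=\sum_{e\in P_i}d(\pi_{e^-},\pi_{e^+})-(p-1)\ge0$. Since unused edges contribute $d\ge0$, we get
\[
 \delta=\Delta_G(\pi)=\sum_i\delta_i+\sum_{e\ \mathrm{unused}}d(\pi_{e^-},\pi_{e^+}).
\]
So every unused edge has $d\le\delta$, and every path $P_i$ has slack $\delta_i\le\delta$.

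The key sublemma is that along a single flow path the labels are nearly geodesic. For vertices $x$ before $y$ on $P_i$, the triangle inequality through the endpoints gives
\[
 d(\pi_x,\pi_y)\le\sum_{e\in P_i[x,y]}d(\pi_{e^-},\pi_{e^+})=:\ell_i(x,y),
\]
and also $d(\id,\pi_x)+\ell_i(x,y)+d(\pi_y,\gamma_p)\le p-1+\delta_i$. We combine this with the reverse comparison $d(\id,\pi_y)\ge d(\id,\pi_x)+\ell_i(x,y)-\delta_i$ (and symmetrically toward $\gamma_p$), which follows from splitting the path at $x$ and $y$. The consequence is that moving forward along a used arc raises the "height" $h(x)=d(\id,\pi_x)$ by essentially the arc length, up to $\delta_i$, and this holds uniformly across the whole path.

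Now take $u,w$ in one SCC and a directed cycle through them in $D$. Its arcs are forward used arcs and unused edges traversed in either direction. Traversing the cycle returns the height to its starting value. Forward used arcs increase height by their length up to path slack, and unused edges change height by at most their $d$. Therefore the total forward length on the cycle is bounded by the total unused-edge contribution plus the path slacks of the paths used. Summing over each distinct path and each distinct unused edge counted once, this total is at most $\delta$. The distance $d(\pi_u,\pi_w)$ is at most the length of either arc of the cycle between them, so it is at most the total forward length plus the unused contribution, giving $\le2\delta$.

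The main obstacle is the bookkeeping that each path's slack and each unused edge are charged only once, even though a cycle may reuse pieces of the same flow path or traverse it several times. The first thing I would try is choosing a simple directed cycle, or a closed walk whose segments on each $P_i$ are disjoint. I would then apply the height argument per path segment, bounding the segments' combined deviation by $\delta_i$ via the single-path triangle decomposition rather than per segment.
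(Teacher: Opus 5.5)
Your decomposition $\delta=\sum_i\delta_i+\sum_{e\ \mathrm{unused}}d(\pi_{e^-},\pi_{e^+})$ and your height $h(x)=d(\id,\pi_x)=|\pi_x|$ are the same ingredients the paper uses. Your path slack is $\delta_i=\sum_{e\in P_i}c_e$ with nonnegative reduced costs $c_{xy}=d(\pi_x,\pi_y)-(h(y)-h(x))$. The gap is the final charging step, which you flag as the obstacle but do not resolve. The fix you suggest is not always available.

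Your argument needs a closed directed walk through $u$ and $w$ that charges each used arc and each unused edge at most once. Neither a simple directed cycle through both vertices nor a closed walk with disjoint segments on each $P_i$ need exist. For example, take tensors $u,z,z',w$ with flow paths $\mathsf b\to z'\to w\to z\to\mathsf a$, $\mathsf b\to u\to z\to z'\to\mathsf a$ and $\mathsf b\to z'\to u\to\mathsf a$. Here there are two input legs at $z'$, one at $u$, $s=3$, and no unused edges. Then $\{u,z,z',w\}$ is a single SCC. However, every directed route $u\to w$ and every route $w\to u$ passes through the arc $z\to z'$. Any closed walk through both vertices therefore charges that slack twice, and your intermediate claim ``this total is at most $\delta$'' fails: the total is $\delta+c_{zz'}$.

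Similarly, if $u$ hangs off its SCC by a single unused edge, every closed walk through $u$ crosses it in both directions, so ``each unused edge counted once'' also fails. Since you then bound $d(\pi_u,\pi_w)$ by the \emph{full} length of the walk $P\cdot P'$, all you get in general is $c(P)+c(P')\le 4\delta$.

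The missing factor $1/2$ comes from using both halves of the walk. Give reduced costs to all arcs of $D$. Their total is $\sum_i\delta_i+2\sum_{\rm unused}d_e=\delta+\sum_{\rm unused}d_e\le2\delta$, because the two orientations of an unused edge have costs summing to $2d_e$. Now take simple directed paths $P:u\to w$ and $P':w\to u$ inside the SCC. Each has reduced cost at most $2\delta$, however much they overlap. Telescoping gives $d(\pi_u,\pi_w)\le h(w)-h(u)+c(P)$ and $d(\pi_u,\pi_w)\le h(u)-h(w)+c(P')$. Adding the two cancels the heights and yields $2d(\pi_u,\pi_w)\le4\delta$. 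This is the paper's proof. In your language, bound $d(\pi_u,\pi_w)$ by half the length of the closed walk rather than its whole length; then no disjointness of the walk is needed.
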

\begin{proof}
Write $r_u=|\pi_u|$ and assign to each directed arc $u\to w$
the nonnegative reduced cost
$c_{uw}=d(\pi_u,\pi_w)-(r_w-r_u)$.
Flow conservation yields
\[
 \delta=\sum_{e\in F}c_e+\sum_{e\notin F}d_e,\qquad
 \sum_{\text{arcs of }D}c_e
   =\delta+\sum_{e\notin F}d_e\le2\delta.
\]
For unused edges the two arc costs sum to $2d_e$.
Choose simple directed paths $P:u\to w$ and $P':w\to u$
inside the SCC. Triangle inequality and telescoping give
\[
 d(\pi_u,\pi_w)\le r_w-r_u+c(P),\qquad
 d(\pi_u,\pi_w)\le r_u-r_w+c(P').
\]
Each path cost is at most $2\delta$; their edges need not be
disjoint. Adding proves the assertion.
\end{proof}

Define $A_Q,\mathcal H_Q,\Delta_Q$ using the same terminal labels
and minimum cut $s$, and let $C_0=1+4R$.
\begin{lemma}[Defect and fiber bounds]\label{lem:edge-fiber}
The map \cref{eq:edge-anchor} satisfies
$0\le\Delta_Q(\omega)\le C_0\delta$.
For $p\ge2$, each fiber at fixed $\omega,\delta$ has at most
$p^{4v\delta}$ original labelings. Consequently
\begin{equation}\label{eq:edge-count-comparison}
 A_G(p,\delta)\le p^{4v\delta}
                 \sum_{j=0}^{C_0\delta}A_Q(p,j),\qquad
 A_G(p,0)=A_Q(p,0).
\end{equation}
\end{lemma}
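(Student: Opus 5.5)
The plan is to prove the three assertions of \cref{lem:edge-fiber} in order: the defect bound for the anchor map, then the fiber bound, then the coefficient comparison, including the zero-defect identity. Throughout, the terminal components $B_0,A_0$ are anchored at $\mathsf b,\mathsf a$, whose labels are fixed at $\id$ and $\gamma_p$. \Cref{lem:edge-variation} is applied to these terminals as vertices of their SCCs; its proof uses only the directed arcs of $D$ and telescoping, so terminals play no special role in it.

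\emph{Defect bound.} Nonnegativity of $\Delta_Q(\omega)$ is the path argument of \cref{lem:edge-wick}, run on $Q$. By \cref{lem:edge-quotient} the quotient $Q$ has $s$ edge-disjoint projected terminal paths. Applying the triangle inequality along each path from $\id$ to $\gamma_p$ gives $\mathcal H_Q^{(p)}(\omega)\ge s(p-1)$.

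For the upper bound, note that every edge of $Q$ is a cross-component edge $\{u,w\}$ of $G$, with the same multiplicity. For such an edge, the triangle inequality and \cref{lem:edge-variation} give
\[
 d(\omega_{C(u)},\omega_{C(w)})
 \le d(\pi_{r(C(u))},\pi_u)+d(\pi_u,\pi_w)+d(\pi_w,\pi_{r(C(w))})
 \le d(\pi_u,\pi_w)+4\delta .
\]
There are at most $R$ such edges. Dropping the nonnegative contributions of edges internal to SCCs, summation gives $\mathcal H_Q^{(p)}(\omega)\le\mathcal H_G^{(p)}(\pi)+4R\delta=s(p-1)+(1+4R)\delta$. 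This is $\Delta_Q(\omega)\le C_0\delta$.

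\emph{Fiber bound.} Fix $\omega$ and $\delta$. Anchors are determined by $\omega$. For every other actual vertex $u$, \cref{lem:edge-variation} places $\pi_u$ in the transposition ball of radius $2\delta$ around $\omega_{C(u)}$. Every element of that ball is a word of length at most $2\delta$ in the $\binom p2$ transpositions; padding shorter words with a null letter, the number of words is at most $(\binom p2+1)^{2\delta}\le p^{4\delta}$ for $p\ge2$. At most $v$ vertices are free, which gives $p^{4v\delta}$.

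Summing over the image, which is contained in $\{\omega:\Delta_Q(\omega)\le C_0\delta\}$ by the first step, yields \cref{eq:edge-count-comparison}.

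\emph{Zero-defect identity.} At $\delta=0$ each fiber has exactly one element, namely $\pi_u=\omega_{C(u)}$, so $A_G(p,0)\le A_Q(p,0)$. For the reverse inequality, lift any $\omega$ with $\Delta_Q(\omega)=0$ to the labeling that is constant on SCCs. Internal edges then contribute zero, and cross edges contribute exactly $\mathcal H_Q^{(p)}(\omega)$. Hence $\Delta_G$ of the lift equals $\Delta_Q(\omega)=0$, and the lift is a right inverse of the anchor map.

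I expect the main obstacle to be the bookkeeping of edge multiplicities and of terminal legs in this lift. The step needs every terminal leg of $G$ to correspond to a quotient edge incident to $B_0$ or $A_0$ with the same count, so that the $R$ appearing in $C_0$ and the minimum cut $s$ are literally shared between $G$ and $Q$. I would verify this first, directly from the construction of $Q$ in \cref{lem:edge-quotient}.
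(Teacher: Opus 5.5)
Your proposal is correct and follows the paper's proof essentially step for step. The only cosmetic difference is that the paper bounds $\mathcal H_G(\bar\pi)-\mathcal H_G(\pi)$ vertex by vertex, by $\sum_{u}\deg_G(u)\,d(\pi_u,\bar\pi_u)\le 4R\delta$, whereas you apply the same triangle-inequality estimate edge by edge. The bookkeeping you flag at the end is unnecessary, and the correspondence you state is false in general: unused terminal legs lie inside $B_0$ or $A_0$ and are discarded in $Q$. Your lift already assigns such internal edges energy zero, and the only quantity actually shared between $G$ and $Q$ is the minimum cut $s$, which \cref{lem:edge-quotient} supplies.
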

\begin{proof}
Deleted edges have equal labels after applying the map, so
$\mathcal H_Q(\omega)=\mathcal H_G(\bar\pi)$.
The terminal labels are unchanged, and
\[
 \mathcal H_G(\bar\pi)
 \le\mathcal H_G(\pi)+
       \sum_{u\in V_G}\deg_G(u)d(\pi_u,\bar\pi_u)
 \le\mathcal H_G(\pi)+4R\delta.
\]
Here we used \cref{lem:edge-variation} and
$\sum_{u\in V_G}\deg_G(u)\le2R$.
The quotient cut bound gives nonnegative defect.

A radius-$2\delta$ ball in $S_p$ has size at most
\[
 \sum_{h=0}^{2\delta}\binom p2^{h}
 \le\left(1+\binom p2\right)^{2\delta}
 \le p^{4\delta}.
\]
For $\delta=0$ the size is one. Every original label lies
in this ball about its anchor. Multiplying over $v$ vertices
is a bound on possible choices, not an assertion of independence.
At defect zero, all labels inside an SCC agree. Conversely,
every zero-defect quotient labeling lifts uniquely to a
component-constant labeling with the same energy.
\end{proof}

We now prove the auxiliary-dimension theorem stated in
\cref{sec:rtn-edge}. Its notation is the positive moment polynomial of
\cref{lem:edge-wick} and the quotient just constructed.
\label{app:edge-transfer-proof}

\begin{proof}
By positivity of the quotient coefficients,
\[
 \sum_{j\le C_0\delta}A_Q(p,j)
 \le M^{C_0\delta}\sum_{j\ge0}A_Q(p,j)M^{-j}
 =M^{C_0\delta}F_Q(p,M).
\]
Insert this in \cref{eq:edge-count-comparison}, multiply by
$N^{-\delta}$, and extend the finite sum to an infinite
geometric series with ratio $p^{4v}M^{C_0}/N$.
Taking $M=p^2$ gives the second bound.
\end{proof}

The quotient comparison retains its positive-defect states instead
of projecting onto zero-defect states alone. It evaluates their
entire positive moment polynomial at a new dimension. This is the
point at which a separately sampled Gaussian quotient can be used
without changing the distributional assumptions on the original RTN.

\subsection{The overlapping Ginibre circuit and its strong limit}

Label the flow paths $1,\ldots,s$. For each internal quotient
vertex $C$, let $K_C\subseteq[s]$ be the paths through it;
$|K_C|=k_C$. Order these vertices topologically as
$C_1,\ldots,C_t$. Independently sample normalized square Ginibre
matrices $G_C^{(M)}$ of size $d_C=M^{k_C}$, with entry variance
$d_C^{-1}$. Embed each gate on tensor factors $K_C$ of
$(\C^M)^{\otimes s}$, using identity on the other factors, and set
\begin{equation}\label{eq:edge-circuit}
 B_M=\widehat G_{C_t}^{(M)}\cdots\widehat G_{C_1}^{(M)} .
\end{equation}
\begin{lemma}[Exact circuit realization]\label{lem:edge-circuit}
Let $H_{Q,M}$ be the raw independent-Gaussian contraction of $Q$,
with direct terminal edges interpreted as identity wires. In the
wire-labeled coordinates,
\[
 B_M=M^{-\frac12\sum_C k_C}H_{Q,M},\qquad
 Y_{Q,M}=B_MB_M^*,\qquad
 F_Q(p,M)=\E[M^{-s}\tr(B_MB_M^*)^p].
\]
\end{lemma}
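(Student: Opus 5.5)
The plan is to verify the identity \(B_M=M^{-\frac12\sum_Ck_C}H_{Q,M}\) by contracting the quotient network gate by gate, and then to obtain the two moment statements from it by direct substitution.

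\textbf{Step 1: structure of \(Q\).} By \cref{lem:edge-quotient}, \(Q\) is acyclic. Every internal quotient vertex \(C\) has exactly \(k_C\) incoming and \(k_C\) outgoing edges, and these are precisely the projected flow paths in \(K_C\), each entering and leaving once. So I would regard each internal vertex \(C\) as a tensor with \(k_C\) input legs and \(k_C\) output legs. Each leg carries dimension \(M\), and each leg is labelled by its path index. Its coordinates are i.i.d.\ standard circular complex Gaussians, so the tensor is a \(d_C\times d_C\) matrix \(\mathsf G_C\) with \(d_C=M^{k_C}\). Wiring the source \(B_0\) to the \(s\) input wires and the sink \(A_0\) to the \(s\) output wires identifies the terminal space with \((\C^M)^{\otimes s}\) in the wire-labelled coordinates. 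A direct edge \(B_0\to A_0\) is an identity wire on its path factor.

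\textbf{Step 2: contraction along a topological order.} Contracting an internal edge \(C_i\to C_j\) with the unnormalized pairing is matrix composition on the corresponding path factor. Each path visits its gates in topological order, since the projected paths are directed in a DAG and topological order is compatible with every path. Hence the full contraction equals the ordered product \(\widehat{\mathsf G}_{C_t}\cdots\widehat{\mathsf G}_{C_1}\). Here each \(\widehat{\mathsf G}_C\) acts on factors \(K_C\) and as the identity elsewhere, and the wires untouched by a gate pass through unchanged.

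I expect the main point needing care to be that this is an honest equality of operators, not merely one up to permutation. Gates on disjoint supports commute. For overlapping supports, the order on each shared wire is fixed by its path, which agrees with the topological order, so I would check this via the standard fact that a tensor network on a DAG evaluates to any linear extension of its partial order. The Ginibre normalization \(G_C^{(M)}=d_C^{-1/2}\mathsf G_C\) then gives the factor \(\prod_C M^{-k_C/2}\), which proves the first identity.

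\textbf{Step 3: the moment identities.} Then \(Y_{Q,M}\), defined as in \cref{eq:edge-normalizations} for the graph \(Q\), equals \(M^{s-R_Q}H_{Q,M}H_{Q,M}^*\). I would count the edges of \(Q\): every edge lies on a path, so \(R_Q\) equals the total number of path segments, which is \(s+\sum_C k_C\) (each gate adds one outgoing segment per path through it). Thus \(M^{s-R_Q}=M^{-\sum_Ck_C}\), and this gives \(Y_{Q,M}=B_MB_M^*\).

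Finally, \cref{lem:edge-wick} applied to \(Q\) gives \(F_Q(p,M)=\E[M^{-s}\tr Y_{Q,M}^p]\). This application is valid because the Wick computation there uses only independent Gaussian vertex tensors, identity wires contributing trivially, and the minimum cut \(s\) of \(Q\) from \cref{lem:edge-quotient}. The resulting expansion coincides with \(A_Q\) defined by the same energy. Substituting \(Y_{Q,M}=B_MB_M^*\) completes the proof.
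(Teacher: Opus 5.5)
Your proposal is correct and follows essentially the same route as the paper. The balanced degrees of the SCC quotient make each internal tensor a square $M^{k_C}\times M^{k_C}$ Gaussian gate, and topological contraction is the ordered product of the embedded gates; counting edges at their tails gives $R_Q=s+\sum_C k_C$ and hence the normalization, and the Wick expansion of \cref{lem:edge-wick} carries over to $Q$ with direct terminal edges as identity wires. The only imprecision is calling those wires ``trivial'': each contributes energy $d(\id,\gamma_p)=p-1$ and an index-loop factor $M$, which is what the paper's remark about their ``usual index-loop factors'' records.
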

\begin{proof}
An internal quotient tensor has $k_C$ inputs and $k_C$ outputs.
Its matricization is a square iid Gaussian matrix. Path labels
give consistent input and output tensor factors, and topological
contraction is exactly multiplication of the embedded gates.
Different boundary ordering contributes only permutation unitaries.
Counting edges at their tails gives $R_Q=s+\sum_C k_C$,
so $M^{s-R_Q}$ is the product of the squared gate normalizations.
The Wick formula also holds for direct identity wires, whose
terminal permutations give their usual index-loop factors.
\end{proof}

If $t=0$, $B_M=I$, $F_Q(p,M)=1$, and $E_G=1$.
Suppose henceforth that $t>0$. A normalized complex Ginibre gate
has the distribution
\[
 G_C^{(M)}=(S_{C,1}^{(M)}+iS_{C,2}^{(M)})/\sqrt2,
\]
where the $2t$ matrices $S$ are independent normalized GUE
matrices of the indicated base dimensions. The covariance of
this array is
$\E G_{ij}\overline{G_{kl}}=d_C^{-1}\one_{i=k,j=l}$,
and its pseudocovariance is zero, proving the asserted iid
circular Gaussian law.

In \citet[Section~9.4, Theorem~9.8]{ChenGarzaVargasVanHandel2026},
take the number of tensor factors to be $s$, the number of GUE
variables to be $2t$, their supports to be
$K_{C_1},K_{C_1},\ldots,K_{C_t},K_{C_t}$, and the coefficient
dimension to be one. These supports are nonempty; the theorem
does not require them to be distinct or disjoint.
The polynomial is the ordered product in \cref{eq:edge-circuit},
of fixed degree $t$, independent of $p$ and $M$.
We obtain a limit $b$ in a $C^*$-probability space with faithful
trace $\tau$ such that
\begin{equation}\label{eq:edge-strong}
 \norm{B_M}\longrightarrow\norm b=:L
 \quad\text{almost surely}.
\end{equation}
The weak moment limit of \citet[Theorem~4]{CharlesworthCollins2021},
also recalled directly before that strong-convergence theorem,
gives, for fixed $p$,
\begin{equation}\label{eq:edge-limit-moments}
 \lim_{M\to\infty}F_Q(p,M)
   =\tau((bb^*)^p)=A_Q(p,0)=A_G(p,0).
\end{equation}
Expected moments are justified directly by that weak-limit
interface, or by uniform integrability: the gate moment bound
proved below implies
\[
 \sup_{M\ge1}\E\!\left[
    \bigl(M^{-s}\tr(B_MB_M^*)^p\bigr)^2\right]
 \le [C(1+\sqrt{4p})]^{4pt}<\infty
\]
for fixed $p$. This bound does not require the growing-moment
conclusion.

The scalar spectral measure of $bb^*$ has compact support and
therefore is uniquely determined by these moments. It is $\mu_G$.
Faithfulness implies that its support is the spectrum of $bb^*$:
a nonzero nonnegative continuous function of $bb^*$ has positive
trace. It follows that
\begin{equation}\label{eq:edge-base}
 E_G=\norm{bb^*}=L^2 .
\end{equation}
Also $m_1=1$, so $E_G\ge1$.

\subsection{From strong convergence to growing moments}

Qualitative norm convergence does not by itself control growing
moments. We supply both concentration and control of the size
of the exceptional contribution.

\begin{lemma}[Gate tails and moments]\label{lem:edge-gate-tail}
For a normalized $d$ by $d$ complex Ginibre matrix $G$, universal
constants $c,C>0$ satisfy
\[
 \Prob\{\norm G>16\}\le e^{-cd},\qquad
 (\E\norm G^r)^{1/r}\le C(1+\sqrt{r/d})\quad(r\ge1).
\]
\end{lemma}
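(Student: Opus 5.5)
The plan is to prove the tail bound by the same net argument used for a single sign matrix in \cref{lem:path-count}, and then to obtain the moment bound by integrating that tail.

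Write $G=d^{-1/2}X$, where $X$ has independent standard circular complex Gaussian entries with $\E|X_{ij}|^2=1$. Take a $1/4$-net $\mathcal N$ of the complex unit sphere of $\C^d$. Viewing it as the real sphere in $\R^{2d}$, it can be chosen with $|\mathcal N|\le 9^{2d}$. The standard approximation step gives
\[
 \norm G\le 2\max_{x,y\in\mathcal N}|x^*Gy|.
\]
For fixed unit vectors, $x^*Xy$ is a standard circular complex Gaussian. Hence
\[
 \Prob\{|x^*Gy|>u\}=e^{-du^2}.
\]
A union bound then gives
\[
 \Prob\{\norm G>2u\}\le 9^{4d}e^{-du^2}.
\]
Take $u=8$, so the threshold is $16$. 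Since $4\log 9<64$, this is at most $e^{-cd}$, which proves the first claim.

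For the moment bound, the same union bound applied at a general threshold gives, for $z\ge0$,
\[
 \Prob\{\norm G>2(t_0+z)\}\le e^{-dz^2},\qquad t_0=\sqrt{4\log 9}.
\]
This holds because $(t_0+z)^2\ge t_0^2+z^2$. Thus $\norm G\le 2t_0+2Z$, where $Z$ is stochastically dominated by a variable with tail $e^{-dz^2}$. The $L^r$ norm of such a variable is at most $C\sqrt{r/d}$, by the subgaussian moment computation $\int_0^\infty rz^{r-1}e^{-dz^2}\,dz=d^{-r/2}\Gamma(r/2+1)$ together with Stirling's formula. Minkowski's inequality then yields
\[
 (\E\norm G^r)^{1/r}\le 2t_0+C\sqrt{r/d}\le C(1+\sqrt{r/d}).
\]

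The only point that needs care is the net constant for the complex sphere, i.e.\ that the volume comparison is done in real dimension $2d$. Any fixed constant works there, with $16$ absorbing it. Alternatively, one can invoke Gaussian concentration of the $1$-Lipschitz function $X\mapsto\norm X$ around $\E\norm X\le 2\sqrt d$. That route gives the same tail $e^{-cd}$ at threshold $16$ and subgaussian moments of order $\sqrt{r/d}$ after normalization. I do not expect either route to present a real obstacle.
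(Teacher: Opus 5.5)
Your proof is correct and is essentially the paper's argument: a $1/4$-net of size $9^{2d}$ on the complex sphere, the two-net union bound $9^{4d}e^{-du^2}$ for circular Gaussians of variance $1/d$, and then tail integration with Minkowski for the moments. The only cosmetic difference is the tail integration. You shift by $t_0=\sqrt{4\log 9}$ using $(t_0+z)^2\ge t_0^2+z^2$, whereas the paper integrates the tail of $(\norm G-16)_+$ using the bound $\exp(-dx^2/8)$ for $x\ge16$.
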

\begin{proof}
A $1/4$-net of the complex unit sphere has at most $9^{2d}$
points. Two such nets give $\norm G\le2\max_{u,w}|w^*Gu|$.
Each scalar in this maximum is circular Gaussian of variance
$1/d$. Thus
\[
 \Prob\{\norm G>x\}
 \le\min\{1,\exp(4d\log9-dx^2/4)\}.
\]
For $x\ge16$ the second term is at most $\exp(-dx^2/8)$.
Integrating the tail of $(\norm G-16)_+$ gives
$\E[(\norm G-16)_+^r]\le(8/d)^{r/2}\Gamma(r/2+1)$.
The bound $\Gamma(r/2+1)^{1/r}\le C\sqrt r$ and Minkowski's
inequality prove the moment estimate.
\end{proof}

\begin{lemma}[Concentration about the limiting norm]\label{lem:edge-concentration}
For each $\varepsilon>0$ there are positive constants
$c_\varepsilon,C_\varepsilon$ and $M_0$, depending on $Q,\varepsilon$,
such that
\[
 \Prob\{\norm{B_M}>L+\varepsilon\}
 \le C_\varepsilon e^{-c_\varepsilon M}\qquad(M\ge M_0).
\]
\end{lemma}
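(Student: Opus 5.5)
The plan is to combine the almost sure convergence \cref{eq:edge-strong} with Gaussian concentration of the map $\mathbf S\mapsto\norm{B_M}$, where $\mathbf S=(S_{C,1},S_{C,2})_C$ is the family of $2t$ independent GUE matrices. The first step is to exhibit a Lipschitz structure. Work on the event $\Omega_M=\{\norm{G_C^{(M)}}\le16\ \text{for all }C\}$. By \cref{lem:edge-gate-tail}, its complement has probability at most $t e^{-c\min_C d_C}\le te^{-cM}$, since every $k_C\ge1$. On $\Omega_M$, the telescoping identity
\[
 B-B'=\sum_{j}\widehat G_{C_t}\cdots\widehat G_{C_{j+1}}(\widehat G_{C_j}-\widehat G'_{C_j})\widehat G'_{C_{j-1}}\cdots\widehat G'_{C_1}
\]
shows that $\norm{B_M}$ is $16^{t-1}$-Lipschitz in the operator norm of each gate. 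Embedding by tensoring with an identity preserves operator norms, and operator norm is at most Hilbert--Schmidt norm. Writing $S=\sqrt{d_C}^{-1}X$ with $X$ having standard Gaussian coordinates, each gate is $d_C^{-1/2}\le M^{-1/2}$-Lipschitz in the Euclidean coordinates. Hence the truncated function is $C_tM^{-1/2}$-Lipschitz on the convex set $\{\norm{G_C}\le16\ \forall C\}$.

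The second step is to extend this function beyond the convex set. Replace each gate by its radial retraction onto the ball $\{\norm G\le16\}$, that is, $G\mapsto G\min(1,16/\norm G)$. In operator norm this retraction is $2$-Lipschitz. The resulting function $f(\mathbf S)$ is therefore globally $C_tM^{-1/2}$-Lipschitz in the standard Gaussian coordinates, and it agrees with $\norm{B_M}$ on $\Omega_M$. Gaussian concentration then gives
\[
 \Prob\{f>\E f+\varepsilon/2\}\le e^{-c\varepsilon^2M/C_t^2}.
\]

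The third and main step is to show that $\E f\le L+\varepsilon/2$ for $M\ge M_0$. Since $f\le16^t$ is bounded and $f=\norm{B_M}$ on $\Omega_M$, almost sure convergence \cref{eq:edge-strong} together with $\Prob(\Omega_M^c)\to0$ gives $f\to L$ in probability. Bounded convergence then gives $\E f\to L$. A subtlety arises because \cref{eq:edge-strong} refers to a coupling across $M$, whereas here each $M$ is treated separately. Only convergence in probability of the marginal laws is needed, and this follows from the almost sure statement for any coupling.

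Combining the three steps yields
\[
 \Prob\{\norm{B_M}>L+\varepsilon\}\le\Prob(\Omega_M^c)+\Prob\{f>L+\varepsilon\}\le te^{-cM}+e^{-c'\varepsilon^2M},
\]
which is the claimed bound with $c_\varepsilon,C_\varepsilon$ depending on $Q$ and $\varepsilon$. I expect the obstacle to be the Lipschitz extension, namely making the retraction estimate and the handling of the tensor embedding precise. The rest is standard.
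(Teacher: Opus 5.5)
Your proposal is correct and follows the paper's route. You truncate at gate norm $16$ (exceptional probability $te^{-cM}$ by the gate-tail lemma), telescope the gate product to get a Lipschitz constant of order $16^{t-1}M^{-1/2}$ in the Gaussian coordinates, apply Gaussian concentration, and identify the center with $L$ through the strong convergence \eqref{eq:edge-strong}. The only difference is technical: the paper applies the restricted concentration inequality of Chen, Garza-Vargas, and van Handel (their Lemma~5.6) directly on $\Omega_M$ and centers at the median, whereas you build an explicit global extension by radially retracting each gate (a $2$-Lipschitz map in operator norm) and center at the mean via bounded convergence, which makes the step self-contained.
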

\begin{proof}
Write gate entries as $(\xi+i\eta)/\sqrt{2d_C}$ in independent
standard real Gaussian coordinates. Let $\Omega_M$ be the set
on which every gate norm is at most 16.
Since $d_C\ge M$, \cref{lem:edge-gate-tail} gives
$\Prob(\Omega_M^c)\le t e^{-c_0M}$.
In particular $\Prob(\Omega_M)\ge3/4$ once
$M\ge\max\{1,\lceil c_0^{-1}\log(4t)\rceil\}$.

For any two coordinate vectors in $\Omega_M$, telescope their
gate products. A gate difference is bounded in operator norm
by its base-matrix Frobenius norm; embedding with identity
factors preserves operator norm. Cauchy--Schwarz over gates gives
the pairwise Lipschitz constant
\[
 \ell_M=16^{t-1}\left(\sum_C(2d_C)^{-1}\right)^{1/2}
       \le16^{t-1}\sqrt{t/(2M)}
\]
for $f_M=\norm{B_M}$ on $\Omega_M$. No convexity of
$\Omega_M$ is required, and no ambient $M^s$ Frobenius factor
is introduced.

The restricted Gaussian concentration inequality
\citep[Lemma~5.6]{ChenGarzaVargasVanHandel2026} yields
\[
 \Prob\{|f_M-\operatorname{med}f_M|>u\}
 \le\Prob(\Omega_M^c)+C e^{-cu^2/\ell_M^2}.
\]
By \cref{eq:edge-strong}, every choice of median tends to $L$.
For all sufficiently large $M$, take $u=\varepsilon/2$.
Combining the two exponential bounds proves the assertion.
\end{proof}

\subsection{The analytic bridge at an auxiliary dimension}
\label{app:positive-growing}

The preceding concentration estimate is one input to the following
general criterion. The other is high-moment control of the exceptional
contribution. We state the criterion before verifying both inputs for
the comparison circuit.

\begin{lemma}[From reference concentration to growing moments]
\label{lem:interface-growing}
Let $Z_M\ge0$ and $L\ge0$. Suppose that for every $\eps>0$ there
are $C_\eps,c_\eps>0$ and $M_\eps$ such that, for a fixed $\beta>0$,
\[
 \Prob\{Z_M>L+\eps\}\le C_\eps e^{-c_\eps M^\beta}
 \qquad(M\ge M_\eps).
\]
Choose integers $M_p$ with $M_p^\beta/p\to\infty$ and suppose
$\sup_{p\ge p_0}\Lpnorm{Z_{M_p}}{4p}\le D<\infty$.
Then for every $\eta>0$ and all sufficiently large $p$,
\begin{equation}\label{eq:interface-growing}
 \E Z_{M_p}^{2p}\le2(L^2+\eta)^p.
\end{equation}
\end{lemma}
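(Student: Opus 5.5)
The plan is to split the moment at a threshold just above $L$. Fix $\eta>0$ and choose $\eps>0$ so small that $(L+\eps)^2\le L^2+\eta/2$. Write $M=M_p$ and $Z=Z_{M}$, and decompose
\[
 \E Z^{2p}=\E\bigl[Z^{2p}\one_{\{Z\le L+\eps\}}\bigr]
 +\E\bigl[Z^{2p}\one_{\{Z>L+\eps\}}\bigr].
\]
The first term is at most $(L+\eps)^{2p}\le(L^2+\eta/2)^p$. This bound is deterministic and uses no hypothesis beyond the choice of $\eps$.

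For the exceptional term I will use Cauchy--Schwarz rather than any pointwise bound on $Z$:
\[
 \E\bigl[Z^{2p}\one_{\{Z>L+\eps\}}\bigr]
 \le\bigl(\E Z^{4p}\bigr)^{1/2}\Prob\{Z>L+\eps\}^{1/2}
 \le D^{2p}C_\eps^{1/2}e^{-c_\eps M_p^\beta/2}.
\]
This uses $\Lpnorm{Z_{M_p}}{4p}\le D$ and the concentration hypothesis. The latter is valid once $M_p\ge M_\eps$, which holds for large $p$ because $M_p^\beta/p\to\infty$ forces $M_p\to\infty$. The bound equals $C_\eps^{1/2}\exp\bigl(2p\log D-c_\eps M_p^\beta/2\bigr)$. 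Since $M_p^\beta/p\to\infty$, the exponent is at most $-c_\eps M_p^\beta/4$ for large $p$. So the exceptional term tends to zero, and faster than any fixed exponential in $p$.

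It remains to compare this decaying term with $(L^2+\eta/2)^p$. If $L^2+\eta/2\ge1$, the exceptional term is eventually below $1\le(L^2+\eta/2)^p$. In general, I compare logarithms: $(L^2+\eta/2)^p=\exp\bigl(p\log(L^2+\eta/2)\bigr)$, and $p\log(L^2+\eta/2)\ge -p\,|\log(\eta/2)|$ since $L^2+\eta/2\ge\eta/2$. The exceptional exponent $-c_\eps M_p^\beta/4$ beats any linear multiple of $p$, so eventually the exceptional term is at most $(L^2+\eta/2)^p$. Summing the two pieces gives $\E Z_{M_p}^{2p}\le2(L^2+\eta/2)^p\le2(L^2+\eta)^p$.

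The only delicate point is this last comparison. The growth condition $M_p^\beta/p\to\infty$ must simultaneously absorb the factor $D^{2p}$ from the high-moment hypothesis and the possibly small base $L^2+\eta/2$. Both are exponential in $p$ with fixed rates, while the tail is exponential in $M_p^\beta$, so the condition suffices. I expect no other obstacle. The rare-spike issue the paper mentions is exactly why the $L^{4p}$ bound is required: the probability bound alone cannot control the size of $Z$ on the exceptional event.
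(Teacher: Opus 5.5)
Your proposal is correct and follows the same route as the paper's proof: split at the threshold $L+\eps$, bound the lower part by $(L+\eps)^{2p}$, and control the exceptional part by Cauchy--Schwarz using the $L^{4p}$ bound and the tail estimate. Your explicit logarithmic comparison just spells out the paper's remark that the $p$-th root of the exceptional term tends to zero.
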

\begin{proof}
Choose $\eps>0$ with $(L+\eps)^2<L^2+\eta$. Below the threshold
$L+\eps$, the contribution is at most $(L+\eps)^{2p}$.
Cauchy--Schwarz bounds the remaining contribution by
$D^{2p}C_\eps^{1/2}\exp(-c_\eps M_p^\beta/2)$.
Its $p$-th root tends to zero, so each contribution is eventually
at most $(L^2+\eta)^p$.
\end{proof}

\begin{corollary}[Logarithmic-order transfer with the right base]
\label{cor:interface-log}
Under \cref{thm:positive-fiber}, suppose $r_p\le p^b$ for fixed
$b\ge0$. Choose $M_p=\lceil p^a\rceil$ with fixed $a>0$ and
$a\beta>1$. If $F_\Lambda(p,M_p)\le\E Z_{M_p}^{2p}$ for a family
satisfying \cref{lem:interface-growing}, then for every $\eta>0$
and all sufficiently large $p$,
\[
 F_\Omega(p,N)\le
 \frac{2(L^2+\eta)^p}{1-p^b\lceil p^a\rceil^\kappa/N}
 \quad\text{whenever the denominator is positive}.
\]
The denominator tends to one when $p\asymp\log N$. The coefficient
$a_{p,\delta}$ is bounded by the same numerator times
$p^{b\delta}\lceil p^a\rceil^{\kappa\delta}$.
\end{corollary}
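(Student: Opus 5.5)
The plan is to chain the two displayed inequalities: the transfer of \cref{thm:positive-fiber} at auxiliary dimension $M=M_p$, and the growing-moment estimate of \cref{lem:interface-growing}. No new combinatorics is needed.

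\textbf{Step 1: the positive-fiber bound at $M=M_p$.} Fix $\eta>0$ and set $M=M_p=\lceil p^a\rceil\ge1$. By hypothesis $r_p\le p^b$, so whenever $1-p^b\lceil p^a\rceil^\kappa/N>0$ we also have $N>r_pM^\kappa$. Then \cref{thm:positive-fiber} applies and gives
\[
 F_\Omega(p,N)\le\frac{F_\Lambda(p,M_p)}{1-r_pM_p^\kappa/N}
 \le\frac{F_\Lambda(p,M_p)}{1-p^b\lceil p^a\rceil^\kappa/N},
\]
where the second step uses that the denominator is monotone decreasing in $r_p$. The same theorem gives $a_{p,\delta}\le r_p^\delta M_p^{\kappa\delta}F_\Lambda(p,M_p)\le p^{b\delta}\lceil p^a\rceil^{\kappa\delta}F_\Lambda(p,M_p)$.

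\textbf{Step 2: the growing-moment input.} Since $M_p\ge p^a$ and $a\beta>1$, we get $M_p^\beta/p\ge p^{a\beta-1}\to\infty$. This is exactly the growth condition in \cref{lem:interface-growing}. The family is assumed to satisfy the remaining hypotheses of that lemma (the concentration tail and the uniform $L^{4p}$ bound along $M_p$). So for all large $p$,
\[
 F_\Lambda(p,M_p)\le\E Z_{M_p}^{2p}\le2(L^2+\eta)^p.
\]
Substituting this into both bounds from Step 1 gives the displayed inequality and the coefficient bound.

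\textbf{Step 3: the denominator.} Take $p\asymp\log N$. Then $p^b\lceil p^a\rceil^\kappa\le C(\log N)^{b+a\kappa}=o(N)$, so the denominator tends to one.

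\textbf{Main obstacle.} The only delicate point is making sure the hypotheses of \cref{lem:interface-growing} are read along the particular sequence $M_p$: the uniform $L^{4p}$ bound is required at $M_p$, not at fixed $M$. Once that is checked, the rest is bookkeeping of quantifiers: $\eta$ is fixed first, then $p$ is taken large, and $N$ is constrained only through positivity of the denominator.
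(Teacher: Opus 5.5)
Your proposal is correct and follows the paper's own argument. You apply \cref{thm:positive-fiber} at $M=M_p$, use $r_p\le p^b$ to enlarge both the denominator and the coefficient factor, substitute $F_\Lambda(p,M_p)\le\E Z_{M_p}^{2p}\le 2(L^2+\eta)^p$ from \cref{lem:interface-growing} (whose growth condition follows from $a\beta>1$), and note that a fixed power of $\log N$ is $o(N)$. You also check explicitly the monotonicity in $r_p$ and that $M_p^\beta/p\to\infty$, which the paper leaves implicit.
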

\begin{proof}
Insert \cref{eq:interface-growing} into \cref{eq:positive-fiber}
and its coefficient bound. A fixed power of $\log N$ is $o(N)$.
\end{proof}

The high-moment assumption is not a consequence of qualitative strong
convergence. Let $Z_M=e^{\sqrt M}$ with probability $e^{-M}$ and
$Z_M=1$ otherwise. The event probabilities are summable, so in any
common coupling $Z_M\to1$ almost surely. For fixed $k$,
\[
 \E Z_M^k=1-e^{-M}+e^{-M+k\sqrt M}\longrightarrow1,
 \qquad \E Z_{p^2}^{2p}\ge e^{p^2}.
\]
This family even has exponentially rare upper deviations. It shows
that fixed moments and small exceptional probability alone do not
control growing moments. The uniform $4p$-moment bound is a sufficient
way to control that contribution here, not a claim that it is the
only possible condition for such a transfer.

For the RTN, take $Z_M=\norm{B_M}$, $\beta=1$, $a=2$,
$b=4v$, and $\kappa=1+4R$. The normalized trace is bounded by the
operator norm moment, and $L^2=E_G$. The following proof supplies
the uniform $4p$ bound and gives the original quantitative statement
with all model constants retained.

\begin{lemma}[Auxiliary growing moments]\label{lem:edge-growing}
For every $\eta>0$, there is $p_0(Q,\eta)$ such that
\begin{equation}\label{eq:edge-growing}
 F_Q(p,p^2)\le2(E_G+\eta)^p\qquad(p\ge p_0).
\end{equation}
Consequently, for every $\delta\ge0$ and the same threshold,
\begin{equation}\label{eq:edge-all-defects}
 A_G(p,\delta)\le2(E_G+\eta)^p p^{K_G\delta}.
\end{equation}
\end{lemma}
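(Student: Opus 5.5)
The plan is to apply \cref{lem:interface-growing} to $Z_M=\norm{B_M}$ with $\beta=1$, $L=\norm b$ and $M_p=p^2$, and then to insert the result into \cref{thm:edge-transfer}. If $t=0$, then $F_Q(p,M)=1\le 2(E_G+\eta)^p$ trivially, because $E_G=1$. Assume $t>0$. By \cref{lem:edge-circuit} and the bound $M^{-s}\tr(B_MB_M^*)^p\le\norm{B_M}^{2p}$, we have $F_Q(p,p^2)\le\E Z_{p^2}^{2p}$. By \cref{eq:edge-base}, $L^2=E_G$. The concentration hypothesis of \cref{lem:interface-growing} is exactly \cref{lem:edge-concentration}, with $\beta=1$. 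Since $M_p=p^2$, we also have $M_p/p\to\infty$.

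The remaining input is the uniform bound $\sup_p\Lpnorm{\norm{B_{p^2}}}{4p}\le D$, which I expect to be the only real work. I would use submultiplicativity,
\[
\norm{B_M}\le\prod_C\norm{G_C^{(M)}},
\]
which holds because embedding a gate with identity factors preserves its operator norm. The gates are independent, so
\[
\Lpnorm{\norm{B_M}}{4p}\le\prod_C\Lpnorm{\norm{G_C^{(M)}}}{4p}.
\]
\Cref{lem:edge-gate-tail} then gives each factor at most $C(1+\sqrt{4p/d_C})$. Since $d_C\ge M=p^2\ge 4p$ for $p\ge4$, every factor is at most $2C$. Hence $D=(2C)^t$, which depends only on $Q$. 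With these inputs, \cref{lem:interface-growing} yields \cref{eq:edge-growing} for all $p\ge p_0(Q,\eta)$.

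For \cref{eq:edge-all-defects}, I would use the coefficient form of the comparison rather than its series form. By \cref{lem:edge-fiber},
\[
A_G(p,\delta)\le p^{4v\delta}\sum_{j\le C_0\delta}A_Q(p,j).
\]
Positivity of the quotient coefficients at $M=p^2$ bounds the inner sum by $p^{2C_0\delta}F_Q(p,p^2)$. This is the same step as in the proof of \cref{thm:edge-transfer}, and also matches the coefficient clause of \cref{thm:positive-fiber}. Combining the two bounds gives
\[
A_G(p,\delta)\le p^{(4v+2+8R)\delta}F_Q(p,p^2)=p^{K_G\delta}F_Q(p,p^2),
\]
and inserting \cref{eq:edge-growing} finishes the proof.

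Two points need checking: that no hidden $p$-dependence enters the constants of \cref{lem:edge-concentration}, which holds because $\eps$ there is fixed once $\eta$ is fixed; and that $p_0$ absorbs the threshold $M_0$ of that lemma, via $p^2\ge M_0$.
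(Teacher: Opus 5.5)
Your proposal is correct and takes essentially the paper's route. You use the uniform $4p$-moment bound from gate independence and \cref{lem:edge-gate-tail}, and control the exceptional event through \cref{lem:edge-concentration}; the paper writes that Cauchy--Schwarz step inline instead of citing \cref{lem:interface-growing}, though it names exactly this instantiation with $\beta=1$. You then combine \cref{eq:edge-count-comparison} with positivity at $M=p^2$ to obtain the exponent $K_G=4v+2C_0$, as the paper does.
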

\begin{proof}
For $p\le M$, independence and \cref{lem:edge-gate-tail} give
\[
 \E\norm{B_M}^{4p}
 \le\prod_C\E\norm{G_C^{(M)}}^{4p}\le D_Q^{4p}
\]
with $D_Q$ independent of $M,p$.
Fix $\varepsilon>0$. On the exceptional event of
\cref{lem:edge-concentration}, apply Cauchy--Schwarz to obtain
\begin{equation}\label{eq:edge-exceptional}
 F_Q(p,M)\le\E\norm{B_M}^{2p}
 \le(L+\varepsilon)^{2p}
       +D_Q^{2p}C_\varepsilon^{1/2}e^{-c_\varepsilon M/2}.
\end{equation}
Fix $\eta$ first and choose $\varepsilon$ so that
$(L+\varepsilon)^2<E_G+\eta$. At $M=p^2$, the logarithm
of the second term divided by $p$ is
$2\log D_Q+(\log C_\varepsilon)/(2p)-c_\varepsilon p/2$,
which tends to $-\infty$. This proves
\cref{eq:edge-growing}. Combining it with
\cref{eq:edge-count-comparison} and positivity at $M=p^2$
gives \cref{eq:edge-all-defects}, uniformly in $\delta$.
When $t=0$ the same assertions follow directly from $F_Q=1$.
\end{proof}

This estimate preserves the exact exponential base up to any
fixed $\eta>0$. It does not assert the stronger relative bound
$A_G(p,\delta)\le A_G(p,0)p^{C_G\delta}$.

\subsection{The upper edge, fixed moments, and normalization}

Fix $\varepsilon>0$. By \cref{thm:edge-transfer,lem:edge-growing},
\[
 \E[N^{-s}\tr Y_N^p]\le
 \frac{2(E_G+\varepsilon/2)^p}{1-p^{K_G}/N}
\]
for all sufficiently large $p$ satisfying $p^{K_G}<N$.
Choose
\[
 p_N=\lceil c_\varepsilon\log N\rceil,\qquad
 c_\varepsilon>
 \frac{s}{\log((E_G+\varepsilon)/(E_G+\varepsilon/2))}.
\]
Then $p_N^{K_G}/N\to0$ and all fixed large-parameter thresholds
hold eventually. Positivity and Markov's inequality give
\begin{equation}\label{eq:edge-upper}
 \Prob\{\norm{Y_N}>E_G+\varepsilon\}
 \le\frac{2N^s}{1-p_N^{K_G}/N}
 \left(\frac{E_G+\varepsilon/2}{E_G+\varepsilon}\right)^{p_N}
 \longrightarrow0 .
\end{equation}
The trace dimension $N^s$ is essential in this choice of $p_N$.

For the lower bound it suffices to use fixed moments. We give
a direct Wick proof, including a useful covariance identity.
Write $T_{N,p}=N^{-s}\tr Y_N^p$.
\begin{lemma}[Nonnegative covariance polynomial]\label{lem:edge-covariance}
For fixed positive integers $p,q$,
\[
 \operatorname{Cov}(T_{N,p},T_{N,q})
   =\sum_{\delta\ge1}B_G(p,q;\delta)N^{-\delta},
 \qquad B_G(p,q;\delta)\in\mathbb Z_{\ge0},
\]
and
\begin{equation}\label{eq:edge-fixed-L2}
 \E[(T_{N,p}-m_p)^2]
 \le\frac{((2p)!)^v}{N}+\frac{(p!)^{2v}}{N^2}.
\end{equation}
\end{lemma}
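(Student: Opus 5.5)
The plan is to expand the product $T_{N,p}T_{N,q}$ by Wick's rule on a doubled set of replicas, exactly as in \cref{lem:edge-wick}. Use $p+q$ unconjugated and $p+q$ conjugated occurrences at each tensor vertex. Wick's rule pairs them by a permutation $\sigma_u\in S_{p+q}$. The terminal labels become $\id$ on $\mathsf b$ and $\gamma_p\oplus\gamma_q$ on $\mathsf a$. This gives
\[
 \E[T_{N,p}T_{N,q}]=N^{-2s}\sum_{\sigma}N^{(p+q)R-\mathcal H(\sigma)}
  \cdot N^{-(s-R)(p+q)}\cdots,
\]
that is, after the normalization of \cref{eq:edge-normalizations}, a sum of monomials $N^{2s(\ldots)}N^{-\text{energy excess}}$ with unit weights. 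The labelings with every $\sigma_u$ preserving the block split $\{1..p\}\sqcup\{p+1..p+q\}$ factor exactly into pairs of labelings for the two separate moments. These reproduce $\E T_{N,p}\,\E T_{N,q}$ term by term. The covariance is therefore exactly the sum over labelings in which some $\sigma_u$ mixes the two blocks, and every coefficient is a nonnegative integer count.

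The key step is to show that each mixing labeling has exponent at most $-1$. Write the normalized exponent as $2s-\bigl(\mathcal H(\sigma)-s(p+q-2)\bigr)$. Then choose $s$ edge-disjoint terminal paths by max flow, as in \cref{lem:edge-wick}. Along each path, the triangle inequality gives total distance at least $d(\id,\gamma_p\oplus\gamma_q)=p+q-2$ for split labelings. For a labeling that crosses blocks somewhere, I would show that on at least one path, and I expect in fact on all $s$, the distance is at least $p+q-1$. The reason is parity together with the geodesic property: any permutation on a geodesic from $\id$ to $\gamma_p\oplus\gamma_q$ lies in the noncrossing lattice below it, and therefore preserves the block split. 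If a vertex label on a path mixes the blocks, that path is not geodesic, and since $d$ changes parity with each transposition, its excess is at least $2$. Every vertex lies on some path or is connected to one, and this needs care. Connectedness is used here: if a mixing vertex is off the flow, I propagate along unused edges. The excess is then carried by an edge cost in $\mathcal H-\sum_{\rm paths}$, and that cost is again at least $1$. Together these give $\delta\ge1$ for mixing terms. This parity/geodesic argument is the main obstacle, and I would first try to get it cleanly from the $\mathsf b$--$\mathsf a$ cut attaining $s$.

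For \cref{eq:edge-fixed-L2}, set $q=p$. Then $\E[(T_{N,p}-m_p)^2]=\operatorname{Var}T_{N,p}+(\E T_{N,p}-m_p)^2$. The variance is at most $N^{-1}$ times the total number of Wick labelings, which is $((2p)!)^v$. The bias $\E T_{N,p}-m_p=\sum_{\delta\ge1}A_G(p,\delta)N^{-\delta}$ is at most $N^{-1}(p!)^v$. Squaring it gives the term $(p!)^{2v}/N^2$. In both cases only $N\ge1$ is used to bound the tails of the series by the total counts.
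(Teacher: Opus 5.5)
Your proof is the paper's proof. It uses the mixed Wick sum over $S_{p+q}^{V_G}$ with terminal labels $\id$ and $\Gamma=\gamma_p\oplus\gamma_q$, and the block-preserving labelings factor exactly into $F_G(p,N)F_G(q,N)$. Zero defect forces every label into $[\id,\Gamma]$: directly on the flow paths, and through zero-cost unused edges off them. Elements of $[\id,\Gamma]$ preserve both blocks; you cite this as the noncrossing-lattice fact, and the paper proves it with reduced words. The crude counts $((2p)!)^v$ and $(p!)^v$ then finish the bound. Your parity refinement (excess $\ge2$ on a non-geodesic path) is correct but unnecessary, since integrality of the defect already gives $\delta\ge1$.

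Fix the normalization bookkeeping. The prefactor is $N^{+(s-R)(p+q)}$, not $N^{-(s-R)(p+q)}$. After the two factors $N^{-s}$, the exponent of a labeling is $s(p+q-2)-\mathcal H(\sigma)=-\delta$, not $2s-\delta$; as you literally wrote it, $\delta\ge1$ would not give the $N^{-1}$ decay. Also drop the guess that every flow path becomes non-geodesic under a mixing labeling. It is false in general, for instance when the mixing vertex lies off the flow or on only one of several paths, and your argument never uses it.
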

\begin{proof}
Use labels in $S_{p+q}$, terminal identity at $\mathsf b$,
and
$\Gamma=(1\,\cdots\,p)(p+1\,\cdots\,p+q)$ at $\mathsf a$.
The mixed Wick sum is
\[
 \E[T_{N,p}T_{N,q}]
 =\sum_{\pi\in S_{p+q}^{V_G}}
       N^{s(p+q-2)-\mathcal H_G^\Gamma(\pi)}.
\]
All labelings preserving the two trace blocks split into
an $S_p$ labeling and an $S_q$ labeling. Their energies add,
so their entire contribution, including positive defects,
is $F_G(p,N)F_G(q,N)$.

No block-mixing labeling has zero defect. Indeed, at minimum
energy every unused edge has equal labels and each flow
path is geodesic from $\id$ to $\Gamma$. Every vertex connects
to a flow vertex through unused edges. Thus each label lies
in $[\id,\Gamma]$ in absolute order. If $\sigma$ lies in
that interval, concatenate reduced transposition words for
$\sigma$ and $\sigma^{-1}\Gamma$. The resulting word has
length $|\Gamma|$. Starting at identity, every step must
merge two cycles; a splitting step would prevent the total
cycle-count decrease from equaling the word length. A merge
across the two final cycle supports could never be undone.
Hence $\sigma$ preserves both supports.

Subtracting the block-preserving contribution leaves positive
terms with integer defects at least one. There are at most
$((p+q)!)^v$ terms, giving the asserted covariance polynomial
and the variance bound $((2p)!)^v/N$.
The ordinary Wick sum also gives
$0\le\E T_{N,p}-m_p\le(p!)^v/N$.
Adding squared bias proves \cref{eq:edge-fixed-L2}.
\end{proof}

Because $\rank Y_N\le N^s$, $T_{N,p}\le\norm{Y_N}^p$.
Compactness and positivity of $\mu_G$ imply $m_p^{1/p}\to E_G$.
For $0<\varepsilon<E_G$, choose a fixed $p$ with
$m_p>(E_G-\varepsilon)^p$. Then \cref{eq:edge-fixed-L2} gives
\[
 \Prob\{\norm{Y_N}\le E_G-\varepsilon\}
 \le\Prob\{T_{N,p}\le(E_G-\varepsilon)^p\}\longrightarrow0.
\]
For $\varepsilon\ge E_G$ the lower estimate is trivial.
Together with \cref{eq:edge-upper} this proves the first
convergence in \cref{thm:rtn-exact-edge}.

Finally, $\E Z_N=1$. In the second moment of $Z_N$, each
vertex has either the identity or the transposition label
in $S_2$, and both terminal labels are identity. If $T$
is the transposed vertex set, its term is $N^{-|\partial T|}$.
Thus
\begin{equation}\label{eq:edge-trace-normalization}
 \Var Z_N=\sum_{\varnothing\ne T\subseteq V_G}
                  N^{-|\partial T|}
          \le(2^v-1)/N.
\end{equation}
Every nonempty $T$ has a boundary by connectedness, including
$T=V_G$. Some entry of $H_N$ is a nonzero polynomial in the
Gaussian coordinates, so $Z_N>0$ almost surely.
Slutsky's theorem applied to $X_N=Y_N/Z_N$ proves the second
convergence. Since $E_G\ge1$, continuity of the logarithm gives
\cref{eq:edge-min-entropy}, completing the proof.

\section{An independent Wishart calibration}\label{app:wishart}

Let $X$ be $P$ by $d$ with independent standard
circular complex coordinates, $P\ge d$, and
$F=X^*X/P-I_d$. For $\gamma=(1\,\cdots\,k)$,
direct Wick expansion gives
\[
 \E\tr(X^*X/P)^k
       =P^{-k}\sum_{\pi\in\mathfrak S_k}
                      P^{\#\pi}d^{\#(\gamma\pi)} .
\]
Pairing conjugate position $s$ with unconjugated
position $\pi(s)$ identifies row labels along
cycles of $\pi$ and column labels along cycles
of $\gamma\pi$. Coinciding labels do not erase
pairing multiplicities.

For $k=1,2,3,4$, this yields
\[
 \begin{aligned}
 \E\tr W&=d,\\
 \E\tr W^2&=d+d^2/P,\\
 \E\tr W^3&=d+3d^2/P+(d^3+d)/P^2,\\
 \E\tr W^4&=d+6d^2/P+(6d^3+5d)/P^2
                                      +(d^4+5d^2)/P^3,
 \end{aligned}
\]
where $W=X^*X/P$. Subtracting the identity gives
\[
 \E\tr F^2=d^2/P,\qquad
 \E\tr F^4=(2d^3+d)/P^2+(d^4+5d^2)/P^3.
\]
For a uniform eigenvalue index let
$Y=|\lambda_j(F)|$ and $u=d/P$. Then
$\E Y^2=u$ and $\E Y^4\le9u^2$.
Interpolation and Cauchy--Schwarz give
\begin{equation}\label{eq:wishart-check}
 \frac13d\sqrt{d/P}\le\E\|F\|_1\le d\sqrt{d/P},
 \qquad
 \Prob\{\|F\|_1\ge d\sqrt{d/P}/6\}\ge1/36 .
\end{equation}
For the probability bound use
$(d^{-1}\|F\|_1)^2\le d^{-1}\tr F^2$ and
Paley--Zygmund. These finite-dimensional bounds
hold uniformly over every $P\ge d$, without
a fixed-aspect limit. At $P=N^a,d=N^b$, they
confirm the trace threshold $a>3b$.
For $a<b$, rank already excludes convergence.

\section{Proof of the sparse phase diagram}\label{app:sparse}

This appendix proves \cref{prop:sparse-phase} for its specified iid
family. Let $D_i=\sum_j B_{ij}$, $D_{\max}=\max_iD_i$, and
$\lambda=n\rho$. The $h=1$ case of \citet[Theorem~1.1]{Seginer2000}
and the deterministic maximum-row-norm lower bound imply
\begin{equation}\label{eq:sparse-seginer}
 \E\norm X\asymp \rho^{-1/2}\E\sqrt{D_{\max}}.
\end{equation}
Indeed, each row norm is $\rho^{-1/2}\sqrt{D_i}$, and the maximum
column norm has the same distribution as the maximum row norm.
The comparison constant is independent of dimension and distribution.
This is the only external norm theorem used in the proof. We now
estimate the expectation on the right, rather than only a typical value.

For every integer $k\ge1$, a binomial union bound gives
\begin{equation}\label{eq:sparse-binomial}
 \Prob\{D_{\max}\ge k\}\le n(e\lambda/k)^k.
\end{equation}
Also, the exponential-moment estimate
$\E e^{D_i}=(1+\rho(e-1))^n\le\exp(\lambda(e-1))$ gives
\[
 \Prob\{D_{\max}\ge k\}\le n\exp(\lambda(e-1)-k).
\]

\paragraph{Polynomially growing row occupancy: $0\le\beta<1$.}
Here $\lambda=n^{1-\beta}$ grows polynomially. For
$k_0=\lceil C\lambda\rceil$ with a sufficiently large fixed $C$,
the last bound has a negligible geometric tail after $k_0$.
Consequently $\E D_{\max}=O(\lambda)$ and Jensen gives
$\E\sqrt{D_{\max}}=O(\sqrt\lambda)$.
One row has mean $\lambda$ and variance at most $\lambda$, so
Chebyshev's inequality gives the matching lower bound. At $\beta=0$
each row count is deterministically $n$, with the same conclusion.

\paragraph{Critical occupancy: $\beta=1$.}
Now $\lambda=1$. Put $k_0=\lceil3\log n/\log\log n\rceil$.
The bound $n(e/k)^k$ is $o(1)$ at $k_0$, and its ratio at successive
integers is at most $e/(k+1)$. Summing the tail yields
$\E D_{\max}=O(\log n/\log\log n)$.
For the lower bound, fix $0<\delta<1$ and set
$k=\lfloor(1-\delta)\log n/\log\log n\rfloor$. Since $k=o(\sqrt n)$,
\[
 \Prob\{D_i=k\}
 =\binom nk n^{-k}(1-1/n)^{n-k}
 =\exp(-k\log k+O(k)+o(1)).
\]
Thus $n\Prob\{D_i=k\}=n^{\delta+o(1)}\to\infty$.
The rows are independent, so $D_{\max}\ge k$ with probability
tending to one. Jensen supplies the matching upper bound for
$\E\sqrt{D_{\max}}$.

\paragraph{Sparse but nonempty: $1<\beta<2$.}
Choose a fixed integer $k_0>2/(\beta-1)$. The first term of
\cref{eq:sparse-binomial} at $k_0$ tends to zero, and successive
terms have ratio at most $e\lambda/(k+1)=o(1)$. Thus
$\E D_{\max}=O(1)$. There is at least one occupied entry with
probability $1-(1-\rho)^{n^2}\to1$, proving
$\E\sqrt{D_{\max}}\asymp1$.

\paragraph{Bounded total occupancy: $\beta=2$.}
Let $N=\sum_{i,j}B_{ij}$. Then $\E N=1$ and
$\E\sqrt{D_{\max}}\le\E\sqrt N\le1$.
On the other hand $\Prob\{N=1\}\to e^{-1}$, giving a constant
lower bound.

\paragraph{Rare occupancy: $\beta>2$.}
Here $\mu=\E N=n^2\rho\to0$. Pointwise
$\sqrt{D_{\max}}\le N$, so its expectation is at most $\mu$.
Exactly one entry is occupied with probability
$\mu(1-\rho)^{n^2-1}\sim\mu$, yielding the reverse bound.
Multiplication by $\rho^{-1/2}$ in \cref{eq:sparse-seginer} gives
all regimes of \cref{eq:sparse-phase}.

\clearpage
\bibliographystyle{plainnat}
\bibliography{references}
\end{document}